\newtheorem{theorem}{Theorem}
\numberwithin{theorem}{section}
\newtheorem{corollary}[theorem]{Corollary}
\newtheorem{lemma}[theorem]{Lemma}
\newtheorem{proposition}[theorem]{Proposition}
\theoremstyle{remark}
\newtheorem{remark}{Remark}
\theoremstyle{definition}
\newtheorem{definition}{Definition}
\newtheorem{claim}{Claim}
\newcommand{\R}{\mathbb{R}}
\newcommand{\N}{\mathbb{N}}
\newcommand{\Z}{\mathbb{Z}}
\newcommand{\cF}{\mathcal{F}}
\newcommand{\cE}{\mathcal{E}}
\newcommand{\tiling}{\mathcal{T}}
\newcommand{\shapes}{\mathcal{S}}
\newcommand{\centers}{\mathcal{C}}
\newcommand{\finSet}{\mathcal{F}(G)}
\newcommand{\gibbs}{\mathcal{G}(\phi)}
\newcommand{\sigmaAlg}{\mathcal{B}}  
\newcommand{\cB}{\mathcal{B}}  
\def\exp{\mathop{\textrm{\rm exp}}\nolimits} 
\begin{document}

\title[Thermodynamic formalism for amenable groups and countable state spaces]{Thermodynamic formalism for amenable groups and countable state spaces}
\author{Elmer R. Beltrán}
\address{Universidad Nacional Jorge Basadre Grohmamn, Tacna, Per\'u; Universidad Católica del Norte, Departamento de Matemáticas, Antofagasta, Chile}
\email{rusbert.unt@gmail.com}
\author{Rodrigo Bissacot}
\address{Faculty of Mathematics and Computer Science, Nicolaus Copernicus University, Poland; Institute of Mathematics and Statistics, University of S\~{a}o Paulo, Brazil}
\email{rodrigo.bissacot@gmail.com}
\author{Luísa Borsato}
\address{Institute of Mathematics and Statistics, University of S\~{a}o Paulo, Brazil}
\email{luisabborsato@gmail.com}
\author{Raimundo Briceño}
\address{Facultad de Matemáticas, Pontificia Universidad Católica de Chile, Santiago, Chile}
\email{raimundo.briceno@mat.uc.cl}

\subjclass[2010]{Primary 37D35, 82B05, 37A35; secondary 37B10, 82B20, 60B15.}
\keywords{Gibbs measure; amenable group; pressure; countable state space; thermodynamic formalism.}

\maketitle

\begin{abstract}
Given the full shift over a countable state space on a countable amenable group, we develop its thermodynamic formalism. First, we introduce the concept of pressure and, using tiling techniques, prove its existence and further properties such as an infimum rule. Next, we extend the definitions of different notions of Gibbs measures and prove their existence and equivalence, given some regularity and normalization criteria on the potential. 
Finally, we provide a family of potentials that non-trivially satisfy the conditions for having this equivalence and a non-empty range of inverse temperatures where uniqueness holds. 
\end{abstract}

\setcounter{tocdepth}{2}
\tableofcontents

\section{Introduction}

There are two general ways to describe a system composed of many particles: microscopically and macroscopically. The first one makes use of the exact positions of the particles, as well as their local interactions. The second one, in turn, is usually outlined by thermodynamic quantities such as energy and entropy. One could say that statistical mechanics --- originated from the works of Boltzmann \cite{boltzmann1902leccons} and Gibbs \cite{gibbs1902elementary} --- is the bridge between the microscopic and the macroscopic descriptions of this kind of systems. In this connection, \emph{Gibbs measures} are a central object.

It is fair to say that Gibbs measures are at the core of the ``conceptual basis of equilibrium statistical mechanics'' \cite{ruelle-2004-thermo}. Relevant examples are the Ising model, which tries to capture the magnetic properties of certain materials; the hard-core model, that describes the distribution of gas particles in a given environment; among many others \cite{friedli2017statistical,georgii2011gibbs,georgii2001random}. In these cases it is customary to consider that the many particles interacting are infinite, take a value from a state space $A$ (also called \emph{alphabet} when $A$ is countable), and they are disposed in a crystalline structure. This structure and its symmetries are usually represented by a countable group $G$, possibly with some Cayley graph associated with it. A particular case is the \emph{hypercubic $d$-dimensional lattice}, which can be understood as the Cayley graph of the finitely generated abelian group $G = \Z^d$ according to its canonical generators. Then, it is natural to represent an arrangement of particles as an element of the space of configurations $X = A^G$, the \emph{$G$-full shift}. Considering this, one is interested in certain measures $\mu$ in the space $\mathcal{M}(X)$ of Borel probability measures supported on $X$. More specifically, the measures of interest are the ones that describe these kind of systems when they are in thermal equilibrium, where the energy of configurations is given by some potential $\phi: X \to \R$. However, there are many mathematically consistent ways to represent that situation by choosing an appropriate measure $\mu \in \mathcal{M}(X)$ and, as the theory evolved, it drew the attention from different areas of expertise such as probability \cite{rassoul2015course,duminil2017lectures} and ergodic theory \cite{sinai1972gibbs,bowen2008equilibrium}. Consequently, the very concept of Gibbs measure started to develop in more abstract and not always equivalent directions.

We focus mainly on four conceptualizations of the idea of thermal equilibrium, namely, DLR, conformal, Bowen-Gibbs, and equilibrium measures. We now proceed to briefly describe each of them.

Dating back to the 60's,  Dobrushin \cite{dobrushin1968problem,dobrushin1970conditional} and, independently, Lanford and Ruelle \cite{lanford1969observables} proposed a concept of Gibbs measure that extended the usual Boltzmann-Gibbs formalism to the infinite particles setting. Roughly, the idea involved looking for probability distributions compatible with a family of maps --- sometimes called \emph{specification} --- that prescribe conditional distributions inside finite subsets of $G$ given some fixed configuration outside. More specifically, given a collection $\gamma = (\gamma_K)_{K \in \finSet}$ of probability kernels $\gamma_{K}\colon \sigmaAlg \times X  \to [0,1]$, with $\finSet$ the set of finite subsets of $G$ and $\sigmaAlg$ the Borel $\sigma$-algebra, one is interested in finding measures $\mu \in \mathcal{M}(X)$ such that $\mu\gamma_K = \mu$ for every $K \in \finSet$, where $\mu\gamma_K$ is a new measure (a priori, different from $\mu$) obtained from $\mu$ via $\gamma_K$. Those distributions are called \emph{DLR measures} after the above cited authors and they have received considerable attention from both mathematical physicists and probabilists (see, for example, \cite{georgii2011gibbs,georgii2001random,keller1998equilibrium,ruelle-2004-thermo}).

Another rather classical way to define a Gibbs measure, which does not involve conditional distributions, was introduced by Capocaccia in \cite{capocaccia1976definition}. Given a class $\mathcal{E}$ of local homeomorphisms $\tau: X \to X$ and a potential $\phi: X \to \R$, one is interested in measures $\mu$ such that $\frac{d(\mu \circ \tau^{-1})}{d\mu} = \exp(\phi_*^\tau)$ for every $\tau \in \mathcal{E}$, where $\phi_*^\tau: X \to \R$ is a function representing the energy difference between a configuration $x$ and $\tau(x)$ (e.g., see \cite[Definition 5.2.1]{keller1998equilibrium}). This kind of measures fits in the more general context of $(\Psi, \mathcal{R})$-conformal measures explored in \cite{aaronson2007exchangeable}, where $\mathcal{R}$ is a Borel equivalence relation and $\Psi\colon \mathcal{R} \to \R_{+}$ is a measurable function. Then, Capocaccia's measures, that we simply call \emph{conformal measures}, can be recovered by taking a function $\Psi$ related to the given potential and $\mathcal{R}$  
the tail relation in the space of configurations. By considering other particular Borel relations $\mathcal{R}$ and measurable functions $\Psi$, one can recover other relevant notions of conformal measures, such as the ones presented in \cite{denker-urbanski-1991-conformal,petersen1997symmetric,sarig1999thermodynamic}, that are mainly adapted to the one-dimensional setting, i.e., when $G = \Z$ or, considering also semigroups, when $G = \N$.

A third possibility, introduced by Rufus Bowen in a one-dimensional and ergodic theoretical context \cite{bowen2008equilibrium}, is to define Gibbs measures by specifying bounds for the probability of cylindrical events. More concretely, one is interested in the measures $\mu \in \mathcal{M}(X)$ for which there exists constants $C > 0$ and $p \in \R$ such that
$$
C^{-1} \leq \frac{\mu([a_0a_1 \cdots a_{n-1}])}{\exp(\sum_{i=0}^{n-1}\phi(T^i x) - pn)} \leq C \qquad \text{ for } x \in X.
$$
As in \cite{berghout2019relation}, we call those measures \textit{Bowen-Gibbs measures} to avoid confusion. This definition has been considered in the literature \cite{chazottes2004entropy,haydn1992equivalence,keller1998equilibrium,ruelle-2004-thermo} and also relaxed versions of it, such as the so-called \textit{weak Gibbs measures} \cite{varandas2017weak,yuri1998zeta}, where the constant $C$ is replaced by a function that grows sublinearly in $n$. This and further relaxations have also played a relevant role in the multi-dimensional case, this is to say, when $G = \Z^d$ and $d > 1$, for finite state spaces (e.g., see \cite[Theorem 5.2.4]{keller1998equilibrium}).

The last important definition considered in this work is the one of \emph{equilibrium measure}. When $X$ is a finite configuration space, equilibrium measures are simply probability vectors that maximize the sum (or difference) of an entropy- and an energy-like quantity, that is, a quantity like $$H(p) + p \cdot (\phi(x_1),\dots,\phi(x_k)) = -\sum_{i=1}^{k}p_i \log p_i + \sum_{i=1}^{k} p_i\phi(x_i),$$ where $k = |X|$, $x_i \in X$, $\phi: X \to \R$ is a potential, $p = (p_1,\dots,p_k)$ is a probability vector with $p_i$ the probability associated with $x_i$, and $H(p)$ is the \emph{Shannon entropy} of $p$. These measures were considered, for example, in \cite{georgii2001random,keller1998equilibrium, ruelle-2004-thermo}. 
On the other hand, when $X$ is an infinite configuration space and there is a robust notion of specific entropy, let's say $h(\mu)$, we are interested in studying measures $\mu \in \mathcal{M}(X)$ that maximize the quantity $h(\mu) + \int \phi \, d\mu$ for a continuous potential $\phi: X \to \R$. This notion tries to capture the macroscopic behaviour of the system without making any assumption of the microscopic structure.

The problem of proving equivalences among these and other related notions has already been studied in different settings. We mention some relevant results that can be found in the literature.

In the one-dimensional case, for finite state spaces, Meyerovitch \cite{meyerovitch-2013-gibbs-eqm} proved the equivalence between conformal measures and DLR measures for some families of proper subshifts. Also, Sarig \cite[Theorem 3.6]{sarig2009notes} proved that any DLR measure on a mixing subshift of finite type is a conformal measure, for a different but related notion of conformal, restricted to the one-dimensional setting. In the same work, for one-sided and countably infinite state spaces, Sarig \cite[Proposition 2.2]{sarig2009notes} proved that conformal measures --- according to his definition --- are DLR measures for topological Markov shifts. In this same setting, Mauldin and Urba\'nski \cite{mauldin2001gibbs} proved the existence of equilibrium measures and that any equilibrium measure satisfies a Bowen-Gibbs equation. Moreover, if the topological Markov shift satisfies the BIP property and the potential has summable variation, Beltr\'an, Bissacot, and Endo \cite{Beltran_2021} proved that DLR measures and conformal measures --- in the same sense as Sarig --- are equivalent. Finally, for potentials with summable variation on sofic subshifts, Borsato and MacDonald \cite{borsato2020dobrushin} proved the equivalence between DLR and equilibrium measures.  There are also other classes of measures in the one-dimensional case which we do not treat here, such as \emph{$g$-measures} \cite{keane1972strongly,walters1975} and \emph{eigenmeasures} associated with the Ruelle operator \cite{bowen2008equilibrium, ruelle-2004-thermo}. When the state space is finite, it is known that the set of DLR measures and $g$-measures do not contain each other \cite{10.1214/ECP.v16-1681,Bissavanenter}, but there is a characterization for when a \emph{$g$-measure} is a DLR measure \cite{berghout2019relation}. In addition, eigenmeasures coincide with DLR measures for continuous potentials in the one-sided setting, as proven by Cioletti, Lopes, and Stadlbauer in \cite{cioletti2020ruelle}. Pioneering works in the one-dimensional countably infinite state space setting can be found in \cite{gurevich1970shift,gurevich1998thermodynamic}.

In the multi-dimensional case, some results regarding the equivalences among the four notions of Gibbs measures have been proved for finite state spaces. A first important reference is Keller \cite[Theorem 5.2.4 and Theorem 5.3.1]{keller1998equilibrium}, where it is proven that when $\phi: X \to \R$ is \emph{regular} (which includes the case of local and H\"older potentials, and well-behaved interactions), then the four definitions are equivalent. Here, by regular, we mean that
$$
\sum_{n=1}^\infty n^{d-1} \delta_n(\phi) < \infty,
$$
where $\delta_n(\phi)$ is the oscillation of $\phi$ when considering configurations that coincide in a specific finite box, namely, $[-n,n]^d \cap \Z^d$. Other classical references in this setting are due to Dobrushin \cite{dobruschin1968description} and Lanford and Ruelle \cite{lanford1969observables}, which, combined, establish the equivalence between DLR measures and equilibrium measures for a general class of subshifts of finite type. Kimura \cite{kimura-2015-thesis} generalized the equivalence between DLR and conformal measures for subshifts of finite type, and some of the implications are true for more general proper subshifts. In the countably infinite state space setting, Muir \cite{muir2011gibbs,muir2011paper} obtained all equivalences for the $G$-full shift when $G=\Z^d$. In order to do this, it was required that the potential $\phi: X \to \R$ is regular and satisfies a normalization criterion, namely, \emph{exp-summability}: 
$$
\sum_{a \in \N} \exp \left(\sup \phi([a])\right) < \infty.
$$
This last condition is automatically satisfied when $A$ is finite.

Results proving equivalences between different kinds of Gibbs measures go beyond the amenable \cite{shriver2020free,barbieri2023lanford,alpeev2016entropy,briceno2021kieffer} and even the symbolic setting to general topological dynamical systems \cite{baladi1991gibbs,haydn1992equivalence}.

One of our main contributions is to exhibit conditions to guarantee that the four notions of Gibbs measures presented above are equivalent, when considering the state space $A=\mathbb{N}$ and an arbitrary countable amenable group $G$, thus extending Muir's methods to the more general amenable case. Countable amenable groups play a fundamental role in ergodic theory \cite{ornstein1987} and include many relevant classes of groups, such as abelian (so, in particular, $G = \Z^d$), nilpotent, and solvable groups and are closed under many natural operations, namely, products, extensions, etc. (e.g., see \cite{loh2017geometric}). In the more general group and finite state space setting, the equivalence between DLR and conformal measures was extended to general subshifts over a countable discrete group $G$ with a special growth property by Borsato-MacDonald \cite[Theorems 5 and Theorem 6]{borsato2021conformal}. Recently, a different proof for the equivalence between DLR and conformal measures for any proper subshift was given by Pfister in \cite{pfister}. Also, in \cite{barbieri2020equivalence}, a Dobrushin-Lanford-Ruelle type theorem is proven in the case that the group is amenable and a topological Markov property holds, which is satisfied, in particular, by subshifts of finite type. Here, as Muir, we focus on the $G$-full shift case. We consider the configuration space $X = \N^G$, for $G$ an arbitrary countable amenable group, and an exp-summable potential $\phi: X \to \R$ with \emph{summable variation (according to some exhausting sequence)}. The concept of summable variation extends the one of regular potential presented before. More precisely, a potential $\phi$ has summable variation if 
$$\sum_{m = 1}^{\infty} \left|E_{m+1}^{-1} \setminus E_{m}^{-1} \right|\delta_{E_m} (\phi) < \infty,$$
where $\{E_m\}_m$ is an exhausting sequence for $G$ and $\delta_{E_m}(\phi)$ is a standard generalization of $\delta_m(\phi)$.

The paper is organized as follows. First, in Section \ref{section:preliminaries}, we present some preliminary notions about amenable groups $G$, the corresponding symbolic space $\N^G$, and potentials. Later, in Section \ref{section:pressure}, we introduce the concept of pressure in our framework and we prove its existence. Also, we prove that it satisfies an infimum rule and that it can be obtained as the supremum of the pressures associated with finite alphabet subsystems. In order to achieve this, we use relatively new techniques for tilings of amenable groups \cite{downarowicz2019tilings} and, inspired by ideas for entropy from \cite{downarowicz2016shearer}, we develop a generalization of Shearer's inequality for pressure. In Section \ref{section:permutations}, we introduce spaces of permutations and Gibbsian specifications in order to pave the way for the definitions of conformal and DLR measures, respectively. Next, in Section \ref{section:main-theorem}, we prove the equivalence between the four notions of Gibbs measures mentioned above given some conditions on the potential, such as exp-summability and summable variation. We also prove related results involving equilibrium measures. In order to prove the equivalence between DLR and conformal measures we rely on the strategies presented on \cite{muir2011gibbs} for the $G = \mathbb{Z}^d$ case, which already considers an infinite state space. Moreover, using Prokhorov's Theorem and relying on the existence of conformal measures in the compact setting \cite{denker-urbanski-1991-conformal}, we prove the existence of a conformal (and DLR) measure in our context. We also prove that DLR measures are Bowen-Gibbs. If it is also the case that the measure is invariant under shift actions of the group, we prove that any Bowen-Gibbs measure is an equilibrium measure and that any equilibrium measure is a DLR measure. At last, in Section \ref{section:examples}, we show how to recover previous results from ours and, inspired by the Potts model and considering a version of it with countably many states, we exhibit a family of examples for which all our results apply non-trivially and, in addition, a version of Dobrushin's Uniqueness Theorem adapted to our setting holds, thus providing a regime where the uniqueness of a Gibbs measure is satisfied.

\section{Preliminaries}
\label{section:preliminaries}

\subsection{Amenable groups and the space $\N^G$}

Let $G$ be a countable discrete group with identity element $1_G$ and $\N$ be the set of non-negative integers. Consider the {\bf $G$-full shift} over $\N$, that is, the set $\N^G = \{x: G \to \N\}$ of $\N$-colorings of $G$, endowed with the product topology. We abbreviate the set $\N^G$ simply by $X$. Given a set $A$, denote by $\mathcal{F}(A)$ the set of nonempty finite subsets of $A$.

Consider a sequence $\{E_m\}_m$ of finite sets of $G$ such that $E_0 = \emptyset$, $1_G \in E_1$, $E_m \subseteq E_{m+1}$ for all $m \in \N$, and $\bigcup_{m \in \N} E_m = G$. We will call such a sequence an {\bf exhaustion of $G$} or an {\bf exhausting sequence for $G$}.  Throughout this paper, we will consider a particular type of exhausting sequences: we will assume further that $E_1 = \{1_G\}$ and $\{E_m\}_m$ strictly increasing.

Given a fixed exhaustion $\{E_m\}_m$, the topology of $X$ is metrizable by the metric $d\colon X \times X \to \R$ given by $d(x, y) = 2^{-\inf \left\{m \in \N \,:\, x_{E_m} \neq y_{E_m} \right\}}$, where $x_F$ denotes the restriction of a configuration $x$ to a set $F \subseteq G$. Denote by $X_F = \{x_F: x \in X\}$ the set of restrictions of $x \in X$ to $F$. The sets of the form $[w] = \{ x \in X \colon x_{F} = w \}$, for $w \in X_F$, $F \in \finSet$, are called {\bf cylinder sets}. The family of such sets is the standard basis for the product topology of $\N^G$. 

Let $\sigmaAlg$ be the $\sigma$-algebra generated by the cylinder sets and let $\mathcal{M}(X)$ be the space of probability measures on $X$. Consider also $\mathcal{M}_G(X)$ the subspace of $G$-invariant probability on X.

The group $G$ acts by translations on $X$ as follows: for every $x \in X$ and every $g,h \in G$,
\begin{equation*}
    (g \cdot x)(h) = x(hg).
\end{equation*}
This action is also referred, in the literature, as the shift action. Moreover, it can be verified that $g \cdot [x_F] = [(g \cdot x)_{Fg^{-1}}]$, for every $x \in X$, $g \in G$, and $F \subseteq G$.

Given $K,F \in \finSet$ and $\delta> 0$, we say that $F$ is {\bf $(K, \delta)$-invariant} if $|K F \Delta F|<\delta|F|$, where $KF = \{kf: k \in K, f\in F\}$. A group $G$ is called {\bf amenable} if for every $K \in \finSet$ and $\delta > 0$, there exists a $(K, \delta)$-invariant set $F$. 

For $K,F \in \finSet$, define: \begin{itemize}
\item[$i)$] the {\bf $K$-interior of $F$} as $\mathrm{Int}_K(F) = \{g \in G\colon Kg \subseteq F\}$,
\item[$ii)$] the {\bf $K$-exterior of $F$} as $\mathrm{Ext}_K(F) = \{g \in G\colon Kg \subseteq G \setminus F\}$, and
\item[$iii)$] the {\bf $K$-boundary of $F$} as $\partial_K(F) = \{g \in G\colon Kg \cap F \neq \emptyset, Kg \cap F^c \neq \emptyset\}$.
\end{itemize}

\subsection{Potentials and variations}

A function $\phi\colon X \to \R$ is called a {\bf potential}. Given $E \subseteq G$, the {\bf variation of $\phi$ on $E$} is given by
\begin{equation*}
 \delta_E(\phi):= \sup \{|\phi(x) - \phi(y)|\colon x_{E} = y_{E}\}.
\end{equation*}

Notice that if $E \subseteq E'$, then $ \delta_{E'}(\phi) \leq \delta_E(\phi)$. If $E = \{1_G\}$, we denote $\delta_E(\phi)$ simply by $\delta(\phi)$.
We say that $\phi$ has {\bf finite oscillation} if $\delta(\phi) < \infty$.

Let $\{E_m\}_m$ be an exhausting sequence for $G$. Given a potential $\phi\colon X \to \R$, it is not difficult to see that $\phi$ is uniformly continuous if and only if $\lim_{m \to \infty} \delta_{E_m}(\phi) = 0$. In this context, given $F \in \finSet$, we define the {\bf $F$-sum of variations of $\phi$ (according to $\{E_m\}_m$)} as
\begin{equation*}
V_F(\phi) := \sum_{m \geq 1} \left|E_{m+1}^{-1}F \setminus E_{m}^{-1}F\right|\cdot\delta_{E_m} (\phi).
\end{equation*}
If $F = \{1_G\}$, we denote $V_F(\phi)$ simply by $V(\phi)$. We say that $\phi\colon X \to \R$ has {\bf summable variation (according to $\{E_m\}_m$)} if $V(\phi) < \infty$.

\begin{remark}\label{rmk:partition-of-G}
For any exhausting sequence $\{E_m\}_m$ and any $F \in \finSet$, the sequence $\{E_{m+1}^{-1}F \setminus E_{m}^{-1}F\}_m$ is a partition of $G$. Moreover, $E_{m+1}^{-1}F \setminus E_{m}^{-1}F \subseteq (E_{m+1}^{-1} \setminus E_{m}^{-1}) F$, so
\begin{equation*}
    \left|E_{m+1}^{-1}F \setminus E_{m}^{-1}F\right| \leq \left|E_{m+1}^{-1} \setminus E_{m}^{-1}\right| \left|F\right|,
\end{equation*}
and $V_F(\phi) \leq V(\phi)  |F|$. In particular, if $\phi$ has summable variation, $V_{F}(\phi) < \infty$ for all $F \in \finSet$.
\end{remark}

\begin{proposition}\label{prop:finite-F-variation-implies-uniformly-continuous} Let $\phi\colon X \to \R$ be a potential such that the $F$-sum of variation of $\phi$ is finite for some $F \in \finSet$. Then $\phi$ is a uniformly continuous potential.
\end{proposition}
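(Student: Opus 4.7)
The plan is to show that $V_F(\phi)<\infty$ forces $\delta_{E_m}(\phi)\to 0$, which by the equivalence recalled just before the proposition is exactly uniform continuity of $\phi$. The key observations are monotonicity of the variations along the exhaustion and the fact that, when $G$ is infinite, the coefficients $\left|E_{m+1}^{-1}F\setminus E_m^{-1}F\right|$ sum to $\infty$, so only a sequence tending to zero can keep $V_F(\phi)$ finite.

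More concretely, I would first dispose of the trivial case in which $G$ is finite: then the exhaustion $\{E_m\}_m$ eventually equals $G$, so $\delta_{E_m}(\phi)=0$ for all large $m$ and $\phi$ is uniformly continuous automatically. Assume henceforth that $G$ is infinite. Since $E_m\subseteq E_{m+1}$, the paper's remark $\delta_{E'}(\phi)\leq\delta_E(\phi)$ whenever $E\subseteq E'$ implies that $(\delta_{E_m}(\phi))_{m\geq 1}$ is a non-increasing sequence of non-negative reals, hence has a limit $L:=\lim_{m\to\infty}\delta_{E_m}(\phi)\in[0,\infty)$ and $\delta_{E_m}(\phi)\geq L$ for every $m\geq 1$.

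Next I would invoke Remark~\ref{rmk:partition-of-G}, which tells us that $\{E_{m+1}^{-1}F\setminus E_m^{-1}F\}_{m\geq 0}$ is a partition of $G$. Removing the $m=0$ piece (which is $E_1^{-1}F=F$, a finite set) leaves a partition of $G\setminus F$, so
\begin{equation*}
\sum_{m\geq 1}\left|E_{m+1}^{-1}F\setminus E_m^{-1}F\right|=|G\setminus F|=\infty,
\end{equation*}
because $G$ is infinite and $F$ is finite. Combining this with the pointwise lower bound $\delta_{E_m}(\phi)\geq L$ yields
\begin{equation*}
V_F(\phi)=\sum_{m\geq 1}\left|E_{m+1}^{-1}F\setminus E_m^{-1}F\right|\delta_{E_m}(\phi)\geq L\sum_{m\geq 1}\left|E_{m+1}^{-1}F\setminus E_m^{-1}F\right|.
\end{equation*}
If $L>0$ the right-hand side equals $+\infty$, contradicting $V_F(\phi)<\infty$; therefore $L=0$, i.e. $\delta_{E_m}(\phi)\to 0$, and $\phi$ is uniformly continuous.

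I do not anticipate any real obstacle here: the argument is essentially a tail/divergence test applied to a non-increasing sequence against diverging weights, and all the ingredients (monotonicity of $E\mapsto\delta_E(\phi)$, the partition structure of $G$ relative to the exhaustion, and the characterization of uniform continuity via $\delta_{E_m}(\phi)\to 0$) have already been set up in the preceding paragraphs. The only point requiring any care is the split between finite and infinite $G$, and the bookkeeping of which index the partition in Remark~\ref{rmk:partition-of-G} starts at, to make sure the sum from $m\geq 1$ still diverges.
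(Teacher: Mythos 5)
Your proof is correct and follows essentially the same route as the paper: monotonicity of $\delta_{E_m}(\phi)$ along the exhaustion together with the divergence of $\sum_{m\geq 1}\left|E_{m+1}^{-1}F\setminus E_m^{-1}F\right|$ (the partition of $G$ from Remark~\ref{rmk:partition-of-G}) forces $\delta_{E_m}(\phi)\to 0$, which is the stated characterization of uniform continuity. The paper phrases this via the explicit bound $\delta_{E_M}(\phi)\leq V_F(\phi)/\left|E_{M+1}^{-1}F\setminus E_1^{-1}F\right|$ rather than your contradiction with the limit $L$, and your separate treatment of finite $G$ is a harmless extra precaution.
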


\begin{proof} Let $\{E_m\}_m$ be an exhausting sequence for $G$. Since, in particular, $E_m \subseteq E_{m+1}$ for every $m \geq 1$, we have that $0 \leq \delta_{E_{m+1}}(\phi) \leq \delta_{E_m}(\phi)$ for every $m \geq 1$. Then, for every $M \geq 1$,
\begin{align*}
    V_F(\phi) &\geq \sum_{m=1}^{M} |E_{m+1}^{-1}F \setminus E_{m}^{-1}F|   \cdot\delta_{E_m} (\phi)\\
    &\geq \delta_{E_M} (\phi)  \cdot \sum_{m=1}^{M} |E_{m+1}^{-1}F \setminus E_{m}^{-1}F| \\
    &= \delta_{E_M}(\phi)  |E^{-1}_{M+1}F \setminus E_1^{-1}F|,
\end{align*}
where the last line follows from Remark \ref{rmk:partition-of-G}. Therefore, 
$$0 \leq \lim_{M \to \infty} \delta_{E_M}(\phi) \leq \lim_{M \to \infty} \frac{V_F(\phi)}{|E^{-1}_{M+1}F \setminus E_1^{-1}F|} = 0,$$ 
and the result follows.
\end{proof}

\begin{definition}\label{defn:limit-as-more-invariant}
Let $\varphi\colon \finSet \to \R$ be a function. Given $L \in \R$, we say that {\bf $\varphi(F)$ converges to $L$ as $F$ becomes more and more invariant} if for every $\epsilon > 0$ there exist $K \in \finSet$ and $\delta > 0$ such that $|\varphi(F) - L| < \epsilon$ for every $(K, \delta)$-invariant set $F \in \finSet$. We will abbreviate such a fact as $\displaystyle\lim_{F \to G} \varphi(F) = L$.
\end{definition}

A sequence $\{F_n\}_n$ in $\finSet$ is {\bf (left) F{\o}lner} for $G$ if
$$
\lim_{n \to \infty}  \frac{|gF_n \setminus F_n|}{|F_n|} = 0, \text{ for any $g \in G$}.
$$

For example, if $G = \Z^d$ and $F_n = [-n,n]^d \cap \Z^d$, then $\{F_n\}_n$ is a F{\o}lner sequence for $\Z^d$. It is not difficult to see that if $\lim_{F \to G} \varphi(F) = L$, then $\lim_{n \to \infty} \varphi(F_n) = L$ for every F{\o}lner sequence $\{F_n\}_n$. In particular, when $G = \Z^d$, convergence as $F$ becomes more and more invariant implies convergence along $d$-dimensional boxes, which is a common condition in the multi-dimensional framework. It is not difficult to see that a group is amenable if and only if it has F{\o}lner sequence. Moreover, for every amenable group $G$ there exists a F{\o}lner sequence that is also an exhaustion.

\begin{proposition}\label{prop:limit-of-VF-over-the-size-of-F}  Let $\phi\colon X \to \R$ be a potential with summable variation according to an exhausting sequence $\{E_m\}_m$. Then,
\begin{equation*}
\lim_{F \to G} \frac{V_F(\phi)}{|F|} = 0.
\end{equation*}
\end{proposition}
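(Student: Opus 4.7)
The plan is to split the sum $V_F(\phi) = \sum_{m \geq 1} |E_{m+1}^{-1}F \setminus E_m^{-1}F|\,\delta_{E_m}(\phi)$ at some large cutoff $M$, bound the head by a term proportional to $|E_{M+1}^{-1}F \setminus F|/|F|$ that is small when $F$ is sufficiently invariant, and bound the tail by the tail of the convergent series $V(\phi)$.

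First, I would fix $\epsilon > 0$ and observe that $\delta(\phi) < \infty$, since $V(\phi) < \infty$ and $\delta_{E_m}(\phi) \leq \delta_{E_1}(\phi) = \delta(\phi)$. Since $V(\phi) = \sum_{m \geq 1} |E_{m+1}^{-1} \setminus E_m^{-1}|\,\delta_{E_m}(\phi) < \infty$, I can choose $M \geq 1$ with
\begin{equation*}
\sum_{m > M} |E_{m+1}^{-1} \setminus E_m^{-1}|\,\delta_{E_m}(\phi) < \frac{\epsilon}{2}.
\end{equation*}
Using the containment $E_{m+1}^{-1}F \setminus E_m^{-1}F \subseteq (E_{m+1}^{-1} \setminus E_m^{-1})F$ from Remark \ref{rmk:partition-of-G}, the tail is then bounded by $\frac{\epsilon}{2}|F|$.

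For the head, the key observation is that, by Remark \ref{rmk:partition-of-G}, the sets $\{E_{m+1}^{-1}F \setminus E_m^{-1}F\}_m$ are pairwise disjoint, so
\begin{equation*}
\sum_{m=1}^{M} |E_{m+1}^{-1}F \setminus E_m^{-1}F| = |E_{M+1}^{-1}F \setminus E_1^{-1}F| = |E_{M+1}^{-1}F \setminus F|,
\end{equation*}
since $E_1 = \{1_G\}$. Bounding each $\delta_{E_m}(\phi)$ by $\delta(\phi)$ yields
\begin{equation*}
\sum_{m=1}^{M} |E_{m+1}^{-1}F \setminus E_m^{-1}F|\,\delta_{E_m}(\phi) \leq \delta(\phi)\,|E_{M+1}^{-1}F \setminus F|.
\end{equation*}
Now I take $K := E_{M+1}^{-1}$ and $\delta := \frac{\epsilon}{2(\delta(\phi)+1)}$. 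For any $(K,\delta)$-invariant $F$, since $1_G \in E_{M+1}^{-1}$ we have $F \subseteq E_{M+1}^{-1}F$, so $|E_{M+1}^{-1}F \setminus F| \leq |E_{M+1}^{-1}F \triangle F| < \delta|F|$, which makes the head less than $\frac{\epsilon}{2}|F|$.

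Putting the two estimates together gives $V_F(\phi) < \epsilon|F|$ for every $(K,\delta)$-invariant $F$, which is precisely the statement $\lim_{F \to G} V_F(\phi)/|F| = 0$ from Definition \ref{defn:limit-as-more-invariant}. The only subtle point is ensuring the head is controlled by a quantity of the form $|KF \triangle F|/|F|$; this is handled cleanly by exploiting the partition structure to collapse the head into a single boundary term of the fixed set $K = E_{M+1}^{-1}$, after which invariance does the rest. No single step is genuinely hard — the proof is essentially an $\epsilon/2$-splitting — but the main thing to get right is choosing $M$ before choosing the invariance parameters, so that $K$ and $\delta$ can be defined in terms of $M$.
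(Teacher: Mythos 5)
Your proof is correct and follows essentially the same route as the paper: split $V_F(\phi)$ at a cutoff, control the tail by the summability of $V(\phi)$, and control the head by the invariance of $F$. The only difference is cosmetic: you collapse the head into the single term $\delta(\phi)\,|E_{M+1}^{-1}F \setminus F|$ via the partition identity and invoke $(E_{M+1}^{-1},\delta)$-invariance once, whereas the paper bounds each head term by $\epsilon|F|$ using invariance with respect to $E_{m_0+1}$ and then uses $\sum_{m\le m_0}\delta_{E_m}(\phi)\le V(\phi)$; note that your claim $\delta(\phi)<\infty$ from $V(\phi)<\infty$ does require the standing assumptions $E_1=\{1_G\}$ and strict monotonicity of the exhaustion (so that $|E_2^{-1}\setminus E_1^{-1}|\ge 1$), which hold here.
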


\begin{proof} Let $\epsilon > 0 $. Since $\phi\colon X \to \R$ has summable variation, there exists $m_0 \geq 1$ such that
$$
\sum_{m > m_0} |E_{m+1}^{-1} \setminus E_{m}^{-1}| \cdot  \delta_{E_m}(\phi) < \epsilon.
$$

Then, for every $F \in \finSet$,
\begin{align*}
V_F(\phi)   &\leq \sum_{m=1}^{m_0} |E_{m+1}^{-1}F \setminus E_{m}^{-1}F| \cdot  \delta_{E_m}(\phi) + \sum_{m > m_0} |E_{m+1}^{-1} \setminus E_{m}^{-1}||F|\cdot  \delta_{E_m}(\phi)  \\
            &   \leq \sum_{m=1}^{m_0} |E_{m+1}^{-1}F \setminus E_{m}^{-1}F| \cdot  \delta_{E_m}(\phi) + |F|\cdot\epsilon.
\end{align*}

Due to the amenability of $G$, for any given $m_0 \geq 1$, we have that, for all $m \leq m_0$,
$$
|F| \leq |E_{m+1}^{-1}F| \leq |E_{m_0+1}^{-1}F| \leq (1+\epsilon)|F|
$$
for every $(E_{m_0+1},\epsilon)$-invariant set $F$. 
Therefore, for every $\epsilon > 0$, there exists $m_0 \geq 1$ and $K \in \finSet$ such that for every $(K,\epsilon)$-invariant set $F$,
$$
V_F(\phi) \leq \sum_{m=1}^{m_0} ((1+\epsilon)|F| - |F|) \cdot  \delta_{E_m}(\phi) + \epsilon\cdot|F| = \epsilon\cdot|F|\sum_{m=1}^{m_0}\delta_{E_m}(\phi) + \epsilon\cdot|F|,
$$
so
$$
\frac{V_F(\phi)}{|F|} \leq \epsilon\cdot C,
$$
where $C = 1 + V(\phi)$. Since $\epsilon$ was arbitrary, we conclude.
\end{proof}

Given a potential $\phi\colon X \to \R$, for each $F \in \finSet$, define $\phi_{F}\colon X \to \R$ as $\phi_{F}(x) = \sum_{g \in F} \phi(g \cdot x)$ and $\Delta_{F}(\phi) = \delta_{F}(\phi_{F})$. Notice that $\Delta_{Fg}(\phi) = \Delta_{F}(\phi)$ for every $g \in G$.

\begin{lemma}
\label{regularityDeltak}  Let $\{E_m\}_m$ be an exhausting sequence for $G$, $\phi\colon X \to \R$ be a potential that has finite oscillation and such that $\liminf_{m \to \infty} \delta_{E_m}(\phi) = 0$. Then,
\begin{equation}\label{eq:limit-of-Delta-F-divided-by-the=size-of-F}
\lim_{F \to G} \frac{\Delta_{F}(\phi)}{|F|} = 0.    
\end{equation}
In particular, if $\phi$ has summable variation according to an exhausting sequence $\{E_m\}_m$, then equation \eqref{eq:limit-of-Delta-F-divided-by-the=size-of-F} holds.
\end{lemma}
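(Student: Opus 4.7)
The plan is to control $\Delta_F(\phi) = \delta_F(\phi_F)$ by splitting $F$ into points lying ``deep inside'' $F$, where the oscillation of $x \mapsto \phi(g \cdot x)$ under the constraint $x_F = y_F$ is small, and points near the boundary, where one must settle for the crude bound $\delta(\phi)$.

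Fix $x, y \in X$ with $x_F = y_F$. Then $|\phi_F(x) - \phi_F(y)| \leq \sum_{g \in F} |\phi(g \cdot x) - \phi(g \cdot y)|$. For each $g \in F$, if $E_m g \subseteq F$, i.e., $g \in \mathrm{Int}_{E_m}(F)$, then $(g\cdot x)(e) = x(eg) = y(eg) = (g\cdot y)(e)$ for every $e \in E_m$, so $|\phi(g \cdot x) - \phi(g \cdot y)| \leq \delta_{E_m}(\phi)$. Otherwise, I would use the crude bound $|\phi(g \cdot x) - \phi(g \cdot y)| \leq \delta(\phi) < \infty$, which is legitimate by the finite-oscillation hypothesis.

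Since $\{E_m\}_m$ is increasing, the sequence $\{\delta_{E_m}(\phi)\}_m$ is non-increasing, and so the hypothesis $\liminf_m \delta_{E_m}(\phi) = 0$ strengthens to $\lim_m \delta_{E_m}(\phi) = 0$. Given $\epsilon > 0$, I would pick $m_0 \geq 1$ with $\delta_{E_{m_0}}(\phi) < \epsilon/2$ and split the sum over $F = \mathrm{Int}_{E_{m_0}}(F) \sqcup (F \setminus \mathrm{Int}_{E_{m_0}}(F))$ to get
$$\Delta_F(\phi) \leq \delta_{E_{m_0}}(\phi) \cdot |\mathrm{Int}_{E_{m_0}}(F)| + \delta(\phi) \cdot |F \setminus \mathrm{Int}_{E_{m_0}}(F)| \leq \frac{\epsilon}{2} |F| + \delta(\phi) \cdot |F \setminus \mathrm{Int}_{E_{m_0}}(F)|.$$
A routine amenability step bounds the second piece: if $g \in F$ satisfies $eg \notin F$ for some $e \in E_{m_0}$, then $eg \in E_{m_0} F \setminus F$, whence $g \in E_{m_0}^{-1}(E_{m_0} F \setminus F)$ and $|F \setminus \mathrm{Int}_{E_{m_0}}(F)| \leq |E_{m_0}| \cdot |E_{m_0} F \setminus F|$. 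Requiring $F$ to be $(E_{m_0}, \delta')$-invariant with $\delta' = \epsilon / (2|E_{m_0}|(1 + \delta(\phi)))$ then yields $\Delta_F(\phi)/|F| < \epsilon$, which is the convergence in the sense of Definition~\ref{defn:limit-as-more-invariant}.

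For the ``in particular'' clause, strict monotonicity of $\{E_m\}_m$ forces $|E_{m+1}^{-1} \setminus E_m^{-1}| \geq 1$ for every $m$, so summable variation immediately gives $\sum_m \delta_{E_m}(\phi) \leq V(\phi) < \infty$; in particular, $\delta(\phi) = \delta_{E_1}(\phi) < \infty$ and $\delta_{E_m}(\phi) \to 0$, placing us squarely under the hypotheses already proved. The only real obstacle is the amenability bookkeeping in the interior/boundary decomposition, which is standard once one writes $F \setminus \mathrm{Int}_{E_{m_0}}(F)$ in terms of $E_{m_0} F \setminus F$ as above.
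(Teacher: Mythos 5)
Your proof is correct and follows essentially the same route as the paper's: split the sum defining $\Delta_F(\phi)$ over the $E_{m_0}$-interior of $F$ (where $\delta_{E_{m_0}}(\phi)$ applies) and its complement in $F$ (where the finite-oscillation bound $\delta(\phi)$ applies), then use invariance of $F$ to make the second piece of size $o(|F|)$. The only difference is cosmetic: you spell out the amenability bookkeeping via $|F \setminus \mathrm{Int}_{E_{m_0}}(F)| \leq |E_{m_0}|\,|E_{m_0}F \setminus F|$ and an explicit choice of $\delta'$, where the paper simply asserts that sufficiently invariant $F$ satisfy $|\mathrm{Int}_{E_{m_0}}(F) \triangle F| < \epsilon |F|$, and you also justify the ``in particular'' clause (finite oscillation and $\delta_{E_m}(\phi)\to 0$ from summable variation) a bit more explicitly.
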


\begin{proof} 
Let $\epsilon > 0$. Since $\liminf_{m \to \infty} \delta_{E_m}(\phi) = 0$, there exists $m_{0} \geq 1$ such that $\delta_{E_{m_0}}(\phi) \leq \epsilon$. Denote $E_{m_0}$ by $K$.
Due to amenability, we can find $K' \supseteq K$ and $0 < \epsilon' \leq \epsilon$ such that if $F$ is $(K',\epsilon')$-invariant, we have that 
\begin{equation*}
    \left|\mathrm{Int}_K(F) \triangle F\right| < \epsilon\cdot|F|.
\end{equation*}

Considering this, if $x, y \in X$ are such that $x_F = y_F$, we have that
\begin{align*}
|\phi_F(x) - \phi_F(y)|		&	\leq	\sum_{g \in F} |\phi(g \cdot x)  - \phi(g \cdot y) |\\
  					&	=	\sum_{g \in \mathrm{Int}_{K}(F) \cap F} |\phi(g \cdot x)  - \phi(g \cdot y) | + \sum_{g \in F \setminus \mathrm{Int}_{K}(F)} |\phi(g \cdot x)  - \phi(g \cdot y) |\\
					&	\leq	\sum_{g \in \mathrm{Int}_{K}(F) \cap F} \delta_{K}(\phi) + \sum_{g \in F \setminus \mathrm{Int}_{K}(F)} \delta(\phi)\\
					&	\leq	|\mathrm{Int}_{K}(F)|\cdot\epsilon + |F \setminus \mathrm{Int}_{K}(F)| \cdot\delta(\phi)\\
					&	\leq |F| \cdot  (1+\epsilon) \cdot \epsilon + |F|\cdot  \epsilon \cdot \delta(\phi)	\\
					&	=	|F| \cdot \epsilon \cdot (1+\epsilon+\delta(\phi)),
\end{align*}
and the result follows.    
\end{proof}

\section{Pressure}
\label{section:pressure}

We dedicate this section to introduce the \emph{pressure} of a potential. We define and work on the setting of exp-summable potentials with summable variation on a countable alphabet. The pressure --- basically equivalent to the \emph{specific Gibbs free energy} --- is a very relevant thermodynamic quantity that helps to capture the concept of Gibbs measure in a quantitative way.
    
First, we prove that the pressure, which we define through a limit over sets that are becoming more and more invariant, exists in the finite alphabet case. The definition of the pressure is often done in terms of a particular F\o lner sequence, which is an, \textit{a priori}, less robust and less overarching
approach. Existence of the limit for a particular F\o lner sequence $\{F_n\}_n$ and the fact that it is independent on the choice of such sequence is well-known (see, for example, \cite{tempelman2013ergodic,gurevich2007breiman,bufetov2011}, in the context of absolutely summable interactions). Here, we prove something stronger: that our definition of pressure obeys the infimum rule --- which is a refinement of the Ornstein-Weiss Lemma (see, for example, \cite[\S 4.5]{kerr2016ergodic}) ---, this is to say, it can be expressed as an infimum over all finite sets of $G$. In order to conclude this, we extend the results about Shearer's inequality in \cite{downarowicz2016shearer} for topological entropy to pressure.

Now, in the countable alphabet context, we take a similar approach. First, we consider again a definition of pressure in terms of sets that are becoming more and more invariant. Next, we prove that the infimum rule still holds and, finally, we prove that the pressure can be obtained as the supremum of the pressures associated with finite alphabet subsystems. A related result was obtained by Muir in \cite{muir2011gibbs} for the $\Z^d$ group case, where the pressure was defined as a limit over a particular type of F{\o}lner sequence, namely, open boxes centered at the origin of radius $n$. The existence of this limit was proven through a sub-additivity argument that exploits the property that large boxes can be partitioned into many equally sized ones, which might not be valid in more general groups. In order to generalize this idea of partitioning sets, we make use of tiling techniques introduced in \cite{downarowicz2019tilings}, which, together to what is done in the finite alphabet case, allow us to prove the infimum rule for infinite alphabets over a countable amenable group. This type of result was not considered in \cite{muir2011gibbs}.

We begin by introducing some definitions.  Given a potential $\phi\colon X \to \R$ and $F \in \finSet$, define the {\bf partition function for $\phi$ on $F$ } as
\begin{equation*}
    Z_F(\phi):= \sum_{w \in X_F} \exp \left(\sup \phi_F([w])\right),
\end{equation*}
where  $\sup\phi_F([w]) = \sup\{\phi_F(x): x \in [w]\}$. We define the {\bf pressure of $\phi$}, which we denote by $p(\phi)$, as 
$$
p(\phi) := \lim_{F \to G} \frac{1}{|F|} \log Z_F(\phi),
$$
whenever such limit as $F$ becomes more and more invariant exists. In addition, given a finite subset $A \in \cF(\N)$, we define $Z_F(A,\phi)$ as the partition function associated with the restriction of $\phi$ to $A^G$. More precisely, 
\begin{equation*}
    Z_F(A,\phi):= \sum_{w \in X_F \cap A^F} \exp \sup\left( \phi_F\left([w] \cap A^G\right)\right).
\end{equation*}

Similarly, we define $p(A,\phi)$ as
\begin{equation*}
    p(A,\phi):= \lim_{F \to G} \frac{1}{|F|} \log Z_F(A,\phi),
\end{equation*}
whenever such limit exists.

\subsection{Infimum rule for finite alphabet pressure}

The main goal of this subsection is to prove the following theorem.

\begin{theorem}
\label{thm:continuous-potential-finite-alphabet-then-pressure-exists}
Let $\phi\colon X \to \R$ be a continuous potential. Then, for any finite alphabet $A \subseteq \N$, $p(A,\phi)$ exists and
$$
p(A,\phi)  = \inf_{E \in \finSet} \frac{1}{|E|}\log Z_E(A,\phi).
$$
\end{theorem}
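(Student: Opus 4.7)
The plan is to prove the theorem by establishing three structural properties of $Z_F(A,\phi)$ and then combining them with a greedy packing argument to obtain both existence of the limit and the infimum rule in one stroke.

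First I would record the three basic properties. \emph{Right-invariance}: $Z_{Fg}(A,\phi) = Z_F(A,\phi)$ for every $g \in G$, coming from the shift action on $X$ and the identity $\phi_{Fg}(x) = \phi_F(g \cdot x)$, together with the shift invariance of $A^G$. \emph{Uniform boundedness on $A^G$}: since $A$ is finite, $A^G$ is compact, so the continuous function $\phi$ is bounded on $A^G$ by some constant $M > 0$; consequently $\log Z_F(A,\phi) \leq (\log|A| + M)|F|$ for every $F \in \finSet$. \emph{Disjoint-union subadditivity}: if $F = F_1 \sqcup F_2$, then writing each $w \in A^F$ as $w_1 w_2$ with $w_i = w|_{F_i}$, one has $\phi_F = \phi_{F_1} + \phi_{F_2}$ and $[w] = [w_1] \cap [w_2]$, so that
\begin{equation*}
\sup \phi_F([w] \cap A^G) \leq \sup \phi_{F_1}([w_1] \cap A^G) + \sup \phi_{F_2}([w_2] \cap A^G);
\end{equation*}
summing over $w$ and exponentiating gives $Z_F(A,\phi) \leq Z_{F_1}(A,\phi) \cdot Z_{F_2}(A,\phi)$.

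The combinatorial heart of the proof is a greedy packing lemma: for every $E \in \finSet$ and every $\epsilon > 0$, any sufficiently invariant $F$ admits pairwise disjoint translates $Eg_1, \ldots, Eg_k \subseteq F$ with $k|E| \geq (1-\epsilon)|F|$. I would prove this by iteratively selecting $g_i$ such that $Eg_i \subseteq F$ is disjoint from the previously chosen translates. When the process terminates, every $g \in \mathrm{Int}_E(F)$ must satisfy $Eg \cap P \neq \emptyset$, where $P = \bigsqcup_i Eg_i$; equivalently, $\mathrm{Int}_E(F) \subseteq E^{-1}P$. Hence $|P| \geq |\mathrm{Int}_E(F)|/|E|$, which is at least $(1-\epsilon)|F|/|E|$ as soon as $F$ is invariant enough to guarantee $|F \setminus \mathrm{Int}_E(F)| \leq \epsilon|F|$.

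Combining these ingredients: for such an $F$, write $F = \bigsqcup_{i=1}^k Eg_i \sqcup R$ with $|R| \leq \epsilon|F|$, and apply disjoint-union subadditivity together with right-invariance to get
\begin{equation*}
\log Z_F(A,\phi) \leq k \log Z_E(A,\phi) + (\log|A| + M)\epsilon|F|.
\end{equation*}
Dividing by $|F|$ and using $(1-\epsilon) \leq k|E|/|F| \leq 1$ to handle either sign of $\log Z_E(A,\phi)$ produces
\begin{equation*}
\frac{\log Z_F(A,\phi)}{|F|} \leq \frac{\log Z_E(A,\phi)}{|E|} + \epsilon \cdot C_E,
\end{equation*}
for a constant $C_E$ depending only on $E$, $|A|$, and $M$. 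Letting $F \to G$ in the sense of Definition \ref{defn:limit-as-more-invariant} and then $\epsilon \to 0$ yields $\limsup_{F \to G} \frac{\log Z_F(A,\phi)}{|F|} \leq \frac{\log Z_E(A,\phi)}{|E|}$ for every $E \in \finSet$. Since the reverse inequality $\inf_E \frac{\log Z_E(A,\phi)}{|E|} \leq \liminf_{F \to G} \frac{\log Z_F(A,\phi)}{|F|}$ is immediate from the definition of infimum, the limit exists and equals the infimum.

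The main obstacle I foresee is calibrating the greedy packing lemma to Definition \ref{defn:limit-as-more-invariant}: one must pin down a single $(K,\delta)$-invariance condition on $F$ that simultaneously ensures $|F \setminus \mathrm{Int}_E(F)| \leq \epsilon|F|$ and the validity of the packing bound, so that the resulting inequality holds uniformly across \emph{every} such $F$ rather than merely along a chosen F\o lner sequence. Once this quantitative matching is settled, the rest of the argument is routine bookkeeping.
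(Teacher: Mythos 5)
The proposal breaks down at the greedy packing lemma, and this is not a repairable technicality but the precise obstruction that forces the paper onto a different route. First, your own greedy argument does not prove what you state: from $\mathrm{Int}_E(F) \subseteq E^{-1}P$ you get $|\mathrm{Int}_E(F)| \leq |E|\,|P|$, hence $k|E| = |P| \geq (1-\epsilon)|F|/|E|$, which is a covered fraction of order $1/|E|$, not $1-\epsilon$; there is a factor $|E|$ missing between the claim and the derivation. Second, the stronger claim is simply false for an arbitrary single shape: take $G = \Z$ and $E = \{0,1,3\}$. In any family of pairwise disjoint translates $E + t_i$, the site $t_i + 2$ is covered by no translate (each candidate collides with $E+t_i$), and these wasted sites are distinct, so no packing of a long interval $F$ by translates of $E$ covers more than about $\tfrac{3}{4}|F|$, no matter how invariant $F$ is. Consequently the remainder $R = F \setminus \bigsqcup_i Eg_i$ has size of order $|F|$, the error term $(\log|A|+M)|R|$ is not $O(\epsilon|F|)$, and the key inequality $\frac{1}{|F|}\log Z_F(A,\phi) \leq \frac{1}{|E|}\log Z_E(A,\phi) + \epsilon C_E$ is not obtained. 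Normalizing $\phi$ to be non-negative on $A^G$ (so that $\log Z_E(A,\phi) \geq 0$) does not save the argument either, because with a low-density packing the uncontrolled part of $F$ still contributes at the same order as the main term.

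The deeper point is that disjoint packings by translates of one \emph{arbitrary} finite set $E$ cannot, in a general amenable group (or even in $\Z$), exhaust invariant sets up to density $1-\epsilon$; Ornstein--Weiss quasi-tilings and the exact tilings of Theorem \ref{thm:tiling} achieve near-full coverage only with tiles that are themselves highly invariant, which would give existence of the limit and comparison against invariant $E$, but not the infimum over \emph{all} $E \in \finSet$ — and the infimum rule over all finite sets is exactly the nontrivial content of the statement. This is why the paper does not argue by disjoint packings at all: it proves Shearer's inequality for $\log Z^{u_E}_F$ by induction with the generalized H\"older inequality (Lemma \ref{lemma:shearers-inequality-is-satisfied}) and then applies the Kerr--Li lemma, because for an arbitrary $E$ one can always produce an \emph{overlapping} $|E|$-cover of $F$ by the translates $\{Eg : g \in E^{-1}F\}$ (each point of $F$ lies in exactly $|E|$ of them), and Shearer's inequality converts that overlapping cover into $\log Z_F \leq \frac{|E^{-1}F|}{|E|}\log Z_E$, which after normalizing $\phi \geq 0$ and using invariance of $F$ yields the infimum rule. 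So your three structural properties and the easy direction $\inf \leq \liminf$ are fine, but the core step needs the strong (Shearer-type) subadditivity rather than plain subadditivity plus packing.
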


In order to prove this result, we require some definitions. A function $\varphi\colon \finSet \rightarrow \R$ is 
\begin{itemize}
\item {\bf $G$-invariant} if $\varphi(Fg) = \varphi(F)$ for every $F \in \finSet$ and $g \in G$;
\item {\bf monotone} if $\varphi(E) \leq \varphi(F)$ for every $E,F \in \finSet$ such that $E \subseteq F$; and
\item {\bf sub-additive} if $\varphi(E \cup F) \leq \varphi(E) + \varphi(F)$ for any $E,F \in \finSet$.
\end{itemize}

A {\bf $k$-cover} $\mathcal{K}$ of a set $F \in \finSet$ is a family $\left\{K_{1}, K_{2}, \dots, K_{r}\right\} \subseteq \finSet$ (with possible repetitions) such that each element of $F$ belongs to $K_{i}$ for at least $k$ indices $i \in \{1,\dots, r\}$. We say that $\varphi$ satisfies {\bf Shearer's inequality} if for any $F \in \finSet$ and any $k$-cover $\mathcal{K}$ of ${F}$, it holds that
$$
\varphi({F}) \leq \frac{1}{k} \sum_{K \in \mathcal{K}} \varphi(K).
$$

Notice that Shearer's inequality implies sub-additivity. Considering this, we have the following key lemma.

\begin{lemma}[{\cite[\S 4]{kerr2016ergodic}}] Let $\varphi\colon \finSet \rightarrow \R$ be a non-negative monotone $G$-invariant sub-additive function. Then there exists $\alpha \in [0,\infty)$ such that
\begin{equation*}
\lim_{F \to G} \frac{\varphi(F)}{|F|} = \alpha.    
\end{equation*}

Moreover, if $\varphi$ satisfies Shearer's inequality, then
\begin{equation*}
\alpha = \inf_{E \in \finSet} \frac{\varphi(E)}{|E|}.    
\end{equation*}

In this last case, we say that $\varphi$ satisfies the {\bf infimum rule}.
\end{lemma}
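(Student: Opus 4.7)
The lemma has two parts: (i) existence and finiteness of $\alpha = \lim_{F \to G} \varphi(F)/|F|$ under the four standing hypotheses, which is a version of the Ornstein--Weiss lemma; and (ii) the identification $\alpha = \inf_{E \in \finSet} \varphi(E)/|E|$ under the additional Shearer's inequality hypothesis. The plan is to invoke the standard quasi-tiling argument for (i) and to give a short, direct proof of (ii) using a canonical $|E|$-cover of $F$ by right translates of $E$.

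For part (i), a uniform upper bound comes easily: any $F$ is the union of its singletons $\{g\} = \{1_G\}g$, so sub-additivity and $G$-invariance give $\varphi(F) \leq |F| \varphi(\{1_G\})$, whence $\alpha_0 := \liminf_{F \to G} \varphi(F)/|F|$ lies in $[0, \varphi(\{1_G\})]$. Fix $\epsilon > 0$. By the Ornstein--Weiss quasi-tiling theorem (cf.\ \cite[\S 4]{kerr2016ergodic}) one can choose finitely many finite sets $E_1, \dots, E_n$ with $\varphi(E_i)/|E_i| \leq \alpha_0 + \epsilon$ such that every sufficiently invariant $F$ admits a disjoint family of right translates $\{E_i g_{i,j}\} \subseteq F$ covering at least $(1-\epsilon)|F|$ points. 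Writing $F = \bigsqcup_{i,j} E_i g_{i,j} \sqcup R$ with $|R| \leq \epsilon |F|$, sub-additivity, $G$-invariance, and the singleton bound for $\varphi(R)$ give
\begin{equation*}
\varphi(F) \leq \sum_{i,j} \varphi(E_i) + |R| \varphi(\{1_G\}) \leq (\alpha_0 + \epsilon) \sum_{i,j} |E_i| + \epsilon \varphi(\{1_G\}) |F| \leq (\alpha_0 + \epsilon + \epsilon\varphi(\{1_G\})) |F|.
\end{equation*}
Dividing by $|F|$, letting $F \to G$, and then sending $\epsilon \to 0$ shows $\limsup_{F \to G} \varphi(F)/|F| \leq \alpha_0$, so the limit $\alpha = \alpha_0$ exists and is finite.

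For part (ii), given any $E \in \finSet$ and any $F \in \finSet$, consider the multiset $\mathcal{K} = \{Eg : g \in E^{-1}F\}$. For each $h \in F$, the indices $g$ with $h \in Eg$ are exactly those in $E^{-1}h$, and $E^{-1}h \subseteq E^{-1}F$ since $h \in F$; therefore the multiplicity of $h$ in $\mathcal{K}$ is exactly $|E^{-1}h| = |E|$, making $\mathcal{K}$ an $|E|$-cover of $F$. Shearer's inequality and $G$-invariance then yield
\begin{equation*}
\varphi(F) \leq \frac{1}{|E|} \sum_{g \in E^{-1}F} \varphi(Eg) = \frac{|E^{-1}F|}{|E|} \varphi(E).
\end{equation*}
For $F$ that is $(E^{-1}, \epsilon)$-invariant, $|E^{-1}F| \leq (1+\epsilon)|F|$, so $\varphi(F)/|F| \leq (1+\epsilon) \varphi(E)/|E|$. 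Taking $F \to G$ gives $\alpha \leq \varphi(E)/|E|$, and infimizing over $E$ yields $\alpha \leq \inf_E \varphi(E)/|E|$; the reverse inequality is immediate since $\varphi(F)/|F| \geq \inf_E \varphi(E)/|E|$ for every $F$. The main obstacle is the technical quasi-tiling machinery invoked in part (i), which I would treat as a black box from the cited reference; part (ii) is entirely self-contained once one hits upon the right cover.
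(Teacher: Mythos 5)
The paper never proves this lemma: the first assertion is exactly the Ornstein--Weiss convergence theorem quoted from \cite[\S 4]{kerr2016ergodic}, and the infimum rule under Shearer's inequality is imported from the framework of \cite{downarowicz2016shearer}; so your proposal should be judged as supplying an argument where the paper only cites. Your part (ii) is correct and self-contained, and it is essentially the standard covering argument from the cited source: the multiset $\{Eg : g \in E^{-1}F\}$ is an exact $|E|$-cover of $F$ (indeed $h \in Eg$ iff $g \in E^{-1}h$, a set of cardinality $|E|$ contained in $E^{-1}F$, and the paper's definition of $k$-cover does not require $Eg \subseteq F$), Shearer plus $G$-invariance give $\varphi(F) \leq \tfrac{|E^{-1}F|}{|E|}\varphi(E)$, and $(E^{-1},\epsilon)$-invariance bounds $|E^{-1}F| \leq (1+\epsilon)|F|$; note monotonicity is not even used in this half. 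Your part (i) defers to quasi-tiling, which matches the paper's own level of detail (a citation), but be aware that the statement you invoke is slightly stronger than the off-the-shelf results: the classical Ornstein--Weiss quasi-tiling theorem yields only $\epsilon$-disjoint (not disjoint) translates, while the exact-tiling theorem used elsewhere in the paper (Theorem \ref{thm:tiling}) lets you prescribe the invariance of the shapes but not additionally the condition $\varphi(E_i)/|E_i| \leq \alpha_0 + \epsilon$; the repair is standard --- for each required invariance level one can pick a set with ratio within $\epsilon$ of $\liminf_{F \to G} \varphi(F)/|F|$ (by definition of that $\liminf$), feed those shapes into the quasi-tiling theorem, and with $\epsilon$-disjointness replace your exact count by $\sum_{i,j}|E_i| \leq (1-\epsilon)^{-1}\,\bigl|\bigcup_{i,j} E_i g_{i,j}\bigr| \leq (1-\epsilon)^{-1}|F|$, which changes the final bound only by a harmless factor. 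So there is no genuine gap; your argument for the infimum rule is a genuine proof that the paper itself omits, and your treatment of the convergence statement coincides with the paper's reliance on \cite{kerr2016ergodic}.
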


Now, fix a finite alphabet $A \in \mathcal{F}(\N)$. For a continuous potential $\phi\colon X \to \R$, we denote by $\|\phi\|_A$ the supremum norm of $\phi$ over the compact set $X \cap A^G$, i.e., $\|\phi\|_A = \sup_{x \in X \cap A^G} |\phi(x)|$. Next, given a set $E \subseteq G$, $F \in \finSet$, and $u_E \in X_E \cap A^E$, we define
\begin{equation*}
    Z^{u_E}_F := \sum_{w_{F \setminus E} \in A^{F\setminus E}} \exp\left(\sup \phi_F([w_{F \setminus E}u_E])\right),
\end{equation*}
where the supremum is over $x \in X \cap A^G$ and, if $[v] = \emptyset$, then $\sup \phi([v]) = -\infty$ and $\exp(-\infty) = 0$. Notice that $Z_F = Z^{u_E}_F$ for $E = \emptyset$.

Now, suppose that $\left.\phi\right\vert_{X \cap A^G}$ is non-negative. Then, it is easy to check that for any $E \subseteq G$ and $u_E \in A^E$, the function $\tilde{\varphi}\colon \finSet \to \R$ given by $\tilde{\varphi}(F) =  Z^{u_E}_F$ satisfies that
\begin{enumerate}
    \item[$i)$] $\tilde{\varphi}(F) \geq 1$ for every $F \in \finSet$ and
    \item[$ii)$] $\tilde{\varphi}$ is monotone, that is, if $F_1 \subseteq F_2$, then $\tilde{\varphi}(F_1) \leq \tilde{\varphi}(F_2)$.
\end{enumerate}

Next, consider the function $\varphi\colon \finSet \to \R$ defined as $\varphi(F) = \log Z_F$. From the properties above and properties of the $\log(\cdot)$ function, it follows that $\varphi$ is non-negative and monotone. Moreover, $\varphi$ is $G$-invariant. The following lemma is a generalization of \cite[Lemma 6.1]{downarowicz2016shearer} designed to address the pressure case instead of just the topological entropy and, in particular, it claims that $\varphi$ satisfies Shearer's inequality.

\begin{lemma}\label{lemma:shearers-inequality-is-satisfied} Let $\phi\colon X \to \R$ be a potential and $A \in \mathcal{F}(\N)$ such that $\left.\phi\right\vert_{X \cap A^G}$ is non-negative. Then, for every $E \subseteq G$, $u_E \in X_E \cap A^E$, $F \in \mathcal{F}(G)$, and any $k$-cover $\mathcal{K}$ of $F$, it holds that
$$
Z^{u_E}_F \leq \prod_{K \in \mathcal{K}} (Z^{u_E}_K)^{1/k}.
$$

In particular, $\varphi$ satisfies Shearer's inequality.
\end{lemma}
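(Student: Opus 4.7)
The plan is to reduce the inequality to a Finner--Loomis--Whitney-type bound on the counting measure of $A^{F \setminus E}$, exploiting the hypothesis $\phi|_{X \cap A^G} \geq 0$ together with the $k$-cover property in a crucial way. Throughout, every configuration considered lies in $A^G$, where the assumption $\phi \geq 0$ makes all the exponential sums well-behaved.

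The first ingredient is a pointwise bound: because each $g \in F$ lies in at least $k$ members of $\mathcal{K}$ and $\phi(g \cdot x) \geq 0$ for every $g \in G$ and every $x \in A^G$,
\begin{equation*}
k\,\phi_F(x) = k\sum_{g \in F} \phi(g \cdot x) \leq \sum_{g \in G} \left|\{K \in \mathcal{K}: g \in K\}\right|\cdot\phi(g \cdot x) = \sum_{K \in \mathcal{K}} \phi_K(x).
\end{equation*}
Taking the supremum over $x \in [wu_E] \cap A^G$ and using that the supremum of a sum is dominated by the sum of suprema yields $\sup \phi_F([wu_E]) \leq \frac{1}{k}\sum_K \sup \phi_K([wu_E])$ for every $w \in A^{F \setminus E}$.

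Next, write $w^{(K)} := w|_{K \cap F \setminus E}$. Any $x \in [wu_E] \cap A^G$ has $x|_{K \setminus E}$ equal to some extension of $w^{(K)}$ in $A^{K \setminus E}$, so bounding a maximum by the logarithm of a sum of exponentials gives
\begin{equation*}
\sup \phi_K([wu_E]) \leq \log \sum_{\substack{w_K \in A^{K \setminus E} \\ w_K|_{K \cap F \setminus E} = w^{(K)}}} \exp\bigl(\sup \phi_K([w_K u_E])\bigr) =: \log \tilde g_K\bigl(w^{(K)}\bigr).
\end{equation*}
Combining, exponentiating, and summing over $w$ produces $Z^{u_E}_F \leq \sum_{w} \prod_{K} \tilde g_K(w^{(K)})^{1/k}$. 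Since $\{K \cap F \setminus E : K \in \mathcal{K}\}$ is a $k$-cover of $F \setminus E$, Finner's inequality (the Loomis--Whitney-type generalization of Shearer's inequality for counting measure, equivalent to the entropy Shearer's inequality used in \cite{downarowicz2016shearer}) gives
$\sum_{w} \prod_K \tilde g_K(w^{(K)})^{1/k} \leq \prod_K\bigl(\sum_{v \in A^{K \cap F \setminus E}} \tilde g_K(v)\bigr)^{1/k}$,
and the inner sum collapses precisely to $Z^{u_E}_K$ after telescoping over extensions. Taking $E = \emptyset$ and a logarithm recovers Shearer's inequality for $\varphi(F) = \log Z_F$.

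The main technical point is handling those $K \in \mathcal{K}$ with $K \not\subseteq F \cup E$: the pattern $w$ does not determine $x$ on $K \setminus (F \cup E)$, so the bound on $\sup \phi_K([wu_E])$ must absorb these free coordinates into the definition of $\tilde g_K$. Summing over all extensions rather than keeping a single term is what allows the Finner step to collapse cleanly into the target partition function $Z^{u_E}_K$; this bookkeeping, together with the pointwise inequality coming from non-negativity and the $k$-cover property, are the only two non-trivial inputs to the proof.
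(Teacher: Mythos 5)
Your proof is correct, but it takes a genuinely different route from the paper's. The paper argues by induction on $|F \setminus E|$: it fixes $g \in F \setminus E$, writes $Z^{u_E}_F = \sum_{a \in A} Z^{a^g u_E}_F$, applies the induction hypothesis, and then pushes the sum over $a$ inside the product over those $K \ni g$ via the generalized H\"older inequality combined with monotonicity of $\ell^p$-norms on a finite alphabet; the general $E$ in the statement exists precisely so that this induction runs. You instead prove the bound for all $E$ in one shot: after the same pointwise inequality $k\phi_F \leq \sum_{K} \phi_K$ (which is where non-negativity and the covering hypothesis enter, exactly as in the paper), you replace $\sup\phi_K([wu_E])$ --- which depends on all of $w$, not only on $w^{(K)} = w|_{K \cap (F\setminus E)}$ --- by the log-sum-exp majorant $\log \tilde g_K(w^{(K)})$ over all $A$-valued extensions on $K \setminus E$, and then apply the functional, counting-measure form of Shearer's inequality (Finner's inequality) for the fractional cover with weights $1/k$; the inner sums indeed collapse to $Z^{u_E}_K$, and the corner cases $K \cap (F\setminus E) = \emptyset$ (constant factors) and $K \not\subseteq F \cup E$ (your extension bookkeeping) are handled correctly. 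The one caveat is that a $k$-cover only gives $\sum_{K \ni g} 1/k \geq 1$ rather than $=1$, so you need the over-covering version of Finner's inequality for counting measure; this is true, and its proof is exactly the coordinate-by-coordinate H\"older-plus-$\ell^p$-monotonicity argument that the paper carries out by hand, so in effect you have outsourced the paper's induction to a standard black box. Your route buys brevity and a transparent conceptual picture (the lemma is a weighted Shearer/Finner bound applied to the weights $\exp(\sup\phi_K)$); the paper's route buys self-containedness, using only the single-coordinate H\"older step.
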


\begin{proof}
Given a $k$-cover $\mathcal{K}$ of $F$, notice that, since $\left.\phi\right\vert_{X \cap A^G}$ is non-negative,
$$
\phi_F(x) = \sum_{g \in F} \phi(g \cdot x) \leq \frac{1}{k}\sum_{K \in \mathcal{K}} \sum_{g \in K} \phi(g \cdot x) = \frac{1}{k}\sum_{K \in \mathcal{K}} \phi_K(x)
$$
for any $x \in X \cap A^G$. We proceed by induction on the size of $F \setminus E$. First, suppose that $|F \setminus E| = 0$. Then, $F \setminus E = \emptyset$ and
\begin{align*}
Z^{u_E}_F   &   =   \exp \left(\sup \phi_F([u_E])\right) \\
            &   \leq \exp\left( \sup \frac{1}{k}\sum_{K \in \mathcal{K}} \phi_K([u_E])\right) \\
            &   \leq \exp\left( \sum_{K \in \mathcal{K}}\frac{1}{k} \sup  \phi_K([u_E]) \right) \\
            &   =   \prod_{K \in \mathcal{K}} \left(\exp \sup \phi_K ([u_E])\right)^{1/k}    \\
            &   \leq \prod_{K \in \mathcal{K}} \left(\sum_{w_{K \setminus E}}\exp \sup \phi_K ([w_{K \setminus E}u_E])\right)^{1/k}    \\
            &   =   \prod_{K \in \mathcal{K}} \left(Z^{u_E}_K\right)^{1/k}.
\end{align*}

Now suppose that $Z^{u_E}_F \leq \prod_{K \in \mathcal{K}} (Z^{u_E}_K)^{1/k}$ for every $E\subseteq G, u_E \in X_E \cap A^E$, $F \in \finSet$ with $|F \setminus E| \leq n$, and every $k$-cover $\mathcal{K}$ of $F$. We will show that the same holds for $E, F$ with $|F \setminus E| = n+1$. Fix $g \in F \setminus E$ and notice that $|F \setminus (E \cup \{g\})| = n$. Then,
\begin{align*}
Z^{u_E}_F   &=   \sum_{a \in A} Z^{a^g u_E}_F\\
    &\leq   \sum_{a \in A}\prod_{K \in \mathcal{K}} \left(Z^{a^gu_E}_K \right)^{1/k}   \\
    &=   \sum_{a \in A}\prod_{K \in \mathcal{K}: g \notin K} \left(Z^{a^gu_E}_K \right)^{1/k}\cdot  \prod_{K \in \mathcal{K}: g \in K} \left(Z^{a^gu_E}_K \right)^{1/k}   \\
            &   \leq \prod_{K \in \mathcal{K}: g \notin K} \left(Z^{u_E}_K \right)^{1/k} \sum_{a \in A} \prod_{K \in \mathcal{K}: g \in K} \left(Z^{a^gu_E}_K \right)^{1/k}   \\
            &   \leq   \prod_{K \in \mathcal{K}: g \notin K} \left(Z^{u_E}_K \right)^{1/k} \cdot \prod_{K \in \mathcal{K}: g \in K} \left(\sum_{a \in A}  Z^{a^gu_E}_K \right)^{1/k}  \\
            &   =   \prod_{K \in \mathcal{K}: g \notin K} \left(Z^{u_E}_K \right)^{1/k}\cdot \prod_{K \in \mathcal{K}: g \in K} \left(Z^{u_E}_K \right)^{1/k}  \\
            &   =   \prod_{K \in \mathcal{K}} \left(Z^{u_E}_K \right)^{1/k}.
\end{align*}

Notice that the first inequality follows from the induction hypothesis and the third inequality follows from the generalized Hölder inequality. Indeed, consider $p \leq 1$ such that $\sum_{K \in \mathcal{K}: g \in K} \frac{1}{k} = \frac{1}{p}$ and the functions $f_K\colon A \to \R$ given by $f_K(a) = \left(Z^{a^gu_E}_K \right)^{1/k}$. By the generalized Hölder inequality,
$$
\left\|\prod_{K \in \mathcal{K}: g \in K} f_K\right\|_p \leq \prod_{K \in \mathcal{K}: g \in K}\|f_K\|_k,
$$
where
\begin{align*}
\prod_{K \in \mathcal{K}: g \in K}\|f_K\|_k &   =   \prod_{K \in \mathcal{K}: g \in K} \left(\sum_{a \in A} ((Z^{a^gu_E}_K)^{1/k})^k \right)^{1/k}  = \prod_{K \in \mathcal{K}: g \in K} \left(\sum_{a \in A} Z^{a^gu_E}_K \right)^{1/k} = \prod_{K \in \mathcal{K}: g \in K} \left(Z^{u_E}_K \right)^{1/k}
\end{align*}
and, since $\|\cdot\|_p$ is monotonically decreasing in $p$ for any fixed $|A|$-dimensional vector, 
$$
\left\|\prod_{K \in \mathcal{K}: g \in K} f_K\right\|_p \geq \left\|\prod_{K \in \mathcal{K}: g \in K} f_K\right\|_1 = \sum_{a \in A}\prod_{K \in \mathcal{K}: g \in K} \left(Z^{a^gu_E}_K \right)^{1/k}.
$$
Therefore, $Z^{u_E}_F \leq \prod_{K \in \mathcal{K}} \left(Z^{u_E}_K \right)^{1/k}$. In particular, if $E = \emptyset$, $Z_F \leq \prod_{K \in \mathcal{K}} \left(Z_K \right)^{1/k}$.
\end{proof}

\begin{proof}[Proof (of Theorem \ref{thm:continuous-potential-finite-alphabet-then-pressure-exists})]

As a consequence of Lemma \ref{lemma:shearers-inequality-is-satisfied}, we have that if $\left.\phi\right\vert_{X \cap A^G}$ is non-negative, then $\varphi$ satisfies Shearer's inequality. Thus, by the Ornstein-Weiss lemma, $p(A,\phi)$ exists and it satisfies the infimum rule, i.e.,
$$
p(A,\phi)  = \inf_{E \in \finSet} \frac{1}{|E|}\log Z_E(A,\phi).
$$

Finally, in order to deal with the general case, it suffices to apply the previous result to $\phi + \|\phi\|$ and then observe that $p(A,\phi + C) = p(A,\phi) + C$ for any constant $C$.
\end{proof}

\begin{remark}
Notice that the previous results (namely, Lemma \ref{lemma:shearers-inequality-is-satisfied} and Theorem \ref{thm:continuous-potential-finite-alphabet-then-pressure-exists}) also hold for \emph{$G$-subshifts}, this is to say, any closed and $G$-invariant subsets $X$ of $\N^G$.
\end{remark}

\subsection{Tilings}

Pressure is one of the most important notions in thermodynamic formalism. One key technique to properly define pressure is sub-additivity, which is based on our ability to partition a system in smaller and representative pieces. In the context of countable amenable groups, it appears to be necessary to generalize tools that are classically used in the $\Z^d$ case (e.g., \cite{ruelle-2004-thermo,muir2011gibbs}). In order to do this, we will begin by exploring the concept of (exact) tilings of amenable groups and the relatively recent techniques introduced in \cite{downarowicz2019tilings}.

\begin{definition}
Given
\begin{enumerate}
\item a finite collection $\shapes(\tiling)$ of finite subsets of $G$ containing the identity $1_G$, called {\bf the shapes}, and
\item a finite collection $\centers(\tiling) = \{C(S): S \in \shapes(\tiling)\}$ of disjoint subsets of $G$, called {\bf center sets} (for the shapes),
\end{enumerate}
the family $\tiling = \{(S, c): S \in \shapes(\tiling), c \in C(S)\}$ is called a {\bf tiling} if the map $(S,c) \mapsto S c$ is injective and $\{Sc\}_{S \in \shapes(\tiling), c \in C(S)}$ is a partition of $G$. In addition, by the {\bf tiles} of $\tiling$ (usually denoted by the letter $T$) we will mean either the sets $S c$ or the pairs $(S, c)$, 
depending on the context.
\end{definition}

We say that a sequence $\{\tiling_n\}_n$ of tilings is {\bf congruent} if, for each $n \geq 1$, every tile of $\tiling_{n+1}$ is equal to a (disjoint) union of tiles of $\tiling_n$. The following theorem is the main result in \cite{downarowicz2019tilings}, which gives sufficient conditions so that we can guarantee the existence of such sequence with extra invariance properties.

\begin{theorem}{(\cite[Theorem 5.2]{downarowicz2019tilings})}
\label{thm:tiling}
Let $\{\epsilon_n\}_n$ be a sequence of positive real numbers converging to zero and $\{K_n\}_n$ be a sequence of finite subsets of $G$. Then, there exists a congruent sequence $\{\tiling_n\}_n$ of tilings  of $G$ such that the shapes of $\tiling_n$ are $\left(K_n, \epsilon_n\right)$-invariant.
\end{theorem}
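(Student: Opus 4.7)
The plan is to build $\{\tiling_n\}_n$ inductively, using as the main engine the Ornstein-Weiss quasi-tiling theorem: for every $K \in \finSet$ and $\epsilon>0$, one can produce a finite family of $(K,\epsilon)$-invariant ``quasi-shapes'' whose disjoint translates cover all but an $\epsilon$-fraction (in a Banach density sense) of the whole group. From a quasi-tiling one passes to an exact tiling by \emph{absorbing} the uncovered remainder into neighbouring tiles, an operation that alters the invariance of the shapes by a controlled amount.

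For the base case $\tiling_1$, I would apply the quasi-tiling theorem with parameters slightly stronger than $(K_1,\epsilon_1)$, obtain a maximal disjoint family of translates of the resulting quasi-shapes via a Zorn-type selection, and then absorb the leftover points into the finitely many quasi-shapes. Choosing the initial slack correctly, the resulting exact shapes remain $(K_1,\epsilon_1)$-invariant, giving $\tiling_1$.

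For the inductive step, assume $\tiling_n$ has been built with finite shape collection $\shapes(\tiling_n)$, each element $(K_n,\epsilon_n)$-invariant. To construct $\tiling_{n+1}$ with the congruence property, the key idea is to apply the quasi-tiling theorem at a much larger scale, producing candidate shapes that are very invariant with respect to both $K_{n+1}$ and the diameters of the $\tiling_n$-shapes. Each candidate shape is then \emph{snapped} to $\tiling_n$: it is replaced by the union of those $\tiling_n$-tiles whose centers it contains (or which it sufficiently covers, depending on the exact convention). Congruence of $\tiling_{n+1}$ over $\tiling_n$ then holds by construction, and absorbing the finitely many remaining uncovered $\tiling_n$-tiles in a controlled way yields an exact tiling $\tiling_{n+1}$.

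The main obstacle is to reconcile the congruence condition with the invariance requirement. Snapping a candidate shape to $\tiling_n$-boundaries perturbs it only within a layer whose width is bounded by $D_n := \max_{S \in \shapes(\tiling_n)} \mathrm{diam}(S)$, a fixed finite quantity. Therefore, if a candidate shape is $(\widetilde K_{n+1}, \eta_n)$-invariant, where $\widetilde K_{n+1} \supseteq K_{n+1}$ is large enough to absorb a $D_n$-thickening of $K_{n+1}$ and $\eta_n$ is sufficiently small compared to $\epsilon_{n+1}/|K_{n+1}|$, then its snapped version is still $(K_{n+1},\epsilon_{n+1})$-invariant. The trick is to prescribe the auxiliary sequence $\{\eta_n\}_n$ and the enlarged sets $\{\widetilde K_{n+1}\}_n$ in advance, so that the quasi-tiling at level $n{+}1$ can be chosen with enough slack for the snapping to keep it within the desired $(K_{n+1},\epsilon_{n+1})$-tolerance. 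Quantifying this trade-off carefully is the technical heart of the argument, and is exactly the content of \cite[Theorem 5.2]{downarowicz2019tilings}.
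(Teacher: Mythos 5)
This statement is not proved in the paper at all: it is imported verbatim as \cite[Theorem 5.2]{downarowicz2019tilings}, so the paper's ``proof'' is the citation, and your sketch cannot be measured against an internal argument. Taken on its own terms, though, your proposal has genuine gaps. First, the engine is misquoted: the Ornstein--Weiss machinery produces $\epsilon$-\emph{disjoint} families of translates of $(K,\epsilon)$-invariant shapes covering a $(1-\epsilon)$-fraction of $G$ (in a Banach density sense); it does not produce disjoint translates with that covering property, and in particular a maximal disjoint family of translates of finitely many shapes (your base case) covers only a fraction bounded in terms of $|SS^{-1}|/|S|$, nowhere near $1-\epsilon$. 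Second, and more seriously, the step ``absorb the uncovered remainder into neighbouring tiles, altering invariance by a controlled amount'' is precisely the hard part that separates exact tilings from classical quasi-tilings, and it is asserted rather than argued. Smallness of the leftover in global (Banach) density does not by itself give an assignment of leftover points to tiles in which each tile receives only a small proportion of its own cardinality; one needs uniform local density control plus a Hall-type distribution argument (or, as in Downarowicz--Huczek--Zhang, a differently structured multi-scale construction), and without that bound the modified shapes can lose $(K_n,\epsilon_n)$-invariance entirely.

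Finally, your closing sentence makes the argument circular: you defer ``quantifying this trade-off'' to \cite[Theorem 5.2]{downarowicz2019tilings}, which is exactly the statement to be proved. The congruence-by-snapping idea in your inductive step is a reasonable heuristic for why shapes of $\tiling_{n+1}$ can be taken as unions of $\tiling_n$-tiles, but as written the proposal neither reproduces the actual DHZ construction nor supplies an independent proof; for the purposes of this paper the correct move is simply to cite the result, as the authors do.
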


Given a tiling $\tiling$, we define $S_\tiling = \bigcup_{S \in \shapes(\tiling)} SS^{-1}$. Notice that $S_{\tiling}$ contains every shape $S \in \shapes(\tiling)$, $S_\tiling^{-1} = S_\tiling$, and $1_G \in S_\tiling$. Given a tiling, the next lemma provides a way to approximate any sufficiently invariant shape by a  union of tiles. 

\begin{lemma}\label{cor:invariant} Given $K \in \finSet$ and $\delta > 0$, consider a tiling $\tiling$ with $(K,\delta)$-invariant shapes. Then, for any $\epsilon > 0$ and any $(S_\tiling,\epsilon)$-invariant  set $F \in \finSet$, there exist center sets $C_F(S) \subseteq C(S)$ for $S \in \shapes(\tiling)$ such that
$$
F \supseteq \bigsqcup_{S \in \mathcal{S}(\mathcal{T})} SC_F(S) \quad \text{and} \quad \left|F \setminus \bigsqcup_{S \in \mathcal{S}(\mathcal{T})} SC_F(S)\right| \leq \epsilon|F|.
$$
\end{lemma}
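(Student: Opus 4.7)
\smallskip

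The plan is to choose the center sets so that they collect precisely those tiles that fit entirely inside $F$, and then to identify the leftover portion of $F$ with a piece of the $S_\tiling$-boundary of $F$, which is small by $(S_\tiling,\epsilon)$-invariance. Concretely, I define
$$
C_F(S) := \{c \in C(S) : Sc \subseteq F\} \quad \text{for each } S \in \shapes(\tiling).
$$
Because $\{Sc : S \in \shapes(\tiling),\, c \in C(S)\}$ is a partition of $G$, the sub-family indexed by the $C_F(S)$'s is automatically disjoint, and every tile it contains is a subset of $F$; hence the inclusion $\bigsqcup_S SC_F(S) \subseteq F$ is immediate, handling the first assertion of the lemma.

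The core of the proof is to bound $\big|F \setminus \bigsqcup_S SC_F(S)\big|$. If $g$ lies in this difference, then $g$ is covered by some tile $Sc$ of the original tiling with $c \notin C_F(S)$, so $Sc \not\subseteq F$ and there is $s' \in S$ with $s'c \in F^c$. Writing $g = sc$ for some $s \in S$, one computes $s'c = (s's^{-1})g$ with $s's^{-1} \in SS^{-1} \subseteq S_\tiling$, hence $S_\tiling g \cap F^c \neq \emptyset$. This yields the set-theoretic containment
$$
F \setminus \bigsqcup_{S \in \shapes(\tiling)} SC_F(S) \;\subseteq\; \partial^{-}_{S_\tiling}(F) := \{g \in F : S_\tiling g \not\subseteq F\}.
$$

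It remains to show that the inner $S_\tiling$-boundary is controlled by the invariance hypothesis. Since $1_G \in S_\tiling$, one has $F \subseteq S_\tiling F$, so the $(S_\tiling,\epsilon)$-invariance reduces to $|S_\tiling F \setminus F| < \epsilon|F|$. The natural way to compare the two quantities is to define $\Phi\colon S_\tiling F \setminus F \to \partial^{-}_{S_\tiling}(F)$ by selecting, for each $h \in S_\tiling F \setminus F$, a decomposition $h = sg$ with $s \in S_\tiling$ and $g \in F$, and setting $\Phi(h) := g$; the element $g$ automatically lies in $\partial^{-}_{S_\tiling}(F)$ since $sg = h \notin F$. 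Conversely, for any $g \in \partial^{-}_{S_\tiling}(F)$ there exists $s \in S_\tiling$ with $sg \in F^c$, producing a preimage $sg \in S_\tiling F \setminus F$ of $g$. Choosing the decompositions in a globally consistent way then makes $\Phi$ surjective, yielding $|\partial^{-}_{S_\tiling}(F)| \leq |S_\tiling F \setminus F| < \epsilon|F|$ as required.

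The main obstacle is precisely this last step: the naive pointwise choice $g \mapsto s_g g$ need not be injective, since different elements of $\partial^{-}_{S_\tiling}(F)$ may share a single boundary witness in $S_\tiling F \setminus F$. The clean fix is to verify Hall's marriage condition for the bipartite graph with parts $\partial^{-}_{S_\tiling}(F)$ and $S_\tiling F \setminus F$, edges $\{(g,h) : h \in S_\tiling g \cap F^c\}$, exploiting the symmetry $S_\tiling = S_\tiling^{-1}$; alternatively, the crude bound $|\partial^{-}_{S_\tiling}(F)| \leq |S_\tiling|\cdot|S_\tiling F \setminus F|$ obtained by double counting is enough provided one absorbs the constant $|S_\tiling|$, which depends only on the tiling, by passing from $\epsilon$ to $\epsilon/|S_\tiling|$ in the invariance hypothesis. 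Since in every downstream application the parameter $\epsilon$ is eventually sent to $0$, both variants are equivalent for the purposes of the lemma.
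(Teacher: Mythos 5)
Your choice of $C_F(S)$ and the containment of $F \setminus \bigsqcup_{S} SC_F(S)$ in the inner boundary $\{g \in F : S_\tiling g \not\subseteq F\}$ are essentially the paper's own argument (the paper takes $C_F(S) = C(S)\cap \mathrm{Int}_S(F)$ and lands in the two-sided boundary $\partial_{S_\tiling}(F)$). The genuine gap is exactly where you located it, but your ``clean fix'' is not available: Hall's condition fails for that bipartite graph in general, because the inequality it would produce, $|\{g\in F: S_\tiling g\not\subseteq F\}| \le |S_\tiling F\setminus F|$, is false. For instance, in $G=\Z$ with $S_\tiling=\{-N,\dots,N\}$, let $F$ be a long interval with a sparse family of isolated single points removed: each removed point puts $2N$ elements of $F$ into the inner boundary but contributes only itself to $S_\tiling F\setminus F$, so when there are many removed points no injection exists, and no ``globally consistent'' choice can make your map $\Phi$ surjective. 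Of your two variants only the crude one survives: $F\setminus\bigsqcup_S SC_F(S)\subseteq S_\tiling\left(S_\tiling F\setminus F\right)$, hence a bound by $|S_\tiling|\,|S_\tiling F\setminus F| \le |S_\tiling|\,\epsilon\,|F|$, which forces you to strengthen the hypothesis to $(S_\tiling,\epsilon/|S_\tiling|)$-invariance rather than prove the statement as written.

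This is not a defect of your write-up alone: the paper's final step invokes the inequality $|\partial_K(F)|\le|(K\cup K^{-1}\cup\{1_G\})F\triangle F|$, which already fails for $K=\{0,1,2\}\subset\Z$ and $F=\{0,2,4\}$ (it gives $7\le 6$), and in fact the lemma as literally stated admits counterexamples. Tile $\Z$ by the intervals $\{c,c+1,\dots,c+10\}$ with $c\in 11\Z$, so $S_\tiling=\{-10,\dots,10\}$; take $\epsilon=1/10$ and let $F$ be the union of $m$ consecutive tiles minus roughly $m/4$ isolated points, one in every fourth tile. Then $|S_\tiling F\triangle F|\approx m/4+20<\epsilon|F|$ for $m$ large, yet every tile containing a removed point must be discarded from any admissible $\bigsqcup_S SC_F(S)$, so the uncovered part of $F$ has about $10\cdot m/4>\epsilon|F|$ elements. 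Hence a tiling-dependent constant such as your $|S_\tiling|$ (or $\max_{S\in\shapes(\tiling)}|S|$) is unavoidable, and your closing remark identifies the correct repair: in the places where the lemma is used (Propositions \ref{prop:inf-of-log-partition-function} and \ref{prop:step1-Georgii}) the tiling is fixed before the invariance level of $F$ is chosen, so demanding $(S_\tiling,\epsilon/|S_\tiling|)$-invariance costs nothing and restores all downstream conclusions. In short: drop the Hall/surjectivity paragraph, keep the double-counting bound, and state the lemma with the adjusted invariance hypothesis (or with $\epsilon\,|S_\tiling|\,|F|$ on the right-hand side).
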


\begin{proof}
Consider a tiling $\tiling$ made of $(K, \delta)$-invariant shapes and $\epsilon > 0$. Suppose that $F$ is $(S_\tiling,\epsilon)$-invariant. Consider the sets $C_F(S) = C(S) \cap \mathrm{Int}_S(F)$ and $\overline{C}_F(S) = C(S) \cap S^{-1}F$ for $S \in \mathcal{S}(\mathcal{T})$. Notice that, since $\tiling$ induces a partition, $|SC_F(S)| = |S||C_F(S)|$, $|S\overline{C}_F(S)| = |S||\overline{C}_F(S)|$, and
$$
\bigsqcup_{S \in \shapes(\tiling)} SC_F(S) \subseteq F \subseteq \bigsqcup_{S \in \shapes(\tiling)} S\overline{C}_F(S).
$$

Therefore,
\begin{align*}
F \setminus \bigsqcup_{S \in \shapes(\tiling)} SC_F(S)   &   \subseteq	\bigsqcup_{S \in \shapes(\tiling)} S\overline{C}_F(S) \setminus \bigsqcup_{S \in \shapes(\tiling)} SC_F(S)   \\
                            &   =		\bigsqcup_{S \in \shapes(\tiling)} S(\overline{C}_F(S) \setminus C_F(S)) \subseteq   \partial_{S_\tiling}(F).
\end{align*}

Indeed, to check the last inclusion, notice that if $g \in \bigsqcup_{S \in \shapes(\tiling)} S(\overline{C}_F(S) \setminus C_F(S))$, then $g = sc$, where $s \in S$ and $c \in \overline{C}_F(S) \setminus C_F(S)$ for some $S \in \shapes(\tiling)$. Therefore, since $c \in \overline{C}_F(S)$,
$$
S_\tiling g \cap F  \supseteq SS^{-1}sc \cap F \supseteq Sc \cap F \neq \emptyset.
$$
Similarly, since $c \notin C_F(S)$,
$$
S_\tiling g \cap F^c  \supseteq SS^{-1}sc \cap F^c \supseteq Sc \cap F^c \neq \emptyset.
$$
so that $g \in \partial_{S_\tiling}(F)$. Then,
$$
\left|F \setminus \bigsqcup_{S \in \mathcal{S}(\mathcal{T})} SC_F(S)\right| \leq \left|\partial_{S_\tiling}(F)\right| \leq \left|S_\tiling F \triangle F\right| \leq \epsilon\cdot|F|,
$$
where we have used that $|\partial_{K}(F)| \leq |(K \cup K^{-1} \cup \{1_G\})F \triangle F|$ for any $K \in \finSet$ and that $S_\tiling^{-1} = S_\tiling$ and $1_G \in S_\tiling$.
\end{proof}

\subsection{Infimum rule for countable alphabet pressure}

We say that $\phi\colon X \to \R$ is {\bf exp-summable} if $Z_{1_G}(\phi) <\infty$. Notice that $Z_F(\phi)$ is sub-multiplicative, that is, if $E, F \in \finSet$ are disjoint, then $Z_{E \cup F}(\Phi) \leq Z_E(\phi) \cdot Z_F(\phi)$. Also, notice that $Z_F(\phi)$ is $G$-invariant, namely, for any $g \in G$, $Z_F(\phi) = Z_{Fg}(\phi)$. Then, in particular, $Z_F(\phi) \leq Z_{1_G}(\phi)^{|F|}$, so $\phi$ is exp-summable if and only if $Z_F(\phi) < \infty$ for every $F \in \finSet$. Finally, observe that if $\phi$ is exp-summable, then it must be bounded from above.  

Before stating the main result of this section, we begin by the next lemma, that guarantees that given a finite shape $F$, one can approximate the partition function on $F$ using a finite alphabet.

\begin{lemma}
\label{lem:appoximate-by-finite-alphabet}
Let $\phi\colon X \to \R$ be an exp-summable and uniformly continuous potential. Then, for every $\epsilon > 0$ and every $F \in \finSet$ such that $|F| \geq -\frac{1}{\epsilon}\log(1-\epsilon)$, there exists $A_F \in \cF(\N)$ such that
\begin{align*}
Z_F(A_F, \phi) \geq (1-\epsilon) Z_F(\phi).
\end{align*}
\end{lemma}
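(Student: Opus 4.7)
The plan is to truncate the series $Z_F(\phi) = \sum_{w \in \N^F} \exp(\sup\phi_F([w]))$ to a finite collection of cylinders (using exp-summability) and then employ uniform continuity of $\phi$ to realize a near-maximizer of each dominant term on a configuration drawn from a finite alphabet. Since $\phi$ is exp-summable, $Z_F(\phi) \leq Z_{\{1_G\}}(\phi)^{|F|} < \infty$, so the defining series converges absolutely. Hence, for any $\eta_1 > 0$ one may fix a finite set $W \subseteq \N^F$ with
\begin{equation*}
\sum_{w \in W} \exp(\sup\phi_F([w])) \geq (1-\eta_1)\,Z_F(\phi),
\end{equation*}
and for each $w \in W$ pick $x_w \in [w]$ with $\phi_F(x_w) \geq \sup\phi_F([w]) - \eta_2$.

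Since $\phi$ is uniformly continuous, $\lim_{m\to\infty}\delta_{E_m}(\phi)=0$, so choose $m_0$ large enough that $E_{m_0} \supseteq F$ and $\delta_{E_{m_0}}(\phi) \leq \eta_3/|F|$. Define the finite alphabet
\begin{equation*}
A_F := \{0\} \cup \{x_w(h) : w \in W,\ h \in E_{m_0}F\} \in \cF(\N),
\end{equation*}
and, for each $w \in W$, let $y_w \in A_F^G$ be the configuration that agrees with $x_w$ on $E_{m_0}F$ and equals $0$ elsewhere. Since $1_G \in E_{m_0}$, one has $F \subseteq E_{m_0}F$, so $y_w \in [w] \cap A_F^G$. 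Moreover, for each $g \in F$, the shifted configurations $g\cdot y_w$ and $g\cdot x_w$ coincide on $E_{m_0}$, hence $|\phi(g\cdot y_w)-\phi(g\cdot x_w)| \leq \delta_{E_{m_0}}(\phi)$; summing over $g \in F$ yields $|\phi_F(y_w) - \phi_F(x_w)| \leq \eta_3$.

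Combining the previous estimates, $\sup\phi_F([w] \cap A_F^G) \geq \phi_F(y_w) \geq \sup\phi_F([w]) - \eta_2 - \eta_3$, and since $W \subseteq A_F^F$ we obtain
\begin{equation*}
Z_F(A_F,\phi) \geq \sum_{w \in W} \exp(\sup\phi_F([w] \cap A_F^G)) \geq e^{-(\eta_2+\eta_3)}(1-\eta_1)\,Z_F(\phi).
\end{equation*}
The conclusion follows by choosing $\eta_1,\eta_2,\eta_3 > 0$ small enough that $e^{-(\eta_2+\eta_3)}(1-\eta_1) \geq 1-\epsilon$.

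The main difficulty is to control both sources of loss simultaneously: the truncation of the sum from $\N^F$ to $A_F^F$ requires $A_F$ to be large enough to capture the dominant cylinders, while the passage from $\sup\phi_F([w])$ to the restricted supremum $\sup\phi_F([w] \cap A_F^G)$ requires $A_F$ to host a near-maximizer of $\phi_F$ on each $[w]$ with $w \in W$. Uniform continuity of $\phi$ is precisely what makes the window of coordinates needed for that approximation finite, enabling both demands to be met with a single finite alphabet $A_F$.
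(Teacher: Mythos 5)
Your proposal is correct and follows essentially the same route as the paper: truncate $Z_F(\phi)$ to a dominant finite family of cylinders via exp-summability, then use uniform continuity on the finite window $E_{m}F$ to build, inside a single finite alphabet, configurations that nearly realize each supremum $\sup\phi_F([w])$. The only difference is cosmetic (you pick a near-maximizer $x_w\in[w]$ directly and restrict it to $E_{m_0}F$, whereas the paper inserts an intermediate cylinder $w'\in\N^{E_mF}$, and it splits the error budget into fixed thirds of $\log(1/\sqrt{1-\epsilon})$ rather than generic $\eta_i$'s), so no further comment is needed.
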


\begin{proof}
Let $\epsilon > 0$ and $F \in \finSet$ be such that $|F| \geq -\frac{1}{\epsilon}\log(1-\epsilon)$. For every such $F$, there exists a finite set of words $W_F \Subset X_F$ such that
\begin{align*}
\sum_{w \in W_F} \exp \left(\sup \phi_{F}\right) \geq Z_{F}(\phi) \sqrt{1-\epsilon}.
\end{align*}
On the other hand, since $\phi$ is uniformly continuous, there must be an index $m \geq 1$ for which
\begin{align*}
\delta_{E_m}(\phi) \leq \frac{1}{3|F|} \log \left(\frac{1}{\sqrt{1-\epsilon}}\right).
\end{align*}

For each $w \in W_F$, pick a word $w' \in \N^{E_mF}$ such that $w'_F=w$ and
\begin{align*}
\sup \phi_{F} [w'] \geq \sup \phi_{F} [w] - \frac{1}{3} \log \left(\frac{1}{\sqrt{1-\epsilon}}\right).
\end{align*}

In addition, for each such $w'$, pick a configuration $x_w \in [w']$ such that
\begin{align*}
\phi_{F}(x_w) \geq \sup \phi_{F}[w'] - \frac{1}{3} \log \left(\frac{1}{\sqrt{1-\epsilon}}\right).
\end{align*}

Define $A_F$ to be $\bigcup_{w \in W_F} w'(E_mF)$, where $w'(E_mF) = \bigcup_{g \in E_mF}\{w'(g)\}$.  It is direct that $A_F$ is a finite subset of $\N$. Pick $y \in [w'] \cap A_F^G$ and notice that $(g \cdot x_w)_{E_m} = (g \cdot y)_{E_m}$ for all $g \in F$. Then, for every $w \in W_F$,
\begin{align*}
\sup \phi_{F}[[w'] \cap A_F^G]	&	\geq	\phi_{F}(y)\\
&	\geq	\phi_{F}(x_w) - \sum_{g \in F}\left|\phi(g \cdot x_w) - \phi(g \cdot y)\right|\\
&	\geq	\phi_{F}(x_w) - \left|F\right| \delta_{E_m}(\phi)\\
&	\geq \phi_{F}(x_w) - \frac{1}{3} \log \left(\frac{1}{\sqrt{1-\epsilon}}\right)\\
&\geq	\sup \phi_{F}[w'] - \frac{2}{3} \log \left(\frac{1}{\sqrt{1-\epsilon}}\right)\\
&	\geq	\sup \phi_{F}[w] - \log \left(\frac{1}{\sqrt{1-\epsilon}}\right).
\end{align*}
Hence,
\begin{align*}
Z_{F}(A_F, \phi)	&	=	\sum_{w \in A_{F}^{F}} \exp \left(\sup \phi_{F}[[w] \cap A_F^G]\right)		\\
				&	\geq	\sum_{w \in W_F} \exp \left(\sup \phi_{F}[[w] \cap A_F^G]\right)				\\
				&	\geq	\sum_{w \in W_F} \exp \left(\sup \phi_{F}[[w'] \cap A_F^G]\right) \\
				&	\geq	\sum_{w \in W_F} \exp \left(\sup \phi_{F}[w]-\log \left(\frac{1}{\sqrt{1-\epsilon}}\right)\right) \\
				&   =   	\sqrt{1-\epsilon}\sum_{w \in W_F} \exp \left(\sup \phi_{F}[w]\right) \\
				&	\geq	(1-\epsilon) Z_{F}(\phi).
\end{align*}
\end{proof}

The next proposition establishes a fundamental connection between the partition function for sufficiently invariant sets $F \in \finSet$ and the pressure for a sufficiently large finite alphabet $A$.

\begin{proposition}
\label{prop:inf-of-log-partition-function}
Let $\phi\colon X \to \R$ be an exp-summable and uniformly continuous potential with finite oscillation. Then, for every $\frac{1}{2} > \epsilon > 0$, there exist $A \in \cF(\N)$, $K \in \finSet$, and $\delta > 0$ such that for every $(K,\delta)$-invariant set $F \in \finSet$, it holds that
\begin{align}
\frac{1}{|F|} \log Z_F(\phi) \leq\inf_{E \in \finSet} \frac{1}{|E|} \log Z_E(A,\phi) + \epsilon.
\end{align}

\end{proposition}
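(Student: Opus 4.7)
The plan combines three ingredients established above: the tiling Theorem \ref{thm:tiling}, the finite-alphabet approximation Lemma \ref{lem:appoximate-by-finite-alphabet}, and the infimum rule for finite-alphabet pressure in Theorem \ref{thm:continuous-potential-finite-alphabet-then-pressure-exists}. Fix $\epsilon \in (0, 1/2)$ and pick small auxiliary $\epsilon', \epsilon'' > 0$ (to be calibrated at the end). By Theorem \ref{thm:tiling}, I would construct a tiling $\mathcal{T}$ of $G$ whose shapes $S \in \mathcal{S}(\mathcal{T})$ are $(K_0, \delta_0)$-invariant with $K_0$ large enough that every such $S$ satisfies $|S| \geq -\tfrac{1}{\epsilon'}\log(1-\epsilon')$, allowing Lemma \ref{lem:appoximate-by-finite-alphabet} to produce $A_S \in \mathcal{F}(\N)$ with $Z_S(A_S,\phi) \geq (1-\epsilon')Z_S(\phi)$. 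Setting $A := \bigcup_{S \in \mathcal{S}(\mathcal{T})} A_S$ gives a finite alphabet such that $Z_S(\phi) \leq (1-\epsilon')^{-1} Z_S(A,\phi)$ for every shape $S$, by monotonicity of $Z_S(\cdot,\phi)$ in the alphabet.

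Take $K := S_\mathcal{T}$ and $\delta := \epsilon'$. For any $(K,\delta)$-invariant $F$, Lemma \ref{cor:invariant} yields center sets $C_F(S)$ with $F = R \sqcup \bigsqcup_S \bigsqcup_{c \in C_F(S)} Sc$ and $|R| \leq \epsilon'|F|$. Sub-multiplicativity of $Z_{\cdot}(\phi)$ on disjoint unions (which follows from $\phi_{F_1 \sqcup F_2} = \phi_{F_1}+\phi_{F_2}$), the $G$-invariance $Z_{Sc}(\phi)=Z_S(\phi)$, and $Z_R(\phi) \leq Z_{1_G}(\phi)^{|R|}$ give
\begin{equation*}
\log Z_F(\phi) \leq \epsilon'|F|\log Z_{1_G}(\phi) + \log(1-\epsilon')^{-1}\sum_S |C_F(S)| + \sum_S |C_F(S)|\log Z_S(A,\phi).
\end{equation*}
Next, Theorem \ref{thm:continuous-potential-finite-alphabet-then-pressure-exists} supplies $\tfrac{1}{|S|}\log Z_S(A,\phi) \leq p(A,\phi) + \epsilon''$ for $S$ sufficiently invariant with respect to $A$. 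Combining this with $\sum_S |C_F(S)||S| \leq |F|$ and $\sum_S |C_F(S)| \leq |F|/|S|_{\min}$, where $|S|_{\min} = \min_{S \in \mathcal{S}(\mathcal{T})}|S|$, and dividing by $|F|$ yields
\begin{equation*}
\frac{1}{|F|}\log Z_F(\phi) \leq p(A,\phi) + \epsilon'' + \epsilon'\log Z_{1_G}(\phi) + \frac{\log(1-\epsilon')^{-1}}{|S|_{\min}},
\end{equation*}
which is at most $p(A,\phi)+\epsilon$ once $\epsilon', \epsilon''$ are small and $|S|_{\min}$ is large. Since $p(A,\phi) = \inf_E \tfrac{1}{|E|}\log Z_E(A,\phi)$, this is the claimed bound.

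The main technical obstacle is a circularity: the invariance $(K_A, \delta_A)$ needed for Theorem \ref{thm:continuous-potential-finite-alphabet-then-pressure-exists} to give the bound $\tfrac{1}{|S|}\log Z_S(A,\phi) \leq p(A,\phi) + \epsilon''$ depends on $A$, which is itself built from the shapes of $\mathcal{T}$. I would resolve this by a bootstrap: first pick a provisional tiling and extract $A$, which is then a \emph{fixed} finite alphabet with its own determined $(K_A, \delta_A)$; then reapply Theorem \ref{thm:tiling} with invariance parameters dominating both $(K_0,\delta_0)$ and $(K_A,\delta_A)$, and re-extract the (possibly enlarged, still finite) alphabet. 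Since the enlarged shapes retain the Lemma's applicability and are now invariant enough for $A$, the circle closes and the required estimate holds uniformly over $(K,\delta)$-invariant $F$.
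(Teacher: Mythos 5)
Your overall strategy (tile $G$, use Lemma \ref{lem:appoximate-by-finite-alphabet} to extract a finite alphabet from the shapes, apply the finite-alphabet pressure of Theorem \ref{thm:continuous-potential-finite-alphabet-then-pressure-exists} on the shapes, and sub-multiplicativity over the tile decomposition of $F$ from Lemma \ref{cor:invariant}) is the same as the paper's, and you correctly isolate the crux: the invariance $(K_A,\delta_A)$ needed for $\tfrac{1}{|S|}\log Z_S(A,\phi)\le p(A,\phi)+\epsilon''$ depends on $A$, which was built from the very shapes $S$. The genuine gap is that your bootstrap does not close this circle. After re-tiling with shapes invariant for both $(K_0,\delta_0)$ and $(K_A,\delta_A)$, you need a \emph{single} alphabet satisfying, on the \emph{new} shapes $S$, both (a) $Z_S(A,\phi)\ge(1-\epsilon')Z_S(\phi)$ and (b) $\tfrac{1}{|S|}\log Z_S(A,\phi)\le p(A,\phi)+\epsilon''$. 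If you ``re-extract'' an enlarged alphabet $A'\supseteq A$, then (a) holds for $A'$, but (b) for $A'$ requires $(K_{A'},\delta_{A'})$-invariance, which again depends on the new tiling, so the regress restarts and never terminates. If instead you keep the old $A$, then (b) holds but (a) is not known: the alphabet produced by Lemma \ref{lem:appoximate-by-finite-alphabet} depends on the set (through the chosen words $w'$ and the letters they use on $E_mF$), and there is no monotonicity transferring $Z_{S_1}(A,\phi)\ge(1-\epsilon')Z_{S_1}(\phi)$ from the old shapes $S_1$ to the new, larger shapes. The sentence ``the circle closes'' is exactly where an argument is missing.

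The paper closes it using the congruence built into Theorem \ref{thm:tiling}: a fine tiling $\tiling'$ with $(K',\delta')$-invariant shapes fixes $A$ once and for all and guarantees $\Delta_{S'}(\phi)\le\epsilon|S'|$, and a coarse tiling $\tiling$, each of whose tiles is a disjoint union of tiles of $\tiling'$, has shapes invariant enough for the $A$-dependent parameters. The inequality
\begin{equation*}
\prod_{S'\in\shapes(\tiling')}Z_{S'}(A,\phi)^{|C_S(S')|}\le Z_S(A,\phi)\exp\Bigl(\sum_{S'\in\shapes(\tiling')}|C_S(S')|\Delta_{S'}(\phi)\Bigr)
\end{equation*}
then transfers the finite-alphabet approximation from the fine shapes (where it was proved) to the coarse shapes (where the pressure bound applies), at a cost of order $\epsilon|S|$ in the exponent; this transfer mechanism is what your bootstrap lacks. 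An alternative repair closer to your write-up: after adding a constant so that $\phi\ge0$ on $A^G$, use the same penalty estimate on the disjoint union $T_F$ of fine tiles together with monotonicity, $\prod_{S',c'}Z_{S'c'}(A,\phi)\le Z_{T_F}(A,\phi)e^{\epsilon|T_F|}\le Z_F(A,\phi)e^{\epsilon|F|}$, and then apply the bound $\tfrac{1}{|F|}\log Z_F(A,\phi)\le p(A,\phi)+\epsilon''$ to $F$ itself, which you may take as $(K_A,\delta_A)$-invariant since $K$ and $\delta$ are chosen after $A$. Either way, an explicit fine-to-coarse transfer step must be added for the proof to be complete.
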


\begin{proof}
Fix $1/2 > \epsilon > 0$ and an exhausting sequence $\{E_m\}_m$ for $G$. Since $\phi$ is uniformly continuous, we have that $\lim_{F \to G} \frac{\Delta_{F}(\phi)}{|F|} = 0$, by Lemma \ref{regularityDeltak}. Therefore, there exist $K' \in \finSet$ and $\delta' > 0$ such that $\Delta_{F}(\phi) < \epsilon|F|$ for every finite $(K',\delta')$-invariant set $F$.

By Theorem \ref{thm:tiling}, there exists a tiling $\tiling'$ such that its shapes are $(K',\delta')$-invariant. Without loss of generality, by possibly readjusting $K'$ and $\delta'$, assume that $|S'| \geq -\frac{1}{\epsilon}\log(1-\epsilon)$ for every $S' \in \shapes(\tiling')$. 
Therefore, by Lemma \ref{lem:appoximate-by-finite-alphabet}, for every $S' \in \shapes(\tiling')$ there exists $A_{S'} \Subset \N$ such that $Z_{S'}(A_{S'}, \phi) \geq (1-\epsilon) Z_{S'}(\phi)$. Define $A$ to be $\bigcup_{S' \in \shapes(\tiling')} A_{S'}$. Then, $A$ is a finite subset of $\N$. Moreover, since $A_{S'} \subseteq A$, for each $S' \in \shapes(\tiling')$, we have that
\begin{equation}\label{eq:partition-funcion-finite-alphabet-and-infinite}
    Z_{S'}(A, \phi) \geq (1-\epsilon) Z_{S'}(\phi),
\end{equation} 
for every $S' \in \shapes(\tiling')$.

Now, by Theorem \ref{thm:continuous-potential-finite-alphabet-then-pressure-exists}, $p(A,\phi) =\lim_{F \to G} \frac{1}{|F|}\log Z_F(A,\phi)$ exists, so we can pick $K \in \finSet$ and $\delta > 0$ such that $K \supseteq K'$, $\delta < \delta'$, and
\begin{equation}\label{eq:inequality-log-partition-function-and-pressure-finite-alphabet}
\log Z_F(A,\phi) \leq |F| (p(A,\phi) + \epsilon)
\end{equation}
for every $(K,\delta)$-invariant set $F \in \finSet$.

Next, by Theorem \ref{thm:tiling}, we can obtain a tiling $\tiling$ of $(K,\delta)$-invariant sets such that every tile in $\tiling$ is a union of tiles in $\tiling'$, i.e.,
$S = \bigsqcup_{S' \in \shapes(\tiling')}\bigsqcup_{c' \in C_{S}(S')} S'c'$. Furthermore, by Lemma \ref{cor:invariant}, for every $(S_{\tiling},\epsilon)$-invariant set $F \in \finSet$, there exist center sets $C_F(S) \subseteq C(S) \in \centers(\tiling)$ for $S \in \shapes(\tiling)$ such that
$$
F \supseteq T_F \quad \text{and} \quad |F \setminus T_F| \leq \epsilon|F|,
$$
where $T_F = \bigsqcup_{S \in \shapes(\tiling)} SC_F(S)$.

Furthermore, for every $S \in \shapes(\tiling)$, we have that
\begin{align*}
Z_S(A,\phi)	&=	\sum_{w_{S} \in A^{S}} \exp \left(\sup\phi_{S} ([w_{S}] \cap A^G)\right)\\
&\geq	\sum_{w_{S} \in A^{S}} \exp\left(\inf\phi_{S} \left([w_{S}] \cap A^G\right)\right)\\
&\geq	\prod_{S' \in \shapes(\tiling')} \prod_{c' \in C_S(S')} \sum_{w_{S'c'} \in A^{S'c'}} \exp \left(\inf\phi_{S'c'} \left([w_{S'c'}] \cap A^G\right)\right)	\\
&	\geq	\prod_{S' \in \shapes(\tiling')} \prod_{c' \in C_{S}(S')} \sum_{w_{S'c'} \in A^{S'c'}} \exp\left(\sup\phi_{S'c'} \left([w_{S'c'}] \cap A^G\right) -  \Delta_{S'c'}(\phi)\right)			\\
&	=	\prod_{S' \in \shapes(\tiling')} \exp\left(-\Delta_{S'}(\phi)|C_{S}(S')|\right) \prod_{c' \in C_{S}(S')} Z_{S'c'}(A,\phi)	\\
	&	=	\prod_{S' \in \shapes(\tiling')} \exp\left(-\Delta_{S'}(\phi)|C_{S}(S')|\right) Z_{S'}(A,\phi)^{|C_{S}(S')|},
\end{align*}
where we used that, for every $g \in G$, $Z_F(A, \phi) = Z_{Fg}(A, \phi)$ and that $\Delta_{F}(\phi) = \Delta_{Fg}(\phi)$. Thus,
\begin{align}\label{eq:inequality-product-of-partition-function}
\prod_{S' \in \shapes(\tiling')} Z_{S'}(A,\phi)^{|C_{S}(S')|} \leq  Z_{S}(A,\phi) \cdot \prod_{S' \in \shapes(\tiling')} \exp\left(|C_{S}(S')|\Delta_{S'}(\phi)\right).
\end{align}

Now, given a $(S_{\tiling},\epsilon)$-invariant set $F \in \finSet$, we have that
\begin{align*}
Z_{T_F}(\phi)	&	\leq		\prod_{S \in \shapes(\tiling)} \prod_{c \in C_F(S)} Z_{Sc}(\phi)	\\
&=		\prod_{S \in \shapes(\tiling)} Z_{S}(\phi)^{|C_F(S)|} \\
&\leq		\prod_{S \in \shapes(\tiling)} \left(\prod_{S' \in \shapes(\tiling')} \prod_{c' \in C_{S}(S')} Z_{S'c'}(\phi)\right)^{|C_F(S)|}	\\
&=		\prod_{S \in \shapes(\tiling)} \prod_{S' \in \shapes(\tiling')} (Z_{S'}(\phi)^{|C_{S}(S')|})^{|C_F(S)|}.
\end{align*}

Therefore, from equation \eqref{eq:partition-funcion-finite-alphabet-and-infinite}, we obtain that
\begin{align*}
	\prod_{S \in \shapes(\tiling)} &\prod_{S' \in \shapes(\tiling')} (Z_{S'}(\phi)^{|C_{S}(S')|})^{|C_F(S)|}\\ 
&	\leq	\prod_{S \in \shapes(\tiling)} \prod_{S' \in \shapes(\tiling')} \left(\frac{1}{1-\epsilon} Z_{S'}(A,\phi)\right)^{|C_{S}(S')||C_F(S)|} 	\\
&	\leq		\left(\frac{1}{1-\epsilon}\right)^{|T_F|} \prod_{S \in \shapes(\tiling)} \left(Z_{S}(A,\phi) \exp\left(\sum_{S' \in \shapes(\tiling')} |C_{S}(S')|\Delta_{S'}(\phi)\right)\right)^{|C_F(S)|} 	\\
&	\leq		\left(\frac{1}{1-\epsilon}\right)^{|F|} \prod_{S \in \shapes(\tiling)} \exp\left(|S|(p(A,\phi) + \epsilon) + \sum_{S' \in \shapes(\tiling')} |C_{S}(S')|\Delta_{S'}(\phi)\right)^{|C_F(S)|},
\end{align*}
where the second inequality follows from equation \eqref{eq:inequality-product-of-partition-function} and the third from equation \eqref{eq:inequality-log-partition-function-and-pressure-finite-alphabet}. Hence, if $0 < \epsilon < \frac{1}{2}$, we have that $\log\left(\frac{1}{1-\epsilon}\right) \leq 2\epsilon$, so
\begin{align*}
\frac{1}{|F|} &\log Z_{T_F}(\phi)\\
&	\leq	\log\left(\frac{1}{1-\epsilon}\right) + \frac{1}{|F|}\sum_{S \in \shapes(\tiling)}|C_F(S)| \left(|S|(p(A,\phi) + \epsilon) + \sum_{S' \in \shapes(\tiling')} |C_{S}(S')|\Delta_{S'}(\phi)\right) 	\\
&	=	\log\left(\frac{1}{1-\epsilon}\right) + \frac{|T_F|}{|F|}(p(A,\phi) + \epsilon) + \sum_{S \in \shapes(\tiling)} \sum_{S' \in \shapes(\tiling')} \frac{|C_F(S)||C_{S}(S')||S'|}{|F|} \frac{\Delta_{S'}(\phi)}{|S'|} \\
&	\leq	2\epsilon + (p(A,\phi) + \epsilon) + \sum_{S \in \shapes(\tiling)} \sum_{S' \in \shapes(\tiling')} \frac{|C_F(S)||C_{S}(S')||S'|}{|F|}  \epsilon	\\
&	=	p(A,\phi) + 3\epsilon + \frac{|T_F|}{|F|} \epsilon	\\
&	\leq	p(A,\phi) + 4\epsilon.
\end{align*}

In addition,
\begin{align*}
Z_F(\phi) \leq Z_{T_F}(\phi)Z_{F \setminus T_F}(\phi) \leq Z_{T_F}(\phi)Z_{1_G}(\phi)^{|F \setminus T_F|} \leq Z_{T_F}(\phi)Z_{1_G}(\phi)^{\epsilon|F|},
\end{align*}
so, considering that $p(A,\phi) = \inf_{E \in \finSet} \frac{1}{|E|} \log Z_E(A,\phi)$ by Theorem \ref{thm:continuous-potential-finite-alphabet-then-pressure-exists}, we have that
\begin{align*}
\frac{1}{|F|} \log Z_F(\phi)	\leq \inf_{E \in \finSet} \frac{1}{|E|} \log Z_E(A,\phi) + 4\epsilon + \epsilon\cdot\log Z_{1_G}(\phi).
\end{align*}

We conclude that, for every $0 < \epsilon < \frac{1}{2}$, there exist $A \in \cF(\N)$, $K \in \finSet$, and $\delta > 0$ such that for every $(K,\delta)$-invariant set $F \in \finSet$,
\begin{align*}
\frac{1}{|F|} \log Z_F(\phi) \leq	\inf_{E \in \finSet} \frac{1}{|E|} \log Z_E(A,\phi) + \epsilon \cdot C 
\end{align*}
where $C = 4 + \log Z_{1_G}(\phi)$. Since $\epsilon$ was arbitrary, we conclude the result.
\end{proof}

Now we can prove the following generalization of Theorem \ref{thm:continuous-potential-finite-alphabet-then-pressure-exists}.

\begin{theorem}\label{thm:existence-of-pressure}
Let $\phi\colon X \to \R$ be an exp-summable and uniformly continuous potential with finite oscillation. Then, $p(\phi)$ exists and $p(\phi) = \inf_{E \in \finSet} \frac{1}{|E|}\log Z_E(\phi)$. Moreover, $p(\phi) = \sup_{A \in \cF(\N)} p(A,\phi)$.
\end{theorem}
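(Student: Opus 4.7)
The plan is to sandwich $\frac{1}{|F|}\log Z_F(\phi)$ between $\inf_{E \in \finSet}\frac{1}{|E|}\log Z_E(\phi)$ on one side and $\sup_{A \in \cF(\N)} p(A, \phi) + \epsilon$ on the other for every sufficiently invariant $F$, and show that these outer quantities differ by at most $\epsilon$. This simultaneously yields existence of $p(\phi)$, the infimum rule, and the identity $p(\phi) = \sup_{A \in \cF(\N)} p(A, \phi)$. Almost all of the real work has already been done: Proposition~\ref{prop:inf-of-log-partition-function} provides the tiling/finite-alphabet approximation estimate, and Theorem~\ref{thm:continuous-potential-finite-alphabet-then-pressure-exists} supplies the infimum rule at the level of a fixed finite alphabet, so the theorem reduces essentially to bookkeeping of inequalities. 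I do not expect any genuine obstacle at this stage.

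First I would record the easy direction. For every $E \in \finSet$ and every $A \in \cF(\N)$, the inclusion $X_E \cap A^E \subseteq X_E$ gives the termwise bound $Z_E(A, \phi) \le Z_E(\phi)$, so Theorem~\ref{thm:continuous-potential-finite-alphabet-then-pressure-exists} yields
\[
p(A, \phi) \;=\; \inf_{E \in \finSet} \frac{1}{|E|}\log Z_E(A, \phi) \;\le\; \inf_{E \in \finSet} \frac{1}{|E|}\log Z_E(\phi).
\]
Taking supremum over $A$ gives $\sup_{A \in \cF(\N)} p(A, \phi) \le \inf_{E \in \finSet} \frac{1}{|E|}\log Z_E(\phi)$, and trivially $\inf_{E}\frac{1}{|E|}\log Z_E(\phi) \le \frac{1}{|F|}\log Z_F(\phi)$ for every $F \in \finSet$.

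Next I would invoke Proposition~\ref{prop:inf-of-log-partition-function} to close the sandwich. Given $\epsilon \in (0, 1/2)$, pick the corresponding $A \in \cF(\N)$, $K \in \finSet$, and $\delta > 0$; then for every $(K, \delta)$-invariant $F$,
\[
\frac{1}{|F|}\log Z_F(\phi) \;\le\; p(A, \phi) + \epsilon \;\le\; \sup_{A' \in \cF(\N)} p(A', \phi) + \epsilon \;\le\; \inf_{E \in \finSet} \frac{1}{|E|}\log Z_E(\phi) + \epsilon.
\]
Combined with the trivial lower bound, this gives
\[
\left|\frac{1}{|F|}\log Z_F(\phi) - \inf_{E \in \finSet} \frac{1}{|E|}\log Z_E(\phi)\right| \le \epsilon
\]
for all sufficiently invariant $F$. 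Since $\epsilon$ was arbitrary, Definition~\ref{defn:limit-as-more-invariant} yields that $p(\phi)$ exists and equals $\inf_{E \in \finSet} \frac{1}{|E|}\log Z_E(\phi)$, establishing the first two claims.

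Finally, the identity $p(\phi) = \sup_{A \in \cF(\N)} p(A, \phi)$ drops out of the same chain. The first step gives $\sup_{A} p(A, \phi) \le p(\phi)$, while from Proposition~\ref{prop:inf-of-log-partition-function} one has $p(\phi) = \lim_{F \to G}\frac{1}{|F|}\log Z_F(\phi) \le p(A, \phi) + \epsilon \le \sup_{A' \in \cF(\N)} p(A', \phi) + \epsilon$ for arbitrary $\epsilon > 0$, yielding the reverse inequality as $\epsilon \to 0$.
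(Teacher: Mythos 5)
Your proposal is correct and follows essentially the same route as the paper: both hinge on Proposition~\ref{prop:inf-of-log-partition-function} together with the termwise bound $Z_E(A,\phi)\le Z_E(\phi)$ and the infimum rule of Theorem~\ref{thm:continuous-potential-finite-alphabet-then-pressure-exists}, sandwiching $\frac{1}{|F|}\log Z_F(\phi)$ between $\inf_E\frac{1}{|E|}\log Z_E(\phi)$ and that infimum plus $\epsilon$ for sufficiently invariant $F$. Your insertion of $\sup_{A'}p(A',\phi)$ into the middle of the chain is only a cosmetic rearrangement of the paper's final step identifying $p(\phi)=\sup_{A}p(A,\phi)$.
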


\begin{proof}
By Proposition \ref{prop:inf-of-log-partition-function}, for every $\frac{1}{2} > \epsilon > 0$, there exist $A \in \cF(\N)$, $K \in \finSet$ and $\delta > 0$ such that for every $(K,\delta)$-invariant set $F \in \finSet$,
\begin{align*}
\frac{1}{|F|} \log Z_F(\phi) \leq \inf_{E \in \finSet} \frac{1}{|E|} \log Z_E(A,\phi) + \epsilon.
\end{align*}

Therefore, for every such $F$,
\begin{align*}
\inf_{E \in \finSet} \frac{1}{|E|} \log Z_E(\phi)   &   \leq \frac{1}{|F|} \log Z_F(\phi)   \\
            &   \leq \inf_{E \in \finSet} \frac{1}{|E|} \log Z_E(A,\phi) + \epsilon  \\
&   \leq  \inf_{E \in \finSet} \frac{1}{|E|} \log Z_E(\phi) + \epsilon.
\end{align*}

Thus, $\lim_{F \to G} \frac{1}{|F|} \log Z_F(\phi) = \inf_{E \in \finSet} \frac{1}{|E|} \log Z_E(\phi)$, $p(\phi)$ exists, and there exists $A \in \cF(\N)$ such that
\begin{align*}
p(\phi) \leq p(A,\phi) + \epsilon \leq  p(\phi) + \epsilon,
\end{align*}
so $p(\phi) = \sup_{A \in \cF(\N)} p(A,\phi)$.
\end{proof}

\section{Permutations and specifications}
\label{section:permutations}

In order to define conformal and DLR measures it will be crucial to introduce coordinate-wise permutations and specifications. We begin by describing and exploring some properties of coordinate-wise permutations.

\subsection{Coordinate-wise permutations}

Let $S_{\N}$ be the set of all permutations of $\N$. Following \cite{keller1998equilibrium,muir2011gibbs}, we now introduce a class of local maps on $X$. Given an exhausting sequence $\{E_m\}_m$, this class will allow us to understand how $\phi_{E_m}(x)$ behaves if $x$ is changed at finitely many sites and it will be central when defining conformal measures in \S \ref{section:main-theorem}.

\begin{definition} Given $K \in \finSet$, denote by $\cE_K$ the set of all maps $\tau\colon X \to X$ such that 
\begin{equation*}
    \tau(x)_g =
\begin{cases}
\tau_g(x_g),  & \text{if } g \in K;     \\
x_g,          & \text{if } g \notin K;
\end{cases}
\end{equation*}
where $\tau_g \in S_{\N}$.
We usually denote $\tau$ by $\tau_K$ to emphasize the set $K$. 
\end{definition}

Let $\cE = \bigcup_{K \in \finSet} \cE_K$ and notice that there is a natural action of $G$ on $\cE$ given by
$$
(g \cdot \tau_K)(x) = g \cdot \tau_K(g^{-1} \cdot x),
$$
where $g \in G$, $x \in X$, $K \in \finSet$, $\tau_K \in \cE_K$, and $g \cdot \tau_K \in \cE_{Kg^{-1}}$. In order to avoid ambiguity, we will denote $g \cdot \tau_K$ by $\tau_{Kg^{-1}}$ and that will be enough for our purposes. 

We can also restrict ourselves to permutations over a finite alphabet. More explicitly, for $A \in \mathcal{F}(\N)$ and $K \in \finSet$, define
\begin{equation*}
    \cE_{K, A} = \{\tau \in \cE_K : \forall h\in K, \, \tau_h|_{A^c}=\textnormal{Id}_\N|_{A^c}\}.
\end{equation*}
Notice that $\cE$ is a group with the composition generated by single-site permutations $\tau_g$, where $\cE_K$ and $\cE_{K, A}$ are subgroups. Moreover, observe that if $g \neq h$, then $\tau_g \tau_h = \tau_h \tau_g$.

We will also consider a particular type of permutations, which are defined below.

\begin{definition}
\label{defn:coordinate-wise-permutation-two} Given $K \in \finSet$ and $w, w' \in X_K$, let $\tau_{w, w'} \colon X \to X$ be the map defined as
\begin{equation*}
    \tau_{w, w'}(x) =
\begin{cases}
w x_{K^c},    &   \text{if } x_K = w';     \\
w'x_{K^c},   &   \text{if } x_K = w;   \\
x,              &   \text{otherwise}.
\end{cases}
\end{equation*}
\end{definition}

It is clear that $\tau_{w, w'} \in \cE_K$, $\tau_{w, w'} = \tau_{w', w}$ and that $\tau_{w, w'}$ is an involution, that is, it is its own inverse. Moreover, there exists $A \in \cF(\N)$, namely $A = w(K) \cup w'(K)$, such that $\tau_{w, w'} \in \cE_{K, A}$. For $\tau \in \cE$ and $F \in \finSet$, define $\phi^{\tau}_F\colon X \to \R$ as
\begin{equation}\label{eq:defn-of-phi-tau-F}
\phi^{\tau}_F(x) = \phi_F \circ \tau^{-1}(x) - \phi_F(x).
\end{equation}

Notice that, for $\tau \in \cE_K$,
\begin{align*}
\phi^{\tau}_F(x) &  = \sum_{g \in F} \phi(g \cdot \tau_K^{-1}(x)) - \phi(g \cdot x) \\
                &   = \sum_{g \in F} \phi(g \cdot \tau_K^{-1}(g^{-1}\cdot(g \cdot x))) - \phi(g \cdot x) \\
                &   = \sum_{g \in F} \phi(\tau_{Kg^{-1}}^{-1}(g \cdot x))) - \phi(g \cdot x).
\end{align*}

\begin{lemma}\label{lemma:bound-of-phi-tau-k-sup-norm} Let $K \in \finSet$ and $\phi\colon X \to \R$ be a potential. Then, for every $\tau_K \in \cE_K$ and every $E, F \in \finSet$ with $F \subseteq E$,
\begin{equation*}
     \Vert \phi_{E}^{\tau_{K}} - \phi_{F}^{\tau_K}\Vert_{\infty} \leq \sum_{g \in G \setminus F} \left\Vert \phi \circ \tau_{Kg^{-1}}^{-1} - \phi\right\Vert_{\infty}.
\end{equation*}
\end{lemma}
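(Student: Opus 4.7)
The plan is to exploit the identity established just above the lemma statement, namely, for $\tau_K \in \cE_K$ and any $F \in \finSet$,
$$\phi^{\tau_K}_F(x) = \sum_{g \in F} \bigl(\phi \circ \tau_{Kg^{-1}}^{-1}(g \cdot x) - \phi(g \cdot x)\bigr).$$
Since $F \subseteq E$, the difference is a sum indexed by $E \setminus F$, so the starting point will be the identity
$$\phi^{\tau_K}_E(x) - \phi^{\tau_K}_F(x) = \sum_{g \in E \setminus F} \bigl(\phi \circ \tau_{Kg^{-1}}^{-1} - \phi\bigr)(g \cdot x).$$

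From here I would apply the triangle inequality termwise and use that the supremum norm on $X$ is invariant under the $G$-action (since the action is by homeomorphisms and the sup is taken over all of $X$):
$$|\phi^{\tau_K}_E(x) - \phi^{\tau_K}_F(x)| \leq \sum_{g \in E \setminus F} \bigl\|\phi \circ \tau_{Kg^{-1}}^{-1} - \phi\bigr\|_{\infty}.$$
Finally, since each summand $\|\phi \circ \tau_{Kg^{-1}}^{-1} - \phi\|_{\infty}$ is non-negative, enlarging the indexing set from $E \setminus F$ to $G \setminus F$ only increases the sum, yielding the stated bound after taking the supremum over $x \in X$.

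There is essentially no main obstacle here: the lemma is a direct consequence of the explicit rewriting of $\phi^{\tau_K}_F$ as a sum of translates of $\phi \circ \tau_{Kg^{-1}}^{-1} - \phi$. The only point that requires a little care, but which is already handled by the notation conventions introduced in the paragraph defining $g \cdot \tau_K = \tau_{Kg^{-1}}$, is the identity $g \cdot \tau_K^{-1}(x) = \tau_{Kg^{-1}}^{-1}(g \cdot x)$, which is what permits one to pull the shift inside and reduce each term of the sum to a translate of the single function $\phi \circ \tau_{Kg^{-1}}^{-1} - \phi$.
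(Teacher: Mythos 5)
Your proposal is correct and follows essentially the same route as the paper: both decompose $\phi^{\tau_K}_E - \phi^{\tau_K}_F$ as the sum over $g \in E \setminus F$ of $\bigl(\phi\circ\tau_{Kg^{-1}}^{-1}-\phi\bigr)(g\cdot x)$, apply the triangle inequality, bound each summand by $\Vert \phi\circ\tau_{Kg^{-1}}^{-1}-\phi\Vert_\infty$, and then enlarge the index set to $G\setminus F$ using non-negativity of the terms. No gaps.
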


\begin{proof}
Let $K, E, F \in \finSet$ and $\tau_K \in \cE_K$ be as in the statement of the Lemma. Then, it is easy to verify that, for any $x \in X$, $(\phi^{\tau_K}_{E} - \phi^{\tau_K}_{F})(x) = \sum_{g \in E \setminus F} \left[\phi\left(\tau_{Kg^{-1}}^{-1}(g \cdot x)\right) - \phi(g \cdot x)\right]$. Thus, 
\begin{align*}
    \Vert \phi_{E}^{\tau_{K}} - \phi_{F}^{\tau_K}\Vert_{\infty} &= \sup_{x \in X} \left|\phi_{E}^{\tau_K}(x) - \phi_{F}^{\tau_K}(x)\right|\\
    &= \sup_{x \in X} \left|\sum_{g \in E \setminus F} \left[\phi\left(\tau_{Kg^{-1}}^{-1}(g \cdot x)\right) - \phi(g \cdot x)\right]\right|\\
    &\leq  \sup_{x \in X} \sum_{g \in E \setminus F} \left| \phi\left(\tau_{Kg^{-1}}^{-1}(g \cdot x)\right) - \phi(g \cdot x)\right|\\
    &\leq \sum_{g \in E \setminus F} \sup_{x \in X} \left|\phi\left(\tau_{Kg^{-1}}^{-1}(g \cdot x)\right) - \phi(g \cdot x)\right|\\
    &=  \sum_{g \in E \setminus F} \left\Vert \phi \circ \tau_{Kg^{-1}}^{-1} - \phi\right\Vert_{\infty}\\
    &\leq \sum_{g \in G \setminus F} \left\Vert \phi \circ \tau_{Kg^{-1}}^{-1} - \phi\right\Vert_{\infty}.
\end{align*}

\end{proof}

Given a potential $\phi\colon X \to \R$ with summable variation according to an exhausting sequence $\{E_m\}_m$, the next theorem tells us that the asymptotic behaviour of $\phi_{E_m}(x)$ is essentially independent of the value of the configuration $x$ at finite sets $K \in \finSet$. The reader can compare the next result with \cite[Lemma 5.1.6]{keller1998equilibrium}.

\begin{theorem}
\label{thm:existence-of-uniform-limit-f-tau}
Let $\phi\colon X \to \R$ be a potential with summable variation according to an exhausting sequence $\{E_m\}_m$. Then, given any (possibly different) exhausting sequence $\{\tilde{E}_m\}_m$, for all $K \in \finSet$ and for all $\tau_K \in \cE_{K}$, the limit
\begin{equation*}
    \phi_*^{\tau_K} := \lim_{m \to \infty} \phi^{\tau_K}_{\tilde{E}_m}
\end{equation*}
exists uniformly on $X$ and on $\cE_{K}$. Moreover, such limit does not depend on the exhausting sequence.
\end{theorem}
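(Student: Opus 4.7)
The plan is to reduce the statement to a uniform Cauchy criterion and to control the resulting estimate by the summable variation hypothesis. Specifically, by Lemma~\ref{lemma:bound-of-phi-tau-k-sup-norm} applied with $E = \tilde{E}_m$ and $F = \tilde{E}_n$ for $n \leq m$, one has
\begin{equation*}
    \bigl\|\phi^{\tau_K}_{\tilde{E}_m} - \phi^{\tau_K}_{\tilde{E}_n}\bigr\|_\infty \;\leq\; \sum_{g \in G \setminus \tilde{E}_n} \bigl\|\phi \circ \tau_{Kg^{-1}}^{-1} - \phi\bigr\|_\infty,
\end{equation*}
so the task reduces to showing that the nonnegative series $\sum_{g \in G} \|\phi \circ \tau_{Kg^{-1}}^{-1} - \phi\|_\infty$ converges, with a bound that depends only on $K$.

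The key estimate is that since $\tau_{Kg^{-1}}^{-1}$ permutes only coordinates lying in $Kg^{-1}$, for any $E \subseteq G$ with $E \cap Kg^{-1} = \emptyset$ one has $x_E = \tau_{Kg^{-1}}^{-1}(x)_E$, whence $\|\phi \circ \tau_{Kg^{-1}}^{-1} - \phi\|_\infty \leq \delta_E(\phi)$. Applying this with $E = E_m$ whenever $g \notin E_m^{-1}K$ (equivalently, $E_m \cap Kg^{-1} = \emptyset$) and using the partition $G \setminus K = \bigsqcup_{m \geq 1}(E_{m+1}^{-1}K \setminus E_m^{-1}K)$ from Remark~\ref{rmk:partition-of-G}, the series is majorized by
\begin{equation*}
    \sum_{g \in G \setminus K} \bigl\|\phi \circ \tau_{Kg^{-1}}^{-1} - \phi\bigr\|_\infty \;\leq\; \sum_{m \geq 1} \bigl|E_{m+1}^{-1}K \setminus E_m^{-1}K\bigr|\, \delta_{E_m}(\phi) \;=\; V_K(\phi) \;\leq\; V(\phi)\,|K| \;<\; \infty,
\end{equation*}
using Remark~\ref{rmk:partition-of-G} again for the last inequality. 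Since $\tilde{E}_n$ exhausts $G$, for large $n$ one has $K \subseteq \tilde{E}_n$, and the tail of the original series over $G \setminus \tilde{E}_n$ is dominated by the tail of this convergent series, hence tends to $0$. This yields uniform convergence on $X$, and since the estimate depends only on $K$ and not on the particular choices of single-site permutations defining $\tau_K$, convergence is also uniform over $\cE_K$.

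For independence from the exhausting sequence, given another exhausting sequence $\{\tilde{E}'_m\}_m$, apply Lemma~\ref{lemma:bound-of-phi-tau-k-sup-norm} twice with the common superset $\tilde{E}_m \cup \tilde{E}'_m$ and use the triangle inequality to obtain
\begin{equation*}
    \bigl\|\phi^{\tau_K}_{\tilde{E}_m} - \phi^{\tau_K}_{\tilde{E}'_m}\bigr\|_\infty \;\leq\; \sum_{g \in G \setminus \tilde{E}_m} \bigl\|\phi \circ \tau_{Kg^{-1}}^{-1} - \phi\bigr\|_\infty + \sum_{g \in G \setminus \tilde{E}'_m} \bigl\|\phi \circ \tau_{Kg^{-1}}^{-1} - \phi\bigr\|_\infty,
\end{equation*}
which tends to $0$ by the same argument, so both limits coincide. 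The only real obstacle is combinatorial bookkeeping: recognizing that the telescoping upper bound from Lemma~\ref{lemma:bound-of-phi-tau-k-sup-norm} matches exactly the quantity $V_K(\phi)$ governed by the summable variation hypothesis, via the partition of $G \setminus K$ built from the exhausting sequence $\{E_m\}_m$.
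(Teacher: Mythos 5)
Your proof is correct, and it takes a more direct route than the paper's. The paper first establishes the theorem for singletons $K=\{h\}$ (comparing $\phi^{\tau_h}_{\tilde{E}_m}$ against $\phi^{\tau_h}_{E_k}$ and using the summable-variation tail), and then handles general $K=\{h_1,\dots,h_{|K|}\}$ by the telescoping identity $\phi_{E_m}\circ\tau_K^{-1}-\phi_{E_m}=\sum_{i=0}^{|K|-1}\phi^{\tau_{h_{i+1}}}_{E_m}\circ\tau^{-1}_{\{h_1,\dots,h_i\}}$, passing the singleton limits through the compositions. You instead treat a general $K$ at once: Lemma \ref{lemma:bound-of-phi-tau-k-sup-norm} reduces everything to the tail of the nonnegative series $\sum_{g\in G\setminus K}\Vert\phi\circ\tau_{Kg^{-1}}^{-1}-\phi\Vert_\infty$, which you majorize by $V_K(\phi)\leq V(\phi)|K|<\infty$ via the equivalence $g\notin E_m^{-1}K\iff Kg^{-1}\cap E_m=\emptyset$ and the partition of $G\setminus K$ from Remark \ref{rmk:partition-of-G}; this is exactly the computation the paper only carries out later, inside Proposition \ref{prop:variation-of-permutations}. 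Your version is arguably cleaner: it avoids the composition bookkeeping, gives uniformity over $\cE_K$ for free (the bound involves only $\delta_{E_m}(\phi)$, not the permutations), and delivers the quantitative estimate $\Vert\phi_*^{\tau}-\phi^{\tau}_K\Vert_\infty\leq V_K(\phi)$ as a byproduct, while the paper's singleton-plus-telescoping argument never needs $V_K(\phi)$ explicitly. One phrasing caveat: your opening claim that the task reduces to convergence of $\sum_{g\in G}\Vert\phi\circ\tau_{Kg^{-1}}^{-1}-\phi\Vert_\infty$ over all of $G$ is too strong --- for $g\in K$ (so $1_G\in Kg^{-1}$) those terms need not be finite when $\phi$ is unbounded --- but this does not affect your argument, since you only ever use the tail over $G\setminus\tilde{E}_n$ with $K\subseteq\tilde{E}_n$, which lies inside $G\setminus K$ where the series does converge; the same remark applies to the union trick in your independence argument, where both $\tilde{E}_m$ and $\tilde{E}'_m$ eventually contain $K$.
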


\begin{proof}
First, suppose that $K$ is a singleton $\{h\}$ for some $h \in G$ and let $\epsilon > 0$. Since $\phi$ has summable variation according to  $\{E_m\}_m$, there exists $m_0 \in \N$ such that $\sum_{m \geq m_0} |E_{m+1}^{-1} \setminus E_m^{-1}| \delta_{E_m}(\phi) < \epsilon$. Now,  consider $\{\tilde{E}_m\}_m$ another (possibly different) exhausting sequence. Then, there exists $m_1 \geq m_0$ such that $E^{-1}_{m_0}h \subseteq \Tilde{E}_m$, for all $m \geq m_1$. On the other hand, since $\{E_m\}_m$ is an exhausting sequence, for every $m \geq m_1$, there exists $k_m \in \N$ such that for all $k \geq k_m$, $\Tilde{E}_m \subseteq E_k$. Therefore, by Lemma \ref{lemma:bound-of-phi-tau-k-sup-norm}, for every $m \geq m_1$ and every $k \geq k_m$,
\begin{align*}
    \left\Vert \phi^{\tau_h}_{E_k} - \phi^{\tau_h}_{\Tilde{E}_m} \right\Vert_{\infty}
    \leq \sum_{g \in G \setminus \Tilde{E}_m} \left\Vert \phi \circ \tau^{-1}_{hg^{-1}} - \phi \right\Vert_{\infty}.
\end{align*}

Moreover, since $E_{m_0}^{-1}h \subseteq \Tilde{E}_m$, we obtain that $G\setminus \Tilde{E}_m \subseteq G\setminus E_{m_0}^{-1}h$, so that
\begin{align*}
    \sum_{g \in G \setminus \Tilde{E}_m} \left\Vert \phi \circ \tau^{-1}_{hg^{-1}} - \phi \right\Vert_{\infty} &\leq \sum_{g \in G \setminus E_{m_0}^{-1}h} \left\Vert \phi \circ \tau^{-1}_{hg^{-1}} - \phi \right\Vert_{\infty} \\
    &= \sum_{g \in G \setminus E_{m_0}^{-1}} \left\Vert \phi \circ \tau^{-1}_{g^{-1}} - \phi \right\Vert_{\infty} \\
    &= \sum_{m \geq m_0}\sum_{g \in E_{m+1}^{-1} \setminus E_{m}^{-1}} \left\Vert \phi \circ \tau_{g^{-1}}^{-1} - \phi \right\Vert_{\infty}\\
    &\leq \sum_{m \geq m_0}\sum_{g \in E_{m+1}^{-1} \setminus E_{m}^{-1}}\delta_{E_m}(\phi)\\
    &= \sum_{m \geq m_0}\left|E_{m+1}^{-1} \setminus E_{m}^{-1}\right| \delta_{E_m}(\phi) < \epsilon.
\end{align*}

Therefore, for every $\epsilon > 0$, there exists $m_1 \geq m_0$ such that for every $m \geq m_1$, there exists $k_m$ such that for every $k \geq k_m$,
$$
\left\Vert \phi^{\tau_h}_{E_k} - \phi^{\tau_h}_{\Tilde{E}_m} \right\Vert_{\infty} < \epsilon.
$$

Notice that, in the particular case that $\{\tilde{E}_m\}_m$ is the same as $\{E_m\}_m$, one just need to take $k_m = m$ and the same inequality would follow. This proves that $\{\phi^{\tau_h}_{\Tilde{E}_m}\}_m$ is a Cauchy sequence for any $\tau_h \in \cE_{\{h\}}$, which implies that the uniform limit $\phi_*^{\tau_h} = \lim_{m \to \infty} \phi^{\tau_h}_{E_m}$ exists. On the other hand, if $\{\tilde{E}_m\}_m$ is another exhausting sequence, this proves that $\phi_*^{\tau_h} = \lim_{m \to \infty} \phi^{\tau_h}_{\tilde{E}_m}$, i.e., the limit is independent of the exhausting sequence provided $\phi$ has summable variation according to some exhausting sequence.

Now, let's consider a general $K \in \finSet$ and write $K = \{h_1,\dots,h_{|K|}\}$. Then, for each $m \in \N$,
\begin{align*}
 \phi_{E_m} \circ \tau^{-1}_K - \phi_{E_m}
    = \sum_{i=0}^{|K|-1} \left(\phi_{E_m} \circ \tau^{-1}_{\{h_1,\dots,h_{i+1}\}} - \phi_{E_m} \circ \tau^{-1}_{\{h_1,\dots,h_i\}}\right)
    = \sum_{i=0}^{|K|-1} \phi^{\tau_{h_{i+1}}}_{E_m} \circ \tau^{-1}_{\{h_1,\dots,h_i\}},
\end{align*}
where we regard $\tau_\emptyset$ as the identity, so the first equality follows from the fact that the considered sum is telescopic. Therefore, by considering the uniform convergence for singletons,
\begin{align*}
   \lim_{m \to \infty} \phi^{\tau_K}_{E_m} &=  \lim_{m \to \infty} \sum_{i=0}^{|K|-1} \phi^{\tau_{h_{i+1}}}_{E_m} \circ \tau^{-1}_{\{h_1,\dots,h_i\}}    \\
    &=  \sum_{i=0}^{|K|-1} \lim_{m \to \infty} \phi^{\tau_{h_{i+1}}}_{E_m} \circ \tau^{-1}_{\{h_1,\dots,h_i\}}    \\
    &   =  \sum_{i=0}^{|K|-1} \phi^{\tau_{h_{i+1}}}_* \circ \tau^{-1}_{\{h_1,\dots,h_i\}},   
\end{align*}
which concludes the result.
\end{proof}

\begin{corollary}
\label{cor:invariance-phi}
Let $\phi\colon X \to \R$ be a potential with summable variation according to an exhausting sequence $\{E_m\}_m$. Then, for all $K \in \finSet$ and for all $\tau_K \in \cE_{K}$,
\begin{equation*}
\phi_*^{\tau_K}(g \cdot x) = \phi_*^{\tau_{Kg}}(x),
\end{equation*}
for all $g \in G$ and $x \in X$.
\end{corollary}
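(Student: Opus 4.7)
The plan is to first establish the identity at the level of the finite sums $\phi^{\tau_K}_F$ and then pass to the limit using the independence-of-exhausting-sequence clause in Theorem \ref{thm:existence-of-uniform-limit-f-tau}. The starting point is the convention introduced for the action of $G$ on $\cE$: $g \cdot \tau_K \in \cE_{Kg^{-1}}$ is denoted $\tau_{Kg^{-1}}$ and satisfies $\tau_{Kg^{-1}}(y) = g \cdot \tau_K(g^{-1} \cdot y)$. Taking inverses and substituting $g \mapsto g^{-1}$ yields the commutation rule
\begin{equation*}
\tau_K^{-1}(g \cdot x) = g \cdot \tau_{Kg}^{-1}(x), \qquad g \in G,\ x \in X.
\end{equation*}

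Using this identity together with the fact that the $G$-action is a left action, $h \cdot (g \cdot z) = hg \cdot z$, a direct substitution in the definition \eqref{eq:defn-of-phi-tau-F} followed by reindexing $h' = hg$ gives, for every $F \in \finSet$,
\begin{equation*}
\phi^{\tau_K}_F(g \cdot x) = \sum_{h \in F}\bigl[\phi(hg \cdot \tau_{Kg}^{-1}(x)) - \phi(hg \cdot x)\bigr] = \phi^{\tau_{Kg}}_{Fg}(x).
\end{equation*}
This is the key algebraic identity, and it holds for every finite $F$ without any regularity hypothesis on $\phi$.

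To finish, I would take $F = E_m$ and let $m \to \infty$. The left-hand side converges uniformly to $\phi_*^{\tau_K}(g \cdot x)$ by Theorem \ref{thm:existence-of-uniform-limit-f-tau}. For the right-hand side, the sequence $\{E_m g\}_m$ is strictly increasing, has union $G$, and satisfies $1_G \in E_m g$ for all sufficiently large $m$ (since $g^{-1} \in E_m$ eventually). Setting $\tilde{E}_m := \{1_G\} \cup E_m g$ therefore produces a bona fide exhausting sequence in the sense of Section \ref{section:preliminaries} that coincides with $E_m g$ from some index onward, so the independence-of-exhausting-sequence statement in Theorem \ref{thm:existence-of-uniform-limit-f-tau} gives
\begin{equation*}
\phi^{\tau_{Kg}}_{E_m g}(x) = \phi^{\tau_{Kg}}_{\tilde{E}_m}(x) \xrightarrow[m \to \infty]{} \phi_*^{\tau_{Kg}}(x),
\end{equation*}
and comparing the two limits delivers the stated equality.

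The main (minor) obstacle is purely bookkeeping: $E_m g$ need not satisfy $E_1 = \{1_G\}$ for small $m$, but this is harmless since only the tail of the sequence affects the limit. The genuine content of the corollary is the finite-level identity $\phi^{\tau_K}_F(g \cdot x) = \phi^{\tau_{Kg}}_{Fg}(x)$, which is a clean consequence of how the $G$-action intertwines with coordinate-wise permutations and of how $\phi$ is summed along the orbit.
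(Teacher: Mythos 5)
Your proof is correct and follows essentially the same route as the paper's: both rest on the identity $\tau_K^{-1}(g \cdot x) = g \cdot \tau_{Kg}^{-1}(x)$, the resulting finite-level equality $\phi^{\tau_K}_{E_m}(g \cdot x) = \phi^{\tau_{Kg}}_{E_m g}(x)$, and the fact that the limit in Theorem \ref{thm:existence-of-uniform-limit-f-tau} is independent of the exhausting sequence, applied to the translates $\{E_m g\}_m$. Your extra bookkeeping step (adjoining $1_G$ so that the translated sequence formally meets the paper's convention for exhausting sequences) is a harmless refinement that the paper itself glosses over.
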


\begin{proof} Notice that, given $g \in G$ and $x \in X$, we have that $\tau_K^{-1}(g \cdot x) = g \cdot \tau^{-1}_{Kg}(x)$,  so that
$$
\phi_*^{\tau_K}(g \cdot x)  =   \lim_{m \to \infty} \phi_{E_m}^{\tau_K}(g \cdot x) = \lim_{m \to \infty} \phi_{E_mg}^{\tau_{Kg}}(x) = \phi_*^{\tau_{Kg}}(x),
$$
since $\{E_mg\}_m$ is also an exhausting sequence.

\end{proof}

\begin{proposition}\label{prop:variation-of-permutations}
Let $\phi\colon X \to \R$ be a potential with summable variation according to an exhausting sequence $\{E_m\}_m$. Then, for every $F \in \finSet$ and $\tau$ in $\mathcal{E}_F$,
$$
\Vert \phi_*^{\tau} - \phi^{\tau}_F\Vert_{\infty} \leq V_F(\phi).
$$
\end{proposition}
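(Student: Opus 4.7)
The plan is to combine Theorem \ref{thm:existence-of-uniform-limit-f-tau} (which identifies $\phi_*^\tau$ as a uniform limit along any exhausting sequence) with the bound provided by Lemma \ref{lemma:bound-of-phi-tau-k-sup-norm}, and then to evaluate the resulting series by using the partition of $G$ given in Remark \ref{rmk:partition-of-G}. More precisely, fix $F \in \finSet$ and $\tau \in \mathcal{E}_F$, and pick an exhausting sequence $\{E_n\}_n$. For all $n$ large enough we have $F \subseteq E_n$, so Lemma \ref{lemma:bound-of-phi-tau-k-sup-norm} applies with $K = F$ and yields
\begin{equation*}
\Vert \phi^{\tau}_{E_n} - \phi^{\tau}_F \Vert_\infty \leq \sum_{g \in G \setminus F} \Vert \phi \circ \tau_{Fg^{-1}}^{-1} - \phi \Vert_\infty,
\end{equation*}
where the right-hand side does not depend on $n$. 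Since $\phi^{\tau}_{E_n} \to \phi_*^\tau$ uniformly by Theorem \ref{thm:existence-of-uniform-limit-f-tau}, the left-hand side converges to $\Vert \phi_*^\tau - \phi^{\tau}_F\Vert_\infty$, and we obtain the same upper bound for this quantity.

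Next I would estimate the individual terms $\Vert \phi \circ \tau_{Fg^{-1}}^{-1} - \phi \Vert_\infty$ using the variations $\delta_{E_m}(\phi)$. Observe that $\tau_{Fg^{-1}}^{-1}(x)$ differs from $x$ only on the set $Fg^{-1}$. Hence, if $E_m \cap Fg^{-1} = \emptyset$, then $x$ and $\tau_{Fg^{-1}}^{-1}(x)$ coincide on $E_m$, so by definition of the variation,
\begin{equation*}
\Vert \phi \circ \tau_{Fg^{-1}}^{-1} - \phi \Vert_\infty \leq \delta_{E_m}(\phi).
\end{equation*}
The elementary combinatorial observation is that $E_m \cap Fg^{-1} = \emptyset$ is equivalent to $g \notin E_m^{-1}F$. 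Thus, for any $g \in E_{m+1}^{-1}F \setminus E_m^{-1}F$ we get $\Vert \phi \circ \tau_{Fg^{-1}}^{-1} - \phi \Vert_\infty \leq \delta_{E_m}(\phi)$.

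To conclude, I would use that by Remark \ref{rmk:partition-of-G} the family $\{E_{m+1}^{-1}F \setminus E_m^{-1}F\}_{m \geq 0}$ partitions $G$, with the $m=0$ piece being precisely $F$ (since $E_0 = \emptyset$ and $E_1 = \{1_G\}$). Therefore $G \setminus F$ is partitioned by the pieces with $m \geq 1$, which combined with the previous bound yields
\begin{equation*}
\sum_{g \in G \setminus F} \Vert \phi \circ \tau_{Fg^{-1}}^{-1} - \phi \Vert_\infty \leq \sum_{m \geq 1} \left|E_{m+1}^{-1}F \setminus E_m^{-1}F\right| \delta_{E_m}(\phi) = V_F(\phi),
\end{equation*}
giving the desired inequality.

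The routine difficulty is the combinatorial identification $\{g \in G : E_m \cap Fg^{-1} = \emptyset\} = G \setminus E_m^{-1}F$, which lines the argument up exactly with the definition of $V_F(\phi)$; once this is in place, everything else is a direct application of the tools already developed in the excerpt, and I do not expect any serious obstacle.
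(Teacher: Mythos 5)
Your proof is correct and follows essentially the same route as the paper's: apply Lemma \ref{lemma:bound-of-phi-tau-k-sup-norm} with $K=F$, pass to the limit via the uniform convergence in Theorem \ref{thm:existence-of-uniform-limit-f-tau}, and then bound the resulting sum over $G\setminus F$ by grouping the terms along the partition $\{E_{m+1}^{-1}F\setminus E_m^{-1}F\}_{m\geq 1}$ together with the equivalence $Fg^{-1}\cap E_m=\emptyset \iff g\notin E_m^{-1}F$. No issues to report.
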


\begin{proof}
Let $F \in\finSet$. From Lemma \ref{lemma:bound-of-phi-tau-k-sup-norm}, we know that
$$
\left\Vert\phi^{\tau_F}_{E_m} - \phi^{\tau_F}_F\right\Vert_\infty \leq \sum_{g \in G \setminus F} \left\Vert\phi \circ \tau_{Fg^{-1}}^{-1} - \phi\right\Vert_\infty,
$$
for every $m \in \N$ such that $F \subseteq E_m$. Therefore, by Theorem \ref{thm:existence-of-uniform-limit-f-tau},
$$
\left\Vert\phi_*^{\tau_F} - \phi^{\tau_F}_F\right\Vert_\infty = \lim_{m \to \infty} \left\Vert\phi^{\tau_F}_{E_m} - \phi^{\tau_F}_F\right\Vert_\infty \leq \sum_{g \in G \setminus F} \left\Vert\phi \circ \tau_{Fg^{-1}}^{-1} - \phi\right\Vert_\infty.
$$
Now, given $m \in \N$,  notice that
$g \in (E_m^{-1}F)^c \iff Fg^{-1} \cap E_m = \emptyset$, so that $\left\Vert\phi \circ \tau_{Fg^{-1}}^{-1} - \phi\right\Vert_\infty \leq \delta_{E_m}(\phi)$. Considering this, we have that
\begin{align*}
 \sum_{g \in G \setminus F} \left\Vert\phi \circ \tau_{Fg^{-1}}^{-1} - \phi\right\Vert_\infty 
 &=\sum_{m=1}^\infty \sum_{g \in E_{m+1}^{-1}F \setminus E_{m}^{-1}F} \left\Vert\phi \circ \tau_{Fg^{-1}}^{-1} - \phi\right\Vert_\infty \\
& \leq  \sum_{m=1}^\infty |E^{-1}_{m+1}F \setminus E^{-1}_{m}F| \cdot \delta_{E_m}(\phi)    \\
& =   V_F(\phi).
\end{align*}

\end{proof}

\subsection{Specifications}

This section tackles results about specifications, a concept related to DLR measures. More precisely, DLR measures can be defined using a special kind of specifications, but here we begin by presenting some more general results.  

Let $\sigmaAlg$ be the Borel $\sigma$-algebra, that is, the $\sigma$-algebra generated by the cylinder sets, and, for each $K \in \finSet$, let $\sigmaAlg_K$ be the $\sigma$-algebra generated by cylinder sets $[w]$, with $w \in X_K$. Now, a \textbf{specification} in our context, will mean a family $\gamma = (\gamma_{K})_{K \in \finSet}$ of maps $\gamma_{K}\colon \sigmaAlg \times X  \to [0,1]$ such that
\begin{enumerate}
\item[i)] for each $x \in X$, the map $B \mapsto \gamma_K(B,x)$ is a probability measure on $\mathcal{M}(X)$;
\item[ii)] for each $B \in \sigmaAlg$, the map $x \mapsto \gamma_K(B,x)$ is $\sigmaAlg_{K^c}$-measurable;
\item[iii)](proper) for every $B \in \sigmaAlg$ and $C \in \sigmaAlg_{K^c}$,
$\gamma_K\left(B \cap C,\cdot\right) = \gamma_K(B,\cdot)\mathbbm{1}_C$; and
\item[iv)] if $F \subseteq K$, then $\gamma_{K}\gamma_{F} = \gamma_{K}$, where
$\gamma_K\gamma_F(B, x) = \int \gamma_K(dy, x)\gamma_F(B,y),$ for $B \in \sigmaAlg$ and $x \in X$.
\end{enumerate}

In other words, $\gamma$ is a particular family of proper probability kernels that satisfies consistency condition (iv). An element $\gamma_K$ in the specification maps each $\mu \in \mathcal{M}(X)$ to $\mu\gamma_K \in \mathcal{M}(X)$, where
$$
\mu\gamma_K(B) = \int \gamma_K(B, x)d\mu(x),
$$
and each $\sigmaAlg$-measurable function $h: X \to \R$ to a $\sigmaAlg_{K^c}$-measurable function $\gamma_Kh: X \to \R$ given by
$$
\gamma_Kh(y) = \int h(y) \gamma_K(dy, x)d\mu(x).
$$

It can be checked that $(\mu\gamma_K)(h) = \mu(\gamma_Kh)$. The probability measures on the set 
\begin{align*}
    \mathscr{G}(\gamma) = \{\mu \in \mathcal{M}(X): \mu\left(B \,|\, \sigmaAlg_{K^c}\right) = \gamma_K\left(B, \cdot \right)\, \mu\text{-a.s., for all }
     B \in \sigmaAlg \text{ and } K \in \finSet\}
\end{align*}
are said to be {\bf admitted} by the specification $\gamma$.

\begin{lemma}\cite[Remark 1.24]{georgii2011gibbs}\label{lemma:characterization-of-admitted-by-specification} Let $\gamma$ be a specification and $\mu \in \mathcal{M}(X)$. Then, $\mu \in \mathscr{G}(\gamma)$ if and only if
$\mu\gamma_K = \mu$, for all $K \in \finSet$.
\end{lemma}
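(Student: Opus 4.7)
My plan is to prove the two implications separately, using only the defining properties (i)--(iv) of a specification together with the characterization of conditional expectation.

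For the forward direction ($\mu \in \mathscr{G}(\gamma) \Rightarrow \mu\gamma_K = \mu$), suppose that for every $B \in \sigmaAlg$ and every $K \in \finSet$ we have $\mu(B \mid \sigmaAlg_{K^c}) = \gamma_K(B,\cdot)$ $\mu$-almost surely. Then for any fixed $K$ and $B$, integrating both sides against $\mu$ and using the tower property of conditional expectation gives
\begin{equation*}
\mu\gamma_K(B) = \int \gamma_K(B,x)\, d\mu(x) = \int \mu(B \mid \sigmaAlg_{K^c})(x)\, d\mu(x) = \mu(B),
\end{equation*}
so $\mu\gamma_K = \mu$. This step is essentially a one-line integration.

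For the reverse direction, I would fix $K \in \finSet$ and $B \in \sigmaAlg$, and verify that $\gamma_K(B,\cdot)$ satisfies the defining properties of a version of $\mu(B \mid \sigmaAlg_{K^c})$. It is $\sigmaAlg_{K^c}$-measurable by property (ii) of the specification, so the only thing to check is that $\int_C \gamma_K(B,x)\, d\mu(x) = \mu(B \cap C)$ for every $C \in \sigmaAlg_{K^c}$. The key observation is that properness (property (iii)) converts the indicator of $C$ into an extra event inside the kernel: $\gamma_K(B,\cdot)\mathbbm{1}_C = \gamma_K(B \cap C,\cdot)$. Combining this with the hypothesis $\mu\gamma_K = \mu$ yields
\begin{equation*}
\int_C \gamma_K(B,x)\, d\mu(x) = \int \gamma_K(B \cap C,x)\, d\mu(x) = \mu\gamma_K(B \cap C) = \mu(B \cap C),
\end{equation*}
which is exactly what is needed to conclude $\gamma_K(B,\cdot) = \mu(B \mid \sigmaAlg_{K^c})$ $\mu$-a.s.

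I do not foresee any real obstacle here: the argument is purely measure-theoretic and both implications reduce to a short calculation once properness is invoked. The only place requiring a moment of care is the reverse direction, where one must remember to use property (iii) to bring the $\sigmaAlg_{K^c}$-measurable test function $\mathbbm{1}_C$ inside the kernel before applying the invariance $\mu\gamma_K = \mu$; without this step one would merely recover a pointwise identity integrated against $\mu$, not the stronger statement needed to identify $\gamma_K(B,\cdot)$ with the conditional probability.
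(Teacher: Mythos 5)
Your argument is correct and is exactly the standard one behind the reference the paper cites for this lemma (the paper itself gives no proof, only the citation to \cite[Remark 1.24]{georgii2011gibbs}): the forward direction is the tower property, and the reverse direction correctly uses properness to identify $\gamma_K(B,\cdot)$ as a version of $\mu(B\,|\,\sigmaAlg_{K^c})$ via $\int_C \gamma_K(B,x)\,d\mu(x)=\mu\gamma_K(B\cap C)=\mu(B\cap C)$ for $C\in\sigmaAlg_{K^c}$. No gaps; nothing further is needed.
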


Now, we restrict ourselves to a particular kind of specification. Namely, given an exhausting sequence of finite sets $\{E_m\}_m$ and $\phi\colon X \to \R$ an exp-summable potential with summable variation according to  $\{E_m\}_m$, consider $\gamma = (\gamma_{K})_{K \in \finSet}$ the specification coming from $\phi$, where each $\gamma_{K}\colon \mathcal{B} \times X \to [0,1]$  is given by 
\begin{equation}\label{specification}
    \gamma_{K}(B, x):= \lim_{m \to \infty} \frac{\sum_{w \in X_{K}} \exp\left(\phi_{E_m}(w x_{K^c})\right)\mathbbm{1}_{\{w x_{K^c} \in B\}}}{\sum_{v \in X_{K}} \exp\left(\phi_{E_m}(v x_{K^c})\right)},
\end{equation}
for each $B \in \cB$ and $x \in X$. The collection $\gamma$ is a \emph{(Gibbsian) specification}. The expression in equation \eqref{specification} is well-defined due to the following proposition. 

\begin{proposition}
\label{existenceoflimit}
Let $\phi\colon X \to \R$ be an exp-summable potential with summable variation according to an exhausting sequence $\{E_m\}_{m}$. If $K \in \finSet$, the limit
\begin{equation*}
    \gamma_K([w],x) = \lim_{m \to \infty} \frac{\exp\left(\phi_{E_m}(w x_{K^c})\right)}{\sum_{v \in X_{K}} \exp\left(\phi_{E_m}(v x_{K^c})\right)}
\end{equation*}
exists for each $w \in X_{K}$, uniformly on $X$. Furthermore, for every $B \in \sigmaAlg$ and every $x \in X$, it holds that
\begin{equation}\label{sumspecification}
    \gamma_{K}(B, x) = \sum_{w \in X_{K}} \gamma_{K}\left([w], x\right)\mathbbm{1}_{\{w x_{K^c} \in B\}}.
\end{equation}
\end{proposition}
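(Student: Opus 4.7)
The plan is to rewrite the ratio in terms of the coordinate-wise permutations $\tau_{v,w} \in \cE_K$ from Definition \ref{defn:coordinate-wise-permutation-two}, so that Theorem \ref{thm:existence-of-uniform-limit-f-tau} delivers the existence of a pointwise limit termwise, and then to use exp-summability to control the infinite sum over $v \in X_K$. The identity \eqref{sumspecification} for general Borel $B$ follows afterwards by separating the indicator.

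First, for $m$ large enough that $K \subseteq E_m$, I would divide numerator and denominator by $\exp(\phi_{E_m}(wx_{K^c}))$ and use that $\tau_{v,w}^{-1}(wx_{K^c}) = vx_{K^c}$ to obtain
\[
\frac{\exp(\phi_{E_m}(wx_{K^c}))}{\sum_{v \in X_K} \exp(\phi_{E_m}(vx_{K^c}))} = \frac{1}{S_m(w,x)}, \qquad S_m(w,x) := \sum_{v \in X_K} \exp\bigl(\phi_{E_m}^{\tau_{v,w}}(wx_{K^c})\bigr).
\]
For each fixed $v$, Theorem \ref{thm:existence-of-uniform-limit-f-tau} gives that $\phi_{E_m}^{\tau_{v,w}} \to \phi_*^{\tau_{v,w}}$ uniformly on $X$, so each summand converges uniformly in $x$. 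Since the $v=w$ term equals $1$, both $S_m(w,x)$ and its formal limit $S(w,x)$ are $\geq 1$, hence
\[
\left|\frac{1}{S_m(w,x)} - \frac{1}{S(w,x)}\right| \leq |S_m(w,x) - S(w,x)|,
\]
and the problem reduces to proving uniform-in-$x$ convergence of the denominators.

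Second, to control the infinite sum I would apply Proposition \ref{prop:variation-of-permutations} (together with its prelimit analogue derived from Lemma \ref{lemma:bound-of-phi-tau-k-sup-norm}) to get, for all sufficiently large $m$,
\[
\exp\bigl(\phi_{E_m}^{\tau_{v,w}}(wx_{K^c})\bigr) \leq e^{V_K(\phi)}\cdot\frac{\exp\bigl(\sup \phi_K([v])\bigr)}{\exp\bigl(\phi_K(wx_{K^c})\bigr)},
\]
and then use $\sum_{v \in X_K} \exp(\sup \phi_K([v])) = Z_K(\phi) < \infty$, which holds by exp-summability and submultiplicativity. Splitting the sum over $v$ into a finite initial segment (where the uniform convergence in $x$ for each term is immediate) and a tail (controlled uniformly in $m$ by the above majorant), I would conclude that $S_m \to S$ uniformly on $X$, which yields the first claim. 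The representation \eqref{sumspecification} then follows by separating the indicator $\mathbbm{1}_{\{wx_{K^c}\in B\}} \in \{0,1\}$ in the numerator of \eqref{specification} and interchanging sum and limit, justified by the same dominated-convergence argument.

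The main obstacle I anticipate is ensuring that the domination is strong enough to upgrade pointwise convergence to uniform convergence on $X$: the natural majorant carries a factor $1/\exp(\phi_K(wx_{K^c}))$ which need not be bounded in $x$, since $\phi_K$ can fail to be bounded below. The trick that saves the day is working on the reciprocal side, where the denominators $S_m$ are automatically $\geq 1$, so that only the uniform smallness of $|S_m - S|$ on $X$ (and not a lower bound on $S$ that is uniform in $x$) is needed.
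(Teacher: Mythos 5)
Your overall route is the paper's: rewrite the ratio as the reciprocal of $S_m(w,x)=\sum_{v\in X_K}\exp\bigl(\phi^{\tau_{w,v}}_{E_m}(wx_{K^c})\bigr)$, use Theorem \ref{thm:existence-of-uniform-limit-f-tau} termwise, and dominate the sum over $v$ via exp-summability; the identity \eqref{sumspecification} then follows as you say. However, there is a genuine gap exactly at the step you flag. Your tail majorant $e^{V_K(\phi)}\exp\bigl(\sup\phi_K([v])\bigr)/\exp\bigl(\phi_K(wx_{K^c})\bigr)$ depends on $x$, and the ``reciprocal trick'' does not repair this: the inequality $\bigl|\tfrac{1}{S_m}-\tfrac{1}{S}\bigr|\le|S_m-S|$ (valid since $S_m,S\ge 1$) only spares you a uniform positive lower bound on $S$, which was never the issue; it still requires $|S_m-S|$ to be small \emph{uniformly in $x$}, and your tail estimate $\sum_{v\notin F}\exp\bigl(\phi^{\tau_{w,v}}_{E_m}(wx_{K^c})\bigr)\le e^{V_K(\phi)}\exp\bigl(-\phi_K(wx_{K^c})\bigr)\sum_{v\notin F}\exp\bigl(\sup\phi_K([v])\bigr)$ cannot be made uniformly small over $x$ if the prefactor is unbounded. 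So as written, ``$S_m\to S$ uniformly on $X$'' does not follow from what you have proved.

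The missing ingredient is the observation the paper isolates in Lemma \ref{lem:bounds}: summable variation forces finite oscillation ($\delta(\phi)\le V(\phi)<\infty$, since $E_1=\{1_G\}$ and the exhaustion is strictly increasing), hence $\Delta_K(\phi)\le|K|\,\delta(\phi)<\infty$, and therefore $\phi_K(wx_{K^c})\ge\sup\phi_K([w])-\Delta_K(\phi)$ for every $x$. Paying the extra factor $e^{\Delta_K(\phi)}$ you may replace your $x$-dependent prefactor by the constant $\exp\bigl(-\sup\phi_K([w])\bigr)$, obtaining the $x$-independent dominating function $h(v)=\exp\bigl(-\sup\phi_K([w])\bigr)\exp\bigl(\sup\phi_K([v])\bigr)$ with $\sum_v h(v)=\exp\bigl(-\sup\phi_K([w])\bigr)Z_K(\phi)<\infty$; this is precisely how the paper (Lemma \ref{lem:bounds} feeding Lemma \ref{lem:dominatedConvergence}) gets the uniform exchange of limit and sum. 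With that one correction your argument closes, and in fact the anticipated obstacle disappears: for fixed $w$, the factor $\exp\bigl(-\phi_K(wx_{K^c})\bigr)$ \emph{is} bounded in $x$ under the standing hypotheses.
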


In order to prove Proposition \ref{existenceoflimit}, we require two lemmas, which we state and prove next.

\begin{lemma}
\label{lem:bounds}
Let $\phi\colon X \to \R$ be an exp-summable potential with summable variation according to an exhausting sequence $\{E_m\}_{m }$. Then, for any $K \in \finSet$ and for any $m \in \N$ such that $K \subseteq E_m$,
$$
 \left\vert\phi_{E_m}^{\tau_{w,v}}(wx_{K^c}) - (\sup\phi_{K}[v] - \sup\phi_{K}[w])\right\vert \leq \Delta_K(\phi)+ V_K(\phi)
$$
for every $v,w \in X_K$ and $x \in X$.
\end{lemma}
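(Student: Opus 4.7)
The key observation is that $\tau_{w,v}$ is an involution, so $\tau_{w,v}^{-1}(wx_{K^c}) = vx_{K^c}$ (since the configuration $wx_{K^c}$ matches $w$ on $K$), and hence
\[
\phi_{E_m}^{\tau_{w,v}}(wx_{K^c}) = \phi_{E_m}(vx_{K^c}) - \phi_{E_m}(wx_{K^c}).
\]
Since $K \subseteq E_m$, the plan is to split $\phi_{E_m} = \phi_K + \phi_{E_m \setminus K}$ and bound each piece separately. The quantity we want to control becomes
\[
\bigl[\phi_K(vx_{K^c}) - \phi_K(wx_{K^c}) - (\sup\phi_K[v] - \sup\phi_K[w])\bigr] + \bigl[\phi_{E_m \setminus K}(vx_{K^c}) - \phi_{E_m \setminus K}(wx_{K^c})\bigr].
\]

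For the first bracket, I would use the definition $\Delta_K(\phi) = \delta_K(\phi_K)$: since $vx_{K^c}$ belongs to the cylinder $[v]$, one has $0 \leq \sup\phi_K[v] - \phi_K(vx_{K^c}) \leq \Delta_K(\phi)$, and analogously for $w$. Subtracting the two non-negative quantities in $[0, \Delta_K(\phi)]$ yields a value in $[-\Delta_K(\phi), \Delta_K(\phi)]$, giving the $\Delta_K(\phi)$ contribution.

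For the second bracket, the configurations $vx_{K^c}$ and $wx_{K^c}$ agree on $K^c$, so for $g \in E_m \setminus K$, the translates $g \cdot vx_{K^c}$ and $g \cdot wx_{K^c}$ differ only on $Kg^{-1}$. They therefore agree on $E_n$ whenever $E_n g \cap K = \emptyset$, i.e.\ whenever $g \notin E_n^{-1}K$. Using the partition $G \setminus K = \bigsqcup_{n \geq 1}(E_{n+1}^{-1}K \setminus E_n^{-1}K)$ (which follows from $E_1 = \{1_G\}$ as in Remark \ref{rmk:partition-of-G}), each $g$ in the $n$-th piece gives $|\phi(g \cdot vx_{K^c}) - \phi(g \cdot wx_{K^c})| \leq \delta_{E_n}(\phi)$. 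Summing and using $E_m \setminus K \subseteq G \setminus K$:
\[
\bigl|\phi_{E_m \setminus K}(vx_{K^c}) - \phi_{E_m \setminus K}(wx_{K^c})\bigr| \leq \sum_{n \geq 1} |E_{n+1}^{-1}K \setminus E_n^{-1}K| \cdot \delta_{E_n}(\phi) = V_K(\phi).
\]

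Combining both estimates via the triangle inequality yields the desired bound $\Delta_K(\phi) + V_K(\phi)$. There is no serious obstacle here — the argument is essentially a careful bookkeeping exercise, and the only subtle step is recognizing that the correct partition of $G \setminus K$ to use for the variation estimate is exactly the one built into the definition of $V_K(\phi)$.
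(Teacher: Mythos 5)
Your proof is correct and follows essentially the same route as the paper: identify $\phi_{E_m}^{\tau_{w,v}}(wx_{K^c}) = \phi_{E_m}(vx_{K^c}) - \phi_{E_m}(wx_{K^c})$, control the on-$K$ part by $\Delta_K(\phi)$ via the supremum comparison, and control the off-$K$ part by $V_K(\phi)$ using the partition of $G \setminus K$ into the sets $E_{n+1}^{-1}K \setminus E_n^{-1}K$ together with the observation that $g \notin E_n^{-1}K$ forces the translated configurations to agree on $E_n$. No gaps.
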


\begin{proof}
Let $K \in \finSet$ and $x,y \in X$ be such that $x_{G \setminus K} = y_{G \setminus K}$. Notice that for any $g \in G$, $(g \cdot x)_{G \setminus Kg^{-1}} = (g \cdot y)_{G \setminus Kg^{-1}}$. In addition, given $m \in \N$, we have that $g \in\left(E_{m}^{-1} K\right)^{c} \iff K g^{-1} \cap E_{m} = \emptyset$. In particular, if $g \in \left(E_{m}^{-1} K\right)^{c}$, we have that $|\phi(g \cdot x) - \phi(g \cdot y)| \leq \delta_{E_{m}}(\phi)$. Considering this, we obtain that
\begin{align*}
\sum_{g \in G \setminus K}|\phi(g \cdot x) - \phi(g \cdot y)|    &   =   \sum_{m=1}^\infty \sum_{g \in E_{m+1}^{-1}K \setminus E_{m}^{-1}K} |\phi(g \cdot x) - \phi(g \cdot y)| \\
                            &   \leq  \sum_{m=1}^\infty \sum_{g \in E_{m+1}^{-1}K \setminus E_{m}^{-1} K}\delta_{E_m}(\phi) \\
                            &   =   \sum_{m=1}^\infty |E_{m+1}^{-1}K \setminus E_{m}^{-1} K|\cdot\delta_{E_m}(\phi) \\
                            &   =   V_K(\phi).
\end{align*}

Now, let $m_0 \in \N$ be the smallest index such that $K \subseteq E_{m_0}$. Then, for every $m \geq m_0$, every $x \in X$, and every $v,w \in X_K$, we have that
\begin{align*}
\phi_{E_m}^{\tau_{w,v}}(wx_{K^c})
&= \phi_{E_m}(vx_{K^c}) - \phi_{E_m}(wx_{K^c})\\
&\leq \phi_K(vx_{K^c}) - \phi_{K}(wx_{K^c})+ \sum_{g \in G \setminus K}|\phi(g \cdot (vx_{K^c})) - \phi(g \cdot (wx_{K^c}))|\\
&\leq \phi_K(vx_{K^c}) - \phi_{K}(wx_{K^c}) + V_K(\phi)\\
&\leq \sup\phi_{K}[v] - \sup\phi_{K}[w] + \Delta_K(\phi)+ V_K(\phi),
\end{align*}
and, similarly,
\begin{align*}
\phi_{E_m}^{\tau_{w,v}}(wx_{K^c}))
&\geq \sup\phi_{K}[v]-\sup\phi_{K}[w]-\Delta_K(\phi)-V_K(\phi),
\end{align*}
so we conclude that
$$
|\phi_{E_m}^{\tau_{w,v}}(wx_{K^c}) - (\sup\phi_{K}[v] - \sup\phi_{K}[w])| \leq \Delta_K(\phi)+ V_K(\phi).
$$
\end{proof}

\begin{lemma}
\label{lem:dominatedConvergence}
Let $\phi\colon X \to \R$ be an exp-summable potential with summable variation according to an exhausting sequence $\{E_m\}_{m}$. Then, for any  $K \in \finSet$ and $w \in X_K$,
\begin{equation*}
    0 < \sum_{v \in X_{K}} \exp(\phi_*^{\tau_{w, v}}(w x_{K^c})) = \lim_{m \to \infty}\sum_{v \in X_K} \exp(\phi^{\tau_{w, v}}_{E_m}(w x_{K^c})),
\end{equation*}
uniformly on $X$.
\end{lemma}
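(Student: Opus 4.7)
The plan is to establish the equality by dominated convergence applied to the series indexed by $v \in X_K$, and to obtain positivity by isolating the term $v = w$. Observe that $\tau_{w,w}$ is the identity map, so $\phi^{\tau_{w,w}}_{E_m} \equiv 0$ for all $m$, and consequently $\phi_*^{\tau_{w,w}} \equiv 0$. This already contributes a term equal to $1$ in both the prelimit sums and the limit sum, giving strict positivity.

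For both finiteness and the interchange of limit and sum, I would use Lemma \ref{lem:bounds} to produce a summable majorant. For every $m$ with $K \subseteq E_m$, every $v \in X_K$, and every $x \in X$,
\begin{equation*}
\phi^{\tau_{w,v}}_{E_m}(w x_{K^c}) \leq \sup \phi_K[v] - \sup \phi_K[w] + \Delta_K(\phi) + V_K(\phi),
\end{equation*}
so $\exp(\phi^{\tau_{w,v}}_{E_m}(w x_{K^c})) \leq C_w \exp(\sup \phi_K[v])$, where $C_w := \exp(-\sup\phi_K[w] + \Delta_K(\phi) + V_K(\phi))$ is independent of $m$ and $x$. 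Since $\phi$ is exp-summable and $V_K(\phi) < \infty$ by Remark \ref{rmk:partition-of-G}, the majorant is summable with $\sum_v C_w \exp(\sup \phi_K[v]) = C_w Z_K(\phi) < \infty$. Letting $m \to \infty$ in the same two-sided bound of Lemma \ref{lem:bounds} transfers the estimate to $\phi_*^{\tau_{w,v}}$, so the limit series $\sum_v \exp(\phi_*^{\tau_{w,v}}(wx_{K^c}))$ is also finite and strictly positive.

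It remains to obtain uniform convergence on $X$. By Theorem \ref{thm:existence-of-uniform-limit-f-tau}, $\phi^{\tau}_{E_m} \to \phi_*^{\tau}$ uniformly in $x \in X$ \emph{and} in $\tau \in \cE_K$; applied to the family $\{\tau_{w,v}\}_{v \in X_K} \subseteq \cE_K$, this yields a sequence $\epsilon_m \downarrow 0$ such that $|\phi^{\tau_{w,v}}_{E_m}(wx_{K^c}) - \phi_*^{\tau_{w,v}}(wx_{K^c})| \leq \epsilon_m$ for all $v$ and $x$ simultaneously. Combining the mean value theorem for $\exp(\cdot)$ with the domination above gives
\begin{equation*}
\left|\exp(\phi^{\tau_{w,v}}_{E_m}(wx_{K^c})) - \exp(\phi_*^{\tau_{w,v}}(wx_{K^c}))\right| \leq C_w \exp(\sup\phi_K[v]) \cdot \epsilon_m,
\end{equation*}
and summing over $v$ bounds the difference between the two series by $C_w Z_K(\phi) \cdot \epsilon_m$, which tends to $0$ independently of $x$.

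The main potential obstacle is the tail of the series, since a purely pointwise convergence would not suffice to swap limit and sum uniformly in $x$; this is precisely resolved by the fact that the uniform convergence in Theorem \ref{thm:existence-of-uniform-limit-f-tau} holds jointly in $x$ and in $\tau \in \cE_K$, so the modulus $\epsilon_m$ is simultaneously valid across all $\tau_{w,v}$.
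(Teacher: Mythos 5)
Your proof is correct and rests on the same two pillars as the paper's (the two-sided estimate of Lemma \ref{lem:bounds} giving a summable, $m$- and $x$-independent majorant $C_w\exp(\sup\phi_K[v])$, and the uniform convergence from Theorem \ref{thm:existence-of-uniform-limit-f-tau}), but it deviates in two small and worthwhile ways. First, for positivity the paper keeps the lower half of Lemma \ref{lem:bounds} and bounds the limit sum below by $C^{-1}\exp(-\sup\phi_K[w])Z_K(\phi)$, whereas you simply observe that $\tau_{w,w}$ is the identity, so the term $v=w$ contributes $1$ to every prelimit sum and to the limit sum; this is cleaner and in fact shows all these sums are at least $1$, which is exactly the "bounded away from zero uniformly in $m$" fact invoked later in Proposition \ref{existenceoflimit}. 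Second, the paper concludes by citing the Dominated Convergence Theorem, which as written gives the interchange of $\lim_m$ and $\sum_v$ for each fixed $x$, with the uniformity in $x$ left implicit; your quantitative argument (the modulus $\epsilon_m$ valid jointly in $x$ and in $\tau\in\cE_K$, combined with the mean value theorem for $\exp$ and the majorant, yielding the bound $C_wZ_K(\phi)\epsilon_m$ on the difference of the two series) makes the claimed uniformity on $X$ explicit, which is a genuine, if minor, sharpening of the write-up. The only cosmetic gap is that you justify $V_K(\phi)<\infty$ but not $\Delta_K(\phi)<\infty$; this follows just as easily (for $x_K=y_K$ one has $|\phi_K(x)-\phi_K(y)|\leq|K|\delta(\phi)$ and $\delta(\phi)<\infty$ from summable variation), and the paper is equally silent on it, so nothing essential is missing.
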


\begin{proof}
Given $K \in \finSet$, $w \in X_K$, and $x \in X$, consider the sequence of functions $f_m\colon X_K \to \R$ given by $f_m(v) := \exp(\phi^{\tau_{w, v}}_{E_m}(w x_{K^c})).$ By Theorem \ref{thm:existence-of-uniform-limit-f-tau}, we have that $\{f_m\}_m$ converges pointwise (in $v$) to $\exp(\phi_*^{\tau_{w, v}}(w x_{K^c}))$, uniformly on $X$. In addition, by 
Lemma \ref{lem:bounds}, there exist $m_0 \in \N$ and a constant $C = \exp(\Delta_K(\phi)+V_K(\phi)) > 0$ such that for every $m \geq m_0$ and for every $v \in X_K$,
$$
C^{-1} \cdot h(v) \leq f_m(v) \leq C \cdot h(v),
$$
where $h(v) := \exp(-\sup\phi_{K}[w]) \cdot \exp(\sup\phi_{K}[v])$. Notice that
$$\sum_{v \in X_K}  h(v) = \exp(-\sup\phi_{K}[w]) \cdot Z_K(\phi),
$$
so $h$ (and therefore, $C \cdot h$) is integrable with respect to the counting measure in $X_K$. Therefore, by the Dominated Convergence Theorem, if follows that
\begin{align*}
\sum_{v \in X_K} 
\exp\left(\phi_*^{\tau_{w,v}}(wx_{K^c})\right)    &   =   \sum_{v \in X_K} \lim_m 
\exp\left(\phi_{E_m}^{\tau_{w,v}}(wx_{K^c})\right)  \\
                                                &   =   \lim_m \sum_{v \in X_K}
\exp\left(\phi_{E_m}^{\tau_{w,v}}(wx_{K^c})\right)  \\
                                                &   \geq    \lim_m \sum_{v \in X_K}
C^{-1} \cdot h(v)   \\
                                                &   =   C^{-1} \cdot \exp(-\sup\phi_{K}[w]) \cdot Z_K(\phi) > 0.
\end{align*}
\end{proof}

\begin{proof}[Proof (of Proposition \ref{existenceoflimit})]
First, note that for any given $K \in \finSet$,
\begin{equation*}
    \sum_{v \in X_{K}} \exp\left(\phi_{E_m}(v x_{K^c})\right) > 0.
\end{equation*}
for all $m \in \N$ and, due to Lemma \ref{lem:dominatedConvergence}, the left-hand side is bounded away from zero uniformly in $m$. Furthermore, for each $w \in X_K$,
\begin{align*}
    \frac{\exp\left(\phi_{E_m}(w x_{K^c})\right)}{\sum_{v \in X_{K}} \exp\left(\phi_{E_m}(v x_{K^c})\right)}    &   = \frac{1}{\sum_{v \in X_{K}} \exp\left(\phi_{E_m}(v x_{K^c}) - \phi_{E_m}(w x_{K^c}))\right)}  = \frac{1}{\sum_{v \in X_{K}} \exp(\phi^{\tau_{w, v}}_{E_m}(w x_{K^c}))}.
\end{align*}
  
Therefore, uniformly on $X$,
\begin{align*}
   \lim_{m \to \infty} \frac{\exp\left(\phi_{E_m}(w x_{K^c})\right)}{\sum_{v \in X_{K}} \exp\left(\phi_{E_m}(v x_{K^c})\right)}
   =   \frac{1}{\lim_{m \to \infty}\sum_{v \in X_{K}} \exp(\phi^{\tau_{w, v}}_{E_m}(w x_{K^c}))}
   &=   \frac{1}{\sum_{v \in X_{K}} \exp(\phi_*^{\tau_{w, v}}(w x_{K^c}))},
\end{align*}
again due to Lemma \ref{lem:dominatedConvergence}. Now, let  $B \in \cB$ and  $x \in X$. Then, uniformly on $X$,
\begin{align*}
\sum_{w \in X_K} &\gamma_{K}\left([w], x\right) \mathbbm{1}_{\{w x_{K^c} \in B\}}\\
 &= \sum_{w \in X_K}  \lim_{m \to \infty} \frac{\exp\left(\phi_{E_m}(w x_{K^c})\right)\mathbbm{1}_{\{w x_{K^c} \in [w]\}}\mathbbm{1}_{\{w x_{K^c} \in B\}}}{\sum_{v' \in X_K} \exp\left(\phi_{E_m}(v' x_{K^c})\right)}\\
 &=  \lim_{m \to \infty} \frac{\sum_{w \in X_K} \exp\left(\phi_{E_m}(w x_{K^c})\right)\mathbbm{1}_{\{w x_{K^c} \in [w]\}}\mathbbm{1}_{\{w x_{K^c} \in B\}}}{\sum_{v' \in X_K} \exp\left(\phi_{E_m}(v' x_{K^c})\right)}\\
  &=   \lim_{m \to \infty} \frac{\sum_{w \in X_K} \exp\left(\phi_{E_m}(w x_{K^c})\right)\mathbbm{1}_{\{w x_{K^c} \in B\}}}{\sum_{v' \in X_K} \exp\left(\phi_{E_m}(v' x_{K^c})\right)}\\
  &= \gamma_{K}(B, x),
 \end{align*}
 where the exchange of the limit and the sum follows from Lemma \ref{lem:dominatedConvergence}.
\end{proof}

\begin{proposition}
\label{lemma:characterization-DLR-state}
Let $\phi\colon X \to \R$ be an exp-summable potential with summable variation according to an exhausting sequence $\{E_m\}_m$. Then, for every $K \in \finSet$, $w \in X_{K}$, and $x \in X$, the equation
\begin{equation*}
    \gamma_K([w],x) = \frac{\exp\left(\phi_*^{\tau_{w, v}}(vx_{K^c})\right)}{\sum_{w^{\prime} \in X_{K}}\exp\left(\phi_*^{\tau_{w^{\prime}, v}}(vx_{K^c})\right)}
\end{equation*}
holds for every $v \in X_{K}$. 
\end{proposition}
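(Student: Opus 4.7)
The plan is to start from the limiting expression for $\gamma_K([w],x)$ given in Proposition \ref{existenceoflimit} and exploit the fact that $\tau_{w,v}$ is an involution to rewrite the Boltzmann-type weights as exponentials of the form $\phi^{\tau}_{E_m}$. Concretely, fix an arbitrary $v \in X_K$ and, in the ratio
\[
\gamma_K([w],x) = \lim_{m \to \infty} \frac{\exp\left(\phi_{E_m}(wx_{K^c})\right)}{\sum_{w' \in X_{K}} \exp\left(\phi_{E_m}(w'x_{K^c})\right)},
\]
divide both the numerator and every term in the denominator by $\exp(\phi_{E_m}(vx_{K^c}))$. Since $\tau_{w',v}$ is an involution that sends $vx_{K^c}$ to $w'x_{K^c}$, by the definition in equation \eqref{eq:defn-of-phi-tau-F} we have
\[
\phi^{\tau_{w',v}}_{E_m}(vx_{K^c}) = \phi_{E_m}(\tau_{w',v}^{-1}(vx_{K^c})) - \phi_{E_m}(vx_{K^c}) = \phi_{E_m}(w'x_{K^c}) - \phi_{E_m}(vx_{K^c}),
\]
for every $w' \in X_K$ (with $w'=v$ giving zero). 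Therefore
\[
\gamma_K([w],x) = \lim_{m \to \infty} \frac{\exp\left(\phi^{\tau_{w,v}}_{E_m}(vx_{K^c})\right)}{\sum_{w' \in X_{K}}\exp\left(\phi^{\tau_{w',v}}_{E_m}(vx_{K^c})\right)}.
\]

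Next, I would pass to the limit separately in the numerator and the denominator. The numerator converges (uniformly on $X$) to $\exp(\phi^{\tau_{w,v}}_{*}(vx_{K^c}))$ by Theorem \ref{thm:existence-of-uniform-limit-f-tau} together with the continuity of $\exp(\cdot)$. For the denominator, the key point is to interchange limit and sum, which is exactly what Lemma \ref{lem:dominatedConvergence} provides: applying that lemma after relabeling (the fixed reference word playing the role of $v$ and the summation index playing the role of $w'$, which is legitimate since $\tau_{a,b}=\tau_{b,a}$), we obtain
\[
\lim_{m \to \infty}\sum_{w' \in X_{K}}\exp\left(\phi^{\tau_{w',v}}_{E_m}(vx_{K^c})\right) = \sum_{w' \in X_{K}}\exp\left(\phi^{\tau_{w',v}}_{*}(vx_{K^c})\right),
\]
and this limit is strictly positive, so it is safe to put it in the denominator.

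Combining the two convergences yields the claimed identity. The only subtle step is the interchange of the limit with the (possibly infinite) sum over $X_K$ in the denominator; this is the main obstacle but is handled entirely by invoking Lemma \ref{lem:dominatedConvergence}, whose proof already provided the uniform integrable bound via Lemma \ref{lem:bounds}. Since the final expression is independent of the auxiliary choice of $v$, the statement holds for every $v \in X_K$, as required.
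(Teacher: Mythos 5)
Your proposal is correct and follows essentially the same route as the paper's proof: rewrite the ratio from Proposition \ref{existenceoflimit} in terms of $\phi^{\tau_{w',v}}_{E_m}(vx_{K^c})$ using the involution property of $\tau_{w',v}$, then pass to the limit via Theorem \ref{thm:existence-of-uniform-limit-f-tau} in the numerator and Lemma \ref{lem:dominatedConvergence} for the exchange of limit and sum (and strict positivity) in the denominator. No gaps.
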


\begin{proof}
Let $K \in \finSet$,  $w \in X_K$ and $x \in X$. Then, for any $v \in X_K$,
\begin{align*}
\lim_{m \to \infty} \frac{\exp\left(\phi_{E_m}(w x_{K^c})\right)}{\sum_{{w'}\in X_{K}}\exp\left(\phi_{E_m}({w'} x_{K^c})\right)}
&=\lim_{m \to \infty} \frac{\exp\left(\phi_{E_m}\circ\tau_{w,v}^{-1} - \phi_{E_m}\right)(vx_{K^c})}{\sum_{{w'}\in X_{K}} \exp\left(\phi_{E_m}\circ\tau_{w',v}^{-1} - \phi_{E_m}\right)(vx_{K^c})}\\
&=\frac{\lim_{m \to \infty}\exp\left(\phi_{E_m}\circ\tau_{w,v}^{-1} - \phi_{E_m}\right)(vx_{K^c})}{\sum_{{w'}\in X_{K}}\lim_{m \to \infty} \exp\left(\phi_{E_m}\circ\tau_{w',v}^{-1} - \phi_{E_m}\right)(vx_{K^c})}\\
&= \frac{\exp\left(\lim_{m \to \infty} \left(\phi_{E_m}\circ\tau_{w,v}^{-1} - \phi_{E_m}\right)(vx_{K^c})\right)}{\sum_{{w'}\in X_{K}}\exp \left(\lim_{m \to \infty}\left(\phi_{E_m}\circ\tau_{w',v}^{-1} - \phi_{E_m}\right)(vx_{K^c})\right)}\\
&= \frac{\exp\left(\phi_*^{\tau_{w, v}}(vx_{K^c})\right)}{\sum_{w^{\prime} \in X_{K}}\exp\left(\phi_*^{\tau_{w^{\prime}, v}}(vx_{K^c})\right)},
\end{align*}
where the last equality follows from Theorem \ref{thm:existence-of-uniform-limit-f-tau}. Also, if $m_0 \in \N$ is such that $K \subseteq E_{m_0}$, the exchange of limit and sum in the denominator from the first to the second line follows from Lemma \ref{lem:dominatedConvergence}.

\end{proof}

\begin{corollary}
\label{cor:invariance-spec}
Let $\phi\colon X \to \R$ be an exp-summable potential with summable variation according to an exhausting sequence $\{E_m\}_m$. Then, for every $K \in \finSet$, $\gamma_K$ is $G$-invariant, that is, for every $w \in X_{K}$, $x \in X$, and $g \in G$, it holds that
\begin{equation*}
\gamma_{Kg^{-1}}(g \cdot [w],g \cdot x) = \gamma_K([w],x).
\end{equation*}

\end{corollary}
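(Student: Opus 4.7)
The plan is to combine the explicit formula for $\gamma_K([w],x)$ given by Proposition \ref{lemma:characterization-DLR-state} with the $G$-equivariance of $\phi_*^{\tau}$ established in Corollary \ref{cor:invariance-phi}; all else amounts to bookkeeping of how the shift action translates cylinders, configurations, and coordinate-wise transpositions.

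First, I would identify the objects on the left-hand side. Using the cylinder identity $g \cdot [x_F] = [(g \cdot x)_{Fg^{-1}}]$ from the preliminaries, one has $g \cdot [w] = [w^g]$, where $w^g \in X_{Kg^{-1}}$ is defined by $w^g(h) = w(hg)$. Fix any reference word $v \in X_K$ and let $v^g \in X_{Kg^{-1}}$ be its analogous shift. A direct coordinate-by-coordinate unpacking of the definitions from Section \ref{section:permutations} then yields two key identities: the configurational identity $v^g (g \cdot x)_{(Kg^{-1})^c} = g \cdot (v x_{K^c})$, and the permutational identity $\tau_{w^g, v^g} = g \cdot \tau_{w, v}$ as elements of $\cE_{Kg^{-1}}$ (i.e., $\tau_{w^g, v^g}$ coincides with the image of $\tau_{w,v}$ under the $G$-action on $\cE$ described just after the introduction of $\cE_K$).

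Next, I would apply Proposition \ref{lemma:characterization-DLR-state} to both sides of the claim. On the right-hand side, $\gamma_K([w], x)$ equals a ratio whose numerator is $\exp(\phi_*^{\tau_{w,v}}(v x_{K^c}))$ and whose denominator is $\sum_{w' \in X_K} \exp(\phi_*^{\tau_{w', v}}(v x_{K^c}))$. On the left-hand side, applied with base word $v^g$, the kernel $\gamma_{Kg^{-1}}([w^g], g \cdot x)$ is a similar ratio indexed by $X_{Kg^{-1}}$. Using the two identities above to rewrite each expression $\phi_*^{\tau_{w^g, v^g}}(v^g (g \cdot x)_{(Kg^{-1})^c})$ as $\phi_*^{g \cdot \tau_{w,v}}(g \cdot (v x_{K^c}))$, and then invoking Corollary \ref{cor:invariance-phi} with $K$ replaced by $Kg^{-1}$ and $\tau_K$ by $\tau_{w^g, v^g}$, produces $\phi_*^{\tau_{w,v}}(v x_{K^c})$. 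The denominator, initially indexed by $w'' \in X_{Kg^{-1}}$, is reparametrized as $w'' = \tilde w^g$ for $\tilde w \in X_K$ and thereby matches the right-hand denominator term by term, closing the proof.

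The main obstacle is notational rather than conceptual: one must carefully align the shift conventions $F \mapsto Fg^{-1}$ used for cylinders and for the action on $\cE$ with the direction in which Corollary \ref{cor:invariance-phi} is stated, and verify the permutational identity $\tau_{w^g, v^g} = g \cdot \tau_{w, v}$ directly from the formula defining $\tau_{w,v}$ in Definition \ref{defn:coordinate-wise-permutation-two}. No new analytic ingredient beyond the two already-cited results is required.
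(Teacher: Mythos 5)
Your proposal is correct and follows essentially the same route as the paper's proof: both apply the explicit formula of Proposition \ref{lemma:characterization-DLR-state} to each side, use the identity $v^g (g \cdot x)_{(Kg^{-1})^c} = g \cdot (v x_{K^c})$ together with the shifted-cylinder identification, and then invoke Corollary \ref{cor:invariance-phi} before reindexing the denominator. Your explicit verification that $\tau_{w^g,v^g} = g \cdot \tau_{w,v}$ just spells out what the paper leaves implicit in its notation $\phi_*^{\tau_{Kg^{-1}}}$ versus $\phi_*^{\tau_K}$.
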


\begin{proof} Let $K \in \finSet$. Given $v \in X_K$, let $y^v \in X$ be arbitrary and such that $y^v_K = v$. Then,
\begin{align*}
\gamma_{Kg^{-1}}(g \cdot [w],g \cdot x)   &   =   \gamma_{Kg^{-1}}([(g \cdot y^w)_{Kg^{-1}}],g \cdot x)  \\
&=   \frac{\exp\left(\phi_*^{\tau_{Kg^{-1}}}((g \cdot y^w)_{Kg^{-1}}(g \cdot x)_{(Kg^{-1})^c})\right)}{\sum_{w' \in X_{K}}\exp\left(\phi_*^{\tau_{Kg^{-1}}}((g \cdot y^{w'})_{Kg^{-1}}(g \cdot x)_{(Kg^{-1})^c})\right)} \\
&=   \frac{\exp\left(\phi_*^{\tau_{Kg^{-1}}}(g \cdot (y^w_K x_{K^c}))\right)}{\sum_{w' \in X_{K}}\exp\left(\phi_*^{\tau_{Kg^{-1}}}(g \cdot (y^{w'}_K x_{K^c}))\right)} \\
&=   \frac{\exp\left(\phi_*^{\tau_K}(y^w_K x_{K^c})\right)}{\sum_{w' \in X_{K}}\exp\left(\phi_*^{\tau_K}(y^{w'}_K x_{K^c})\right)} \\
& = \gamma_{K}([w],x),
\end{align*}
where we have used the property of $\phi_*^\tau$ from Corollary \ref{cor:invariance-phi}.
\end{proof}

\begin{definition} A potential $h\colon X \to \R$ is \textbf{local} if $h$ is $\cB_{K}$-measurable for some $K \in \finSet$. For each $K \in \finSet$, denote by $\mathcal{L}_{K}$ the linear space of all bounded $\cB_{K}$-measurable potentials and $\mathcal{L} = \bigcup_{K \in \finSet} \mathcal{L}_{K}$.

A potential $h\colon X \to \R$ is \textbf{quasilocal} if there exists a sequence $\{\phi_n\}_{n}$ of local potentials such that $\lim_{n \to \infty}\Vert h - h_n\Vert_{\infty} = 0$. Note that $\overline{\mathcal{L}}$  is the linear space of all bounded quasilocal potentials, where $\overline{\mathcal{L}}$ is the uniform closure of $\mathcal{L}$ on the linear space of bounded $\sigmaAlg$-measurable potentials. 
\end{definition}

\begin{remark}\cite[Remark 2.21]{georgii2011gibbs}\label{rmk:characterization-of-quasilocal} A potential $h\colon X \to \R$ is quasilocal if and only if for all exhausting sequences of finite subsets $\{E_m\}_{m}$ of $G$, $\lim_{m \to \infty}\sup_{\substack{x, y \in X\\ x_{E_m} = y_{E_m}}} |h(x) - h(y)| = 0 $. 
\end{remark}

\begin{definition} A specification $\gamma = \left(\gamma_K\right)_{K \in \finSet}$ is quasilocal if, for each $K \in \finSet$ and $h \in \overline{\mathcal{L}}$, it holds that $\gamma_K h \in \overline{\mathcal{L}}$, where 
\begin{equation*}
    \gamma_{K}h(x) = \sum_{w \in X_{K}}\gamma_{K}(w,x)h(wx_{K^c}).
\end{equation*}
\end{definition}

\begin{remark}\label{rmk:quasilocal-specifications} In order to verify that a specification is quasilocal it suffices to prove that $\gamma_K h  \in \overline{\mathcal{L}}$, for $K \in \finSet$ and $h \in \mathcal{L}$ (see \cite{georgii2011gibbs}, page 32).
\end{remark}

\begin{theorem}\label{thm:quasilocality} Let $\phi\colon X \to \R$ be an exp-summable potential with summable variation according to an exhausting sequence $\{E_m\}_m$. If $\gamma = \{\gamma_{K}\}_{K \in \finSet}$ is defined as in equation \eqref{specification}, then $\gamma$ is quasilocal.
\end{theorem}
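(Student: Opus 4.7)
The plan is to invoke Remark~\ref{rmk:quasilocal-specifications} and verify that $\gamma_{K} h \in \overline{\mathcal{L}}$ for every $K \in \finSet$ and every bounded local $h \in \mathcal{L}_{F}$, with $F \in \finSet$. Using Remark~\ref{rmk:characterization-of-quasilocal}, the task reduces to showing that the oscillation of $\gamma_K h$ over configurations that agree on $E_m$ tends to zero as $m \to \infty$. Fixing a reference word $v \in X_K$ and invoking Proposition~\ref{lemma:characterization-DLR-state}, I would write
$$
\gamma_K h(x) \;=\; \sum_{w \in X_K} \frac{\exp\!\left(\phi_{*}^{\tau_{w,v}}(vx_{K^c})\right)}{\sum_{w' \in X_K} \exp\!\left(\phi_{*}^{\tau_{w',v}}(vx_{K^c})\right)}\, h(w x_{K^c}),
$$
and split the analysis into controlling an infinite tail over $w \in X_K$ and showing that each truncation is quasilocal.

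The first key step is a uniform summable domination. Applying Lemma~\ref{lem:bounds} and letting $m \to \infty$ (swapping the roles of $w$ and $v$ in the evaluation point) gives
$$
\bigl|\phi_{*}^{\tau_{w,v}}(vx_{K^c}) - (\sup\phi_K[w] - \sup\phi_K[v])\bigr| \;\leq\; \Delta_K(\phi) + V_K(\phi),
$$
uniformly in $x$. Combined with the symmetric version of the lower bound from Lemma~\ref{lem:dominatedConvergence}, this yields
$$
\gamma_K([w],x) \;\leq\; C \cdot \frac{\exp(\sup\phi_K[w])}{Z_K(\phi)},
$$
with $C = \exp(2(\Delta_K(\phi)+V_K(\phi)))$, uniformly in $x$. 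Since $\phi$ is exp-summable, $\sum_{w} \exp(\sup\phi_K[w]) = Z_K(\phi) < \infty$, so given any $\eta > 0$ there is a finite $A \subseteq X_K$ such that $\sum_{w \notin A} \gamma_K([w],x) < \eta$ for all $x$; the tail thus contributes at most $2\eta \,\|h\|_\infty$ to any difference $|\gamma_K h(x) - \gamma_K h(y)|$.

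The second step is to show that for each fixed $w$, the map $x \mapsto \phi_{*}^{\tau_{w,v}}(vx_{K^c})$ is uniformly continuous. Each $\phi_{E_m}^{\tau_{w,v}}$ is uniformly continuous on $X$ (because $\phi$ is uniformly continuous by Proposition~\ref{prop:finite-F-variation-implies-uniformly-continuous}, and a permutation in $\mathcal{E}_K$ affects only finitely many coordinates), and by Theorem~\ref{thm:existence-of-uniform-limit-f-tau} it converges uniformly to $\phi_{*}^{\tau_{w,v}}$; hence the limit is uniformly continuous. The denominator in the formula of Proposition~\ref{lemma:characterization-DLR-state} is bounded away from zero uniformly in $x$ (again by Lemma~\ref{lem:dominatedConvergence}), so each $\gamma_K([w],\cdot)$ is uniformly continuous. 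Since $h$ is local, each term $\gamma_K([w],\cdot) h(w \cdot_{K^c})$ for $w \in A$ is quasilocal. Given $\epsilon > 0$, I would pick $\eta$ so that the tail contribution is at most $\epsilon/2$, then choose $m$ large enough that the finite sum over $w \in A$ varies by at most $\epsilon/2$ whenever $x_{E_m} = y_{E_m}$.

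The main obstacle is producing a uniform summable majorant for $\gamma_K([w],x)$, since the alphabet is infinite and the normalizing denominator depends on $x$; once the estimates of Lemmas~\ref{lem:bounds} and~\ref{lem:dominatedConvergence} are leveraged together with exp-summability, the tail is controlled uniformly in $x$ and the remainder of the argument is a straightforward combination of uniform continuity of finitely many factors with the locality of $h$.
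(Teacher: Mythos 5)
Your proof is correct, but it takes a genuinely different route from the paper's. The paper never truncates the sum over $w \in X_K$: using that the convergence $\phi^{\tau}_{E_m} \to \phi_*^{\tau}$ in Theorem~\ref{thm:existence-of-uniform-limit-f-tau} is uniform on $X$ \emph{and} on $\cE_K$, it shows that for $x_{E_n}=y_{E_n}$ with $n$ large the kernel satisfies $\gamma_K(w,y)\le e^{2\epsilon}\gamma_K(w,x)$ uniformly in $w\in X_K$, and then bounds $|\gamma_K h(x)-\gamma_K h(y)|$ in one shot by $\sup_{x'_{E_n}=y'_{E_n}}|h(x')-h(y')|+(1-e^{\pm 2\epsilon})\|h\|_\infty$, invoking locality of $h$ only for the first term. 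You instead establish the uniform summable domination $\gamma_K([w],x)\le C\,e^{\sup\phi_K[w]}/Z_K(\phi)$ with $C=e^{2(\Delta_K(\phi)+V_K(\phi))}$ (this is correct; it is exactly the two-sided comparison used inside the proof of Lemma~\ref{lem:dominatedConvergence}, obtained from Lemma~\ref{lem:bounds} with the roles of $w$ and $v$ exchanged), cut the tail using exp-summability, and handle the finitely many remaining terms via uniform continuity of each $\gamma_K([w],\cdot)$ and locality of $h$. Your route yields an explicit tail bound on the kernel, uniform in the boundary condition, and only requires per-$w$ uniform continuity of $\phi_*^{\tau_{w,v}}$; the paper's multiplicative estimate is shorter and avoids any truncation, at the price of using the uniformity of the limit over all of $\cE_K$. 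One detail you should make explicit: uniform continuity of $\gamma_K([w],\cdot)$ needs not only that the denominator $\sum_{w'}\exp\bigl(\phi_*^{\tau_{w',v}}(vx_{K^c})\bigr)$ be bounded away from zero, but also that it be uniformly continuous in $x$; since it is an infinite sum, this follows from your own majorant by the Weierstrass $M$-test, because each summand is uniformly continuous and dominated, uniformly in $x$, by the summable bound $e^{\Delta_K(\phi)+V_K(\phi)}e^{\sup\phi_K[w']-\sup\phi_K[v]}$, so the omission is easily repaired from estimates you already have.
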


\begin{proof} Let $h \in \mathcal{L}$ and let $\epsilon > 0$. Given any $K \in \finSet$, first notice that 
\begin{equation*}
|\gamma_{K}h(x)| \leq \sum_{w \in X_{K}}\gamma_{K}(w,x)|h(wx_{K^c})| \leq \|h\|_\infty\sum_{w \in X_{K}}\gamma_{K}(w,x) = \|h\|_\infty,    
\end{equation*}
so $\|\gamma_{K}h\|_\infty \leq \|h\|_\infty$. In addition, if $x, y \in X$ are such that $x_{E_n} = y_{E_n}$ for $n$ to be determined, we have that
\begin{align*}
|\gamma_{K}&h(x) - \gamma_{K}h(y)|\\   
&\leq    \sum_{w \in X_{K}}|\gamma_{K}(w,x)h(wx_{K^c}) - \gamma_{K}(w,y)h(wy_{K^c})|   \\
&=    \sum_{w \in X_{K}}\gamma_{K}(w,x) \left|h(wx_{K^c}) - \frac{\gamma_{K}(w,y)}{\gamma_{K}(w,x)}h(wy_{K^c})\right| \\
&\leq    \sum_{w \in X_{K}}\gamma_{K}(w,x) \left|h(wx_{K^c}) - e^{\pm 2\epsilon}h(wy_{K^c})\right| \\
&\leq    \sum_{w \in X_{K}}\gamma_{K}(w,x) \left|h(wx_{K^c}) - h(wy_{K^c})\right| + \sum_{w \in X_{K}}\gamma_{K}(w,x)(1-e^{\pm 2\epsilon}) \left|h(wy_{K^c})\right| \\
&\leq    \sum_{w \in X_{K}}\gamma_{K}(w,x) \left|h(wx_{K^c}) - h(wy_{K^c})\right| + \sum_{w \in X_{K}}\gamma_{K}(w,x)(1-e^{\pm 2\epsilon})\left\|h\right\|_\infty \\
&\leq    \sup_{x',y': x'_{E_n} = y'_{E_n}}|h(x') - h(y')| + (1-e^{\pm 2\epsilon}) \left\|h\right\|_\infty.   
\end{align*}
To justify the second inequality, first observe that,  for every $w,v \in X_K$, $\phi_*^{\tau_{v,w}}$ is uniformly continuous, since it is a uniform limit of uniformly continuous potentials, namely, $\phi_{E_m}$. Then, there exists $n_0 \in \N$ such that for every $n \geq n_0$, every $w,v \in X_K$, and every $x, y \in X$ with $x_{E_n} = y_{E_n}$,

$$
|\phi_*^{\tau_{v, w}}(wx_{K^c}) - \phi_*^{\tau_{v, w}}(wy_{K^c})| < \epsilon, 
$$
so
$$
\gamma_{K}(w,y) = \frac{\exp\left(\phi_*^{\tau_{w, w}}(wy_{K^c})\right)}{\sum_{v \in X_{K}}\exp\left(\phi_*^{\tau_{v, w}}(wy_{K^c})\right)} \leq \frac{\exp\left(\phi_*^{\tau_{w, w}}(wx_{K^c}) + \epsilon\right)}{\sum_{v \in X_{K}}\exp\left(\phi_*^{\tau_{v, w}}(wx_{K^c})-\epsilon\right)} = e^{2\epsilon}\gamma_{K}(w,x).
$$

Now, since $h$ is local, we have that $\lim_{n \to \infty} \sup_{\substack{x,y \in X\\ x_{E_n} = y_{E_n}}} |h(x) - h(y)| = 0$, so that there exists $n_1 \in \N$ such that for all $n \geq n_1$, $\sup_{\substack{x,y \in X\\ x_{E_n} = y_{E_n}}} |h(x) - h(y)| < \epsilon$. Taking $n=\max\{n_0,n_1\}$, we obtain that
\begin{equation*}
    |\gamma_{K}h(x) - \gamma_{K}h(y)| \leq \epsilon +  (1-e^{\pm 2\epsilon}) \left\|h\right\|_\infty,
\end{equation*}
and since $\epsilon$ was arbitrary, we conclude.
\end{proof}

\section{Equivalences of different notions of Gibbs measures}\label{section:main-theorem}

In this section, we introduce the four notions of Gibbs measures to be considered, namely, DLR, conformal, Bowen-Gibbs, and equilibrium measures, and prove the equivalence among them provided extra conditions. We mainly assume that $G$ is a countable amenable group, the configuration space is $X = \N^G$, and $\phi: X \to \R$ is an exp-summable potential with summable variation according to an exhausting sequence $\{E_m\}_{m}$.

We proceed to describe the content of each subsection: in \S \ref{sec:main-def}, we provide a rigorous definition of each kind of measure and results about entropy and pressure; in \S \ref{sec:DLR-iff-conformal}, we establish that the set of DLR measures and the set of conformal measures coincide; in \S \ref{sec:DLR-implies-Bowen-Gibbs}, we prove that every DLR measure is a Bowen-Gibbs measure; in \S\ref{sec:existence-of-DLR-states}, we show the existence of a conformal measure; in \S\ref{sec:Bowen-Gibbs-are-equilibrium}, we prove that  a $G$-invariant Bowen-Gibbs measure with finite entropy is an equilibrium measure; finally, in \S\ref{sec:equilibrium-states-are-DLR}, we prove that if a measure is an equilibrium measure, then it is also a DLR measure.

Below, we provide a diagram of the main results of this section, including extra assumptions needed.
$$
\xymatrix@C=12em@R=4em{
\text{DLR measure} \ar@{<=>}[r]^{\text{Theorem \ref{thm:conformal-iff-DLR}}} \ar@{=>}[d]_{\text{Theorem \ref{thm:bowgib}}} & \text{Conformal measure}\\
\text{Bowen-Gibbs measure} \ar@{=>}[r]_{\text{+ $G$-invariance  and $H(\mu) < \infty$}}^{\text{Theorem \ref{prop:bowen-gibbs-implies-equlibrium}}} & \text{Equilibrium measure} \ar@{=>}[lu]|-{\substack{\text{\hspace{0.2cm} Theorem \ref{thm:equilibrium-implies-DLR}}\\ \text{+ $G$-invariance}}}
} 
$$

\begin{remark}
We are not aware whether it is possible to prove that a Bowen-Gibbs measure is necessarily a DLR measure without the finite entropy assumption. In fact, we do not know if $G$-invariance is a necessary assumption for that implication.
\end{remark}

\subsection{Definitions of Gibbs measures}

\label{sec:main-def}

We start by giving the definitions of DLR, conformal, and Bowen-Gibbs measures.

\begin{definition}
Let $\phi\colon X \to \R$ be an exp-summable potential with summable variation according to an exhausting sequence $\{E_m\}_m$. A measure $\mu \in \mathcal{M}(X)$ is a {\bf DLR measure (for $\phi$)} if
\begin{equation*}
    \mu\left(B\,|\,\cB_{K^c}\right)(x) = \gamma_{K}(B,x) \qquad \mu(x)\text{-a.s.,}
\end{equation*}
 for every $K \in \finSet$, $B \in \cB$, and $x \in X$, where $\gamma_K$ is defined as in equation \eqref{specification}. We denote the set of DLR measures for $\phi$ by $\gibbs$.
\end{definition}

\begin{definition}
Let $\phi\colon X \to \R$ be an exp-summable potential with summable variation according to an exhausting sequence $\{E_m\}_m$. A measure $\mu \in \mathcal{M}(X)$ is a {\bf conformal measure (for $\phi$)} if
\begin{equation}
\label{defn:conformal-measure-countable-alphabet}
    \frac{d(\mu\circ\tau^{-1})}{d \mu}=\exp(\phi_*^{\tau}) \qquad \mu(x)\text{-a.s.}
\end{equation}
for every $A \in \mathcal{F}(\N)$, $K \in \finSet $, and $\tau\in\cE_{K,A}$.
\end{definition}

\begin{definition}\label{defn:bowen-gibbs-measure} Let $\phi\colon X \to \R$ be an exp-summable potential with summable variation according to an exhausting sequence $\{E_m\}_m$. A measure $\mu \in \mathcal{M}(X)$ is a {\bf Bowen-Gibbs measure (for $\phi$)} if there exists $p \in \R$ such that, for every $\epsilon > 0$, there exist $K \in \finSet$ and $\delta > 0$ such that, for every $(K,\delta)$-invariant set $F \in \finSet$ and $x \in X$,
\begin{equation}\label{eq:defn-of-Bowen-Gibbs}
       \exp\left(-\epsilon \cdot|F|\right) \leq  \frac{\mu([x_F])}{\exp\left(\phi_F(x) - p  |F|\right)} \leq \exp\left(\epsilon\cdot  |F|\right).
\end{equation}
\end{definition}

\begin{remark} Notice that, in Definition \ref{defn:bowen-gibbs-measure}, we can replace $\phi_F(x)$ by $\sup\phi_F([x_F])$ in an equivalent way, so that we have
\begin{equation*}
       \exp\left(-\epsilon \cdot|F|\right) \leq  \frac{\mu([x_F])}{\exp\left(\sup\phi_F([x_F]) - p  |F|\right)} \leq \exp\left(\epsilon \cdot |F|\right).
\end{equation*}

\end{remark}

\begin{proposition} Let $\phi\colon X \to \R$ be an exp-summable potential with summable variation according to an exhausting sequence $\{E_m\}_m$. Then, if $\mu$ is a Bowen-Gibbs measure for $\phi$, the constant $p$ is necessarily $p(\phi)$.
\end{proposition}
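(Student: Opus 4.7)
The plan is to compare the normalization constraint $\sum_{w \in X_F} \mu([w]) = 1$ against the Bowen-Gibbs inequality summed over cylinders, and then invoke Theorem~\ref{thm:existence-of-pressure} to identify $p$ with $p(\phi)$. This is the natural way to extract a partition function out of the Bowen-Gibbs bounds.

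More concretely, fix $\epsilon > 0$ and, using the equivalent formulation of Bowen-Gibbs with $\sup \phi_F([x_F])$ in place of $\phi_F(x)$ noted in the remark after Definition~\ref{defn:bowen-gibbs-measure}, obtain $K \in \finSet$ and $\delta > 0$ such that for every $(K,\delta)$-invariant set $F$ and every $w \in X_F$,
\begin{equation*}
\exp\bigl(-\epsilon|F|\bigr)\exp\bigl(\sup\phi_F([w]) - p|F|\bigr) \leq \mu([w]) \leq \exp\bigl(\epsilon|F|\bigr)\exp\bigl(\sup\phi_F([w]) - p|F|\bigr).
\end{equation*}
Since $\{[w] : w \in X_F\}$ is a countable partition of $X$ and $\mu$ is a probability measure, summing over $w \in X_F$ yields
\begin{equation*}
\exp\bigl(-\epsilon|F| - p|F|\bigr) Z_F(\phi) \;\leq\; 1 \;\leq\; \exp\bigl(\epsilon|F| - p|F|\bigr) Z_F(\phi),
\end{equation*}
where exp-summability of $\phi$ ensures that $Z_F(\phi)$ is finite so that everything is meaningful. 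Taking logarithms and dividing by $|F|$ gives
\begin{equation*}
p - \epsilon \;\leq\; \frac{1}{|F|} \log Z_F(\phi) \;\leq\; p + \epsilon
\end{equation*}
for every $(K,\delta)$-invariant $F$.

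By Theorem~\ref{thm:existence-of-pressure}, $\frac{1}{|F|}\log Z_F(\phi) \to p(\phi)$ as $F$ becomes more and more invariant, so letting $F \to G$ along sets that are simultaneously sufficiently invariant for both the Bowen-Gibbs bound and the pressure approximation yields $p - \epsilon \leq p(\phi) \leq p + \epsilon$. Since $\epsilon > 0$ was arbitrary, $p = p(\phi)$.

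There is essentially no obstacle here; the only point requiring a small amount of care is that the alphabet is countably infinite, so the sum $\sum_{w \in X_F} \exp(\sup\phi_F([w]))$ has infinitely many terms. This is precisely why exp-summability of $\phi$ (which, by sub-multiplicativity of $Z_F$, implies $Z_F(\phi) < \infty$ for all $F \in \finSet$) is the natural standing hypothesis; it guarantees that both sides of the summed inequality are finite positive real numbers and the logarithm step is legitimate.
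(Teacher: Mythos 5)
Your proposal is correct and follows essentially the same route as the paper: sum the two-sided Bowen--Gibbs bounds (in the $\sup\phi_F$ form) over all cylinders $[w]$, $w \in X_F$, use $\sum_{w}\mu([w])=1$ to trap $\tfrac{1}{|F|}\log Z_F(\phi)$ within $\epsilon$ of $p$, and then let $F \to G$ via Theorem \ref{thm:existence-of-pressure}. The remark about exp-summability guaranteeing $Z_F(\phi)<\infty$ is a fine extra observation but does not change the argument.
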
\label{subsec:p=p(phi)}

\begin{proof} Indeed, given $\epsilon > 0$, there exist $K \in \finSet$ and $\delta > 0$ so that
$$
\exp\left(-\epsilon\cdot|F|\right) \exp\left(\phi_F(x)\right) \leq \mu([x_F])\exp\left(p |F|\right)  \leq \exp\left(\epsilon\cdot|F|\right) \exp\left(\phi_F(x)\right)
$$
for every $(K,\delta)$-invariant set $F \in \finSet$ and every $x \in X$. Since $x$ is arbitrary, we have that
$$
\exp\left(-\epsilon\cdot|F|\right) \exp\left(\sup \phi_F[x_F]\right) \leq \mu([x_F])\exp\left(p  |F|\right)  \leq \exp\left(\epsilon\cdot|F|\right) \exp\left(\sup \phi_F[x_F]\right),
$$
and, since $\mu$ is a probability measure, adding over all $x_F \in X_F$, we get
$$
\exp\left(-\epsilon\cdot|F|\right) Z_F(\phi) \leq \exp\left(p |F|\right)  \leq \exp\left(\epsilon\cdot|F|\right)  Z_F(\phi).
$$

Then, if we take logarithms and divide by $|F|$, we obtain that
$$
-\epsilon + \frac{\log Z_F(\phi)}{|F|} \leq p \leq \frac{\log Z_F(\phi)}{|F|} + \epsilon,
$$
so, taking the limit as $F$ becomes more and more invariant, we obtain that
$$
-\epsilon + p(\phi) \leq p \leq p(\phi) + \epsilon,
$$
and since $\epsilon$ was arbitrary, we conclude that $p = p(\phi)$.
\end{proof}

Consider the canonical partition of $X$ given by $\{[a]\}_{a \in \N}$. This is a countable partition that generates the Borel $\sigma$-algebra $\sigmaAlg$ under the shift dynamic. Given a measure  $\nu \in \mathcal{M}(X)$, the \textbf{Shannon entropy} of the canonical partition associated with $\nu$ is given by 
\begin{equation*}
    H(\nu) := -\sum_{a \in \N} \nu([a])\log\nu([a]).
\end{equation*}
Now, for each $F \in \finSet$, let $\{[w]\}_{w \in X_F}$ be the $F$-refinement of the canonical partition and consider its corresponding Shannon entropy, which is given by
$$
H_F(\nu) := -\sum_{w \in X_F} \nu([w])\log\nu([w]).
$$

We have the following proposition.

\begin{proposition}\label{prop:finite-Shannon-entropy} Let $\phi\colon X \to \R$ be an exp-summable and continuous potential with finite oscillation. If $\nu \in \mathcal{M}(X)$ is such that $\int \phi d\nu > -\infty$, then $H(\nu) < \infty$. Furthermore, if $\nu$ is $G$-invariant, then, for every $F \in \finSet$, $H_F(\nu) < \infty$.
\end{proposition}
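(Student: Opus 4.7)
The plan is to handle the two assertions in sequence.

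For the first assertion, I would apply the Gibbs (variational) inequality at a single site. Since $\phi$ is exp-summable, the weights $q_a := \exp(\sup \phi([a]))/Z_{1_G}(\phi)$ form a probability distribution on $\N$, so the nonnegativity of the relative entropy between $(\nu([a]))_{a \in \N}$ and $(q_a)_{a \in \N}$ yields
\[
H(\nu) \leq \log Z_{1_G}(\phi) - \sum_{a \in \N} \nu([a]) \sup \phi([a]).
\]
The next step is to bound the last sum from below by $\int \phi \, d\nu$: since $\sup \phi([a]) \geq \phi(x)$ for every $x \in [a]$, one has $\nu([a]) \sup \phi([a]) \geq \int_{[a]} \phi \, d\nu$, and summing over $a$ gives $\sum_a \nu([a]) \sup \phi([a]) \geq \int \phi \, d\nu > -\infty$. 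The interchange of sum and integral is legitimate because $\phi$ is bounded above (a consequence of exp-summability, as observed earlier in the paper), so $\phi^{+}$ is $\nu$-integrable, while $\int \phi^{-}\, d\nu < \infty$ is exactly the hypothesis $\int \phi \, d\nu > -\infty$. Combining, $H(\nu) \leq \log Z_{1_G}(\phi) - \int \phi \, d\nu < \infty$.

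For the second assertion, I would rely on the standard subadditivity of Shannon entropy under partition joins combined with $G$-invariance. For each $g \in G$, let $\xi_g$ denote the countable measurable partition $\{\{x \in X : x(g) = a\}\}_{a \in \N}$. The $F$-cylinder partition is exactly the join $\bigvee_{g \in F} \xi_g$, so by subadditivity,
\[
H_F(\nu) \leq \sum_{g \in F} H(\nu, \xi_g).
\]
Unwinding the action $(g \cdot x)(h) = x(hg)$ shows that $g \cdot \xi_{1_G} = \xi_{g^{-1}}$, so $G$-invariance of $\nu$ implies $H(\nu, \xi_g) = H(\nu, \xi_{1_G}) = H(\nu)$ for every $g \in G$. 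Therefore $H_F(\nu) \leq |F| H(\nu)$, which is finite by the first part.

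There is no substantial obstacle here; the only point needing care is the interchange of sum and integral in the first part, which is routine given the integrability of both $\phi^{+}$ and $\phi^{-}$.
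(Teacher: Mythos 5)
Your proof is correct, but it takes a partly different route from the paper's. For the first claim the underlying idea is the same (a single-site Gibbs/Jensen inequality against the weights $e^{\sup\phi([a])}/Z_{1_G}(\phi)$), though you implement it directly via nonnegativity of relative entropy, whereas the paper truncates to finite alphabets $A_n$ and passes to the limit with monotone convergence; your rearrangement of the possibly infinite sums is legitimate precisely because, as you note, $\sum_a \nu([a])\sup\phi([a])$ is finite (bounded above since $\phi$ is bounded above, below by $\int\phi\,d\nu$), so it would be cleaner to establish that finiteness before invoking the splitting. For the second claim your argument is genuinely different: you reduce $H_F(\nu)$ to the single-site case via subadditivity of Shannon entropy over the join $\bigvee_{g\in F}\xi_g$ together with $g\cdot\xi_{1_G}=\xi_{g^{-1}}$ and $G$-invariance, giving $H_F(\nu)\le |F|\,H(\nu)$; the paper instead runs the Jensen argument at the level of $F$, using $\int\phi_F\,d\nu=|F|\int\phi\,d\nu$ and $\log Z_F(\phi)\le |F|\log Z_{1_G}(\phi)$. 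Your route is shorter and avoids the truncation machinery, and it suffices for the statement as written; the paper's route buys the sharper finite-volume Gibbs inequality $H_F(\nu)+\int\phi_F\,d\nu\le \log Z_F(\phi)$, which is exactly the estimate reused later (in the proof of Proposition \ref{prop:half-of-pv}) to obtain $h(\mu)+\int\phi\,d\mu\le p(\phi)$, so with your proof that inequality would still need to be derived separately.
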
 

\begin{proof}
Let $\{A_n\}_n$ an exhausting sequence of finite alphabets and $F \in \finSet$. Consider $X^{F,n} = \{x \in X: x_F \in A_n^F\} \in \sigmaAlg_F$. Since $\phi$ is exp-summable, then it is bounded from above. Without loss of generality, suppose that it is bounded from above by $0$. Thus, so is $\phi_F$. Define
$$
\phi^{F,n}(x) = \begin{cases}
\phi_F(x), & x \in X^{F,n};\\
0, & \text{otherwise}.
\end{cases}
$$
Notice that, for every $x \in X$, $\phi_F(x) = \lim_{n\to\infty}\phi_{F,n}(x)$ and, for every $n \in \N$, $\phi_F(x) \leq \phi^{F, n+1}(x) \leq \phi^{F, n}(x)$. Therefore, by the Monotone Convergence Theorem, we can conclude that 
\begin{equation*}
    \int \phi_F d\nu = \lim_{n\to\infty} \int \phi^{F,n}(x) d\nu.
\end{equation*}

For each $n\in \N$, let $H_{F, n}(\nu) = -\sum_{w \in A_n^F} \nu([w])\log\nu([w])$. Then, $\lim_{n\to\infty} H_{F,n}(\nu) = H_F(\nu)$. Also, for each $n\in \N$ and $F \in \finSet$, notice that $\phi^{F,n } \leq \sum_{w \in A_n^{F}} 1_{[w]} \sup \phi_{F}([w])$. Therefore, for every $n\in \N$ and $F \in \finSet$,
\begin{align*}
   H_{F,n}(\nu) + \int \phi^{F,n} d\nu &= -\sum_{w \in A^{F}_n} \nu([w])\log \nu([w]) + \int \phi^{F,n} d\nu\\
    &\leq -\sum_{w \in A_n^{F}} \nu([w])\log \nu([w]) +  \sum_{w \in A_n^{F}} \nu([w])\sup \phi_{F}([w])\\
    &= \sum_{w \in A_n^{F}} \nu([w])\log\left(\frac{\exp\left(\sup\phi_F([w])\right)}{\nu([w])}\right)\\
    &\leq \log\left(\sum_{w \in A_n^{F}} \exp \sup \phi_F([w])\right)\\
    &= \log Z_{F}(A_n,\phi),
\end{align*}
where we assume that all the sums involved are over cylinder sets with positive measure. The second inequality follows from Jensen's inequality. In addition, notice that, in the case that $\nu$ is $G$-invariant, it follows that
\begin{align*}
     H_{F}(\nu) &= \lim_{n \to \infty} H_{F,n}(\nu) \\
                &\leq \lim_{n \to \infty} \left(\log Z_{F}(A_n,\phi) - \int \phi^{F,n} d\nu\right) \\
                &= \log Z_F(\phi) - \int \phi_F d\nu \\
                & \leq |F|\left(\log Z_{1_G}(\phi) - \int \phi d\nu\right),
\end{align*}
where we have used that $\log Z_{F}(\phi) \leq |F|\log Z_{1_G}(\phi)$ and $\int \phi_F d\nu = |F|\int \phi d\nu$. Therefore, $H_F(\nu) < \infty$ and, in particular, $H(\nu) = H_{\{1_G\}}(\nu) < \infty$.
\end{proof}

Through a standard argument (for example, for the case $G = \Z$, see \cite{downarowicz2011entropy}; the general case is analogous), it can be justified that if the canonical partition has finite Shannon entropy, the \textbf{Kolmogorov-Sinai entropy} of $\nu$ can be written as 
\begin{equation*}
    h(\nu) = \lim_{F\to G} \frac{1}{|F|}H_F(\nu).
\end{equation*}

The next proposition is based on \cite[Lemma 4.9]{muir2011gibbs} and gives us an upper bound in terms of the pressure for the \emph{specific Gibbs free energy} of a given measure with respect to some potential. Sometimes this fact is known as the \emph{Gibbs inequality}.

\begin{proposition}\label{prop:half-of-pv} Let $\phi\colon X \to \R$ be an exp-summable and uniformly continuous potential with finite oscillation. If $\mu \in \mathcal{M}(X)$ is $G$-invariant and $\int \phi d\mu > -\infty$, then $h(\mu) + \int \phi d\mu \leq p(\phi)$.
\end{proposition}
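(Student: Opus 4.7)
The plan is to mimic and then pass to the limit in the inequality already obtained inside the proof of Proposition \ref{prop:finite-Shannon-entropy}. First, by the same normalization used there, I may assume without loss of generality that $\phi \leq 0$, so $\phi_F \leq 0$ for every $F \in \finSet$; the general case follows by adding a constant, since both sides of the desired inequality transform by the same constant when $\phi$ is replaced by $\phi + c$ (because $\int (\phi+c) d\mu = \int \phi d\mu + c$ and $p(\phi+c) = p(\phi) + c$).

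Next, I would reuse the computation already carried out in the proof of Proposition \ref{prop:finite-Shannon-entropy}: with $\{A_n\}_n$ an exhaustion of $\N$ by finite alphabets and $\phi^{F,n}$ defined as $\phi_F \cdot \mathbbm{1}_{X^{F,n}}$, Jensen's inequality yields
\begin{equation*}
H_{F,n}(\mu) + \int \phi^{F,n} d\mu \leq \log Z_F(A_n, \phi).
\end{equation*}
Since $\phi^{F,n} \downarrow \phi_F$ as $n \to \infty$ with $\phi_F \leq 0$, the Monotone Convergence Theorem gives $\int \phi^{F,n} d\mu \to \int \phi_F d\mu$; meanwhile $H_{F,n}(\mu) \to H_F(\mu)$ and $Z_F(A_n,\phi) \uparrow Z_F(\phi)$ by monotone convergence applied to the counting measure on $X_F$. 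Thus
\begin{equation*}
H_F(\mu) + \int \phi_F d\mu \leq \log Z_F(\phi).
\end{equation*}

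Now I invoke $G$-invariance of $\mu$, which gives $\int \phi_F d\mu = \sum_{g \in F} \int \phi \circ g \, d\mu = |F| \int \phi d\mu$. Dividing by $|F|$,
\begin{equation*}
\frac{H_F(\mu)}{|F|} + \int \phi d\mu \leq \frac{\log Z_F(\phi)}{|F|}.
\end{equation*}
Finally, since $G$ is amenable, pick a F{\o}lner sequence $\{F_n\}_n$ that is also an exhausting sequence of $G$. The right-hand side converges to $p(\phi)$ by Theorem \ref{thm:existence-of-pressure}, while the Shannon entropy term converges to $h(\mu)$ along $\{F_n\}_n$ by the standard formula for the Kolmogorov-Sinai entropy of the canonical generating partition (which is available since $H(\mu) < \infty$ by Proposition \ref{prop:finite-Shannon-entropy}). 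Letting $n \to \infty$ yields $h(\mu) + \int \phi d\mu \leq p(\phi)$.

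The only delicate point is ensuring the passage $n\to\infty$ in the Jensen bound is legitimate when $\phi$ is unbounded below; this is precisely where the exp-summability together with the normalization $\phi \leq 0$ and the finiteness of $\int \phi d\mu$ are used, so that dominated/monotone convergence applies both to the integral and to the partition function. Everything else is a routine manipulation of the entropy identity combined with $G$-invariance.
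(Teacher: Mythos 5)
Your proof is correct and follows essentially the same route as the paper: the paper's one-line argument is exactly the Jensen-type bound $H_F(\mu) + \int \phi_F \, d\mu \leq \log Z_F(\phi)$ extracted from the proof of Proposition \ref{prop:finite-Shannon-entropy}, combined with $G$-invariance ($\int \phi_F \, d\mu = |F| \int \phi \, d\mu$) and the limit $F \to G$ via Theorem \ref{thm:existence-of-pressure} and the entropy formula $h(\mu) = \lim_{F \to G} \frac{1}{|F|} H_F(\mu)$. Your version merely spells out the normalization $\phi \leq 0$ and the monotone-convergence steps that the paper leaves implicit.
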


\begin{proof} Since  $\phi\colon X \to \R$ is an exp-summable and uniformly continuous potential with finite oscillation, due to Theorem \ref{thm:existence-of-pressure}, the pressure $p(\phi)$ exists. Then,
\begin{align*}
    h(\mu) + \int \phi d\mu = \lim_{F \to G} \frac{1}{|F|}H_F(\mu) + \int \phi d\mu
    \leq \lim_{F \to G} \frac{1}{|F|} \log Z_{F}(\phi)
    = p(\phi).
\end{align*}
\end{proof}

\begin{definition}\label{defn:equilibrium-state}
Let $\phi\colon X \to \R$ be an exp-summable potential with summable variation according to an exhausting sequence $\{E_m\}_m$. A measure $\mu \in \mathcal{M}_G(X)$ is an {\bf equilibrium measure (for $\phi$)} if $\int{\phi}d\mu > -\infty$ and
\begin{equation}\label{eq:equilibrium-state-eq-defn}
h(\mu) + \int{\phi}d\mu = \sup \left\{h(\nu) + \int{\phi} d\nu\colon \nu \in \mathcal{M}_G(X), \int{\phi}d\nu > -\infty\right\}.
\end{equation}
\end{definition}

Notice that it is not clear whether the supremum in equation \eqref{eq:equilibrium-state-eq-defn} is achieved. The answer to this problem is intimately related to the concept of Gibbs measures in its various forms and their equivalences, which we address throughout this section.

\begin{remark} Notice that, in light of Proposition \ref{prop:finite-Shannon-entropy},  any measure $\nu \in \mathcal{M}_G(X)$ such that the $\int \phi d\nu > -\infty $ has finite entropy, that is, $h(\nu) < \infty$, provided that $\phi$ is exp-summable and has finite oscillation. Thus, in the particular case that $\phi$ is an exp-summable potential with summable variation according to an exhausting sequence $\{E_m\}_m$, we obtain that $h(\nu) < \infty$.
\end{remark}

\subsection{Equivalence between DLR and conformal measures}\label{sec:DLR-iff-conformal}

This section is dedicated to proving that the notions of DLR measure and conformal measure coincide in the full shift with countable alphabet over a countable amenable group context. Nevertheless, before proving this major result, notice that for $B \in \cB$, $K \in \finSet$, and $x \in X$,
\begin{equation}\label{eq:conditional-measure-of-measurable-as-sum-of-cylinders}
\mu(B \,\vert \, \mathcal{B}_{K^c})(x) = \sum_{w \in X_K} \mu([w] \,\vert \,\mathcal{B}_{K^c})(x)\mathbbm{1}_{\{w x_{K^c} \in B\}}.    
\end{equation}

Indeed, it can be checked that $\mathbbm{1}_{\{w x_{K^c} \in B\}}(x)$ is $\mathcal{B}_{K^c}$-measurable, so $\mu(x)$-a.s.,
\begin{align*}
\sum_{w \in X_K} \mu([w] \, \vert \, \mathcal{B}_{K^c})(x)\mathbbm{1}_{\{w x_{K^c} \in B\}}(x)  &=   \mu\left(\sum_{w \in X_K} \mathbbm{1}_{[w]} \mathbbm{1}_{\{w x_{K^c} \in B\}}\middle\vert \mathcal{B}_{K^c}\right)(x) \\
    &   =   \mu\left(\sum_{w \in X_K} \mathbbm{1}_{[w]} \mathbbm{1}_B \middle\vert \mathcal{B}_{K^c}\right)(x) \\
    &   =   \mu\left(B \vert \mathcal{B}_{K^c}\right)(x).
\end{align*}

This observation will allow us to reduce our calculations from arbitrary Borel sets $B \in \sigmaAlg$ to cylinder sets of the form $[w]$. Next, we have the following result.

\begin{corollary}\label{charact-DLR-corol}
Let $\phi\colon X \to \R$ be an exp-summable potential with summable variation according to an exhausting sequence $\{E_m\}_m$. A measure $\mu \in \mathcal{M}(X)$ is a DLR measure for $\phi$ if, and only if, for every $K \in \finSet$, $w \in X_{K}$ and $x \in X$, it holds that
\begin{equation}
\label{DLR-permutation}
    \mu([{w}]\,\vert\, \mathcal{B}_{K^c})(x) = \frac{\exp\left(\phi_*^{\tau_{w, v}}(vx_{K^c})\right)}{\sum_{w^{\prime} \in X_{K}}\exp\left(\phi_*^{\tau_{w^{\prime}, v}}(vx_{K^c})\right)} \qquad \mu(x)\text{-a.s.},
\end{equation}
for every $v \in X_{K}$.
\end{corollary}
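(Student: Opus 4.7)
The proof is essentially a bookkeeping argument that reduces the DLR condition from arbitrary Borel sets to cylinder sets and then applies the explicit formula for $\gamma_K([w],x)$ already proven in Proposition \ref{lemma:characterization-DLR-state}. The plan is to prove the two implications separately using the decompositions given by equations \eqref{sumspecification} and \eqref{eq:conditional-measure-of-measurable-as-sum-of-cylinders}.

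For the forward direction, if $\mu \in \gibbs$, then for every $K \in \finSet$ and $w \in X_K$, the cylinder $[w]$ is a Borel set, so $\mu([w] \mid \mathcal{B}_{K^c})(x) = \gamma_K([w],x)$ outside of a $\mu$-null set $N_{K,w}$. By Proposition \ref{lemma:characterization-DLR-state}, $\gamma_K([w],x)$ equals the right-hand side of \eqref{DLR-permutation} for every $v \in X_K$, giving the desired identity on the complement of $N_{K,w}$.

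For the reverse direction, assume \eqref{DLR-permutation} holds. By Proposition \ref{lemma:characterization-DLR-state} again, this says $\mu([w] \mid \mathcal{B}_{K^c})(x) = \gamma_K([w], x)$ $\mu$-a.s. for each $w \in X_K$. Let $B \in \mathcal{B}$ and $K \in \finSet$ be arbitrary. Combining the decomposition \eqref{eq:conditional-measure-of-measurable-as-sum-of-cylinders} for $\mu(B \mid \mathcal{B}_{K^c})$ with the analogous decomposition \eqref{sumspecification} for $\gamma_K(B,\cdot)$, we obtain
\begin{align*}
\mu(B \mid \mathcal{B}_{K^c})(x) &= \sum_{w \in X_K} \mu([w] \mid \mathcal{B}_{K^c})(x) \mathbbm{1}_{\{wx_{K^c} \in B\}} \\
&= \sum_{w \in X_K} \gamma_K([w],x) \mathbbm{1}_{\{wx_{K^c} \in B\}} = \gamma_K(B,x),
\end{align*}
outside the countable union $N_K := \bigcup_{w \in X_K} N_{K,w}$ of $\mu$-null sets, which is itself $\mu$-null because $X_K$ is countable. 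Hence $\mu\gamma_K = \mu$ for every $K \in \finSet$, so by Lemma \ref{lemma:characterization-of-admitted-by-specification}, $\mu \in \gibbs$.

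The only potential obstacle is ensuring that the a.s. equalities hold simultaneously for all $w \in X_K$ when $|X_K|$ is infinite, but this is handled by countability of the alphabet $\N$ (and hence of $X_K$ for any finite $K$). Everything else is a direct application of results established earlier in the section.
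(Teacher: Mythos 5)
Your proof is correct and takes essentially the same approach as the paper: both directions reduce to cylinder sets via the decompositions \eqref{sumspecification} and \eqref{eq:conditional-measure-of-measurable-as-sum-of-cylinders} and then invoke Proposition \ref{lemma:characterization-DLR-state}, with your explicit remark on the countability of $X_K$ being a harmless clarification of what the paper leaves implicit. The only superfluous step is your closing detour through $\mu\gamma_K=\mu$ and Lemma \ref{lemma:characterization-of-admitted-by-specification}: the $\mu$-a.s.\ identity $\mu(B\,\vert\,\cB_{K^c})(x)=\gamma_K(B,x)$ you have already established for all $B\in\cB$ and $K\in\finSet$ is precisely the definition of a DLR measure, so no further lemma is needed.
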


\begin{proof}
If $\mu$ is a DLR measure for $\phi$, then for every $K \in \finSet$, $B \in \cB$, and $x \in X$, 
\begin{equation*}
    \mu\left(B\,|\,\cB_{K^c}\right)(x) = \gamma_{K}(B,x) \quad \mu(x)\text{-a.s.}
\end{equation*}
Thus, in particular, if $w \in X_K$, it holds that
\begin{equation*}
    \mu([{w}] \,\vert \, \mathcal{B}_{K^c})(x) = \gamma_K([w],x) \qquad  \mu(x)\text{-a.s.},
\end{equation*}
and the result follows from Proposition \ref{lemma:characterization-DLR-state}.

On the other hand, if we assume that for every $K \in \finSet$, $w \in X_{K}$, and $x \in X$, equation (\ref{DLR-permutation}) holds $\mu(x)$-almost surely for every $v \in X_{K}$, then, from equation \eqref{eq:conditional-measure-of-measurable-as-sum-of-cylinders} and Proposition \ref{lemma:characterization-DLR-state},  $\mu(x)$-a.s it holds that
\begin{align*}
\mu(B \vert \mathcal{B}_{K^c})(x)   &   = \sum_{w \in X_K} \mu([w] \vert  \mathcal{B}_{K^c})(x)\mathbbm{1}_{\{w x_{K^c} \in B\}}   
= \sum_{w \in X_K} \gamma_K([w],x)\mathbbm{1}_{\{w x_{K^c} \in B\}} =  \gamma_K(B,x).
\end{align*}
\end{proof}

In order to relate the functions $\gamma_K$ that appear in the definition of DLR measures with the permutations involved in the definition of conformal measures, we have the following lemma.

\begin{lemma}
\label{lemma:specification-permutation} Let $\phi\colon X \to \R$ be an exp-summable potential with summable variation according to an exhausting sequence $\{E_m\}_m$. Then, for every $K \in \finSet$,  $v, w \in X_{K}$ and $\tau \in \cE_K$ such that $\tau^{-1}([v]) = [w]$, 
\begin{equation*}
\gamma_K([w],x) = \exp\left(\phi_*^{\tau}(v x_{K^c})\right) \gamma_K([v],x).
\end{equation*}
\end{lemma}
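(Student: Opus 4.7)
The plan is to first use Proposition~\ref{lemma:characterization-DLR-state} (characterization of $\gamma_K$) with the auxiliary configuration chosen to be $v$ itself on both sides, then replace the specific swap $\tau_{w,v}$ by the arbitrary $\tau$ by showing that both give the same value of $\phi_*^{\cdot}(v x_{K^c})$.

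First, I would apply Proposition~\ref{lemma:characterization-DLR-state} to write
$$
\gamma_K([w],x) = \frac{\exp\bigl(\phi_*^{\tau_{w,v}}(v x_{K^c})\bigr)}{\sum_{w' \in X_K}\exp\bigl(\phi_*^{\tau_{w',v}}(v x_{K^c})\bigr)} \qquad \text{and} \qquad \gamma_K([v],x) = \frac{\exp\bigl(\phi_*^{\tau_{v,v}}(v x_{K^c})\bigr)}{\sum_{w' \in X_K}\exp\bigl(\phi_*^{\tau_{w',v}}(v x_{K^c})\bigr)}.
$$
Since $\tau_{v,v}$ acts as the identity on $X$ (by Definition~\ref{defn:coordinate-wise-permutation-two}, since its two "swap" arguments coincide), we have $\phi_{E_m}^{\tau_{v,v}} \equiv 0$ for every $m$, and hence $\phi_*^{\tau_{v,v}} \equiv 0$. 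Therefore the ratio yields
$$
\gamma_K([w],x) = \exp\bigl(\phi_*^{\tau_{w,v}}(v x_{K^c})\bigr)\, \gamma_K([v],x).
$$

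Next, I would show that for any $\tau \in \cE_K$ with $\tau^{-1}([v]) = [w]$, one has $\phi_*^{\tau}(v x_{K^c}) = \phi_*^{\tau_{w,v}}(v x_{K^c})$. The key observation is that both maps send the specific point $v x_{K^c}$ to the same image $w x_{K^c}$: indeed, $\tau_{w,v}$ is the involution that swaps the cylinders $[v]$ and $[w]$, so $\tau_{w,v}^{-1}(v x_{K^c}) = w x_{K^c}$; and $\tau \in \cE_K$ modifies configurations only on $K$, so $\tau^{-1}(v x_{K^c})_{K^c} = x_{K^c}$, while the hypothesis $\tau^{-1}([v]) = [w]$ forces $\tau^{-1}(v x_{K^c})_K = w$, giving $\tau^{-1}(v x_{K^c}) = w x_{K^c}$ as well. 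Consequently,
$$
\phi_{E_m} \circ \tau^{-1}(v x_{K^c}) - \phi_{E_m}(v x_{K^c}) = \phi_{E_m}(w x_{K^c}) - \phi_{E_m}(v x_{K^c}) = \phi_{E_m} \circ \tau_{w,v}^{-1}(v x_{K^c}) - \phi_{E_m}(v x_{K^c})
$$
for every $m$, so by Theorem~\ref{thm:existence-of-uniform-limit-f-tau} the limits as $m \to \infty$ coincide, that is, $\phi_*^{\tau}(v x_{K^c}) = \phi_*^{\tau_{w,v}}(v x_{K^c})$.

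Combining the two steps gives $\gamma_K([w],x) = \exp(\phi_*^{\tau}(v x_{K^c}))\, \gamma_K([v],x)$, as required. I do not anticipate a serious obstacle here: the whole content is the elementary identification $\tau^{-1}(v x_{K^c}) = \tau_{w,v}^{-1}(v x_{K^c})$, which reduces the general permutation case to the already-established swap formula from Proposition~\ref{lemma:characterization-DLR-state}. The only subtlety worth stating carefully is that the equality $\phi_*^{\tau}(v x_{K^c}) = \phi_*^{\tau_{w,v}}(v x_{K^c})$ is pointwise at the specific configuration $v x_{K^c}$ and need not hold for arbitrary $y \in X$, but this is exactly what the statement requires.
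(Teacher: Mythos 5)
Your proof is correct and follows essentially the same route as the paper: apply Proposition~\ref{lemma:characterization-DLR-state} with the reference configuration $v$, use that $\tau_{v,v}$ is the identity so $\phi_*^{\tau_{v,v}} \equiv 0$, and take the ratio. In fact you make explicit a step the paper leaves implicit, namely that $\tau^{-1}(v x_{K^c}) = \tau_{w,v}^{-1}(v x_{K^c}) = w x_{K^c}$ and hence $\phi_*^{\tau}(v x_{K^c}) = \phi_*^{\tau_{w,v}}(v x_{K^c})$, which is a welcome clarification.
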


\begin{proof}
Indeed, by Proposition \ref{lemma:characterization-DLR-state}, for every $x \in X$,
\begin{align*}
    \gamma_K([w],x) &= \frac{\exp \left(\phi_*^{\tau}(v x_{K^c})\right)}{\sum_{w' \in X_{K}}\exp \left(\phi_*^{\tau_{w^{\prime}, v}}(v x_{K^c})\right)}\\
    &= \frac{\exp\left(\phi_*^{\tau_{v, v}}(v x_{K^c})\right)}{\sum_{w' \in X_{K}}\exp \left(\phi_*^{\tau_{w^{\prime}, v}}(v x_{K^c})\right)} \cdot\frac{\exp \left(\phi_*^{\tau}(v x_{K^c})\right)}{\exp\left(\phi_*^{\tau_{v, v}}(v x_{K^c})\right)}\\
    &= \gamma_K([v],x)\exp \left(\phi_*^{\tau}(v x_{K^c}) - \phi_*^{\tau_{v, v}}(v x_{K^c})\right).
\end{align*}

Now, notice that $\phi_*^{\tau}(v x_{K^c}) - \phi_*^{\tau_{v, v}}(v x_{K^c}) = \phi_*^{\tau}(v x_{K^c})$, and the result follows.
\end{proof}

Now we can prove the main result of this subsection. The proof is a slight adaptation of the proof of \cite[Theorem 3.3]{muir2011paper} and we include it here for completeness.

\begin{theorem}
\label{thm:conformal-iff-DLR}
Let  $\phi\colon X \to \R$ be an exp-summable potential with summable variation according to an exhausting sequence $\{E_m\}_m$. Then, a measure $\mu \in \mathcal{M}(X)$ is a DLR measure for $\phi$ if and only if $\mu$ is a conformal measure for $\phi$.
\end{theorem}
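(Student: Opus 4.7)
The plan is to establish the two implications separately, reducing everything to test sets of the form $[w] \cap C$ with $w \in X_K$ and $C \in \mathcal{B}_{K^c}$, which generate $\mathcal{B}$. The central bridges are Corollary \ref{charact-DLR-corol}, which recasts the DLR property as an explicit formula for $\mu([w]\,|\,\mathcal{B}_{K^c})$; Lemma \ref{lemma:specification-permutation}, which links the specification values to the functionals $\phi_*^\tau$; and the observation that every swap $\tau_{w,v}$ from Definition \ref{defn:coordinate-wise-permutation-two} lies in some $\mathcal{E}_{K,A}$ with $A = w(K) \cup v(K) \in \mathcal{F}(\mathbb{N})$, so the conformal definition applies to it despite the alphabet being infinite.

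For the DLR $\Rightarrow$ conformal direction, I would fix $A \in \mathcal{F}(\mathbb{N})$, $K \in \finSet$, and $\tau \in \mathcal{E}_{K,A}$, and verify the integrated form $\mu(\tau^{-1}(B)) = \int_B \exp(\phi_*^\tau)\, d\mu$ on the generating class $B = [w] \cap C$. Since $\tau$ acts coordinate-wise on $K$, one has $\tau^{-1}([w]\cap C) = [\tau^{-1}(w)] \cap C$, so the left side equals $\int_C \gamma_K([\tau^{-1}(w)], x)\, d\mu(x)$ by the DLR property. On $[w]$, the function $\phi_*^\tau(x) = \phi_*^\tau(wx_{K^c})$ depends only on $x_{K^c}$, so conditioning the right side on $\mathcal{B}_{K^c}$ and applying DLR again gives $\int_C \gamma_K([w],x)\, \exp(\phi_*^\tau(wx_{K^c}))\, d\mu(x)$. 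Then Lemma \ref{lemma:specification-permutation}, applied with $v = w$ and using $\tau^{-1}([w]) = [\tau^{-1}(w)]$, shows the two integrands coincide pointwise.

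For the converse, by Corollary \ref{charact-DLR-corol} it suffices to verify equation \eqref{DLR-permutation} for every $K \in \finSet$ and every $w, v \in X_K$. Applying the conformal relation with $\tau = \tau_{w,v}$ to $B = [w] \cap C$, $C \in \mathcal{B}_{K^c}$, and using that $\tau_{w,v}$ is an involution with $\tau_{w,v}^{-1}([w]) = [v]$, I would obtain
\begin{equation*}
\mu([v]\cap C) \;=\; \int_{[w]\cap C} \exp\!\left(\phi_*^{\tau_{w,v}}\right) d\mu,
\end{equation*}
which, after conditioning on $\mathcal{B}_{K^c}$ as in the first direction, yields the pointwise identity
\begin{equation*}
\mu([v]\,|\,\mathcal{B}_{K^c})(x) \;=\; \exp\!\left(\phi_*^{\tau_{w,v}}(wx_{K^c})\right) \mu([w]\,|\,\mathcal{B}_{K^c})(x) \quad \mu\text{-a.s.}
\end{equation*}
Summing this over $v \in X_K$ and invoking monotone convergence together with $\sum_v \mu([v]\,|\,\mathcal{B}_{K^c}) = 1$ (since $\{[v]\}_{v \in X_K}$ partitions $X$) produces a closed-form expression for $\mu([w]\,|\,\mathcal{B}_{K^c})(x)$. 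Swapping the roles of $w$ and $v$ once more rewrites the identity in terms of $vx_{K^c}$, recovering exactly the formula of Corollary \ref{charact-DLR-corol}.

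The principal obstacles are the countable alphabet, which rules out a direct use of a global permutation of $X_K$ in the conformal identity (addressed by invoking only the finite-alphabet involutions $\tau_{w,v}$), and the justification of the interchange of summation and conditional expectation, handled by monotone convergence together with the finiteness and strict positivity of $\sum_v \exp(\phi_*^{\tau_{w,v}}(wx_{K^c}))$ furnished by Lemma \ref{lem:dominatedConvergence}. One must also ensure $\mu([w]\,|\,\mathcal{B}_{K^c})(x) > 0$ $\mu$-a.s.\ before dividing, which follows because otherwise the identity displayed above would force all conditional probabilities to vanish, contradicting the total conditional mass being one.
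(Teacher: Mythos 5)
Your proposal is correct and follows essentially the same route as the paper: both directions are reduced to cylinder computations via Corollary \ref{charact-DLR-corol}, the pairwise identity coming from the involutions $\tau_{w,v}$, and Lemma \ref{lemma:specification-permutation}, with Lemma \ref{lem:dominatedConvergence} controlling the normalizing sums. The only notable deviation is that, in the conformal-to-DLR direction, you sum the pairwise identity over all of $X_K$ using conditional monotone convergence, where the paper truncates to finite alphabets $A_n^K$ and passes to the limit via $L^2$-continuity of conditional expectation; both devices are valid, and yours is in fact slightly more direct (the positivity of $\mu([w]\,|\,\mathcal{B}_{K^c})$ you worry about is never needed, since one divides only by $\sum_{v}\exp(\phi_*^{\tau_{w,v}}(wx_{K^c}))$, which is finite and positive).
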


\begin{proof}
Suppose first that $\mu \in \mathcal{M}(X)$ is a conformal measure for $\phi$ and let $K \in \finSet$. Begin by noticing that if $B \in \cB_{K^c}$ and, for some $w \in X_{K}$ and $x \in X$, $w x_{K^c} \in B$, then $vx_{K^c} \in B$, for every $v \in X_{K}$. As a consequence, we have that, for all $\tau\in\cE_{K}$ and all $B \in \cB_{K^c}$, $B = \tau^{-1}(B)$. 

For $w,v \in X_{K}$, consider $\tau_{w,v} \in \cE_{K}$. Thus, $\tau_{w,v}^{-1}([v])=[w]$ and, for every $B \in \cB_{K^c}$, $\tau_{w,v}^{-1}([v]\cap B) = \tau_{w,v}^{-1}([v])\cap \tau_{w,v}^{-1}(B) =  [w]\cap B$. Furthermore,
\begin{align*}
\int_{B} \mathbbm{1}_{[w]}(x) \, d\mu(x) &   = \int_{B} \mathbbm{1}_{[v]}(x) \, d(\mu \circ \tau_{w,v}^{-1})(x)\\
&   =   \int_{B} \mathbbm{1}_{[v]}(x)\exp \phi_*^{\tau_{w,v}}(x)\,d\mu(x)\\
&   =   \int_{B} \mathbbm{1}_{[v]}(x)\exp \phi_*^{\tau_{w,v}}(v x_{K^c})\,d\mu(x)\\
&= \int_{B} \mu\left(\mathbbm{1}_{[v]}(x)\exp \phi_*^{\tau_{w,v}}(v x_{K^c})\,\middle\vert\, \cB_{K^c}\right)(x) \, d\mu(x)\\
&= \int_{B}  \mu\left(\mathbbm{1}_{[v]}\,\middle\vert\, \cB_{K^c}\right)(x) \exp \phi_*^{\tau_{w,v}}(v x_{K^c})\,d\mu(x).
\end{align*}

On the other hand, 
\begin{equation*}
\int_{B} \mathbbm{1}_{[w]}(x) \, d\mu(x) = \int_{B}  \mu\left(\mathbbm{1}_{[w]}\,\middle\vert\, \cB_{K^c}\right)(x)\,d\mu(x).
\end{equation*}

Therefore, for any $w, v \in X_K$, $\mu(x)$-almost surely it holds that
\begin{equation}\label{M5}
\mu\left(\mathbbm{1}_{[v]} \,\middle\vert \, \cB_{K^c}\right)(x)\exp \phi_*^{\tau_{w,v}}(v x_{K^c})= \mu\left(\mathbbm{1}_{[w]}\,|\,\cB_{K^c}\right)(x).
\end{equation}

Now, let $A \in \cF(\mathbb{N})$ be a finite alphabet and $v \in A^{K}$. For any $w' \in A^K$, we have that $\tau_{w',v}  \in \cE_{K, A}$. Summing equation \eqref{M5} over all $w' \in A^{K}$, we obtain that $\mu(x)$-almost surely it holds that 
\begin{align}\label{M5-1}
\mu\left(\mathbbm{1}_{A^{K} \times X_{K^c}} \middle\vert \cB_{K^c}\right)(x) &= \sum_{w' \in A^{K}} \mu\left(\mathbbm{1}_{[w']}\middle\vert \cB_{K^c}\right)(x)\\ &= \mu\left(\mathbbm{1}_{[v]}\middle\vert\cB_{K^c}\right)(x)\sum_{w'\in A^{K}}\exp \phi_*^{\tau_{w',v}}(v x_{K^c}).
\end{align}

If $\{A_n\}_n$ is an exhausting sequence of finite alphabets, then $\bigcap_{n \geq 1} \left(A_n^{K}\times X_{K^c}\right)^{c} = \emptyset$. Moreover, for each $n \in \N$, 
\begin{align*}
    \int \left(1 - \mathbbm{1}_{A_n^{K} \times X_{K^c}}\right)^2 d\mu &= \int \left\vert 1 - \mathbbm{1}_{A_n^{K} \times X_{K^c}}\right\vert d\mu = \int \mathbbm{1}_{\left(A_n^{K} \times X_{K^c}\right)^c} \, \,d\mu. 
\end{align*}
Therefore, $\int \left(1 - \mathbbm{1}_{A_n^{K} \times X_{K^c}}\right)^2 \, d\mu \longrightarrow 0 \text{ as } n \to \infty$. Since conditional expectation given $\cB_{K^c}$ is a continuous linear operator on $L^2(\mu)$, 
we have $\mu\left(\mathbbm{1}_{A_n^{K}\times X_{K^c}} \middle\vert \cB_{K^c}\right) \longrightarrow \mu\left(1 \middle\vert \cB_{K^c}\right)$, $\mu(x)$-almost surely
in $L^2(\mu)$  as  $n \to \infty$. Therefore, for any fixed $v \in X_K$, there exists $n_0 \in \N$ be such that $v \in A^K_{n_0}$ and, consequently, $v \in A_n^K$, for all $n \geq n_0$. Therefore, $\mu(x)$-almost surely it holds that 

\begin{align*}
1 &= \mu\left(1\, |\, \cB_{K^c}\right)(x)\\
  &=\lim_{n\to\infty} \mu \left(\mathbbm{1}_{A_n^{K}\times X_{K^c}}\,|\,\cB_{K^c}\right)(x)\\
  &= \lim_{n \to \infty} \mu\left(\mathbbm{1}_{[v]}\,|\,\cB_{K^c}\right)(x)\sum_{w \in A_n^{K}}\exp \phi_*^{\tau_{w',v}}(v \, x_{K^c})\\
  &= \mu\left(\mathbbm{1}_{[v]}\middle\vert\cB_{K^c}\right)(x) \lim_{n \to \infty} \sum_{w' \in A_n^{K}}\exp \phi_*^{\tau_{w',v}}(v x_{K^c})\\
  &= \mu\left(\mathbbm{1}_{[v]} \middle\vert \cB_{K^c}\right)(x) \sum_{w \in X_K} \exp \phi_*^{\tau_{w',v}} (v x_{K^c}).
\end{align*}
Moreover,  equation \eqref{M5} yields that, for any $w \in X_K$, $\mu(x)$-almost surely
\begin{align*}
1 =  \mu\left(\mathbbm{1}_{[v]} \middle\vert \cB_{K^c}\right)(x) \sum_{w' \in X_{K}}\exp \phi_*^{\tau_{w',v}} (v x_{K^c}) = \frac{\mu\left(\mathbbm{1}_{[w]}\middle\vert\cB_{K^c}\right)(x)}{\exp \phi_*^{\tau_{w,v}}(v x_{K^c})}\sum_{w' \in X_{K}}\exp \phi_*^{\tau_{w',v}} (v x_{K^c}),
\end{align*}
so that, for any $w \in X_K$, $\mu(x)$-almost surely it holds that
\begin{align*}
     \mu\left([w] \middle\vert \cB_{K^c}\right)(x) = \frac{\exp\left(\phi_*^{\tau_{w,v}}(v x_{K^c})\right)}{\sum_{w^{\prime} \in X_{K}}\exp \phi_*^{\tau_{w',v}} (v x_{K^c})}.
\end{align*}
Therefore, due to Corollary \ref{charact-DLR-corol}, $\mu$ is a DLR measure.

Conversely, suppose that $\mu \in \mathcal{M}(X)$ is a DLR measure for $\phi$ and let $A \in \mathcal{F}(\N)$, $K \in \finSet$, and $\tau \in \cE_{K, A}$. For any $v \in X_{K}$ and $w = \left[\tau^{-1}([v])\right]_{K}$, due to Lemma \ref{lemma:specification-permutation}, we obtain
\begin{align*}
\mu\circ\tau^{-1}([v]) = \mu([w]) 
&=\int \mu\left([w] \middle\vert \cB_{K^c}\right)(x)\,d\mu(x)\\
&=\int \exp \phi_*^{\tau}(v x_{K^c}) \mu\left([v] \middle\vert \cB_{K^c}\right)(x) \, d\mu(x)\\
&=\int \mu \left(\exp \phi_*^{\tau}(v x_{K^c}) \mathbbm{1}_{[v]} \middle\vert \cB_{K^c}\right)(x)\,d\mu(x)\\
&=\int \exp \phi_*^{\tau}(v x_{K^c}) \mathbbm{1}_{[v]}(x)\, d\mu(x)\\
&=\int_{[v]} \exp \phi_*^{\tau}(v x_{K^c}) \, d\mu(x),
\end{align*}
which concludes the result.
\end{proof}

\subsection{DLR measures are Bowen-Gibbs measures}\label{sec:DLR-implies-Bowen-Gibbs}

This subsection is dedicated to proving that, provided some conditions, any DLR measure for a potential $\phi$ is a Bowen-Gibbs measure for $\phi$. 

\begin{proposition}\label{BowenGibbs}
Let $\phi\colon X \to \R$ be an exp-summable potential with summable variation according to an exhausting sequence $\{E_m\}_{m}$. If $\mu \in\mathcal{M}(X)$ is a DLR measure for $\phi$, then, for every $F \in \finSet$, $w \in X_F$ and $y \in X$, it holds $\mu(x)$-almost surely that
 \begin{equation*}
        \exp\left(-2V_F(\phi) - 3\Delta_F(\phi)\right) \leq  \frac{\mu\left([w] \,|\, \cB_{F^c}\right)(x)}{\exp\left(\phi_{F}(wy_{F^c}) - \log Z_F(\phi)\right)} \leq \exp\left(2V_F(\phi) + 3\Delta_F(\phi)\right).
     \end{equation*}
\end{proposition}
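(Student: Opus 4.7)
The plan is to use the DLR hypothesis to reduce the claim to a two-sided estimate on the Gibbsian kernel $\gamma_F([w], x)$, and then to control this kernel via the asymptotic bound in Lemma \ref{lem:bounds} together with standard oscillation estimates inside cylinders.

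First, since $\mu \in \gibbs$ yields $\mu([w] \mid \cB_{F^c})(x) = \gamma_F([w], x)$ for $\mu$-almost every $x$, it suffices to prove the announced inequality for $\gamma_F([w], x)$ in place of the conditional probability. By Proposition \ref{lemma:characterization-DLR-state}, we can pick an arbitrary auxiliary $v \in X_F$ and write
\[\gamma_F([w], x) = \frac{\exp\bigl(\phi_*^{\tau_{w,v}}(vx_{F^c})\bigr)}{\sum_{w' \in X_F} \exp\bigl(\phi_*^{\tau_{w',v}}(vx_{F^c})\bigr)}.\]
Applying Lemma \ref{lem:bounds} with the roles of $w$ and $v$ swapped (using $\tau_{v,w'} = \tau_{w',v}$), and passing to the limit $m \to \infty$ as justified by Theorem \ref{thm:existence-of-uniform-limit-f-tau}, yields for every $w' \in X_F$ the uniform estimate
\[\left|\phi_*^{\tau_{w',v}}(vx_{F^c}) - \bigl(\sup\phi_F[w'] - \sup\phi_F[v]\bigr)\right| \leq \Delta_F(\phi) + V_F(\phi).\]

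Writing $\phi_*^{\tau_{w',v}}(vx_{F^c}) = \sup\phi_F[w'] - \sup\phi_F[v] + \alpha_{w'}$ with $|\alpha_{w'}| \leq V_F(\phi) + \Delta_F(\phi)$, the common factor $\exp(-\sup\phi_F[v])$ cancels between numerator and denominator, leaving
\[\gamma_F([w], x) = \frac{e^{\alpha_w}\exp(\sup\phi_F[w])}{\sum_{w' \in X_F} e^{\alpha_{w'}}\exp(\sup\phi_F[w'])}.\]
Bounding $e^{\alpha_w}$ and $e^{\alpha_{w'}}$ by $e^{\pm(V_F(\phi)+\Delta_F(\phi))}$ in the numerator and denominator respectively, and recognizing the resulting sum as $Z_F(\phi)$, gives
\[e^{-2(V_F(\phi)+\Delta_F(\phi))}\,\frac{\exp(\sup\phi_F[w])}{Z_F(\phi)} \leq \gamma_F([w], x) \leq e^{2(V_F(\phi)+\Delta_F(\phi))}\,\frac{\exp(\sup\phi_F[w])}{Z_F(\phi)}.\]

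Finally, for any $y \in X$ the configuration $wy_{F^c}$ belongs to the cylinder $[w]$, so by the very definition of $\Delta_F(\phi) = \delta_F(\phi_F)$ we have $|\phi_F(wy_{F^c}) - \sup\phi_F[w]| \leq \Delta_F(\phi)$. Trading $\exp(\sup\phi_F[w])$ for $\exp(\phi_F(wy_{F^c}))$ costs one extra multiplicative factor $e^{\pm\Delta_F(\phi)}$, producing the claimed bound with total exponent $\pm(2V_F(\phi) + 3\Delta_F(\phi))$. The only genuine difficulty in the argument is the careful accounting of these three sources of error: a single $V_F(\phi)$ from the tail estimate that defines $\phi_*^\tau$, appearing both in numerator and denominator (hence the factor $2$), together with three copies of $\Delta_F(\phi)$, two from replacing pointwise values of $\phi_F$ by its cylinder supremum in the partition-function-like ratio, and one more from the final conversion to $\phi_F(wy_{F^c})$.
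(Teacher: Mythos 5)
Your proof is correct, and it reaches the stated bound with exactly the right error budget $2V_F(\phi)+3\Delta_F(\phi)$. The route differs from the paper's in its bookkeeping: you reduce via Corollary \ref{charact-DLR-corol}/Proposition \ref{lemma:characterization-DLR-state} to the kernel $\gamma_F([w],x)$ (as the paper does), but then your key estimate is Lemma \ref{lem:bounds}, pushed to the limit $m\to\infty$ via Theorem \ref{thm:existence-of-uniform-limit-f-tau}, which lets you compare each $\phi_*^{\tau_{w',v}}(vx_{F^c})$ directly with $\sup\phi_F[w']-\sup\phi_F[v]$ and hence normalize straight away by $Z_F(\phi)=\sum_{w'}\exp\sup\phi_F[w']$; the dependence on the boundary condition disappears in one stroke, and only one last $\Delta_F(\phi)$ is spent converting $\sup\phi_F[w]$ into $\phi_F(wy_{F^c})$. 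The paper instead uses Proposition \ref{prop:variation-of-permutations} ($\|\phi_*^{\tau}-\phi_F^{\tau}\|_\infty\le V_F(\phi)$) to replace $\phi_*$ by $\phi_F^\tau$ in numerator and denominator (cost $2V_F$), recognizes the resulting ratio as the finite-volume kernel $\pi_F^x(w)$ with boundary $x$, swaps the boundary $x_{F^c}\to y_{F^c}$ (cost $2\Delta_F$), and finally compares $\pi_F^y(w)$ with $\exp(\phi_F(wy_{F^c})-\log Z_F(\phi))$ (cost $\Delta_F$). Your version is marginally more economical in that it avoids the intermediate object $\pi_F^x$ and the explicit boundary swap, at the price of leaning on Lemma \ref{lem:bounds} (whose proof already contains the same $V_F$ tail computation); the paper's version makes the dependence on, and removal of, the boundary condition more transparent. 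Both arguments rely on the same two sources of error and are equally rigorous; the uniformity in $w'$ of the Lemma \ref{lem:bounds} bound, which you need to sum over the denominator, is indeed available, so there is no gap.
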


\begin{proof}
Let $F \in \finSet$ and $\tau \in \cE_F$. From Proposition \ref{prop:variation-of-permutations}, we have that for every $x \in X$,
\begin{equation}\label{ineq1}
   |\phi_*^{\tau}(x) - \phi^{\tau}_F(x)| \leq V_F(\phi),
\end{equation}
which, in particular, yields that, for every $x \in X$,

\begin{equation}\label{ineq:exp-phi-star-and-phi-F}
   0< \exp(-V_F(\phi))  \exp\phi_*^{\tau}(x) \leq \exp\phi^{\tau}_F(x) \leq \exp(V_F(\phi))  \exp\phi_*^{\tau}(x).
\end{equation}

For a fixed $v \in X_F$  and for every $w' \in  X_F$, the map $\tau_{w',v}$ belongs to $\cE_F$. Thus,  inequality \eqref{ineq:exp-phi-star-and-phi-F} holds for any such $\tau_{w',v}$ and, summing over all those such maps, we obtain that, for every $x \in X$,
\begin{equation*}
    \exp(-V_F(\phi))  \sum_{w' \in X_F} \exp\phi_*^{\tau_{w',v}}(x) \leq \sum_{w' \in X_F} \exp\phi^{\tau_{w',v}}_F(x) \leq \exp(V_F(\phi))  \sum_{w' \in X_F} \exp\phi_*^{\tau_{w',v}}(x).
\end{equation*}
Therefore, for every $F \in \finSet$, $v \in X_F$ and $x \in X$, we have
\begin{equation}\label{ineqwithsums}
    \exp(-V_F(\phi)) \leq \frac{\sum_{w' \in X_F}\exp\phi^{\tau_{w',v}}_F(x)}{\sum_{w' \in X_F}\exp\phi_*^{\tau_{w',v}}(x)} \leq \exp(V_F(\phi)).
\end{equation}

On the other hand, inequality \eqref{ineq1} also yields that for every $F \in \finSet$, $w, v \in X_F$ and $x \in X$, 
\begin{equation}\label{eq:inequality-with-tau-w-v}
    \exp(-V_F(\phi)) \leq \frac{\exp\phi_*^{\tau_{w,v}}(x)}{\exp\phi^{\tau_{w,v}}_F(x)} \leq \exp(V_F(\phi)).
\end{equation}

Then, from inequalities  \eqref{ineqwithsums} and \eqref{eq:inequality-with-tau-w-v}, we obtain that, for every $F \in \finSet$, $w, v \in X_F$ and $x \in X$,
\begin{equation}\label{ineq:bowen-gibbs-middle}
    \exp(-2 V_F(\phi)) \leq \frac{\sum_{w' \in X_F} \exp\phi^{\tau_{w',v}}_F(x)}{\sum_{w' \in X_F} \exp \phi_*^{\tau_{w',v}}(x)} \cdot\frac{\exp\phi_*^{\tau_{w,v}}(x)}{\exp\phi^{\tau_{w,v}}_F(x)} \leq \exp(2 V_F(\phi)).
\end{equation}

So, if $x \in [v]$, inequality \eqref{ineq:bowen-gibbs-middle} can be rewritten as 
\begin{equation}\label{ineq:bowen-gibbs-with-x-in-[v]}
    \exp(-2 V_F(\phi)) \leq \frac{\sum_{w' \in X_F} \exp\phi^{\tau_{w',v}}_F(v x_{F^c})}{\sum_{w' \in X_{F}} \exp \phi_*^{\tau_{w',v}}(v x_{F^c})} \cdot\frac{\exp\phi_*^{\tau_{w,v}}(v x_{F^c})}{\exp\phi^{\tau_{w,v}}_F(vx_{F^c})} \leq \exp(2 V_F(\phi)).
\end{equation}
Since $\mu$ is a DLR measure for $\phi$, from Corollary \ref{charact-DLR-corol} we obtain that $\mu(x)$-almost surely it holds that
\begin{equation}\label{eq:inequatily-conditional-measure-and-VF}
    \exp(-2  V_F(\phi)) \leq \mu\left([w] \,|\, \cB_{F^c}\right)(x)  \frac{\sum_{w' \in X_F} \exp\phi^{\tau_{w', v}}_{F}(v x_{F^c})}{\exp\phi^{\tau_{w,v}}_{F}(v x_{F^c})} \leq \exp(2 V_{F}(\phi)).
\end{equation}

Furthermore, notice that 
\begin{align*}
    \frac{\sum_{w' \in X_{F}} \exp\phi^{\tau_{w',v}}_{F}(v x_{F^c})}{\exp\phi^{\tau_{w,v}}_{F}(vx_{F^c})}  
    &= \frac{\sum_{w' \in X_{F}} \exp \phi_{F}(w' x_{F^c}) }{\exp \phi_{F}(w x_{F^c})},
\end{align*}
so that inequality \eqref{eq:inequatily-conditional-measure-and-VF} can be rewritten as \begin{equation}\label{eq:inequatily-conditional-measure-and-VF-2}
    \exp(-2  V_F(\phi)) \leq \mu\left([w] \,|\, \cB_{F^c}\right)(x)  \frac{\sum_{w' \in X_{F}} \exp \phi_{F}(w' x_{F^c}) }{\exp \phi_{F}(w x_{F^c})} \leq \exp(2 V_{F}(\phi)).
\end{equation}

For $F \in \finSet$ and $x \in X$,  define the following auxiliary probability measure over $X_F$:
$$\pi_{F}^x(w):=\frac{\exp \phi_{F}(wx_{F^c})}{\sum_{w' \in X_F} \exp \phi_{F}(w'x_{F^c})},
 \quad \text{ for } w \in X_F.$$

Thus, inequality \eqref{eq:inequatily-conditional-measure-and-VF-2} yields that $\mu(x)$-almost surely it holds that
\begin{equation*}
    \exp(-2  V_{F}(\phi)) \pi_{F}^{x}(w) \leq \mu\left([w] \,|\, \cB_{F^c}\right)(x) \leq \exp(2  V_{F}(\phi))  \pi_{F}^{x}(w).
\end{equation*}

Now, given $y \in X$, notice that the tail configuration $x_{F^c}$ can be replaced by $y_{F^c}$ with a penalty of $2\Delta_{F}(\phi)$ as follows
\begin{equation*}
\pi_{F}^y(w)  \exp(-2\Delta_{F}(\phi)) \leq \pi_{F}^{x}(w)  \leq \pi_{F}^y(w)\exp(2\Delta_{F}(\phi)),
\end{equation*}
so that
\begin{equation}
\label{equivMuir10}
\exp\left(-2(V_{F}(\phi) + \Delta_{F}(\phi))\right) \leq \frac{\mu\left([w] \,|\, \cB_{F^c}\right)(x)}{\pi^y_{F}(w)}\leq \exp\left(2( V_{F}(\phi) + \Delta_{F}(\phi))\right).
\end{equation}
Moreover, it is easy to verify that
\begin{equation*}
    \exp\left(-\Delta_{F}(\phi)\right) \leq \frac{\pi_{F}^y(w)}{\exp\left(\phi_{F}(wy_{F^c}) - \log Z_{F}(\phi)\right)} \leq  \exp\left( \Delta_{F}(\phi)\right).
\end{equation*}

Therefore,  for every $w \in X_{F}$, $y \in X$, it holds $\mu(x)$-almost surely that 
\begin{align*}
     \mu\left([w] \,|\, \cB_{F^c}\right)(x) &\geq  \exp\left(-2\left(V_F(\phi) + \Delta_{F}(\phi)\right)\right)\exp\left(\phi_F(wy_{F^c}) - \log Z_F(\phi) - \Delta_F(\phi)\right) \\
     &= \exp\left(-2  V_F(\phi) - 3\Delta_F(\phi)\right) \exp\left(\phi_{F}(wy_{F^c}) - \log Z_F(\phi)\right)
     \end{align*}
   and that
    \begin{align*}
         \mu\left([w] \,|\, \cB_{F^c}\right)(x) &\leq \exp\left(2\left( V_F(\phi) + \Delta_F(\phi)\right)\right) \exp\left(\phi_F (wy_{F^c}) - \log Z_F(\phi)+ \Delta_F(\phi)\right)\\
         &= \exp\left(2V_F(\phi) + 3\Delta_F(\phi)\right)\exp\left(\phi_{F}(wy_{F^c}) - \log Z_F(\phi)\right) .
     \end{align*}
     
     Thus,
     \begin{equation*}
        \exp\left(-2V_F(\phi) - 3\Delta_F(\phi)\right) \leq  \frac{\mu\left([w] \,|\, \cB_{F^c}\right)(x)}{\exp\left(\phi_{F}(wy_{F^c}) - \log Z_F(\phi)\right)} \leq \exp\left(2V_F(\phi) + 3\Delta_F(\phi)\right),
     \end{equation*}
     concluding the proof.
     \end{proof}
    
We now state the main theorem of this subsection. 
     
\begin{theorem}
\label{thm:bowgib} Let $\phi\colon X \to \R$ be an exp-summable potential with summable variation according to an exhausting sequence $\{E_m\}_m$. If $\mu$ is a DLR measure for $\phi$, then, for every $\epsilon > 0$, there exist $K \in \finSet$ and $\delta > 0$ such that for every $(K,\delta)$-invariant set $F$ and $x \in X$, it holds $\mu(x)$-almost surely that
\begin{equation*}
       \exp\left(-\epsilon\cdot|F|\right) \leq  \frac{\mu\left([w] \,|\, \cB_{F^c}\right)(x)}{\exp\left(\phi_F(x) - p(\phi)\cdot|F|\right)} \leq \exp\left(\epsilon\cdot|F|\right).
\end{equation*}

In particular, $\mu$ is a Bowen-Gibbs measure for $\phi$.
\end{theorem}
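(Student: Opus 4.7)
The plan is to combine the quantitative bound of Proposition \ref{BowenGibbs} with three asymptotic estimates, all valid as $F$ becomes more and more invariant: $\log Z_F(\phi)/|F| \to p(\phi)$ from Theorem \ref{thm:existence-of-pressure}, $V_F(\phi)/|F| \to 0$ from Proposition \ref{prop:limit-of-VF-over-the-size-of-F}, and $\Delta_F(\phi)/|F| \to 0$ from Lemma \ref{regularityDeltak}.

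For the conditional estimate, I would specialize Proposition \ref{BowenGibbs}. Since $y$ in that proposition is a free parameter, for a given $x \in X$ I take $w = x_F$ and any $y$ satisfying $y_{F^c} = x_{F^c}$ (for instance $y = x$); this gives $\phi_F(w y_{F^c}) = \phi_F(x)$, so the proposition reads, $\mu$-almost surely,
\begin{equation*}
\exp\bigl(-2V_F(\phi) - 3\Delta_F(\phi)\bigr) \leq \frac{\mu([x_F]\mid\cB_{F^c})(x)}{\exp\bigl(\phi_F(x) - \log Z_F(\phi)\bigr)} \leq \exp\bigl(2V_F(\phi) + 3\Delta_F(\phi)\bigr).
\end{equation*}
Given $\epsilon > 0$, the three limits cited above let me choose $K \in \finSet$ and $\delta > 0$ so that, simultaneously, $2V_F(\phi) + 3\Delta_F(\phi) \leq \epsilon\cdot|F|/2$ and $\abs{\log Z_F(\phi) - p(\phi)\cdot|F|} \leq \epsilon\cdot|F|/2$ for every $(K,\delta)$-invariant $F$: take $K$ to be the union of the three finite sets furnished by each limit and $\delta$ the minimum of the corresponding tolerances. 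Replacing $\log Z_F(\phi)$ by $p(\phi)\cdot|F|$ in the denominator contributes the extra multiplicative factor $\exp(\pm\abs{\log Z_F(\phi) - p(\phi)\cdot|F|})$, and the two errors combine to give the desired $\exp(\pm\epsilon\cdot|F|)$ bounds.

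For the ``in particular'' clause, I pass from the conditional to the unconditional bound by integration. In Proposition \ref{BowenGibbs}, once $w$ and $y$ are fixed, the right-hand side of the estimate does not depend on $x$; integrating the $\mu$-a.s.\ inequality against $\mu(dx)$ and using $\mu([w]) = \int \mu([w]\mid\cB_{F^c})(x)\, d\mu(x)$ yields the same bound with $\mu([w])$ in place of $\mu([w]\mid\cB_{F^c})(x)$. Specializing once more to $w = x_F$ and $y$ with $y_{F^c} = x_{F^c}$, the same control over the three error terms gives
\begin{equation*}
\exp(-\epsilon\cdot|F|) \leq \frac{\mu([x_F])}{\exp\bigl(\phi_F(x) - p(\phi)\cdot|F|\bigr)} \leq \exp(\epsilon\cdot|F|),
\end{equation*}
which is exactly the Bowen-Gibbs condition, with $p = p(\phi)$ (as must be the case, since the defining constant of a Bowen-Gibbs measure is uniquely determined as the pressure).

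No step is genuinely hard; the proof is essentially bookkeeping. The main care required is to see that a single $(K,\delta)$ pair can simultaneously control all three asymptotic errors, which is immediate once one recalls that every relevant limit is taken ``as $F$ becomes more and more invariant.'' A minor quantifier point is that in Proposition \ref{BowenGibbs} the $\mu$-null exceptional set depends only on the DLR property rather than on the specific $w$ or $y$, so specializing $w$ to the $x$-dependent choice $x_F$ is harmless (and, if needed, one can take a countable union over $w \in X_F$).
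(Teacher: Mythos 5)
Your proposal is correct and is essentially the paper's own proof: the paper likewise combines Proposition \ref{BowenGibbs} with the three invariance limits (Proposition \ref{prop:limit-of-VF-over-the-size-of-F}, Lemma \ref{regularityDeltak}, and Theorem \ref{thm:existence-of-pressure}) to choose a single $(K,\delta)$ pair, and then integrates the conditional inequality against $\mu$ to obtain the Bowen-Gibbs bound. Your extra remarks on specializing $w = x_F$, $y = x$ and on the countable union over $w \in X_F$ for the exceptional null set are harmless refinements of the same argument.
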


\begin{proof} Indeed, for every $\epsilon > 0$, we obtain, from Proposition \ref{prop:limit-of-VF-over-the-size-of-F}, Lemma \ref{regularityDeltak}, and Theorem \ref{thm:existence-of-pressure}, that there exist $K \in \finSet$ and $\delta > 0$ such that, for every $(K, \delta)$-invariant set $F \in \finSet$, 
\begin{equation*}
    \Delta_F(\phi) \leq \epsilon\cdot|F|,~  V_F(\phi) \leq \epsilon\cdot|F|, \text{ and }
    \left|\log Z_F(\phi) - p(\phi)|F|\right| \leq \epsilon\cdot|F|,
\end{equation*}
respectively. Considering a sufficiently large $K$ and sufficiently small $\delta$ so that the three conditions are satisfied at the same time, we obtain from Proposition \ref{BowenGibbs} that
\begin{equation*}
       \exp\left(-\epsilon \cdot|F|\right) \leq  \frac{\mu\left([w] \,|\, \cB_{F^c}\right)(x)}{\exp\left(\phi_F(x) - p(\phi) \cdot|F|\right)} \leq \exp\left(\epsilon\cdot |F|\right).
\end{equation*}

Integrating this inequality with respect to $d\mu(x)$, it follows that $\mu$ is a Bowen-Gibbs measure for $\phi$. 
\end{proof}

\subsection{Existence of conformal measures}
\label{sec:existence-of-DLR-states}

In order to guarantee that the equivalences we prove here are non-trivial, we prove the existence of a conformal measure for an exp-summable potential with summable variation in the context of a countably infinite state space over an amenable group. The strategy is to apply a version of Prokhorov's Theorem.  
     
\begin{definition}
A sequence of probability measures $\{\mu_n\}_n$ in $\mathcal{M}(X)$ is {\bf tight} if for every $\epsilon > 0$ there exists a compact set $K_{\epsilon} \subseteq X$ such that
\begin{equation*}
\mu_n(K_{\epsilon}) > 1 -\epsilon   \qquad  \text{ for all $n\in \N$.}
\end{equation*}
\end{definition}
    
We now state a version of Prokhorov's Theorem (see \cite{billingsley2013convergence,munkres2018elements}).

\begin{theorem} Every tight sequence of probability measures in $\mathcal{M}(X)$ has a weak convergent subsequence. 
\end{theorem}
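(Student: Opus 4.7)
The plan is to combine tightness with the sequential weak-$*$ compactness of probability measures on compact metric spaces via a Cantor diagonal argument. Note that $X = \N^G$ is a Polish space (separable and completely metrizable via the metric $d$ from the preliminaries), so all compact subsets of $X$ are metrizable. By tightness, for each $k \geq 1$ one can pick a compact $K_k \subseteq X$ with $\mu_n(K_k) > 1 - 1/k$ for every $n$; after replacing $K_k$ by $\bigcup_{j \leq k} K_j$, we may assume the sequence $\{K_k\}_k$ is increasing.

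For each fixed $k$, regard the restrictions $\mu_n|_{K_k}$ as positive Borel measures of total mass at most $1$ on the compact metric space $K_k$. By the Riesz representation theorem they correspond to positive linear functionals on $C(K_k)$ of norm at most $1$. Since $K_k$ is compact and metrizable, $C(K_k)$ is separable, so the closed unit ball of its dual is weak-$*$ metrizable and weak-$*$ compact (Banach--Alaoglu), hence sequentially compact. Extract a subsequence along which the restrictions to $K_1$ converge weakly to some finite positive measure $\nu_1$, then a subsubsequence converging on $K_2$, and so on; the Cantor diagonal subsequence, still denoted $\{\mu_n\}_n$, satisfies $\mu_n|_{K_k} \to \nu_k$ weakly on $K_k$ for every $k$, where each $\nu_k$ is a finite positive Borel measure on $K_k$.

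The last step is to assemble the $\nu_k$ into a single probability measure $\mu$ on $X$ and verify weak convergence of $\{\mu_n\}_n$ to $\mu$. Since $K_j \subseteq K_k$ for $j \leq k$, the masses $\nu_k(B \cap K_k)$ are non-decreasing in $k$ for any Borel set $B$, and we define $\mu(B) := \lim_{k \to \infty} \nu_k(B \cap K_k)$, which is a Borel measure by monotone convergence. The Portmanteau characterization applied to the closed set $K_k$ gives $\nu_k(K_k) \geq \limsup_n \mu_n(K_k) \geq 1 - 1/k$, which forces $\mu(X) = 1$. Finally, given $f \in C_b(X)$ and $\epsilon > 0$, choose $k$ with $4\|f\|_\infty / k < \epsilon$, split $\int f\, d\mu_n$ into its contributions from $K_k$ and $X \setminus K_k$, apply weak convergence of $\mu_n|_{K_k}$ to $\nu_k$ on the compact $K_k$, and bound the tail contribution by $\|f\|_\infty / k$ uniformly in $n$ thanks to tightness; this yields $\int f\, d\mu_n \to \int f\, d\mu$. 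The main technical point is the assembly step: ensuring that no mass escapes in the limit, which is precisely what tightness prevents, while handling continuity-set issues when pulling weak convergence on each $K_k$ back to integrals against arbitrary $f \in C_b(X)$.
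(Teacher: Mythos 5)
The paper does not actually prove this statement---it is quoted as the classical Prokhorov theorem with references to Billingsley and Munkres---so there is no internal proof to compare against; what you have written is essentially the standard textbook argument (tightness, Riesz representation plus Banach--Alaoglu on each compact $K_k$, a diagonal extraction, assembly of the limit, and a three-term splitting to verify weak convergence), and it is correct in outline. Your use of metrizability of $X=\N^G$, the mass bound $\nu_k(K_k)\geq 1-1/k$ (which in fact follows simply from testing against the constant function $1$, so total masses converge), and the final $2\|f\|_\infty/k$ estimate are all fine.

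The one place where you assert rather than argue is the consistency step: the claim that $k\mapsto \nu_k(B\cap K_k)$ is non-decreasing for \emph{every} Borel set $B$ does not follow from the inclusion $K_j\subseteq K_k$ alone, since weak limits do not control arbitrary Borel sets. It is true, but it needs the following observation: $\mu_n|_{K_j}\leq \mu_n|_{K_k}$ as measures, and $\mu_n|_{K_j}\to\nu_j$ weakly even when tested against $C(K_k)$ (any $f\in C(K_k)$ restricts to an element of $C(K_j)$), so for every nonnegative $f\in C(K_k)$ one gets $\int f\,d\nu_j\leq \int f\,d\nu_k$; approximating indicators of closed sets from above by continuous functions and then using inner regularity of finite Borel measures on metric spaces yields $\nu_j\leq\nu_k$ on all Borel subsets of $K_j$, which is exactly the monotonicity you need to define $\mu(B)=\lim_k\nu_k(B\cap K_k)$ and to run the final estimate $\bigl|\int f\,d\mu-\int_{K_k}f\,d\nu_k\bigr|\leq \|f\|_\infty(\mu(X)-\nu_k(K_k))\leq \|f\|_\infty/k$. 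With that short argument inserted, your proof is complete and matches the cited classical one.
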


Let $\phi\colon X \to \R$ be an exp-summable potential with summable variation according to an exhausting sequence $\{E_m\}_m$.  Consider $A \subseteq \N$ a finite alphabet. Then $\phi\vert_{A^G}$ is also an exp-summable potential with summable variation according to $\{E_m\}_m$ and the specification defined by equation \eqref{specification} is quasilocal. Moreover, the set of Borel probability measures on $A^G$ is compact. Then, following \cite[Comment (4.18)]{georgii2011gibbs}, for all $x \in A^G$, any accumulation point of the sequence $\left\{\gamma_{E_m}(\cdot,x)\right\}_m$, will be a DLR measure $\mu$. Finally, if we want to obtain a $G$-invariant DLR measure, for each $g\in G$, let $g\mu$ be given by $g\mu(A) = \mu(g^{-1} \cdot A)$, for any $A \in \sigmaAlg$. Notice that, for every $g \in G$, the measure $g\mu$ is also a DLR measure for $\phi\vert_{A^G}$ due to the $G$-invariance of $\gamma$ (see Corollary \ref{cor:invariance-spec}). Then it suffices to consider any accumulation point of the sequence $\left\{\frac{1}{|F_n|}\sum_{g \in F_n} g\mu\right\}_n$, for a F\o lner sequence $\{F_n\}_n$. 

Now, let $\{A_n\}_n$ in $\mathcal{F}(\N)$ be a fixed exhaustion of $\N$ and, for each $n \in \N$, denote the set of DLR measures and $G$-invariant DLR measures for $\phi^n = \phi|_{A^G_n}$ by $\mathcal{G}_n(\phi)$ and $\mathcal{G}_n^I(\phi)$, respectively. For each $n \in \N$ and each $\mu_n \in \mathcal{G}_n^I(\phi)$, consider its extension $\tilde{\mu}_n \in \mathcal{M}(X)$ given by 
\begin{equation*}\label{eq:mu-extension-infinite-alphabet}
    \tilde{\mu}_n(\cdot) = \mu_n(\cdot \cap A_n^G).
\end{equation*} The next result establishes that $\{\tilde{\mu}_n\}_n$ is tight and the reader can compare this to \cite[Lemma 5.15]{muir2011gibbs}.

\begin{lemma}\label{lemma:tightness-of-seq-induced-measures} Let $\phi\colon X \to \mathbb{R}$ be an exp-summable potential with summable variation according to some exhausting sequence $\{E_m\}_m$. Then, for any sequence $\{\mu_n\}_n$ with $\mu_n \in \mathcal{G}_n^I(\phi)$, for all $n\in \N$,  the sequence of extensions $\{\tilde{\mu}_n\}_n$ is tight.
\end{lemma}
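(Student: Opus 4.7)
The plan is to exhibit, for each $\epsilon>0$, a compact set $K_\epsilon\subseteq X$ of product form $\prod_{g\in G}\{0,\ldots,N_g-1\}$ with $\tilde\mu_n(K_\epsilon)>1-\epsilon$ for every $n$. Such a product is compact in $\N^G$ by Tychonoff's theorem, so the whole argument reduces to obtaining a single-site bound
$$\tilde\mu_n([a]_{1_G})\le C_0\exp\bigl(\sup\phi([a])\bigr),$$
that is uniform in $n\in\N$ and $a\in\N$, and then combining it with $G$-invariance of the $\tilde\mu_n$'s and with exp-summability of $\phi$.

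To derive the uniform single-site bound I would apply Proposition \ref{BowenGibbs} to the restricted potential $\phi|_{A_n^G}$ and the DLR measure $\mu_n$, noting that $\phi|_{A_n^G}$ inherits exp-summability and summable variation with $V_F(\phi|_{A_n^G})\le V_F(\phi)$ and $\Delta_F(\phi|_{A_n^G})\le \Delta_F(\phi)$ (suprema can only shrink upon restriction). Taking $F=\{1_G\}$ and $w=a\in A_n$, taking the supremum over the boundary condition $y\in A_n^G$, and integrating over $\mu_n$ yields
$$\mu_n([a]_{1_G})\le \exp\bigl(2V_{\{1_G\}}(\phi)+3\Delta_{\{1_G\}}(\phi)\bigr)\cdot\frac{\exp(\sup\phi([a]))}{Z_{\{1_G\}}(A_n,\phi)}.$$
The sequence $Z_{\{1_G\}}(A_n,\phi)$ is nondecreasing in $n$ and bounded above by $Z_{\{1_G\}}(\phi)<\infty$, so it is bounded below by a positive constant for all but finitely many $n$. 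For those finitely many initial $n$, $\{\tilde\mu_n\}$ is a finite family of probability measures on a Polish space and hence automatically tight, so they can be absorbed. This packages everything into a single constant $C_0$ valid for all $n$ and $a$, with the convention $\tilde\mu_n([a]_{1_G})=0$ for $a\notin A_n$.

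With the uniform bound in hand, I would fix an enumeration $G=\{g_i\}_{i\ge 1}$. By $G$-invariance of $\tilde\mu_n$,
$$\tilde\mu_n\bigl(\{x:x(g_i)\ge N\}\bigr)=\tilde\mu_n\bigl(\{x:x(1_G)\ge N\}\bigr)\le C_0\sum_{a\ge N}\exp(\sup\phi([a])),$$
and exp-summability forces the tail on the right to vanish as $N\to\infty$. I then choose $N_i$ so that the bound is at most $\epsilon/2^i$, set $K_\epsilon:=\{x\in X:x(g_i)<N_i\text{ for all }i\ge 1\}$ (a closed subset of the compact product $\prod_i\{0,\ldots,N_i-1\}$, hence compact), and apply a union bound to conclude $\tilde\mu_n(K_\epsilon^c)<\epsilon$ for every $n$.

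The only delicate point is ensuring that the constants coming out of Proposition \ref{BowenGibbs} are uniform in $n$ after passing to the restriction $\phi|_{A_n^G}$; this hinges on the monotonicity of $V_F$ and $\Delta_F$ under restriction, together with the monotone convergence of $Z_{\{1_G\}}(A_n,\phi)$ to the finite positive limit $Z_{\{1_G\}}(\phi)$. Once that is handled, everything else is a routine exp-summability tightness argument.
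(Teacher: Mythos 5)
Your proposal is correct and follows essentially the same route as the paper: a uniform-in-$n$ single-site bound obtained from Proposition \ref{BowenGibbs} applied to $\phi|_{A_n^G}$ (with the constants controlled by monotonicity of $V$, $\Delta$ and the monotone lower bound $Z_{\{1_G\}}(A_n,\phi)\geq Z_{\{1_G\}}(A_1,\phi)>0$), then exp-summability plus $G$-invariance, Tychonoff, and a weighted union bound. The only cosmetic difference is that you allot tail mass $\epsilon 2^{-i}$ per site along an enumeration of $G$, whereas the paper allots $\epsilon 2^{-m}/|E_m\setminus E_{m-1}|$ per annulus $E_m\setminus E_{m-1}$ of the exhaustion.
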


\begin{proof}
Fix some $n \in \N$. Then, for any $a \in \N$ and any $y \in A_n^{\{1_G\}^c}$, Proposition \ref{BowenGibbs} yields that
\begin{align*}
\exp\left(-C(\phi^n) \right) \leq  \frac{\mu_n([a])}{\exp\left(\phi^n(ay) - \log Z_{E_1}(\phi^n)\right)}
       \leq \exp\left(C(\phi^n) \right),
\end{align*}
where $\phi^n = \phi|_{A_n^G}$ and $C(\phi^n) =  2 V_{E_1}(\phi^n) + 3\delta(\phi^n)$. Furthermore, 
$0 < Z_{E_1}(\phi^n) \leq Z_{E_1}(\phi^{n+1}) < \infty$ and $\{Z_{E_1}(\phi^n)\}_n$ converges monotonically to $Z_{E_1}(\phi)$. In particular, there exists $c = -\log Z_{E_1}(\phi^1)$ such that $c \geq -\log Z_{E_1}(\phi^n)$, for all $n \in \N$.

If $a \notin A_n$, then $\tilde{\mu}_n([a]) = 0$. 
On the other hand, if $a \in A_n$, then for every $y \in A_n^{\{1_G\}^c}$,
\begin{align*}
    \tilde{\mu}_n([a]) = \mu_n([a]) &\leq \exp\left(C(\phi^n) \right)\exp\left(\phi^n(ay) - \log Z_{E_1}(\phi^n)\right) \\
    &\leq\label{estimate-without-constraints} \exp\left(C(\phi) + \phi(ay) + c\right),
\end{align*}
where $C(\phi) =  2 V_{E_1}(\phi) + 3\delta(\phi)$

Now, let $\epsilon > 0$. Since $\phi$ is exp-summable, for each $m \in \N$, there must exist a finite alphabet $A_{\epsilon, m} \in \mathcal{F}(\N)$ such that
\begin{equation}\label{estimate-for-tightness-of-union}
    \sum_{b \in \N \setminus A_{\epsilon, m}} \exp \left(\sup_{x \in [b]} \phi(x) \right) < \frac{\epsilon\cdot\exp\left(-C(\phi)- c\right)}{2^m|E_m\setminus E_{m-1}|}.
\end{equation}

Let \begin{equation*}
    K_{\epsilon} = A_{\epsilon, 1}^{E_1} \times A_{\epsilon, 2}^{E_2 \setminus E_1} \times A_{\epsilon, 3}^{E_3 \setminus E_2} \times \cdots.
\end{equation*}

By Tychonoff's Theorem (see \cite{munkres2018elements}), $K_{\epsilon}$ is compact. Moreover, notice that
\begin{equation*}
K_{\epsilon} = 
\bigcap_{m=1}^{\infty} \bigcap_{g \in E_m\setminus E_{m-1}} \bigcup_{a \in A_{\epsilon, m}} [a^g], 
\end{equation*}
where $[a^g] = \{x \in X : x(g) = a\}$. Therefore, for each $n \in \N$,
\begin{align*}
    \tilde{\mu}_n\left(X \setminus K_{\epsilon}\right) 
&=  \tilde{\mu}_n\left(\bigcup_{m=1}^{\infty} \bigcup_{g \in E_m\setminus E_{m-1}} \bigcap_{a \in A_{\epsilon, m}} [a^g]^c\right)\\
    &\leq \sum_{m=1}^{\infty} \sum_{g \in E_m \setminus E_{m-1}}  \tilde{\mu}_n\left(\bigcap_{a \in A_{\epsilon, m}} [a^g]^c\right)\\
    &= \sum_{m=1}^{\infty} \sum_{g \in E_m \setminus E_{m-1}}  \tilde{\mu}_n\left(\bigsqcup_{b \in \N \setminus A_{\epsilon, m}} [b^g]\right)\\
    &= \sum_{m=1}^{\infty} \sum_{g \in E_m \setminus E_{m-1}} \sum_{b \in \N \setminus A_{\epsilon, m}}  \tilde{\mu}_n \left(\left[b^g\right]\right).
\end{align*}
Since all the measures considered here are $G$-invariant, it follows that,  for any $y \in A_n^{\{1_G\}^c}$,
\begin{align*}
     \tilde{\mu}_n\left(X \setminus K_{\epsilon}\right)  &\leq  \sum_{m=1}^{\infty}\sum_{g \in E_m \setminus E_{m-1}} \sum_{b \in \N \setminus A_{\epsilon, m}} \tilde{\mu}_n\left([b]\right)\\
    &\leq \sum_{m=1}^{\infty} \sum_{g \in E_m \setminus E_{m-1}} \sum_{b \in \N \setminus A_{\epsilon, m}} \exp\left(C(\phi) + \phi(b y) + c\right)\\
    &= \sum_{m=1}^{\infty} \sum_{g \in E_m \setminus E_{m-1}} \exp\left(C(\phi) + c\right) \sum_{b \in \N \setminus A_{\epsilon, m}}  \exp(\phi(by))\\
     &< \sum_{m=1}^{\infty} \sum_{g \in E_m \setminus E_{m-1}} \exp\left(C(\phi) + c\right)\frac{\epsilon \cdot\exp\left(-C(\phi) - c\right)}{2^m|E_m\setminus E_{m-1}|}\\
     &= \sum_{m=1}^{\infty} \sum_{g \in E_m \setminus E_{m-1}} \frac{\epsilon}{2^m|E_m\setminus E_{m-1}|}\\
     %
     %
     &= \epsilon,
\end{align*}
where the fifth line follows from estimate \eqref{estimate-for-tightness-of-union}. Therefore, for all $n \in \N$, $\tilde{\mu}_n\left(X \setminus K_{\epsilon}\right) < \epsilon$, so that $\tilde{\mu}_n\left(K_{\epsilon}\right) = 1 - \tilde{\mu}_n\left(K_{\epsilon}^c\right) > 1 - \epsilon$, which proves the tightness of $\{\tilde{\mu}_n\}_n$.
\end{proof}
 
We have proven that for each sequence $\{\mu_n\}_{n}$ with $\mu_n \in \mathcal{G}^I_n(\phi)$,  the sequence $\{\tilde{\mu}_n\}_{n}$ of their extensions is tight. Then, the existence of at least one accumulation point is guaranteed by Prokhorov's Theorem. Let's see that an arbitrary accumulation point, which we will denote by $\tilde{\mu}$, is conformal for $\phi$ and, moreover, that it is $G$-invariant.

\begin{theorem}\label{thm:non-emptyness-of-invariant-Gibbs} Let $\phi\colon X \to \R$ be an exp-summable potential with summable variation according to an exhausting sequence $\{E_m\}_{m}$. Then, the set of $G$-invariant DLR measures for $\phi$ is non-empty.
\end{theorem}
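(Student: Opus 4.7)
The strategy is to apply Prokhorov's Theorem to the tight sequence $\{\tilde{\mu}_n\}_n$ from Lemma \ref{lemma:tightness-of-seq-induced-measures}, extract a weak accumulation point $\tilde{\mu} \in \mathcal{M}(X)$, and verify that it is a $G$-invariant conformal measure for $\phi$; the equivalence between conformal and DLR measures (Theorem \ref{thm:conformal-iff-DLR}) then shows $\tilde{\mu} \in \gibbs$. I prefer to work with the conformal characterization at the limiting stage because the defining identity $\int f \circ \tau \, d\mu = \int f \cdot \exp(\phi_*^{\tau}) \, d\mu$ involves integrands that are bounded and continuous for $\tau \in \cE_{K, A}$ with $A$ finite, and such integrands behave well under weak convergence.

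First one has to check that the sequence $\{\mu_n\}_n$ in the statement of Lemma \ref{lemma:tightness-of-seq-induced-measures} can actually be chosen, i.e., that $\mathcal{G}_n^I(\phi) \neq \emptyset$ for each $n$. This is precisely the content of the paragraph preceding that lemma: compactness of $A_n^G$ and quasilocality of $\gamma^n$ (Theorem \ref{thm:quasilocality}) produce a DLR measure for $\phi|_{A_n^G}$ as a weak accumulation point of $\{\gamma^n_{E_m}(\cdot, x)\}_m$, and averaging over a Følner sequence yields $G$-invariance using the $G$-invariance of $\gamma^n$ (Corollary \ref{cor:invariance-spec}). Lemma \ref{lemma:tightness-of-seq-induced-measures} then provides tightness of $\{\tilde{\mu}_n\}_n$, so Prokhorov's Theorem extracts a subsequence $\tilde{\mu}_{n_k} \to \tilde{\mu}$ weakly. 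The $G$-invariance of $\tilde{\mu}$ is inherited from the $\tilde{\mu}_{n_k}$: the shift preserves $A_{n_k}^G$, so each $\tilde{\mu}_{n_k}$ is $G$-invariant, and since $f \circ g$ is bounded continuous whenever $f$ is, weak convergence gives $\tilde{\mu}(f \circ g) = \lim_k \tilde{\mu}_{n_k}(f \circ g) = \lim_k \tilde{\mu}_{n_k}(f) = \tilde{\mu}(f)$ for every $g \in G$.

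The main obstacle is verifying that $\tilde{\mu}$ is a conformal measure. Fix $A \in \mathcal{F}(\N)$, $K \in \finSet$, and $\tau \in \cE_{K, A}$. For all $k$ large enough that $A \subseteq A_{n_k}$, the permutation $\tau$ preserves $A_{n_k}^G$, and since $\mu_{n_k}$ is a conformal measure for $\phi|_{A_{n_k}^G}$ (by Theorem \ref{thm:conformal-iff-DLR}), the extension satisfies
\begin{equation*}
\int f \circ \tau \, d\tilde{\mu}_{n_k} = \int f \cdot \exp(\phi_*^{\tau}) \, d\tilde{\mu}_{n_k}
\end{equation*}
for every bounded continuous $f$. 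The key technical point is that $\phi_*^{\tau}$ is both continuous and bounded on $X$: continuity follows because $\phi_*^{\tau}$ is a uniform limit of the continuous potentials $\phi^{\tau}_{E_m}$ (Theorem \ref{thm:existence-of-uniform-limit-f-tau}), while boundedness follows from Proposition \ref{prop:variation-of-permutations} together with the observation that $\phi^{\tau}_K$ is bounded because $\tau$ only alters coordinates in the finite alphabet $A$ and the finite oscillation $\delta(\phi)$ controls the corresponding local differences of $\phi$ on $\bigcup_{a \in A}[a]$. Consequently both $f \circ \tau$ and $f \cdot \exp(\phi_*^{\tau})$ are bounded continuous, so taking $k \to \infty$ via weak convergence yields the same identity for $\tilde{\mu}$. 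A standard approximation of bounded Borel functions by bounded continuous ones then extends this identity and gives $d(\tilde{\mu} \circ \tau^{-1})/d\tilde{\mu} = \exp(\phi_*^{\tau})$, so $\tilde{\mu}$ is a conformal measure for $\phi$. Theorem \ref{thm:conformal-iff-DLR} then places $\tilde{\mu}$ in $\gibbs$, completing the proof.
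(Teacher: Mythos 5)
Your proposal is correct and follows essentially the same route as the paper's proof: existence of $G$-invariant DLR measures for the finite-alphabet restrictions, tightness of the extensions (Lemma \ref{lemma:tightness-of-seq-induced-measures}) plus Prokhorov's Theorem, verification that the weak limit satisfies the conformal identity by testing against bounded continuous functions (using continuity and boundedness of $\exp(\phi_*^{\tau})$ for $\tau \in \cE_{K,A}$ with $A$ finite), and $G$-invariance passed to the limit, concluding via Theorem \ref{thm:conformal-iff-DLR}. Your justification of the boundedness of $\phi_*^{\tau}$ through Proposition \ref{prop:variation-of-permutations} and the local control of $\phi$ on $\bigcup_{a\in A}[a]$, and your test-function argument for $G$-invariance, are if anything slightly more careful versions of the corresponding steps in the paper.
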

    
\begin{proof} 
Let $\{\mu_n\}_{n}$ be such that, for each $n \in \N$, $\mu_n$ is a $G$-invariant conformal measure for $\phi^n:   A_n^G \to \R$ (or, equivalently, $\mu_n$ is a $G$-invariant DLR measure for $\phi^n$). Thus, for each $n \in \N$, any $K \in \finSet$, and any $\tau \in \cE_{K, A_n}$, 
\begin{equation}\label{eq:mu-n-conformal-existence}
    \exp\left((\phi^n)^{\tau_n}_*\right) = \frac{d (\mu_n \circ (\tau_n)^{-1})}{d\mu_n},
\end{equation}
where $\tau_n = \tau|_{A^G_n}$. This yields that
\begin{equation*}
    \exp(\phi_*^{\tau}) = \frac{d( \tilde{\mu}_n \circ \tau^{-1})}{d\tilde{\mu}_n}.
\end{equation*}
Indeed, let $\psi\colon X \to \R$ be a bounded continuous potential. Observe that, for $\tau \in \cE_{K, A_n}$, $(\phi^n)^{\tau_n}_* = (\phi_*^\tau)|_{A_n^G}$. Moreover, for every $B \in \sigmaAlg$, since $\tau_n^{-1}(A^G_n) = A^G_n$  and $\tilde{\mu}_n(X\setminus A_n^G) = 0$, we have that $\widetilde{{\mu_n\circ \tau_n^{-1}}}(B) = \tilde{\mu}_n(\tau^{-1}(B))$. Then, we obtain
\begin{align*}
\int \psi d(\tilde{\mu}_n \circ \tau^{-1}) &= \int \psi d(\widetilde{\mu_n \circ \tau_n^{-1}})\\
&= \int \psi^n d(\widetilde{\mu_n \circ \tau_n^{-1}})\\
&= \int \psi^n d(\mu_n \circ \tau_n^{-1})\\
&= \int \psi^n \exp\left((\phi^n)^{\tau_n}_*\right) d\mu_n\\
&= \int \psi^n \exp\left((\phi^n|_{A_n^G})^{\tau}_*\right) d\mu_n\\
&= \int \psi \exp(\phi_*^{\tau}) d\tilde{\mu}_n,
\end{align*}
where $\psi^n = \psi|_{A_n^G}$.

Furthermore, Lemma \ref{lemma:tightness-of-seq-induced-measures} guarantees that the sequence of induced measures $\{\tilde{\mu}_n\}_{n}$ is tight and we can apply Prokhorov's Theorem to guarantee the existence of a limit point for some subsequence $\{\mu_{n_k}\}_k$, which we denote by $\tilde{\mu}$. Now, we are going to prove that $\tilde{\mu}$ is a conformal measure for $\phi$. For that, consider a bounded continuous potential $\psi\colon X \to \R$, $A \in \mathcal{F}(\N)$, $K \in \finSet$, and  $\tau \in \cE_{K, A}$. 
 Then,  
\begin{align*}
\int \psi \, d(\tilde{\mu} \circ \tau^{-1}) &= \int \psi\circ \tau \, d\tilde{\mu}\\
    &= \lim_{k\to\infty} \int \psi \circ \tau \, d\tilde{\mu}_{n_k}\\
    &= \lim_{k\to\infty} \int \psi \, d(\tilde{\mu}_{n_k} \circ \tau^{-1})\\
    &= \lim_{k\to\infty} \int \psi \exp \phi_*^{\tau} \, d\tilde{\mu}_{n_k}\\
    &= \int \psi \exp \phi_*^{\tau} \, d\tilde{\mu},
\end{align*}
where the fourth equality follows from the fact that for $k$ large enough, $A \subseteq A_{n_k}$, and the last equality follows from weak convergence and the fact that $\psi \exp \phi_*^{\tau}$ is a continuous and bounded function. Indeed, first notice that $\phi_*^{\tau}$ is a uniform limit of continuous functions that are bounded from above, since $\phi$ is exp-summable. Therefore, the same holds for $\phi_*^{\tau}$, so that $\exp(\phi_*^{\tau})$ is continuous and bounded (from above and below). Since $A$, $K$, and $\tau$ are arbitrary, this proves that $\tilde{\mu}$ is conformal for $\phi$ and, therefore, DLR for $\phi$.

It remains to show that $\tilde{\mu}$ is $G$-invariant. For that, notice that, due to the weak convergence, for any $B \in \sigmaAlg$,
\begin{equation*}
\tilde{\mu}(g \cdot B) = \lim_{k \to \infty} \tilde{\mu}_{n_k}(g \cdot B) = \lim_{k \to \infty} \tilde{\mu}_{n_k}(B) = \mu(B), 
\end{equation*}
where we have used that, for each $k \in \N$, $\tilde{\mu}_{n_k}$ is $G$-invariant due to $G$-invariance of $A_n^G$ and to the fact that $\mu_{n_k}$ is $G$-invariant.
\end{proof}

\subsection{Finite entropy Bowen-Gibbs measures are equilibrium measures}\label{sec:Bowen-Gibbs-are-equilibrium}

Thus far, we have proven that if $\phi\colon X \to \R$ is an exp-summable potential with summable variation, then a measure $\mu \in \mathcal{M}(X)$ is a DLR measure if and only if it is a conformal measure. Also, if $\mu$ is a DLR measure, then $\mu$ is also a Bowen-Gibbs measure. For Bowen-Gibbs measures, we begin by exploring some equivalent hypothesis to having $H_F(\mu) < \infty$ for every $F \in \finSet$, or, equivalently, to have finite Shannon entropy at the identity element. This will allow us to assume, indistinctly,  that the energy of the potential is finite. The following lemma generalizes \cite[Lemma 3.4] {mauldin2001gibbs}.

\begin{proposition}\label{prop:equivalentes-of-finiteness-of-integral} Let $\phi\colon X \to \R$ be an exp-summable potential with summable variation according to an exhausting sequence $\{E_m\}_m$. Then, if $\mu \in \mathcal{M}(X)$ is a Bowen-Gibbs measure for $\phi$, the following conditions are equivalent:
\begin{itemize}
    \item[$i)$] $ \int \phi d\mu > -\infty$;
    \item[$ii)$] $\sum_{a\in \N} \sup \phi([a])\exp\left(\sup \phi([a])\right) > -\infty$; and
    \item[$iii)$] $H(\mu) < \infty$.
\end{itemize}
\end{proposition}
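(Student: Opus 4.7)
The plan is to first establish a uniform asymptotic $\mu([a]) \asymp \exp(\sup\phi([a]))$, in the sense that there are positive constants $c_1 \leq c_2$ (independent of $a$) with $c_1\exp(\sup\phi([a])) \leq \mu([a]) \leq c_2\exp(\sup\phi([a]))$ for every $a \in \N$, and then to read off all three equivalences from it. Note that the constant $p$ in Definition \ref{defn:bowen-gibbs-measure} must be $p(\phi)$, as already shown in this subsection.

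To prove the asymptotic, fix $\epsilon > 0$ and, using amenability, fix any $(K,\delta)$-invariant set $F$ for which the Bowen-Gibbs estimate $\exp(-\epsilon|F|) \leq \mu([x_F])/\exp(\phi_F(x) - p(\phi)|F|) \leq \exp(\epsilon|F|)$ holds for every $x \in X$. Set $F' = F \setminus \{1_G\}$ and decompose $\mu([a]) = \sum_{w \in X_F,\, w(1_G) = a}\mu([w])$. For the upper bound, use $\sup\phi_F([w]) \leq \sup\phi([a]) + \sup\phi_{F'}([w_{F'}])$ (the supremum of a sum is at most the sum of suprema), together with $\sum_{w_{F'}}\exp(\sup\phi_{F'}([w_{F'}])) = Z_{F'}(\phi) < \infty$ by exp-summability. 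For the lower bound, for each $w$ pick $y^w$ with $y^w_{F'} = w_{F'}$ almost maximizing $\phi_{F'}$, and modify it to $x^w$ by setting $x^w(1_G) = a$. Finite oscillation gives $\phi(x^w) \geq \sup\phi([a]) - \delta(\phi)$, and the summable variation hypothesis bounds the resulting change in $\phi_{F'}$ by $V(\phi)$, via
\begin{equation*}
\sum_{g \in F'}\delta_{E_{m(g)}}(\phi) \leq \sum_{m \geq 1}|E_{m+1}^{-1} \setminus E_m^{-1}|\delta_{E_m}(\phi) = V(\phi),
\end{equation*}
where $m(g) = \max\{m \geq 0 : g \notin E_m^{-1}\}$ (since $g\cdot y^w$ and $g\cdot x^w$ agree off $\{g^{-1}\}$). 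Substituting into the Bowen-Gibbs lower bound and summing over $w_{F'}$ matches the upper bound up to a bounded multiplicative factor, yielding the asymptotic.

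Given this asymptotic, the equivalences are immediate. By finite oscillation,
\begin{equation*}
\sum_a\mu([a])\sup\phi([a]) - \delta(\phi) \leq \int \phi\,d\mu \leq \sum_a\mu([a])\sup\phi([a]),
\end{equation*}
so (i) is equivalent to $\sum_a\mu([a])\sup\phi([a]) > -\infty$. Since $\mu([a])$ is bi-comparable to $\exp(\sup\phi([a]))$, and $\sup\phi([a]) < 0$ for all but finitely many $a$ by exp-summability, this is in turn equivalent to (ii). For (iii), the asymptotic gives $\log\mu([a]) = \sup\phi([a]) + O(1)$ uniformly in $a$, so $H(\mu) = -\sum_a\mu([a])\log\mu([a])$ differs from $-\sum_a\mu([a])\sup\phi([a])$ by a bounded quantity, making (iii) equivalent to (ii).

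The main obstacle is the lower bound in the asymptotic: one must exhibit in each cylinder $[w]$ a configuration that simultaneously approaches the maximum of $\phi$ at $1_G$ (forcing $x(1_G) = a$, at a cost of $\delta(\phi)$) and the maximum of $\phi_{F'}$ (starting from a near-maximizer subject only to $x_{F'} = w_{F'}$, then readjusting at $1_G$ at a cost uniformly bounded by $V(\phi)$). Both the exp-summability of $\phi$ (needed for $Z_{F'}(\phi) < \infty$) and the summable variation hypothesis (needed to bound $V(\phi)$) are essential to make the constants $c_1, c_2$ finite and positive independently of $a$.
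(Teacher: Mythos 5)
Your proof is correct, but it is organized quite differently from the paper's. You first establish a uniform two-sided single-letter comparison $c_1\exp(\sup\phi([a]))\le\mu([a])\le c_2\exp(\sup\phi([a]))$ by marginalizing the Bowen-Gibbs estimate on one fixed invariant set $F$ over $F'=F\setminus\{1_G\}$: the upper bound via subadditivity of suprema and $Z_{F'}(\phi)\le Z_{1_G}(\phi)^{|F'|}<\infty$, the lower bound via the surgery at $1_G$, whose cost you correctly bound by $\delta(\phi)+V(\phi)$ through the shell decomposition $\{E_{m+1}^{-1}\setminus E_m^{-1}\}_m$; all three conditions then become equivalent at once, since $\int\phi\,d\mu$, $\sum_a\mu([a])\sup\phi([a])$, $\sum_a \sup\phi([a])\exp(\sup\phi([a]))$ and $-H(\mu)$ differ pairwise by uniformly controlled quantities (using that $\sup\phi([a])<0$ for all but finitely many $a$ and $\sum_a\mu([a])=1$). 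The paper instead proves the cycle $i)\Rightarrow iii)\Rightarrow ii)\Rightarrow i)$: the first implication is soft, being exactly Proposition \ref{prop:finite-Shannon-entropy} (a Jensen-type bound that does not use the Bowen-Gibbs property at all), while the other two apply the Bowen-Gibbs bound with $\epsilon=1$ on a fixed invariant $F\ni 1_G$ and then decouple $\sup\phi_F([x_F])$ site by site, paying $\delta(\phi)$ per site and factorizing the sum over $x_{F\setminus\{1_G\}}$ into $Z_{1_G}(\phi)^{|F|-1}$. Your route buys a clean, reusable intermediate statement (a single-site Bowen-Gibbs estimate for Bowen-Gibbs measures, analogous to the $F=\{1_G\}$ case of Proposition \ref{BowenGibbs} for DLR measures) and a symmetric treatment of the three conditions, at the price of invoking $V(\phi)$ in the lower bound where the paper's per-site decoupling only needs finite oscillation; the paper's route also gets $i)\Rightarrow iii)$ without the Bowen-Gibbs hypothesis. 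Two cosmetic points: your decomposition of $\mu([a])$ over $X_F$ implicitly requires $1_G\in F$, which you should secure by translating $F$ (right translates preserve $(K,\delta)$-invariance, hence the Bowen-Gibbs bound still applies), and in the lower bound one should record that the near-maximizer slack $\eta$ is chosen uniformly in $w_{F'}$, which is immediate since $\phi_{F'}$ is real-valued and bounded above.
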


\begin{proof} Begin by noticing that, since $\mu$ is a Bowen-Gibbs measure for $\phi$, we have that, in particular, for $\epsilon = 1$, there exist $K \in \finSet$, $\delta > 0$, and a $(K,\delta)$-invariant set $F \in \finSet$ with $1_G \in F$ such that, for every $x \in X$, it holds that 
\begin{equation}\label{eq:bowen-gibbs-one-side-1}
 \exp\left(-|F|(1 + p(\phi)) + \sup\phi_F([x_F])\right) \leq \mu([x_F]) \leq \exp\left(-|F|(-1 + p(\phi)) + \sup\phi_F([x_F])\right).
\end{equation}
We now prove that  $i) \implies iii) \implies ii) \implies i)$.

[$i) \implies iii)$] Notice that, since $\phi$ has summable variation according to $\{E_m\}_m$, then, in particular, $\phi$ has finite oscillation. Therefore, the result follows directly from Proposition \ref{prop:finite-Shannon-entropy}, disregarding whether $\mu$ is a Bowen-Gibbs measure for $\phi$ or not.

[$iii) \implies ii)$] Begin by noticing that, due to standard properties of Shannon entropy,
$H(\mu) \leq H_F(\mu) \leq |F|H(\mu)$. Then,
\begin{align*}
-\infty &< -H_F(\mu)\\
&= \sum_{x_F \in X_F} \mu([x_F]) \log \mu([x_F])\\
&\leq \sum_{x_F \in X_F} \mu([x_F])\left(-|F|(-1 + p(\phi)) + \sup\phi_F([x_F])\right)    \\
&=-|F|(1 + p(\phi)) + \sum_{x_F \in X_F}\mu([x_F])\sup\phi_F([x_F]).   
\end{align*}

Thus,
\begin{align*}
     -\infty &< \sum_{x_F \in X_F}\mu([x_F])\sup\phi_F([x_F])\\
     &\leq \sum_{x_F \in X_F}\exp\left(-|F|(-1+p(\phi)) + \sup \phi_F([x_F])\right)\cdot \sup\phi_F([x_F])\\
     &= \exp\left(-|F|(-1+p(\phi))\right)\sum_{x_F \in X_F}\exp\left(\sup \phi_F([x_F])\right)\cdot \sup\phi_F([x_F]),
\end{align*}
so that 
\begin{equation*}
     -\infty < \sum_{x_F \in X_F}\exp\left(\sup \phi_F([x_F])\right) \sup\phi_F([x_F]).
\end{equation*}

Also, for each $x_F \in X_F$, 
\begin{equation*}
    \sup \phi_F([x_F]) \geq \inf \phi_F([x_F]) \geq \sum_{g \in F} \inf\left(\phi_{\{g\}}([x_F])\right) \geq \sum_{g \in F} \inf\left(\phi_{\{g\}}([x_g])\right).
\end{equation*}

Now, due to exp-summability, without loss of generality we can assume that $\phi(x) \leq 0$, for all $x \in X$, so $\sup\phi_F([x_F]) \leq \sup\phi_F([x_{1_G}]) \leq \sup\phi([x_{1_G}]) \leq 0$. Then, abbreviating $\phi_{\{g\}}$ by $\phi_{g}$, we obtain that
\begin{align*}
     -\infty &< \sum_{x_F \in X_F} \sup\phi_F([x_F])\exp\left(\sup \phi_F([x_F])\right)\\
     &\leq \sum_{x_F \in X_F} \sup\phi_F([x_F])\prod_{g\in F} \exp\left(\inf \phi_g([x_g])\right)\\
     &\leq \sum_{x_F \in X_F} \sup\phi_F([x_F])\prod_{g\in F} \exp\left(\sup \phi_g([x_g]) - \delta(\phi)\right)\\
     &= \exp\left(- \delta(\phi)|F|\right)\sum_{x_F \in X_F} \sup\phi_F([x_F])\prod_{g\in F} \exp\left(\sup \phi_g([x_g])\right)\\
     &= \exp\left(- \delta(\phi)|F|\right)\sum_{x_F \in X_F} \sup\phi_F([x_F])\exp\left(\sup \phi([x_{1_G}])\right)\prod_{g\in F\setminus \{1_G\}} \exp\left(\sup \phi_g([x_g])\right)\\
     &\leq \exp\left(- \delta(\phi)|F|\right)\sum_{x_{F} \in X_F} \sup\phi_F([x_{1_G}])\exp\left(\sup \phi([x_{1_G}])\right)\prod_{g\in F\setminus \{1_G\}} \exp\left(\sup \phi_g([x_g])\right)\\
     &= \exp\left(- \delta(\phi)|F|\right)\sum_{x_{1_G} \in \N} \sup\phi_F([x_{1_G}])\exp\left(\sup \phi([x_{1_G}])\right)\sum_{x_{F\setminus \{1_G\}}}\prod_{g\in F\setminus \{1_G\}} \exp\left(\sup \phi_g([x_g])\right)\\
     &\leq  \exp\left(- \delta(\phi)|F|\right)\sum_{x_{1_G} \in \N} \sup\phi([x_{1_G}])\exp\left(\sup \phi([x_{1_G}])\right)\sum_{x_{F\setminus \{1_G\}}}\prod_{g\in F\setminus \{1_G\}} \exp\left(\sup \phi_g([x_g])\right).
\end{align*}
Moreover, notice that if $m = |F|-1$ and $g_1,\cdots, g_m$ is an enumeration of $F\setminus \{1_G\}$, then
\begin{align*}
    \sum_{x_{F\setminus \{1_G\}}}\prod_{g\in F\setminus \{1_G\}} \exp\left(\sup \phi_g([x_g])\right) &  = \sum_{x_{g_1}} \cdots \sum_{x_{g_m}} \exp\left(\sup \phi_{g_1}([x_{g_1}])\right) \cdots \exp\left(\sup \phi_{g_m}([x_{g_m}])\right)  \\
    &  = \sum_{x_{g_1}} \exp\left(\sup \phi_{g_1}([x_{g_1}])\right) \cdots \sum_{x_{g_m}} \exp\left(\sup \phi_{g_m}([x_{g_m}])\right)  \\
    &   = \prod_{g\in F\setminus \{1_G\}}\sum_{x_{g} \in X_g} \exp\left(\sup \phi_g([x_g])\right),
\end{align*}
so that 
\begin{align*}
    -\infty &<  \sum_{x_{1_G} \in \N} \sup\phi([x_{1_G}])\exp\left(\sup \phi([x_{1_G}])\right)\prod_{g\in F\setminus \{1_G\}}\sum_{x_{g} \in X_g} \exp\left(\sup \phi_g([x_g])\right)\\
    &= \sum_{x_{1_G} \in \N} \sup\phi([x_{1_G}])\exp\left(\sup \phi([x_{1_G}])\right)\prod_{g\in F\setminus \{1_G\}}Z_{g}(\phi)\\
    &= \sum_{x_{1_G} \in \N} \sup\phi([x_{1_G}])\exp\left(\sup \phi([x_{1_G}])\right)\prod_{g\in F\setminus \{1_G\}}Z_{1_G}(\phi)\\
    &= \sum_{x_{1_G} \in \N} \sup\phi([x_{1_G}])\exp\left(\sup \phi([x_{1_G}])\right)Z_{1_G}(\phi)^{|F| - 1}.
\end{align*}
Therefore, 
\begin{equation*}
    \sum_{x_{1_G} \in \N} \sup\phi([x_{1_G}])\exp\left(\sup \phi([x_{1_G}])\right) > -\infty.
\end{equation*}

[$ii) \implies i)$] Indeed, 
\begin{align*}
    \int \phi d\mu &\geq    \sum_{a \in \N} \inf \phi([a])\mu([a]) \\
    &=  \sum_{a \in \N} \inf \phi([a])\sum_{\substack{x_F: x_F(1_G) = a}}\mu([x_F]) \\
    &   \geq    \sum_{a \in \N} \inf \phi([a])\sum_{\substack{x_F: x_F(1_G) = a}} \exp\left(- |F|(1 +  p(\phi)) + \sup\phi_F ([x_F])\right) \\
     &=   \exp\left(- |F|(1 +  p(\phi))\right)\sum_{a \in \N} \inf \phi([a])\sum_{\substack{x_F: x_F(1_G) = a}} \exp\left(\sup\phi_F ([x_F])\right) \\
         &   \geq    \exp(-|F|(1+ p(\phi)))\sum_{a \in \N} \inf \phi([a])\sum_{\substack{x_F: x_F(1_G) = a}}\exp\left(\sum_{g \in F}\sup\phi_g([x_F])\right) \\
      &\geq    \exp\left(-|F|(1 + p(\phi)\right) \sum_{a \in \N} \inf \phi([a])\sum_{\substack{x_F: x_F(1_G) = a}} \prod_{g \in F}\exp\left(\sup\phi[x(g)]\right)\\
    &=   \exp\left(-|F|(1 + p(\phi)\right) \sum_{a \in \N} \inf \phi([a]) \exp\left(\sup\phi[a]\right) \sum_{x_{F\setminus\{1_G\}}} \prod_{g \in F\setminus \{1_G\}}\exp\left(\sup\phi[x(g)]\right).
 \end{align*} 

Notice that, due to the same argument as in the proof of $[iii) \implies ii)]$, we have that
\begin{equation*}
    \sum_{x_{F\setminus\{1_G\}}} \prod_{g \in F\setminus \{1_G\}}\exp\left(\sup\phi[x(g)]\right) = Z_{1_G}(\phi)^{|F|-1}.
\end{equation*}
Therefore, since $\exp\left(-|F|(1 + p(\phi)\right) Z_{1_G}(\phi)^{|F|-1} > 0$, it suffices to prove that 
\begin{align*}
   \sum_{a \in \N} \inf \phi([a]) \exp\left(\sup\phi[a]\right) > -\infty,
\end{align*}
but this is true since
\begin{align*}
   \sum_{a \in \N} \inf \phi([a]) \exp\left(\sup\phi[a]\right) &    \geq \sum_{a \in \N} (\sup \phi([a]) - \delta(\phi) )\exp\left(\sup\phi[a]\right)  \\
   & = \sum_{a \in \N} \sup \phi([a]) \exp\left(\sup\phi[a]\right) - \delta(\phi) \sum_{a \in \N} \exp\left(\sup\phi[a]\right)  \\
   & = \sum_{a \in \N} \sup \phi([a]) \exp\left(\sup\phi[a]\right) - \delta(\phi) \cdot Z_{1_G}(\phi)\\
   & > -\infty.
\end{align*}

\end{proof}

We now proceed to prove that Bowen-Gibbs measures with finite Shannon entropy at the identity are equilibrium measures.

\begin{theorem}\label{prop:bowen-gibbs-implies-equlibrium}
Let $\phi\colon X \to \R$ be an exp-summable potential with summable variation according to an exhausting sequence $\{E_m\}_m$. If $\mu \in \mathcal{M}(X)$ is a $G$-invariant Bowen-Gibbs measure for $\phi$ and $H(\mu) < \infty$, then $\mu$ is an equilibrium measure for $\phi$.
\end{theorem}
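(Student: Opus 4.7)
My plan is to integrate the Bowen-Gibbs inequality against $\mu$ and show that $h(\mu) + \int \phi \, d\mu = p(\phi)$, which together with the Gibbs inequality from Proposition \ref{prop:half-of-pv} characterises equilibrium measures.

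First, I would note that since $\mu$ is Bowen-Gibbs with $H(\mu) < \infty$, Proposition \ref{prop:equivalentes-of-finiteness-of-integral} guarantees that $\int \phi \, d\mu > -\infty$. Then, since $\mu$ is $G$-invariant, Proposition \ref{prop:finite-Shannon-entropy} ensures that $H_F(\mu) < \infty$ for every $F \in \finSet$, so that $h(\mu) = \lim_{F \to G} |F|^{-1} H_F(\mu)$ in the sense made explicit in \S \ref{sec:main-def}. This sets up all the finiteness needed to integrate freely.

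Next, given $\epsilon > 0$, use Definition \ref{defn:bowen-gibbs-measure} and the identification $p = p(\phi)$ from Proposition in \S \ref{subsec:p=p(phi)} to find $K \in \finSet$ and $\delta > 0$ such that for every $(K,\delta)$-invariant $F \in \finSet$ and every $x \in X$,
\begin{equation*}
\left| -\log \mu([x_F]) - \bigl(p(\phi)|F| - \phi_F(x)\bigr) \right| \leq \epsilon \cdot |F|.
\end{equation*}
Since this bound is pointwise in $x$, I can integrate it against $\mu$. By $G$-invariance of $\mu$, $\int \phi_F \, d\mu = |F| \int \phi \, d\mu$, and by the definition of Shannon entropy, $\int -\log \mu([x_F]) \, d\mu(x) = H_F(\mu)$. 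Dividing by $|F|$ yields
\begin{equation*}
\left| \frac{H_F(\mu)}{|F|} - p(\phi) + \int \phi \, d\mu \right| \leq \epsilon
\end{equation*}
for every sufficiently invariant $F$.

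Letting $F \to G$ along a F\o lner sequence that is also an exhaustion (which exists by the remarks preceding Proposition \ref{prop:limit-of-VF-over-the-size-of-F}), the left-hand side converges to $|h(\mu) - p(\phi) + \int \phi \, d\mu|$. Since $\epsilon > 0$ was arbitrary, we conclude $h(\mu) + \int \phi \, d\mu = p(\phi)$. Combined with Proposition \ref{prop:half-of-pv}, which bounds $h(\nu) + \int \phi \, d\nu \leq p(\phi)$ for any competitor $\nu \in \mathcal{M}_G(X)$ with $\int \phi \, d\nu > -\infty$, this shows $\mu$ attains the supremum in Definition \ref{defn:equilibrium-state} and is therefore an equilibrium measure for $\phi$. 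The main technical point is really just ensuring all integrals are finite so that integration of the Bowen-Gibbs sandwich is legitimate; this is handled by the two cited propositions at the start.
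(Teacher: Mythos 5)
Your proof is correct and follows essentially the same route as the paper: take logarithms in the Bowen-Gibbs bound, integrate against $\mu$ to identify $\frac{1}{|F|}H_F(\mu)$ and $\int \phi\, d\mu$, pass to the limit as $F$ becomes more and more invariant, and invoke Proposition \ref{prop:half-of-pv} to handle competitor measures. The only differences are cosmetic: by using $G$-invariance to write $\int \phi_F\, d\mu = |F|\int \phi\, d\mu$ directly you bypass the paper's detour through $\sup\phi_F([x_F])$ and Lemma \ref{regularityDeltak}, and you make explicit the appeal to Proposition \ref{prop:equivalentes-of-finiteness-of-integral} for $\int \phi\, d\mu > -\infty$, which the paper's proof leaves implicit.
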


\begin{proof} Since $\mu$ is a Bowen-Gibbs measure for $\phi$, for every $\epsilon > 0$, there exist $K \in \finSet$ and $\delta > 0$, such that for every $(K,\delta)$-invariant set $F \in \finSet$ and $x \in X$, 
\begin{equation}\label{eq:bowen-gibbs-for-proof-of-equilibrium}
       \exp\left(-\epsilon\cdot|F|\right) \leq  \frac{\mu([x_F])}{\exp\left(\phi_F(x) - p(\phi)\cdot|F|\right)} \leq \exp\left(\epsilon\cdot|F|\right).
\end{equation}

Moreover, notice that, for every $x \in X$ and $F \in \finSet$,
\begin{equation}\label{eq:supremum-then-integral}
    \sup\phi_F([x_F]) \leq \phi_F(x) + \Delta_F(\phi)  = \sum_{g \in F} \phi(g \cdot x) + \Delta_F(\phi).
\end{equation}
Therefore,
\begin{align*}
\lim_{F \to G} \frac{1}{|F|}\int{\sup\phi_{F}([x_F])}\, d\mu(x) & \leq \lim_{F \to G} \frac{1}{|F|}\int{\left(\sum_{g \in F} \phi(g \cdot x) + \Delta_F(\phi)\right)} \,d\mu(x)\\
&   = \lim_{F \to G} \frac{1}{|F|}\left(\sum_{g \in F} \int \phi(x) \, d\mu(x)\right) +  \lim_{F \to G} \frac{\Delta_F(\phi)}{|F|} \\
&   = \lim_{F \to G} \frac{1}{|F|}\left(|F| \int{\phi} \, d\mu\right)\\
&   =   \int{\phi}d\mu,
\end{align*}
where the second line follows from the $G$-invariance of $\mu$ and the third line follows from Lemma \ref{regularityDeltak}.

On the other hand, after taking logarithm in equation \eqref{eq:bowen-gibbs-for-proof-of-equilibrium} and dividing by $|F|$, we obtain
\begin{equation*}
    -\epsilon \leq \frac{\log \mu([x_F]) - \phi_F(x)}{|F|} + p(\phi)\leq \epsilon.
\end{equation*}

Thus, for every $x \in X$ and every $(K,\delta)$-invariant set $F \in \finSet$, 
\begin{equation*}
    p(\phi) \leq \frac{- \log \mu([x_F]) + \phi_F(x)}{|F|} + \epsilon \leq \frac{- \log \mu([x_F]) + \sup\phi_F([x_F])}{|F|} + \epsilon.
\end{equation*}

Integrating the last equation with respect to $\mu$, we get
\begin{align*}
p(\phi) &\leq    \frac{-1}{|F|} \sum_{x_F \in X_F}\mu([x_F])\log\mu([x_F]) + \frac{1}{|F|} \int{\sup\phi_F([x_F])}d\mu + \epsilon\\
&=  \frac{1}{|F|} H_F(\mu) + \frac{1}{|F|} \int{\sup\phi_F([x_F])}d\mu + \epsilon.
\end{align*}

Therefore, if we take limit as $F$ becomes more and more invariant, we have that
\begin{align*}
p(\phi) \leq    h(\mu) + \lim_{F \to G}\frac{1}{|F|} \int{\sup\phi_F([x_F])}d\mu + \epsilon   \leq    h(\mu) + \int{\phi}d\mu + \epsilon,
\end{align*}
where the last inequality follows from  inequality \eqref{eq:supremum-then-integral}. Since $\epsilon>0$ is arbitrary, we obtain that
\begin{equation*}
    p(\phi) \leq  h(\mu) + \int{\phi}d\mu.
\end{equation*}
The reverse inequality follows from Proposition \ref{prop:half-of-pv} and this concludes the proof. 
\end{proof}

\subsection{Equilibrium measures are DLR measures}\label{sec:equilibrium-states-are-DLR}

In \S\ref{sec:existence-of-DLR-states}, we proved that if $\phi\colon X \to \R$ is an exp-summable potential with summable variation according to an exhausting sequence $\{E_m\}_m$, then the set of $G$-invariant DLR measures for $\phi$ is non-empty. Throughout this section, fix a $G$-invariant $\nu \in \gibbs$.

Given $E \in \finSet$ and $\mu \in \mathcal{M}_G(X)$,  denote by $f_{\mu,E}$ the Radon-Nikodym derivative of $\mu\vert_E$ with respect to $\nu\vert_E$, where $\mu\vert_E$ and $\nu\vert_E$ denote the restrictions of $\mu$ and $\nu$ to $\sigmaAlg_E$, respectively. More precisely, for every $x \in X$, 
\begin{equation}\label{eq:radon-nikodym-derivative}
f_{\mu, E}(x) = \sum_{w \in X_E} \frac{\mu([w])}{\nu([w])} \mathbbm{1}_{[w]}(x)
.\end{equation}

Notice that $f_{\mu,E}$ is well-defined, because any DLR measure for $\phi$, in our context, is fully supported. Moreover, we can understand it as the pointwise limit of the simple functions $f_{\mu,E}^n = \sum_{w \in X_E \cap A_n^G} \frac{\mu([w])}{\nu([w])} \mathbbm{1}_{[w]}$, where $\{A_n\}_n$ is a fixed exhausting sequence of finite alphabets.

Consider the function $\psi\colon [0,\infty) \to [0,\infty)$ given by $\psi(x) = 1 - x + x\log x$, where $0\log(0) = 0$. Define, for each $n\in \N$ and $E \in \finSet$, the simple function $I^n_{\mu,E} := \sum_{w \in X_E \cap A_n^G} \psi\left(\frac{\mu([w])}{\nu([w])}\right) \mathbbm{1}_{[w]}$. Notice that $0 \leq I^n_{\mu,E}(x) \leq I^{n+1}_{\mu,E}(x)$, so we can define a measurable function $I_{\mu,E}$ by considering the pointwise limit $I_E(x) := \lim_{n \to \infty}I^n_E(x)$ in $[0,\infty]$.

When there is no ambiguity, we will omit the subscript $\mu$ from the previous notations. 

Observe that, by the Monotone Convergence Theorem,
\begin{equation*}
    \lim_{n\to\infty} \int I^n_E d\nu = \int \lim_{n\to\infty}  I^{n}_E d\nu = \int I_E d\nu \in [0,\infty].
\end{equation*}

In addition, 
\begin{align*}
    H^n_E(\mu \vert \nu) := \int I^n_E d\nu = \sum_{w \in X_E \cap A_n^G} \left(\nu([w]) - \mu([w]) + \mu([w])\log\left(\frac{\mu([w])}{\nu([w])}\right)\right),
\end{align*}
so that 
\begin{align*}
    \int I_E d\nu &= \lim_{n\to \infty}\int I^n_E d\nu\\
    &= \sum_{w \in X_E} \left(\nu([w]) - \mu([w]) + \mu([w])\log\left(\frac{\mu([w])}{\nu([w])}\right)\right)\\
    &=  \sum_{w \in X_E }  \mu([w])\log\left(\frac{\mu([w])}{\nu([w])}\right).
\end{align*}

We define the {\bf relative entropy} of a measure  $\mu \in \mathcal{M}_G(X)$ with respect to $\nu$ to be
\begin{equation*}
     H_E(\mu \vert \nu) := \int I_{\mu,E} d\nu,
\end{equation*}
when $E \in \finSet$, and $0$ if $E = \emptyset$.  Notice that, \textit{a priori}, $H_E(\mu \vert \nu) \in [0,\infty]$. Also, if $\mu \in \mathcal{M}_G(X)$, then $H_{Eg}(\mu \vert \nu) = H_E(\mu \vert \nu)$ for every $g \in G$.

\begin{lemma}\label{rmk:H-F-increasing} Let $E, F \in \finSet$ be such that $E \subseteq F$ and $\mu \in \mathcal{M}(X)$. Then, for every $n \in \N$, $H^n_E(\mu \vert \nu) \leq H^n_F(\mu \vert \nu)$. Moreover, $H_E(\mu \vert \nu) \leq H_F(\mu \vert \nu)$.
\end{lemma}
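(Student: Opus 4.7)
The engine of the proof is the convexity of $\psi(x) = 1 - x + x \log x$ on $[0,\infty)$, which is immediate from $\psi''(x) = 1/x > 0$ for $x > 0$, together with Jensen's inequality in its conditional and discrete forms.

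For the ``moreover'' clause $H_E(\mu|\nu) \leq H_F(\mu|\nu)$, I would take the cleanest route via conditional Jensen. Since $\nu$ is a DLR measure for an exp-summable potential, every cylinder carries strictly positive $\nu$-measure, so the functions $f_{\mu,E}$ and $f_{\mu,F}$ defined in \eqref{eq:radon-nikodym-derivative} are bona fide Radon--Nikodym derivatives of $\mu|_{\sigmaAlg_E}$ with respect to $\nu|_{\sigmaAlg_E}$ and of $\mu|_{\sigmaAlg_F}$ with respect to $\nu|_{\sigmaAlg_F}$, respectively. Because $\sigmaAlg_E \subseteq \sigmaAlg_F$ and both functions represent $\mu$ on $\sigmaAlg_E$, the uniqueness of conditional expectation gives $f_{\mu,E} = \E_\nu[f_{\mu,F} \mid \sigmaAlg_E]$ $\nu$-a.s. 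Conditional Jensen applied to $\psi$ then yields
\[
\psi(f_{\mu,E}) \;=\; \psi\!\bigl(\E_\nu[f_{\mu,F} \mid \sigmaAlg_E]\bigr) \;\leq\; \E_\nu\!\bigl[\psi(f_{\mu,F}) \mid \sigmaAlg_E\bigr] \qquad \nu\text{-a.s.,}
\]
and integrating against $\nu$ with the tower property gives $H_E(\mu|\nu) \leq H_F(\mu|\nu)$.

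For the finite-$n$ inequality $H^n_E \leq H^n_F$, the plan is to carry out the same convexity principle at the discrete level. For each $w \in X_E \cap A_n^G$, the set of words $u \in X_F \cap A_n^G$ with $u|_E = w$ is $\{wv : v \in A_n^{F \setminus E}\}$, and the cylinders $\{[wv]\}$ partition the part of $[w]$ lying inside $\bigsqcup_{u \in X_F \cap A_n^G}[u]$. I would apply the finite Jensen inequality to $\psi$ with the weights $\nu([wv])$, then sum the resulting bounds over $w \in X_E \cap A_n^G$ and compare with $H^n_E(\mu|\nu)$.

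The main obstacle lies in this last discrete step: because $\sum_{v \in A_n^{F\setminus E}} \nu([wv])$ may be strictly smaller than $\nu([w])$ (there is residual mass coming from symbols outside $A_n$), the raw Jensen bound must be supplemented with a careful accounting of this tail so that the comparison with $H^n_E(\mu|\nu)$ goes through. Once the finite-$n$ statement is in hand, the ``moreover'' clause also follows directly by Monotone Convergence, since $H^n_E \uparrow H_E$ and $H^n_F \uparrow H_F$ as $n \to \infty$.
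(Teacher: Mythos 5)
Your proof of the second assertion is correct and complete: both $f_{\mu,E}$ and $f_{\mu,F}$ represent $\mu$ on $\sigmaAlg_E$ (cylinders form a generating $\pi$-system and $\nu$ has full support), so $f_{\mu,E}=\E_\nu[f_{\mu,F}\mid\sigmaAlg_E]$ $\nu$-a.s.; moreover $I_{\mu,E}=\psi(f_{\mu,E})$ pointwise, hence $H_E(\mu\vert\nu)=\int\psi(f_{\mu,E})\,d\nu$, and conditional Jensen gives $H_E(\mu\vert\nu)\leq H_F(\mu\vert\nu)$. This is the same mechanism as the paper's proof (a tower-property identity plus conditional Jensen), except that the paper runs it on the truncated densities $f^n_{\mu,E}$, $f^n_{\mu,F}$ and then lets $n\to\infty$, whereas you apply it directly to the untruncated ones; your route is the more robust of the two.

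The gap in your proposal is the finite-$n$ claim, which you leave unresolved --- and the obstacle you flag is genuine, not something that can be fixed by ``careful accounting of the tail''. On $[w]$ with $w\in A_n^E$ one has $\E_\nu[f^n_{\mu,F}\mid\sigmaAlg_E]=\frac{1}{\nu([w])}\sum_{v\in A_n^{F\setminus E}}\mu([wv])$, which is strictly smaller than $f^n_{\mu,E}=\mu([w])/\nu([w])$ as soon as $\mu$ charges configurations in $[w]$ whose symbols on $F\setminus E$ leave $A_n$; so the truncated tower identity $f^n_{\mu,E}=\nu(f^n_{\mu,F}\mid\sigmaAlg_E)$ --- which is exactly what the paper's argument asserts --- fails in general. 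In fact, with the definitions as written the finite-$n$ inequality itself can fail: take $\nu$ the product (hence $G$-invariant DLR) measure of a fully supported single-site potential with $\nu([0])=\frac{1}{10}$, take $E=\{1_G\}\subseteq F=\{1_G,g\}$, $A_n=\{0\}$, and choose $\mu\in\mathcal{M}(X)$ with $\mu([0])=\frac12$ and $\mu$-mass $\frac{1}{100}=\nu([0])^2$ on the cylinder fixing the symbol $0$ at both sites of $F$; then $H^n_E(\mu\vert\nu)=\frac{1}{10}-\frac12+\frac12\log 5>0=H^n_F(\mu\vert\nu)$. So you should not expect to close the discrete step by tail bookkeeping: the statement that is sound (and the only one invoked later, e.g.\ in Proposition \ref{entropy-0} and Proposition \ref{prop:step1-Georgii}) is the monotonicity of $H_E(\mu\vert\nu)$ in $E$, which your conditional-Jensen argument already establishes without passing through the truncated quantities.
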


\begin{proof}
Fix $n \in \N$. First, observe that $f^n_{\mu,E} = \nu[f^n_{\mu,F} \,\vert\, \sigmaAlg_E]$. Indeed, it suffices to prove that for any $v \in X_E$,
\begin{equation}\label{eq:tower-property-lemma}
    \int_{[v] \cap A_n^G} f^n_E \, d\nu = \int_{[v] \cap A_n^G} f^n_F \, d\nu,
\end{equation}
since the supports of $f^n_E$ and $f^n_F$ are contained in $A_n^G$ and $\sigmaAlg_E$ is generated by cylinder sets of this form. If $v \notin A_n^E$, then both sides of equation \eqref{eq:tower-property-lemma} are $0$ and the result is proven. Otherwise, if $v \in A_n^E$, then
\begin{align*}
    \int_{[v]\cap A_n^G} f^n_F \, d\nu &=  \int_{[v]\cap A_n^G} \sum_{w \in X_F \cap A_n^G} \frac{\mu([w])}{\nu([w])} \mathbbm{1}_{[w]} d\nu\\
    &= \int_{A_n^G} \sum_{w \in X_{F\setminus E} \cap A_n^G} \frac{\mu([vw_{F\setminus E}])}{\nu([vw_{F\setminus E}])} \mathbbm{1}_{[vw_{F \setminus E}]} d\nu\\
    &=  \sum_{w \in X_{F\setminus E} \cap A_n^G} \frac{\mu([vw_{F\setminus E}])}{\nu([vw_{F\setminus E}])}\int_{A_n^G}\mathbbm{1}_{[vw_{F \setminus E}]} d\nu\\
    &=  \sum_{w \in X_{F\setminus E} \cap A_n^G} \mu([vw_{F\setminus E}])\\
    &= \mu([v]\cap A_n^G)   \\
     &= \int_{[v]\cap A_n^G}  d\mu\\
     &=  \int_{[v]\cap A_n^G} f^n_E \, d\nu.
\end{align*}

 Thus,
\begin{align*}
H^n_E(\mu \vert \nu) &= \int{f^n_{E} \log f^n_{E}} \,d\nu \\
&= \int{\nu(f^n_{F}\vert\sigmaAlg_E) \log \nu(f^n_{F} \vert \sigmaAlg_E)}d\nu \\
&\leq  \int{\nu(f^n_{F} \log f^n_{F} \vert \sigmaAlg_E)}d\nu\\
&=  H^n_F(\mu \vert \nu),
\end{align*}
where the inequality follows from Jensen's inequality for conditional expectations. Finally, observe that
$$
H_E(\mu \vert \nu) = \lim_{n\to\infty} H^n_E(\mu \vert \nu) \leq \lim_{n\to\infty} H^n_F(\mu \vert \nu) = H_F(\mu \vert \nu).
$$
\end{proof}

\begin{proposition}\label{entropy-0} Let $\phi\colon X \to \R$ be an exp-summable potential with summable variation according to an exhausting sequence $\{E_m\}_m$ and $\mu \in \mathcal{M}_G(X)$. Then, $H_E(\mu \vert \nu) < \infty$ for every $E \in \finSet$. Moreover,  if  $\int \phi \, d\mu > -\infty$,
$$
h(\mu \,|\, \nu) := \lim_{F \to G} \frac{1}{|F|} H_F(\mu \vert \nu) = p(\phi) - \left(h(\mu) + \int \phi \,d\mu\right).
$$
\end{proposition}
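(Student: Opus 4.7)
The key input is that $\nu$, being a DLR measure for $\phi$, satisfies the non-asymptotic Bowen--Gibbs bound of Proposition~\ref{BowenGibbs}. Integrating that bound against $\nu$ itself eliminates the conditional expectation, and specializing $y=x$ in the resulting family of deterministic inequalities gives
$$
\bigl|\log\nu([x_F]) - \phi_F(x) + \log Z_F(\phi)\bigr| \,\leq\, 2 V_F(\phi) + 3 \Delta_F(\phi) \qquad (\star)
$$
for every $F \in \finSet$ and every $x \in X$. Thus $(\star)$ lets one trade the implicit quantity $-\log\nu([x_F])$ for the explicit quantity $-\phi_F(x) + \log Z_F(\phi)$, up to an error which, as $F$ becomes more and more invariant, is negligible compared to $|F|$ by Proposition~\ref{prop:limit-of-VF-over-the-size-of-F} and Lemma~\ref{regularityDeltak}. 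The rest of the proof simply feeds $(\star)$ into the defining decomposition
$$
H_F(\mu \mid \nu) = -H_F(\mu) + \int \bigl( -\log \nu([x_F]) \bigr)\, d\mu(x).
$$

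For the finiteness statement, I would first apply Proposition~\ref{prop:finite-Shannon-entropy} to the $G$-invariant measure $\mu$ (with $\int \phi\, d\mu > -\infty$, noting that summable variation of $\phi$ implies uniform continuity and finite oscillation) to conclude $H_F(\mu) < \infty$ for every $F \in \finSet$. Integrating $(\star)$ against $\mu$ and using $G$-invariance to write $\int \phi_F\, d\mu = |F| \int \phi\, d\mu$ yields
$$
\int \bigl( -\log \nu([x_F]) \bigr)\, d\mu(x) = -|F| \int \phi\, d\mu + \log Z_F(\phi) + R_F, \qquad |R_F| \leq 2 V_F(\phi) + 3 \Delta_F(\phi),
$$
whose right-hand side is finite: $\log Z_F(\phi) < \infty$ by exp-summability, $-\int \phi\, d\mu < \infty$ by hypothesis, $V_F(\phi) < \infty$ by Remark~\ref{rmk:partition-of-G}, and $\Delta_F(\phi) < \infty$ by finite oscillation. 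Both summands in the decomposition are therefore finite and $H_F(\mu \mid \nu) < \infty$, with no risk of an $\infty-\infty$ ambiguity.

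Dividing the same identity by $|F|$ gives
$$
\frac{H_F(\mu \mid \nu)}{|F|} = -\frac{H_F(\mu)}{|F|} - \int \phi\, d\mu + \frac{\log Z_F(\phi)}{|F|} + \frac{R_F}{|F|}.
$$
Letting $F \to G$, the four terms converge respectively to $-h(\mu)$ (by the Kolmogorov--Sinai identity recalled in the excerpt immediately after Proposition~\ref{prop:finite-Shannon-entropy}), to $-\int\phi\, d\mu$ (constant in $F$), to $p(\phi)$ (by Theorem~\ref{thm:existence-of-pressure}), and to $0$ (by Proposition~\ref{prop:limit-of-VF-over-the-size-of-F} and Lemma~\ref{regularityDeltak}), yielding the announced formula $h(\mu\mid \nu) = p(\phi) - h(\mu) - \int \phi\, d\mu$. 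The one mildly delicate point in the argument is the derivation of $(\star)$ itself: Proposition~\ref{BowenGibbs} produces the conditional inequality only $\mu$-a.s.\ and in terms of an arbitrary $y \in X$, so one must check that the bound descends to an unconditional statement on $\nu([w])$ (by integrating against $\nu$, using that the prefactors $\exp(\pm(2V_F(\phi)+3\Delta_F(\phi)))$ depend on neither $x$ nor $y$) and that subsequently setting $y=x$ inside the integrand is legitimate, which it is since one is choosing $y=x$ in a family of purely deterministic bounds indexed by $y$. Once $(\star)$ is secured, every remaining step is a routine application of results already proven in the excerpt.
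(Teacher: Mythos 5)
Your argument is correct and rests on the same skeleton as the paper's proof --- write $H_F(\mu\,\vert\,\nu)=-H_F(\mu)-\sum_{w}\mu([w])\log\nu([w])$, control the cross term through the Gibbs property of the reference DLR measure $\nu$, and let $F\to G$ using Theorem~\ref{thm:existence-of-pressure}, Proposition~\ref{prop:limit-of-VF-over-the-size-of-F} and Lemma~\ref{regularityDeltak} --- but the input you feed into that skeleton differs in a useful way. The paper invokes the asymptotic statement of Theorem~\ref{thm:bowgib} (error $\epsilon|F|$, valid only for sufficiently invariant $F$), and must therefore obtain finiteness of $H_E(\mu\,\vert\,\nu)$ for an arbitrary $E\in\finSet$ indirectly, by embedding $E$ into an invariant set and appealing to the monotonicity Lemma~\ref{rmk:H-F-increasing}. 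You instead return to the non-asymptotic Proposition~\ref{BowenGibbs}, integrate it against $\nu$ (legitimate, since the two-sided prefactors are constant in $x$ and the conditional probability integrates to $\nu([w])$; this is precisely the integration step in the proof of Theorem~\ref{thm:bowgib}) and then set $y=x$, which gives your deterministic bound $(\star)$ for \emph{every} $F\in\finSet$ with explicit error $2V_F(\phi)+3\Delta_F(\phi)$. As a result, finiteness for every $E$ drops out directly, the monotonicity lemma is not needed, and the limit is computed from a single exact identity with a remainder $R_F=o(|F|)$ rather than from matching upper and lower $\epsilon$-bounds. One caveat, which you share with the paper rather than introduce: your finiteness argument (via Proposition~\ref{prop:finite-Shannon-entropy} and the bound on $\int(-\log\nu([x_F]))\,d\mu$) uses $\int\phi\,d\mu>-\infty$, whereas the statement asserts $H_E(\mu\,\vert\,\nu)<\infty$ for every $\mu\in\mathcal{M}_G(X)$; the paper's proof has the same implicit restriction (its bound involves $h(\mu)$ and $\int\phi\,d\mu$), so this is a limitation of the proposition as stated, not a gap in your proposal.
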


\begin{proof} Let $E \in \finSet$. Since $\nu$ is a DLR measure for $\phi$, by Theorem \ref{thm:bowgib}, $\nu$ is a Bowen-Gibbs measure for $\phi$. Then, for every $\epsilon > 0$, there exist $K \in \finSet$ and $\delta > 0$ such that for all $(K, \delta)$-invariant set $F \in \finSet$, the following conditions hold at the same time:
\begin{equation*}
 \left|h(\mu) - \frac{H_F(\mu)}{|F|}\right| \leq \epsilon
\end{equation*}
and
\begin{equation*}
       \exp\left(-\epsilon\cdot|F|\right) \leq  \frac{\nu([x_F])}{\exp\left(\sup\phi_F(x) - p(\phi)\cdot|F|\right)} \leq \exp\left(\epsilon\cdot|F|\right).
\end{equation*}

Observe that, by considering the lower bound of the equation above,
\begin{align*}
    -\sum_{x_F \in X_F \cap A_n^G} \mu([x_F])\log(\nu([x_F])) & \leq  - \sum_{x_F \in X_F \cap A_n^G} \mu([x_F])\left(\sup\phi_F([x_F]) - p(\phi) |F| - \epsilon|F|\right)\\
    &= (p(\phi) + \epsilon)|F| - \sum_{x_F \in X_F\cap A_n^G} \mu([x_F])\sup\phi_F([x_F])\\
    &\leq (p(\phi) + \epsilon)|F| - \int_{A_n^G} \phi_F d\mu \\
    &=  \left(p(\phi) + \epsilon - \int_{A_n^G} \phi d\mu\right)|F|,
\end{align*}
for any $(K, \delta)$-invariant set $F$. Then, we have that
\begin{align*}
     H_F(\mu \vert \nu) &  = \lim_{n \to \infty} \left(H^n_F(\mu \vert \nu) - H^n_F(\mu)\right) + \lim_{n \to \infty} H^n_F(\mu)   \\
    &= \lim_{n\to\infty} - \sum_{x_F \in X_F \cap A_n^G} \mu([x_F])\log(
    \nu([x_F])) + H_F(\mu)\\
    &\leq  \lim_{n\to\infty} \left(p(\phi) + \epsilon - \int_{A_n^G} \phi d\mu\right)|F| + (h(\mu) + \epsilon)|F|\\
     &=  \left(p(\phi) + h(\mu) - \int \phi d\mu + 2\epsilon\right)|F| + H_F(\mu) < \infty,
\end{align*}
where $H^n_F(\mu) := -\sum_{x_F \in X_F \cap A_n^G} \mu([x_F])\log(\mu([x_F]))$.

First, observe that for any $E$, we can find a $(K,\delta)$-invariant set $F$ such that $E \subseteq F$. Then, by Lemma \ref{rmk:H-F-increasing}, $H_E(\mu \vert \nu) \leq H_E(\mu \vert \nu) < \infty$. Second, for any $(K,\delta)$-invariant set $F$,
$$
\frac{H_F(\mu \vert \nu)}{|F|} \leq p(\phi) + \left(h(\mu) - \int \phi d\mu\right) + 2\epsilon.
$$

Finally, by considering the upper bound given by the definition of Bowen-Gibbs measure and using a similar argument, we obtain that
$$
\frac{H_F(\mu \vert \nu)}{|F|} \geq p(\phi) + \left(h(\mu) - \int \phi d\mu\right) - 2\epsilon.
$$

Since $\epsilon$ was arbitrary, we conclude that
$$
\lim_{F \to G} \frac{H_F(\mu \vert \nu)}{|F|} = p(\phi) - \left(h(\mu) + \int \phi d\mu\right).
$$
\end{proof}

In particular, given $\phi\colon X \to \R$ an exp-summable potential with summable variation according to an exhausting sequence $\{E_m\}_m$, a $G$-invariant measure $\mu$ is an equilibrium measure for $\phi$ if and only if $h(\mu\,|\,\nu) = 0$, for some (or every) DLR measure $\nu$. The next proposition is a generalization of Step $1$ in the proof of \cite[Theorem 15.37]{georgii2011gibbs}.

\begin{proposition}\label{prop:step1-Georgii}
Let $\phi\colon X \to \R$ be an exp-summable potential with summable variation according to an exhausting sequence $\{E_m\}_m$ and $\mu \in \mathcal{M}_G(X)$ be an equilibrium measure for $\phi$. Then, for every $\alpha > 0$ and $K\in \finSet$, there exists $E \in \finSet$ such that $K \subseteq E$ and 
\begin{equation*}
0 \leq H_E(\mu|\nu)-H_{E \setminus K}(\mu|\nu) \leq \alpha.    
\end{equation*}
\end{proposition}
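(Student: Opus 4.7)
The lower bound $H_E(\mu|\nu) \geq H_{E\setminus K}(\mu|\nu)$ is immediate from monotonicity of relative entropy (Lemma \ref{rmk:H-F-increasing}), since $E \setminus K \subseteq E$. For the upper bound the plan is to argue by contradiction, adapting Step 1 of the proof of \cite[Theorem 15.37]{georgii2011gibbs} to the amenable setting: if the increment of $H_\bullet(\mu|\nu)$ upon adding any translate of $K$ exceeded $\alpha$, then by packing many disjoint translates of $K$ into a F\o lner set $F$ and stripping them off one at a time, one would force $H_F(\mu|\nu)/|F|$ to remain bounded away from zero, contradicting the fact that $h(\mu|\nu) = 0$ for an equilibrium measure (Proposition \ref{entropy-0}).

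More precisely, suppose the conclusion fails, so that there exist $\alpha > 0$ and $K \in \finSet$ with $H_E(\mu|\nu) - H_{E\setminus K}(\mu|\nu) > \alpha$ for every $E \supseteq K$. Since $\mu$ and $\nu$ are both $G$-invariant, the map $E \mapsto H_E(\mu|\nu)$ is $G$-invariant on $\finSet$; substituting $Eg^{-1}$ for $E$, the inequality propagates to any right-translate: whenever $Kg \subseteq E$,
\begin{equation*}
H_E(\mu|\nu) - H_{E \setminus Kg}(\mu|\nu) > \alpha.
\end{equation*}
Fix now a F\o lner exhaustion $\{F_n\}_n$ of $G$. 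A standard greedy packing argument produces pairwise disjoint right-translates $Kg_1, \ldots, Kg_{k_n} \subseteq F_n$ such that maximality forces $\mathrm{Int}_K(F_n) \subseteq K^{-1}K\{g_1, \ldots, g_{k_n}\}$, and the estimate $|F_n \setminus \mathrm{Int}_K(F_n)| \leq |K|\cdot|K F_n \Delta F_n|$ (a direct consequence of the F\o lner condition) yields $k_n/|F_n| \geq 1/(2|K|^2)$ for all sufficiently large $n$.

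Setting $F_n^{(0)} = F_n$ and $F_n^{(i)} = F_n^{(i-1)} \setminus Kg_i$, disjointness of the packing gives $Kg_i \subseteq F_n^{(i-1)}$, so the translated hypothesis implies
\begin{equation*}
H_{F_n^{(i-1)}}(\mu|\nu) - H_{F_n^{(i)}}(\mu|\nu) > \alpha, \qquad i = 1, \dots, k_n.
\end{equation*}
Telescoping and using $H_{F_n^{(k_n)}}(\mu|\nu) \geq 0$ (nonnegativity of relative entropy) one obtains $H_{F_n}(\mu|\nu) > k_n\alpha$, hence $H_{F_n}(\mu|\nu)/|F_n| > \alpha/(2|K|^2)$ eventually. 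Passing to the limit, Proposition \ref{entropy-0} gives $h(\mu|\nu) \geq \alpha/(2|K|^2) > 0$; but the same proposition applied to the equilibrium measure $\mu$ (for which $\int\phi\,d\mu > -\infty$ and $h(\mu) + \int\phi\,d\mu = p(\phi)$) forces $h(\mu|\nu) = 0$, a contradiction. The only delicate step is the packing density bound $k_n/|F_n| \geq c > 0$, which genuinely uses the F\o lner property; the rest reduces to a clean telescoping of the purported increments.
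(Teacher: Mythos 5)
Your proposal is correct, but its combinatorial core differs from the paper's. The paper argues directly: it invokes the tiling machinery (Theorem \ref{thm:tiling} and Lemma \ref{cor:invariant}) to cover a sufficiently invariant set $F$ by tiles $T_1,\dots,T_M$, each of which contains a translate $Kg_i$, and then averages the telescoping sum $\sum_i \bigl(H_{W(i)}(\mu|\nu)-H_{W(i)\setminus Kg_i}(\mu|\nu)\bigr)$ with $W(i)=\bigsqcup_{j\le i}T_j$, using monotonicity (Lemma \ref{rmk:H-F-increasing}) to compare removal of $Kg_i$ with removal of the whole tile; a pigeonhole then produces the good index, and $G$-invariance of $E\mapsto H_E(\mu|\nu)$ translates it back to a set containing $K$. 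You instead argue by contradiction: assuming every $E\supseteq K$ has increment $>\alpha$, you translate the hypothesis, greedily pack pairwise disjoint translates $Kg_1,\dots,Kg_{k_n}$ into a F\o lner set $F_n$ with density $k_n/|F_n|\ge 1/(2|K|^2)$ (via $\mathrm{Int}_K(F_n)\subseteq K^{-1}K\{g_1,\dots,g_{k_n}\}$ and the F\o lner bound on $|F_n\setminus\mathrm{Int}_K(F_n)|$), peel the translates off one at a time, and telescope using nonnegativity and finiteness of the relative entropies to get $H_{F_n}(\mu|\nu)>k_n\alpha$, contradicting $h(\mu|\nu)=0$ from Proposition \ref{entropy-0}. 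Both proofs rest on the same three pillars --- monotonicity, $G$-invariance of $H_E(\mu|\nu)$, and $h(\mu|\nu)=0$ for an equilibrium measure --- but your packing argument bypasses the exact-tiling theorem entirely, which makes it more elementary and closer to the original Step 1 of \cite[Theorem 15.37]{georgii2011gibbs}; it could even be rephrased directly (pigeonhole over the peeling steps gives an increment at most $2|K|^2 H_{F_n}(\mu|\nu)/|F_n|$), avoiding the contradiction. What the paper's route buys is uniformity with the rest of the article, where the tiling decomposition is already the workhorse for pressure and entropy estimates. All the steps you flag as delicate (the density bound, the translation of the hypothesis, finiteness of $H_E(\mu|\nu)$ needed for telescoping, and convergence along F\o lner sequences to the invariant limit) do check out against the paper's definitions.
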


\begin{proof}
Pick $\delta > 0$ small enough so that every $(K,\delta)$-invariant set $F \in \finSet$ satisfies $\mathrm{Int}_K(F) \neq \emptyset$.  Consider $0 < \epsilon < 1$ and a tiling $\tiling$ with $(K,\delta)$-invariant shapes, which we can do by Theorem \ref{thm:tiling}. Then, from Lemma \ref{cor:invariant}, for every $(S_\tiling,\epsilon)$-invariant set $F \in \finSet$, there exist center sets $C_F(S) \subseteq C(S) \in \centers(\tiling)$ for $S \in \shapes(\tiling)$ such that
$$
F \supseteq \bigsqcup_{S \in \mathcal{S}(\tiling)} SC_F(S) \quad \text{and} \quad \left|F \setminus \bigsqcup_{S \in \mathcal{S}(\tiling)} SC_F(S)\right| \leq \epsilon|F|.
$$

Since $\mu$ is an equilibrium measure, $h(\mu\,|\nu) = 0$. Recall that $S_\tiling = \bigcup_{S \in \shapes(\tiling)} SS^{-1}$. Then, considering Lemma \ref{entropy-0}, pick $K' \supseteq S_\tiling$ and $\delta' < \epsilon$ so that, for every $(K',\delta')$-invariant set $F \in \finSet$, we have
\begin{equation*}
\frac{1}{|F|}H_F(\mu|\nu)\leq \frac{\alpha(1-\varepsilon)}{\max_{S \in \shapes(\tiling)}|S|}.    
\end{equation*}
Fix a $(K',\delta')$-invariant set $F \in \finSet$ and an arbitrary enumeration of the tiles $\{Sc: S \in \shapes(\tiling), c \in C_F(S)\}$, say $T_1,\dots,T_M$, where $M := \sum_{S \in \shapes(\tiling)} |C_F(S)|$. Notice that $(1-\epsilon)|F| \leq \sum_{S \in \shapes(\tiling)} |S||C_F(S)| \leq M\max_{S \in \shapes(\tiling)}|S|$. Moreover, since each $T_i$ is a $(K,\delta)$-invariant set, for every $1 \leq i \leq M$, $\mathrm{Int}_K(T_i) \neq \emptyset$, i.e., there exists $g_i \in G$ such that $Kg_i \subseteq T_i$. Denote $W(i)=\bigsqcup_{j=1}^iT_j$ for $0\leq i\leq M$. Then,
\begin{align*}
    	0\leq \frac{1}{M}\sum_{i=1}^{M}\left(H_{W(i)}(\mu|\nu)-H_{W(i)\setminus Kg_i}(\mu|\nu)\right) &\leq \frac{1}{M}\sum_{i=1}^{M} \left(H_{W(i)}(\mu|\nu)-H_{W(i)\setminus T_i}(\mu|\nu)\right)\\
	&= \frac{1}{M}\, H_{W(M)}(\mu|\nu)\\
	&\leq \frac{|F|}{M} \frac{1}{|F|}H_F(\mu|\nu)\\
	&\leq \frac{M\max_{S \in \shapes(\tiling)}|S|}{M(1-\epsilon)}\frac{\alpha(1-\varepsilon)}{\max_{S \in \shapes(\tiling)}|S|} \\
	&  = \alpha,
\end{align*}
where the first and second inequality follow from Lemma \ref{rmk:H-F-increasing} and, the first equality, from the fact that the sum is telescopic. Consequently, there must exist an index $i' \in \{1, \dots, M\}$ such that
$$H_{W(i')}(\mu|\nu) - H_{W(i')\setminus Kg_{i'}}(\mu|\nu) \leq \alpha.$$
Therefore, taking $E =  W(i')g_{i'}^{-1}$, the result follows from the $G$-invariance of $\mu$ and $\nu$.
\end{proof}

The next Lemma is a version of Step 2 in \cite[Theorem 15.37]{georgii2011gibbs}. 

\begin{lemma}
\label{step2-Georgii-Thm15.37}
Let $\phi\colon X \to \mathbb{R}$ be an exp-summable potential with summable variation with respect to an exhausting sequence $\{E_m\}_m$ and $\mu \in \mathcal{M}(X)$ be an equilibrium measure for $\phi$. Then, for every $\epsilon>0$, there exists $\alpha > 0$ such that, if $E \supseteq K$ and $H_{E}(\mu|\nu) - H_{E\setminus K}(\mu|\nu) \leq \alpha$, then $\nu\left(\left|f_{E} - f_{E \setminus K}\right|\right) \leq \epsilon$.
\end{lemma}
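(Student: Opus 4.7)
The plan is to reduce $\nu(|f_{E} - f_{E\setminus K}|)$ to a (conditional) relative entropy via Pinsker's inequality, and then use a chain-rule identity to bound that quantity by $H_E(\mu\vert\nu) - H_{E\setminus K}(\mu\vert\nu)$. Since $\mu$ is an equilibrium measure, $\int \phi\,d\mu > -\infty$, so Proposition \ref{entropy-0} ensures that both $H_E(\mu\vert\nu)$ and $H_{E\setminus K}(\mu\vert\nu)$ are finite, which is what we need to make all manipulations below rigorous in the countable-alphabet setting.

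First I would rewrite the integrand using the explicit formula \eqref{eq:radon-nikodym-derivative}. For each $v \in X_{E\setminus K}$ with $\mu([v]) > 0$, introduce the conditional probabilities $\mu(u\,|\,v) := \mu([u])/\mu([v])$ and $\nu(u\,|\,v) := \nu([u])/\nu([v])$ on the countable set of $u \in X_E$ extending $v$ (well-defined since $\nu$ is fully supported, which follows from Theorem \ref{thm:bowgib}). Using the factorisations $\mu([u]) = \mu([v])\mu(u\,|\,v)$ and $\nu([u]) = \nu([v])\nu(u\,|\,v)$, a direct computation gives
\begin{equation*}
\nu(|f_E - f_{E\setminus K}|) \;=\; \sum_{v \in X_{E\setminus K}} \mu([v]) \sum_{u \supseteq v} \left|\mu(u\,|\,v) - \nu(u\,|\,v)\right|,
\end{equation*}
that is, the $\mu$-average over $v$ of the total-variation distance between the conditional distributions $\mu(\cdot\,|\,v)$ and $\nu(\cdot\,|\,v)$.

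Next, I would apply Pinsker's inequality to each pair $(\mu(\cdot\,|\,v), \nu(\cdot\,|\,v))$, obtaining
\begin{equation*}
\sum_{u \supseteq v} \left|\mu(u\,|\,v) - \nu(u\,|\,v)\right| \;\leq\; \sqrt{2\,D_v}, \qquad D_v := \sum_{u \supseteq v} \mu(u\,|\,v) \log\frac{\mu(u\,|\,v)}{\nu(u\,|\,v)},
\end{equation*}
and then use the concavity of the square root (Jensen) to bound the $\mu$-average:
\begin{equation*}
\sum_v \mu([v]) \sqrt{2 D_v} \;\leq\; \sqrt{2 \sum_v \mu([v]) D_v} \;=\; \sqrt{2\bigl(H_E(\mu\vert\nu) - H_{E\setminus K}(\mu\vert\nu)\bigr)},
\end{equation*}
where the last equality is the chain rule for relative entropy, verified by expanding $\mu([u])\log(\mu([u])/\nu([u]))$ via the above factorisations and summing in the order $\sum_v \sum_{u \supseteq v}$. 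Choosing $\alpha := \epsilon^2/2$ then yields $\nu(|f_E - f_{E\setminus K}|) \leq \epsilon$, as desired.

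The main subtlety is the countable-alphabet setting, since the $\sigma$-algebras $\sigmaAlg_E$ and $\sigmaAlg_{E\setminus K}$ have infinitely many atoms, and the conditional distributions $\mu(\cdot\,|\,v)$ and $\nu(\cdot\,|\,v)$ can have infinite support. However, Pinsker's inequality and the chain rule for relative entropy both hold in this generality provided the entropies involved are finite, and finiteness is secured by Proposition \ref{entropy-0}; so the argument goes through without further complication. Notice that the hypothesis that $\mu$ is an equilibrium measure is used only to guarantee $\int \phi\,d\mu > -\infty$.
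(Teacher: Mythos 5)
Your proposal is correct, but it takes a genuinely different route from the paper at the decisive step. Both arguments hinge on the same structural fact, namely that the entropy difference is the $\mu$-average of the relative entropies of the conditional distributions: your chain-rule identity $\sum_v \mu([v])\,D_v = H_E(\mu\,|\,\nu) - H_{E\setminus K}(\mu\,|\,\nu)$ is exactly the paper's identity $H_E(\mu\,|\,\nu) - H_{E\setminus K}(\mu\,|\,\nu) = \int_B f_{E\setminus K}\,\psi\bigl(f_E/f_{E\setminus K}\bigr)\,d\nu$ with $\psi(x) = 1 - x + x\log x$, written atom-by-atom instead of via Radon--Nikodym derivatives and conditional expectations (your observation that atoms with $\mu([v])=0$ contribute nothing plays the role of the paper's restriction to the set $B = \{f_{E\setminus K} \neq 0\}$, where it is shown that $f_E = 0$ $\nu$-a.s. off $B$). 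Where you diverge is in how the entropy difference controls $\nu(|f_E - f_{E\setminus K}|)$: the paper follows Georgii's Step 2 and uses the elementary pointwise inequality $|x-1| \le r_\epsilon\,\psi(x) + \epsilon/2$, yielding the non-explicit threshold $\alpha = \epsilon/(2r_\epsilon)$, whereas you use Pinsker's inequality on each conditional pair followed by Jensen for the square root, yielding the explicit and uniform threshold $\alpha = \epsilon^2/2$ and in fact the stronger quantitative bound $\nu(|f_E - f_{E\setminus K}|) \le \sqrt{2\bigl(H_E(\mu\,|\,\nu) - H_{E\setminus K}(\mu\,|\,\nu)\bigr)}$. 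Your appeal to Proposition \ref{entropy-0} for finiteness (legitimate, since an equilibrium measure is by definition $G$-invariant with $\int\phi\,d\mu > -\infty$) is what makes the subtraction in the chain rule licit; if you want the rearrangement of the possibly conditionally convergent series to be airtight, carry out the expansion with the nonnegative summands $\nu([u]) - \mu([u]) + \mu([u])\log\bigl(\mu([u])/\nu([u])\bigr)$, as in the paper's definition of $H_E(\mu\,|\,\nu)$, but this is a cosmetic point. In short: the paper's argument is more elementary and self-contained, while yours buys an explicit, quantitative dependence of $\alpha$ on $\epsilon$.
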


\begin{proof} Notice that, for each $\epsilon>0$, there exists $r_\epsilon > 0$ such that
\begin{equation}\label{ineq-Geor}
|x-1| \leq r_\epsilon\psi(x)+\dfrac{\epsilon}{2},
\end{equation}
where $\psi(x) = 1 - x +x\log x$.

For a given $\epsilon > 0$, consider $\alpha=\frac{\epsilon}{2r_\epsilon}$, and let $E, K \in \finSet$ be such that $K \subseteq E$ and $H_{E}(\mu|\nu) - H_{E\setminus K}(\mu|\nu) \leq \alpha$, which we can do by Proposition \ref{prop:step1-Georgii}. Let $B = \{x \in X : f_{E \setminus K}(x) \neq 0\}$.  Notice that $B \in \sigmaAlg_{E\setminus K}$ 
$$
\int{\mathbbm{1}_{X \setminus B}f_E}d\nu = \int_{X \setminus B} f_E d\nu = \int_{X \setminus B} \nu(f_E \vert \sigmaAlg_{E\setminus K})d\nu = \int_{X \setminus B} f_{E \setminus K}d\nu = 0.
$$
Then, since $f_E(x) \geq 0$, we obtain that $f_E(x) = 0$ $\nu(x)$-almost surely on $X\setminus B$. Next, notice that
\begin{align*}
    \int_{B}f_{E}\log\left(\frac{f_{E}}{f_{E\setminus K}}\right)d\nu &   =    \int_{B}\log\left(\frac{f_{E}}{f_{E\setminus K}}\right)d\mu  \\
    &   = \int_{B} \log f_{E} \,d\mu - \int_{B} \log f_{E\setminus K} \, d\mu\\
    &= \int_{B} f_{E}\log f_{E} d\nu - \int_{B} f_{E \setminus K} \log f_{E\setminus K}  \, d\nu\\
    &= \int f_{E}\log f_{E} d\nu - \int f_{E \setminus K}\log f_{E\setminus K} \, d\nu\\
    &= H_{E}(\mu \, |\, \nu) - H_{E\setminus K}(\mu \, |\, \nu),
\end{align*}
where, making an abuse of notation, we just write $\mu$ and $\nu$, ignoring the restrictions.
Thus,
\begin{equation*}
    H_{E}(\mu \, |\, \nu) - H_{E\setminus K}(\mu \, |\, \nu) =\int_{B} f_{E}\log \left(\frac{f_{E}}{f_{E\setminus K}}\right)d\nu.
\end{equation*}
Furthermore, in $B$, observe that 
\begin{equation*}
    \psi\left(\frac{f_E}{f_{E \setminus K}}\right) = 1 - \frac{f_E}{f_{E \setminus K}} + \frac{f_{E}}{f_{E \setminus K}}\log\left(\frac{f_E}{f_{E \setminus K}}\right),
\end{equation*}
so that 
\begin{align*}
    f_{E \setminus K}\psi\left(\frac{f_{E}}{f_{E \setminus K}}\right)  =  f_{E \setminus K} -  f_{E}  + f_{E}\log\left(\frac{f_{E}}{f_{E \setminus K}}\right).
\end{align*}

Therefore, 
\begin{align*}
    \int_{B} f_{E \setminus K} \psi\left(\frac{f_{E}}{f_{E \setminus K}}\right)d\nu    &= \int_{B}  \left(f_{E \setminus K} -  f_{E}\right) \, d\nu + \int_{B} f_{E} \log\left(\frac{f_{E}}{f_{E \setminus K}}\right) \, d\nu.
\end{align*}

Since  
$f_{E \setminus K} = \nu(f_{E} \,|\, \cB_{E \setminus K})$, we have that $\int_{B}  \left(f_{E \setminus K} -  f_{E}\right) \, d\nu = 0$, so that we can rewrite
\begin{equation*}
H_E(\mu \, |\, \nu) - H_{E\setminus K}(\mu \, |\, \nu)  = \int_{B} f_{E \setminus K} \psi\left(\frac{f_{E}}{f_{E \setminus K}}\right)d\nu.
\end{equation*}

Therefore, from inequality \eqref{ineq-Geor}, it follows that
\begin{align*}
\nu(|f_E-f_{E\setminus K}|) &   =   \int_{B}|f_E-f_{E\setminus K}|d\nu+\int_{X \setminus B}|f_{E}-f_{E\setminus K}|d\nu\\
&   =   \int_{B}\left|f_{E}-f_{E\setminus K}\right|d\nu\\
&   =   \int_{B}\left|\frac{f_E}{f_{E\setminus K}}-1\right|f_{E \setminus K}d\nu\\
&   \leq   r_\epsilon\int_{B}f_{E\setminus K}\psi\left(\frac{f_{E}}{f_{E\setminus K}}\right)d\nu +\frac{\epsilon}{2}\int_{B}f_{E\setminus K}d\nu\\
&   =   r_\epsilon(H_E(\mu \, |\, \nu) - H_{E\setminus K}(\mu \, |\, \nu)) + \frac{\epsilon}{2}\int_{B}d\mu\\
&\leq r_\epsilon\alpha + \frac{\epsilon}{2} = \epsilon.
\end{align*}
\end{proof}

\begin{theorem}\label{thm:equilibrium-implies-DLR}
Let $\phi\colon X \to \R$ be an exp-summable potential with summable variation according to an exhausting sequence $\{E_m\}_m$. If $\mu \in \mathcal{M}_G(X)$ is an equilibrium measure for $\phi$, then $\mu$ is a DLR measure for $\phi$.
\end{theorem}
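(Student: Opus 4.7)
The plan is to adapt the strategy of the proof of Theorem 15.37 in \cite{georgii2011gibbs} to our amenable group and countable alphabet setting. Fix $K \in \finSet$. By Lemma \ref{lemma:characterization-of-admitted-by-specification}, it suffices to show $\mu\gamma_K = \mu$, which reduces to proving $\int h\, d\mu = \int \gamma_K h\, d\mu$ for every bounded local $h$ (since cylinder indicators form a $\pi$-system generating $\sigmaAlg$). I would work with a $G$-invariant DLR measure $\nu$, whose existence is guaranteed by Theorem \ref{thm:non-emptyness-of-invariant-Gibbs}; since $\mu$ is an equilibrium measure, Proposition \ref{entropy-0} gives $h(\mu\,|\,\nu) = 0$.

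The key observation to exploit is: for each $F \in \finSet$ with $K \subseteq F$, the probability measure $\mu^F := f_{\mu, F \setminus K} \cdot \nu$ satisfies $\mu^F \gamma_K = \mu^F$. This follows from a direct computation: $f_{\mu, F \setminus K}$ is $\sigmaAlg_{F \setminus K}$-measurable, hence $\sigmaAlg_{K^c}$-measurable, and $\nu \in \gibbs$ gives $\gamma_K(A, \cdot) = \nu(\mathbbm{1}_A \mid \sigmaAlg_{K^c})$ $\nu$-a.s., so
\begin{equation*}
\mu^F \gamma_K(A) = \int \gamma_K(A, \cdot)\, f_{\mu, F \setminus K}\, d\nu = \int \mathbbm{1}_A\, f_{\mu, F \setminus K}\, d\nu = \mu^F(A).
\end{equation*}

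Given $\epsilon > 0$ and a bounded $h \in \mathcal{L}_E$ with $E \supseteq K$, I would choose---using the quasilocality of $\gamma$ (Theorem \ref{thm:quasilocality})---a local $h' \in \mathcal{L}_{F_0}$ with $\|\gamma_K h - h'\|_\infty < \epsilon$, and then set $\tilde K := K \cup E \cup F_0$. Applying Proposition \ref{prop:step1-Georgii} to $\tilde K$ and a small $\alpha > 0$ would then produce $\tilde E \supseteq \tilde K$ with $H_{\tilde E}(\mu|\nu) - H_{\tilde E \setminus \tilde K}(\mu|\nu) \leq \alpha$. Since $K \subseteq \tilde K$ implies $\tilde E \setminus \tilde K \subseteq \tilde E \setminus K$, the monotonicity in Lemma \ref{rmk:H-F-increasing} gives $H_{\tilde E}(\mu|\nu) - H_{\tilde E \setminus K}(\mu|\nu) \leq \alpha$, and Lemma \ref{step2-Georgii-Thm15.37} then yields $\nu(|f_{\tilde E} - f_{\tilde E \setminus K}|) \leq \epsilon$ provided $\alpha$ is chosen small enough in terms of $\epsilon$.

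Finally, I would use the triangle inequality
\begin{equation*}
\left|\int h\,d\mu - \int \gamma_K h\,d\mu\right| \leq \left|\int h\,d\mu - \int h\,d\mu^{\tilde E}\right| + \left|\int h\,d\mu^{\tilde E} - \int \gamma_K h\,d\mu^{\tilde E}\right| + \left|\int \gamma_K h\,d\mu^{\tilde E} - \int \gamma_K h\,d\mu\right|.
\end{equation*}
The middle term vanishes by $\mu^{\tilde E}\gamma_K = \mu^{\tilde E}$. The first equals $|\int h\,(f_{\tilde E} - f_{\tilde E \setminus K})\,d\nu| \leq \|h\|_\infty\,\epsilon$ since $h \in \mathcal{L}_{\tilde E}$. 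For the third, approximating $\gamma_K h$ by $h' \in \mathcal{L}_{\tilde E}$ (an $O(\epsilon)$ error on each of the $\mu$- and $\mu^{\tilde E}$-integrals) and bounding $|\int h'\,d\mu - \int h'\,d\mu^{\tilde E}|$ by $\|h'\|_\infty\,\epsilon$ in the same way reduces everything to $O(\epsilon)$ with constants depending only on $\|h\|_\infty$. Letting $\epsilon \to 0$ concludes. The main obstacle is the bookkeeping of $\sigma$-algebras: ensuring that both $h$ and the local approximation $h'$ to the non-local function $\gamma_K h$ lie in $\mathcal{L}_{\tilde E}$ while $\tilde E$ still witnesses the small relative-entropy increment; this is precisely the reason for applying Proposition \ref{prop:step1-Georgii} to the enlargement $\tilde K$ of $K$ rather than to $K$ directly.
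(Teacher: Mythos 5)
Your proposal is correct and follows essentially the same route as the paper's proof: the same ingredients (a $G$-invariant DLR measure $\nu$ from Theorem \ref{thm:non-emptyness-of-invariant-Gibbs}, $h(\mu\,|\,\nu)=0$ via Proposition \ref{entropy-0}, Proposition \ref{prop:step1-Georgii} combined with the monotonicity of Lemma \ref{rmk:H-F-increasing}, Lemma \ref{step2-Georgii-Thm15.37}, and the quasilocality of $\gamma$ from Theorem \ref{thm:quasilocality}) drive the same estimate. Packaging the cancellation as the $\gamma_K$-invariance of $\mu^{\tilde E}=f_{\mu,\tilde E\setminus K}\,\nu$ and enlarging $K$ to $\tilde K=K\cup E\cup F_0$ (which lets you drop the requirement that the local approximation of $\gamma_K h$ be $\sigmaAlg_{K^c}$-measurable) is only a mild reorganization of the paper's six-term telescoping argument.
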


\begin{proof} Since $\mu$ is an equilibrium measure, then $h(\mu \vert \nu) = 0$. The strategy is to prove that, for every $K \in \finSet$, $\mu\gamma_{K} = \mu$, where $\gamma$ is the Gibbsian specification defined by equation \eqref{specification}. Then, by Lemma \ref{lemma:characterization-of-admitted-by-specification}, it will follow that $\mu$ is a DLR measure for $\phi$. 

Let $h\colon X \to \R$ be a bounded local 
function and $\epsilon > 0$. Since $\gamma$ is a quasilocal specification (see Theorem \ref{thm:quasilocality}), then $\gamma_Kh$ is a bounded quasilocal $\sigmaAlg_{K^c}$-measurable function. Thus, there exists a bounded local $\sigmaAlg_{K^c}$-measurable function $\tilde{h}\colon X \to \R$ such that $\left\Vert \gamma_K h - \tilde{h}\right\Vert_{\infty} < \epsilon$. Since $\tilde{h}$ is a local potential, there exists $B \in \finSet$, $B \supseteq K$, such that $\tilde{h}$ is a $\sigmaAlg_{B\setminus K}$-measurable. Also, since $h$ is local, we can assume, without loss of generality, that $h$ is $\sigmaAlg_{B}$-measurable. 

Consider $\alpha$ as in Lemma \ref{step2-Georgii-Thm15.37}, that is, whenever $E \supseteq B$ and $H_{E}(\mu|\nu) - H_{E\setminus B}(\mu|\nu) \leq \alpha$, then $\nu\left(\left|f_{E} - f_{E \setminus B}\right|\right) \leq \epsilon$. Now, using Proposition \ref{prop:step1-Georgii}, fix a set $E \in \finSet$ such that $E \supseteq B$ and $H_E(\mu|\nu)-H_{E \setminus B}(\mu|\nu) \leq \alpha$. Therefore, by the monotonicity of the relative entropy, we obtain that $H_E(\mu|\nu)-H_{E \setminus K}(\mu|\nu) \leq \alpha$, so that $\nu\left(\left|f_{E} - f_{E \setminus K}\right|\right) \leq \epsilon$. 

We now compute $\left\vert \mu\gamma_K(h) - \mu(h)\right\vert$. First observe that since $\tilde{h}$ is $\sigmaAlg_{B\setminus K}$-measurable and $B \subseteq E$, then $\tilde{h}$ is $\sigmaAlg_{E\setminus K}$-measurable. Therefore, recalling that $\mu\gamma_K(h) = \mu(\gamma_K h)$,
\begin{align*}
    \left\vert \mu\gamma_K(h) - \mu(h)\right\vert &\leq \left\vert \mu(\gamma_Kh) - \mu(\tilde{h})\right\vert + \left\vert \mu(\tilde{h}) - \nu(f_{E\setminus K}\tilde{h}) \right\vert + \left\vert \nu(f_{E\setminus K}\tilde{h}) - \nu(f_{E\setminus K}(\gamma_K h)) \right\vert\\\
    & \quad + \left\vert \nu(f_{E\setminus K}(\gamma_K h)) - \nu(f_{E\setminus K}h) \right\vert  + \left\vert \nu(f_{E\setminus K}h) - \nu(f_E h)\right\vert + \left\vert \nu(f_Eh) -\mu(h)\right\vert\\
    &\leq  \mu\left(\left\vert\gamma_K h - \tilde{h}\right\vert\right) + 0 + \nu\left(f_{E\setminus K}\left\vert\tilde{h} - \gamma_K h\right\vert\right) + 0 + \left\Vert h \right\Vert_{\infty}\nu\left(\left\vert f_{E\setminus K}  -  f_{E} \right\vert\right) + 0.
\end{align*}

We begin by justifying the terms that vanished from the first inequality to the second. Notice that $\left\vert \mu(\tilde{h}) - \nu(f_{E\setminus K}\tilde{h}) \right\vert = 0$ and $\left\vert \nu(f_E h) -\mu(h)\right\vert = 0$, because $\tilde{h}$ is $\sigmaAlg_{E\setminus K}$-measurable and because $h$ is $\sigmaAlg_E$-measurable. We also have that $ \left\vert \nu\left(f_{E\setminus K}(\gamma_K h) \right) -  \nu\left(f_{E\setminus K}h\right) \right\vert= 0$, because $f_{E\setminus K}$ is $\sigmaAlg_{K^c}$-measurable and $\gamma$ is proper, so $
\nu(f_{E\setminus K} (\gamma_K h)) = \nu(\gamma_K (f_{E\setminus K} h))$ and, in addition, since $\nu$ is a DLR measure, we have that
$$
\nu(\gamma_K (f_{E\setminus K} h)) = (\nu\gamma_K)(f_{E\setminus K} h) = \nu(f_{E\setminus K} h).
$$

We now have to deal with the three other terms. Notice that
$$
\mu\left(\left\vert\gamma_K h - \tilde{h}\right\vert\right) < \epsilon \quad \text{ and } \quad \nu\left(f_{E\setminus K}\left\vert\tilde{h} - \gamma_K h\right\vert\right) < \epsilon,
$$
because $\left\Vert \gamma_K h - \tilde{h}\right\Vert_{\infty} < \epsilon$. 
Lastly, since $\nu\left(\left|f_{E \setminus K} - f_{E}\right|\right) \leq \epsilon$, it follows that
\begin{equation*}
    \left\vert \mu\gamma_K(h) - \mu(h)\right\vert < 2\epsilon + \left\Vert h \right\Vert_{\infty}\epsilon.
\end{equation*}
Since $\epsilon > 0$ and $h\colon X \to \R$ are arbitrary, we obtain that, $\mu\gamma_{K} = \mu$,  which concludes the result. 
\end{proof}

\section{Final considerations}
\label{section:examples}

In this section we consider the case when the group is finitely generated, which includes the well-studied case $G = \Z^d$ and show that our approach generalizes previous ones. Next, we present a version of Dobrushin's Uniqueness Theorem adapted to our framework and we apply it to a concrete class of examples of potentials defined in the $G$-full shift for any countable amenable group $G$.

\subsection{The finitely generated case}
\label{subsec:fin-gen}

We now restrict ourselves to the case that $G$ is a finitely generated group. The main goal is to prove that our definition of a Bowen-Gibbs measure (Definition \ref{defn:bowen-gibbs-measure}) for a given exp-summable potential with summable variation according to an exhausting sequence is related to the standard --- but more restrictive --- way to define Bowen-Gibbs measures (e.g., \cite{muir2011gibbs,keller1998equilibrium}). For that, we will prove that the bounds in Definition \ref{defn:bowen-gibbs-measure} can be replaced by a bound which involves the size of the boundary of invariant sets.

Suppose that $G$ is finitely generated and let $S$ be a finite and symmetric generating set. Without loss of generality, suppose that $1_G \in S$. In this context, it is common to implicitly consider an exhausting sequence $E_{m+1} = S^m$. For example, if $G = \Z^d$ and $S$ is the set of all elements $s \in \Z^d$ with $\|s\|_\infty \leq 1$, the sequence $\{E_m\}_m$ recovers the notion of ``boxes'' with sides of length $2m+1$ centered at the origin, which is the most usual in the literature. In particular, one recovers the more standard definition of summable variation for a potential $\phi\colon X \to \R$, which is given by
$$
\sum_{m \geq 1} |E_{m+1}^{-1} \setminus E_{m}^{-1}|\cdot\delta_{E_m}(\phi) = 
\sum_{m \geq 0} |S^{m+1} \setminus S^m|\cdot \delta_{S^m}(\phi) = \sum_{m \geq 0} |\partial B(1_G,m)|\cdot \delta_{B(1_G,m)}(\phi),
$$
where $B(1_G,m) = S^m$ denotes the ball of radius $m$ (according to the word metric), $\partial F := SF \setminus F$ denotes the ``(exterior) boundary'' of a set $F$, and $|\partial B(1_G,m)|$ is proportional to $m^{d-1}$ in the $\Z^d$ case. Usually, potentials that have summable variation according to this particular exhausting sequence are called {\bf regular} (see, for example, \cite{keller1998equilibrium}).

Notice that when $\{E_m\}_m$ is an exhausting sequence of the form $S^m$, we have that
$$
|\partial (S^m F)| = |S(S^mF) \setminus S^m F| = |S^{m+1}F \setminus S^m F| \leq |S^{m+1} \setminus S^m| |\partial_{\rm int} F|,
$$
where $\partial_{\rm int} F = \partial F^c$ denotes the ``interior boundary'' of $F$. Indeed, if $g \in S^{m+1}F \setminus S^m F$, there must exist $h \in \partial_{\rm int} F$ such that $d_S(g,h) = m+1$, where $d_S$ denotes the word metric. In addition, we also have that $|\partial_{\rm int} F| \leq |S||\partial F|$, so
$$
|\partial (S^m F)| = |S^{m+1}F \setminus S^m F| \leq |S^{m+1} \setminus S^m| |S| |\partial F|.
$$

From this, it is direct that
\begin{align*}
V_F(\phi)  =       \sum_{m \geq 0} |S^{m+1}F \setminus S^{m}F| \cdot\delta_{S^m}(\phi)       \leq    \sum_{m \geq 0} |S^{m+1} \setminus S^m| |S| |\partial F|\cdot\delta_{S^m}(\phi)   =       V(\phi)|S| |\partial F|.
\end{align*}

On the other hand, if $x,y \in X$ are such that $x_F = y_F$, we have that
\begin{align*}
|\phi_F(x) - \phi_F(y)| &   \leq    \sum_{g \in F} |\phi(g \cdot x) - \phi(g \cdot y)|    \\
                        &   =       \sum_{m \geq 0} \sum_{g \in \mathrm{Int}_{S^m}(F) \setminus \mathrm{Int}_{S^{m+1}}(F)} |\phi(g \cdot x) - \phi(g \cdot y)|   \\
                        &   \leq   \sum_{m \geq 0} |\mathrm{Int}_{S^m}(F) \setminus \mathrm{Int}_{S^{m+1}}(F)| \cdot\delta_{S^m}(\phi).
\end{align*}

Notice that if $g \in \mathrm{Int}_{S^m}(F) \setminus \mathrm{Int}_{S^{m+1}}(F)$, then $d_S(g,\partial F) = m+1$, i.e., $g \in S^{m+1}\partial F \setminus S^m\partial F$, so
\begin{align*}
|\mathrm{Int}_{S^m}(F) \setminus \mathrm{Int}_{S^{m+1}}(F)| &\leq  |S^{m+1}\partial F \setminus S^m\partial F|\\ 
&\leq |S^{m+1} \setminus S^m| |\partial_{\rm int}(\partial F)|\\ 
&\leq |S^{m+1} \setminus S^m| |S||\partial(\partial F)|\\
&\leq  |S^{m+1} \setminus S^m| |S|^2|\partial F|    
\end{align*}
and
\begin{align*}
|\phi_F(x) - \phi_F(y)| \leq    \sum_{m \geq 0} |S^{m+1} \setminus S^m| |S| |\partial F| \cdot\delta_{S^m}(\phi)  =     V(\phi)  |S|^2 |\partial F|.
\end{align*}

Therefore, we conclude that $\Delta_F(\phi) \leq   V(\phi)|S|^2|\partial F|$.

We now provide an alternative way of proving Proposition \ref{prop:limit-of-VF-over-the-size-of-F} and Lemma \ref{regularityDeltak}. Begin by noticing that a finitely generated group is amenable if and only if $\lim_{F \to G}\frac{|\partial F|}{|F|} = 0$ (indeed, given $\epsilon > 0$, we have that $|\partial F| \leq |SF \triangle F|< \epsilon\cdot|F|$ for every $(S,\epsilon)$-invariant set $F$). Therefore, if $\phi$ has summable variation, it follows that
$$
0 \leq \lim_{F \to G} \frac{V_F(\phi)}{|F|} \leq  V(\phi)|S| \lim_{F \to G} \frac{|\partial F|}{|F|} = 0
$$
and, similarly,
$$
0 \leq \lim_{F \to G} \frac{\Delta_F(\phi)}{|F|} \leq V(\phi)|S|^2\lim_{F \to G} \frac{|\partial F|}{|F|} = 0.
$$

In particular, in this context, we could alternatively have defined a Bowen-Gibbs measure as follows: if $G$ is a finitely generated amenable group with generating set $S$ and $\phi\colon X \to \R$ is an exp-summable potential with summable variation according to $\{S^m\}_m$, a measure $\mu \in \mathcal{M}(X)$ is a Bowen-Gibbs measure for $\phi$ if for every $\epsilon > 0$, there exist $K \in \finSet$ and $\delta > 0$ such that for every $(K,\delta)$-invariant set $F \in \finSet$ and $x \in X$,
\begin{equation*}
       \exp\left(- C |\partial F|\right) \leq  \frac{\mu([x_F])}{\exp\left(\phi_F(x) - p(\phi)\cdot |F|\right)} \leq \exp\left(C |\partial F|\right),
\end{equation*}
where $C > 0$ is a constant that we can choose to be $$
C := 5 V(\phi)|S|^2 \geq 2 V(\phi)|S| + 3 \Delta(\phi)|S|^2.
$$ 

This recovers the more standard definition of Bowen-Gibbs measure in terms of boundaries. Furthermore, with this choice of $C$, it is not difficult to check that we could mimic the proofs of Proposition \ref{BowenGibbs}, Theorem \ref{thm:bowgib}, and Theorem \ref{prop:bowen-gibbs-implies-equlibrium}, thus providing all the implications involving Bowen-Gibbs measures. 

\subsection{Dobrushin's Uniqueness Theorem}\label{sec:example-dobrushins-uniqueness-theorem}

From \S \ref{sec:existence-of-DLR-states}, we know that if $\phi\colon X \to \R$ is an exp-summable potential with summable variation according to an exhausting sequence $\{E_m\}_m$, then the set of $G$-invariant DLR measures for $\phi$ is non-empty. One natural question that may arise is under which conditions we have uniqueness of the DLR measure. When a specification is a Gibbsian specification, the Dobrushin's Uniqueness Theorem (see \cite{georgii2011gibbs}) addresses this question. For a detailed proof of a version of this theorem adapted to our setting, see \cite{borsato2022thesis}.

Let $2^\N$ be the set of all subsets of $\N$, which is a $\sigma$-algebra, and $\mathcal{M}(\mathbb{N},2^\N)$  be the set of probability
measures on $(\mathbb{N},2^\N)$. For $A \in 2^\N$,  $w \in X$, and $g \in G$, denote 
\begin{equation*}
    \gamma^0_{\{g\}}(A, w)(\eta) =  \gamma_{\{g\}}\left(A \times \N^{G\setminus \{g\}}, x\right),
\end{equation*}
where $\gamma$ is a specification, notice that, for each $x \in X$, $\gamma_{g}^0(\cdot,x)\in\mathcal{M}(\mathbb{N},2^\N)$. Now, for each $h \in G$,  the $w_h$-dependence of $\gamma^0_{\{g\}}(\cdot, w)$ is estimated by the quantity
\begin{equation*}
\rho_{gh}(\gamma) =  \sup_{\substack{w, \eta \in X \\ w_{G \setminus \{h\}} = \eta_{G \setminus \{h\}}}} \left\Vert \gamma_{\{g\}}^0(\cdot, \eta) - \gamma_{\{g\}}^0(\cdot, w)\right\Vert,
\end{equation*}
where, for any given $\mu, \tilde{\mu} \in \mathcal{M}(\mathbb{N},2^\N)$, $\Vert \mu - \tilde{\mu}\Vert = \max_{A \in \mathcal{E}} \vert \mu(A) - \tilde{\mu}(A)\vert$ (see \cite[\S 8.1]{georgii2011gibbs}).

The infinite matrix $\rho(\gamma) = (\rho_{gh}(\gamma))_{g,h \in G}$ is called Dobrushin's interdependence matrix for $\gamma$. When there is no ambiguity, we will omit the parameter $\gamma$ from the notation.  

\begin{remark}\label{rmk:rho-gg-equal-0} Notice that $\rho_{gg} = 0$, for all $g \in G$.
\end{remark}

\begin{definition} Let $\gamma$ be a specification. We say that $\gamma$ satisfies the \textit{Dobrushin's condition} if $\gamma$ is quasilocal and
    \begin{equation*}
        c(\gamma):= \sup_{g \in G}\sum_{h\in G}\rho_{gh} < 1.
    \end{equation*}
\end{definition}

\begin{theorem}[Dobrushin's Uniqueness Theorem]\label{thm:Dobrushin-uniqueness} If $\gamma$ is a specification that satisfies the Dobrushin's condition, then there is at most one measure that is admitted by the specification $\gamma$.
\end{theorem}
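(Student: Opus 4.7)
The plan is to follow the classical strategy for Dobrushin's Uniqueness Theorem (as in Georgii's textbook, Chapter 8), adapted to the amenable group setting of this paper. Suppose $\mu_{1}, \mu_{2} \in \mathscr{G}(\gamma)$; by Lemma \ref{lemma:characterization-of-admitted-by-specification}, both are fixed by every $\gamma_{K}$, hence also by arbitrary compositions of single-site kernels $\gamma_{\{g_{1}\}} \cdots \gamma_{\{g_{n}\}}$. The goal is to show that iterating such single-site kernels under the Dobrushin condition forces a local test function $f$ to become asymptotically constant, from which $\mu_{1}(f) = \mu_{2}(f)$ follows for a dense class of $f$, and thus $\mu_{1} = \mu_{2}$.

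The first step is to introduce, for any bounded measurable $f\colon X \to \R$ and $g \in G$, the single-site oscillation
\begin{equation*}
\delta_{g}(f) := \sup\left\{|f(x) - f(y)| : x_{G \setminus \{g\}} = y_{G \setminus \{g\}}\right\},
\end{equation*}
together with the total oscillation $\Delta(f) := \sum_{g \in G} \delta_{g}(f)$; for local $f$, $\Delta(f)$ is finite. The second step is to establish the key single-site contraction estimate: for $g \neq h$,
\begin{equation*}
\delta_{h}(\gamma_{\{g\}} f) \leq \delta_{h}(f) + \rho_{gh}(\gamma) \cdot \delta_{g}(f),
\end{equation*}
while $\delta_{g}(\gamma_{\{g\}} f) = 0$. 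Both bounds follow from writing $\gamma_{\{g\}} f(x) - \gamma_{\{g\}} f(y)$ for configurations $x, y$ agreeing off $h$, splitting into a ``kernel'' term (controlled by the definition of $\rho_{gh}$ together with the single-coordinate spread of $f$) and an ``integrand'' term (controlled by $\delta_{h}(f)$), and using that $\gamma_{\{g\}} f$ is $\sigmaAlg_{\{g\}^{c}}$-measurable. Summing over $h$ yields the one-step bound $\Delta(\gamma_{\{g\}} f) \leq \Delta(f) - (1 - c(\gamma)) \delta_{g}(f)$.

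The third step is the iteration. Since $\mu_{i} \gamma_{\{g\}} = \mu_{i}$ for every $g \in G$, one obtains, for any finite sequence $g_{1}, \ldots, g_{n}$,
\begin{equation*}
|\mu_{1}(f) - \mu_{2}(f)| \leq \Delta\bigl(\gamma_{\{g_{1}\}} \cdots \gamma_{\{g_{n}\}} f\bigr),
\end{equation*}
so it suffices to show that this quantity tends to $0$ for a suitable choice of sequence. The main obstacle lies precisely here: a single application of $\gamma_{\{g\}}$ decreases $\Delta$ only by $(1 - c(\gamma)) \delta_{g}(f)$, while the residual ``mass'' $\rho_{gh} \delta_{g}(f)$ is redistributed to other sites, so no naive one-step geometric rate is immediately available. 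The remedy, which I will implement, is to interpret the one-step inequality as a matrix estimate on the oscillation vector $(\delta_{g}(f))_{g \in G}$, driven by a stochastic-type matrix dominated entrywise by the interdependence matrix $\rho(\gamma)$. The Dobrushin condition $c(\gamma) < 1$ is exactly the statement $\|\rho(\gamma)\|_{\infty \to \infty} < 1$, so $\|\rho(\gamma)^{n}\|_{\infty \to \infty} \leq c(\gamma)^{n}$. By cycling through an enumeration of the sites that enter the support of $f_{n} := \gamma_{\{g_{n}\}} \cdots \gamma_{\{g_{1}\}} f$, this gives $\Delta(f_{n}) \leq C \cdot c(\gamma)^{\floor{n/N}} \to 0$, where $N$ controls the growth of the effective dependence set. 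Quasilocality of $\gamma$ is used throughout to guarantee that each $\gamma_{\{g\}} f$ is again approximable by local functions with finite total oscillation, so that the iteration is well-defined and the estimates pass to the limit. A fully detailed implementation appears in \cite{borsato2022thesis}.
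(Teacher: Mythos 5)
A point of comparison first: the paper does not actually prove Theorem \ref{thm:Dobrushin-uniqueness}; it states the result and refers to \cite{georgii2011gibbs} and to \cite{borsato2022thesis} for a proof adapted to this setting. So your proposal can only be measured against the classical argument in those references, and that is indeed the route you chose: single-site oscillations $\delta_g(f)$, the estimate $\delta_h(\gamma_{\{g\}}f)\le\delta_h(f)+\rho_{gh}(\gamma)\,\delta_g(f)$ for $h\neq g$ together with $\delta_g(\gamma_{\{g\}}f)=0$ (the latter by $\sigmaAlg_{\{g\}^c}$-measurability), the consequent one-step bound $\Delta(\gamma_{\{g\}}f)\le\Delta(f)-(1-c(\gamma))\delta_g(f)$, and the reduction $|\mu_1(f)-\mu_2(f)|\le\Delta(\gamma_{\{g_1\}}\cdots\gamma_{\{g_n\}}f)$, which uses $\mu_i\gamma_{\{g\}}=\mu_i$ (Lemma \ref{lemma:characterization-of-admitted-by-specification}), quasilocality of the iterates, and the fact that a quasilocal $F$ satisfies $\sup F-\inf F\le\sum_g\delta_g(F)$. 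All of this is correct, and since bounded local functions (indicators of cylinders) determine measures, uniqueness follows once $\Delta$ of the iterates tends to zero.

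The gap is in precisely the step you flag as the main obstacle. You propose to cycle ``through an enumeration of the sites that enter the support of $f_n$'' and deduce $\Delta(f_n)\le C\, c(\gamma)^{\lfloor n/N\rfloor}$ with a fixed $N$ controlling ``the growth of the effective dependence set''. But after a single application of a single-site kernel the function $\gamma_{\{g\}}f$ is in general no longer local: $\gamma_{\{g\}}(\cdot,x)$ depends on the whole boundary condition $x_{\{g\}^c}$, so $\delta_h(\gamma_{\{g\}}f)$ can be nonzero (of size up to $\rho_{gh}\delta_g(f)$) for \emph{every} $h\in G$. Hence there is no finite dependence set to sweep, no fixed $N$, and the claimed geometric rate does not follow from the one-step inequality; the matrix bound $\|\rho(\gamma)^n\|_{\ell^\infty\to\ell^\infty}\le c(\gamma)^n$ by itself does not plug into the sweeping scheme as stated. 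The standard repair, which is what the cited references carry out, is either (i) a sweeping argument with tail control: fix $\epsilon>0$, pick a finite $\Lambda$ carrying all but $\epsilon$ of the current oscillation mass (possible since $\Delta(f_n)<\infty$, the tails being dominated by sums of $\rho_{gh}$), sweep $\Lambda$ repeatedly while bookkeeping the mass leaking outside $\Lambda$, and conclude $\Delta(f_n)\to 0$, typically without a uniform geometric rate; or (ii) the comparison-theorem route of \cite{georgii2011gibbs}, which works with the series $D=\sum_{n\ge 0}\rho(\gamma)^n$, convergent exactly because $c(\gamma)<1$, and bounds $|\mu_1(f)-\mu_2(f)|$ directly, yielding uniqueness when both measures are admitted by the same $\gamma$. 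Either completion is routine but must actually be carried out; as written, your final estimate is not justified.
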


We now present an example of a potential inspired by the Potts model \cite{friedli2017statistical,duminil2017lectures} such that, under some conditions to be presented, is exp-summable and has summable variation according to an exhausting sequence $\{E_m\}_m$. Moreover, this potential will also satisfy that, if $\mu$ is a Bowen-Gibbs measure, $\int \phi d\mu > -\infty$. Another important property of this potential is that it is non-trivial, in the sense that it depends on every coordinate of $G$. We will also explore conditions on $\beta > 0$ such that the potential $\beta \phi$ satisfies Dobrushin's condition.

\subsection{Main example}

Given a countable amenable group $G$, consider the potential $\phi\colon X \to \R$ given by
\begin{equation}\label{eq:potential-for-dobrushin-example}
\phi(x) := - \sum_{g \in G} c(g,x(1_G)) \mathbbm{1}_{\{x(1_G) = x(g)\}},  
\end{equation}
with $c\colon G \times \N \to [0,\infty)$ such that, given an exhausting sequence $\{E_m\}_m$ of $G$, it holds that
\begin{enumerate}
    \item\label{defn-of-c-item1} $\sum_{m \geq 1} |E_{m+1} \setminus E_m| \sum_{g \in G \setminus E_m} C(g)  < \infty$, with $C(g) := \sup_{n} c(g,n)$ for $g \neq 1_G$; and
    \item\label{defn-of-c-item2} for all $M>0$, there exists $n_0 \in \N$ such that for all $n \geq n_0$, $M \log(n) \leq c(1_G,n)$.
\end{enumerate}

\begin{lemma}\label{claim:potential}
If the potential $\phi\colon X\to\R$ given by $\phi(x) = - \sum_{g \in G} c(g,x(1_G)) \mathbbm{1}_{\left\{x(1_G) = x(g)\right\}}$ satisfies conditions (\ref{defn-of-c-item1}) and (\ref{defn-of-c-item2}), then, for every $\beta > 0$, the potential $\beta\phi$ is well-defined, has summable variation according to the exhausting sequence $\{E_m\}_m$, is exp-summable, and $\int\phi d\mu_\beta > -\infty$ for any Bowen-Gibbs measure $\mu_\beta \in \mathcal{M}(X)$ for $\beta\phi$. 
\end{lemma}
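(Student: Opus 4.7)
My plan is to verify the four assertions in the order they are listed, leveraging the two structural conditions $(1)$ and $(2)$ on $c$, plus Proposition~\ref{prop:equivalentes-of-finiteness-of-integral} for the last statement. Since $\beta > 0$ is a positive scalar, every property of $\phi$ passes to $\beta\phi$ (with the constants scaled by $\beta$), so I will really work with $\beta \phi$ throughout.

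First I would check that $\phi$ is pointwise well-defined and compute $\delta_{E_m}(\phi)$. Observe that the only term in the defining sum that is not dominated by $C(g)$ is the one at $g = 1_G$, which equals $c(1_G,x(1_G))$; for $g \neq 1_G$ we have the bound $c(g,x(1_G))\mathbbm 1_{\{x(1_G)=x(g)\}} \leq C(g)$. Taking $m = 1$ in condition $(1)$ (so $E_1 = \{1_G\}$) forces $\sum_{g \neq 1_G} C(g) < \infty$, giving $|\phi(x)| \leq c(1_G,x(1_G)) + \sum_{g \neq 1_G} C(g) < \infty$. For the variation, if $x_{E_m} = y_{E_m}$, then $x(1_G) = y(1_G) =: n$ and the terms for $g \in E_m$ cancel, so
\begin{equation*}
|\phi(x) - \phi(y)| \leq \sum_{g \in G \setminus E_m} c(g,n) \bigl|\mathbbm 1_{\{x(g) = n\}} - \mathbbm 1_{\{y(g) = n\}}\bigr| \leq \sum_{g \in G \setminus E_m} C(g),
\end{equation*}
where the last bound uses that $1_G \in E_m$, so the only coordinates in $G \setminus E_m$ are in the domain of $C$. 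Therefore $\delta_{E_m}(\phi) \leq \sum_{g \in G \setminus E_m} C(g)$, and since $|E_{m+1}^{-1} \setminus E_m^{-1}| = |E_{m+1} \setminus E_m|$, condition $(1)$ yields $V(\phi) < \infty$, hence $V(\beta\phi) = \beta V(\phi) < \infty$.

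Next, for exp-summability of $\beta\phi$, I would show that $\sup (\beta\phi)([a]) = -\beta c(1_G,a)$. Indeed, $\phi \leq 0$ and in $[a]$ the indicator at $g = 1_G$ is forced to be $1$, while for each $g \neq 1_G$ the configuration can be chosen with $x(g) \neq a$ (as $|\N| = \infty$), making the corresponding indicator vanish. Using condition $(2)$ with $M = 2/\beta$, for all sufficiently large $a$ we have $\beta c(1_G,a) \geq 2 \log a$, whence $\exp(\sup(\beta\phi)([a])) \leq a^{-2}$, and so $Z_{1_G}(\beta\phi) = \sum_a \exp(-\beta c(1_G,a)) < \infty$.

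For the final claim, I will invoke Proposition~\ref{prop:equivalentes-of-finiteness-of-integral}, which reduces the statement $\int (\beta\phi)\, d\mu_\beta > -\infty$ (and hence $\int \phi\, d\mu_\beta > -\infty$) to the summability condition
\begin{equation*}
\sum_{a \in \N} \sup (\beta\phi)([a]) \exp(\sup (\beta\phi)([a])) = -\sum_{a \in \N} \beta c(1_G,a)\exp(-\beta c(1_G,a)) > -\infty.
\end{equation*}
Applying condition $(2)$ once more, this time with $M = 3/\beta$, we get $\beta c(1_G,a) \geq 3 \log a$ for large $a$; combined with the fact that $t \mapsto t e^{-t}$ is decreasing on $[1,\infty)$, the $a$-th summand is bounded by $3 (\log a)/a^3$ for all sufficiently large $a$, which is summable. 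The finitely many remaining terms are trivially finite, and this concludes the proof. The main subtlety — and the only step that really uses the interplay between the two hypotheses — is identifying $\sup(\beta\phi)([a])$ exactly so that condition $(2)$ can be brought to bear; everything else is bookkeeping.
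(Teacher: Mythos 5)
Your proof is correct and follows essentially the same route as the paper: bound $\delta_{E_m}(\phi)$ by the tail sums $\sum_{g \in G\setminus E_m} C(g)$ and use condition (\ref{defn-of-c-item1}) for well-definedness and summable variation, use condition (\ref{defn-of-c-item2}) to control the growth of $c(1_G,\cdot)$ for exp-summability and for condition $ii)$ of Proposition \ref{prop:equivalentes-of-finiteness-of-integral}, which then gives $\int\phi\,d\mu_\beta > -\infty$. Your explicit identification $\sup(\beta\phi)([a]) = -\beta c(1_G,a)$ and the choices $M = 2/\beta$, $M = 3/\beta$ together with monotonicity of $t\mapsto te^{-t}$ are just a cleaner bookkeeping of the same tail estimates the paper performs.
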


\begin{proof}
Notice that condition (\ref{defn-of-c-item1}) implies that
$$
0 \leq  \sum_{\substack{g \in G\\g \neq 1_G}} C(g) = \sum_{m \geq 1} \sum_{g \in E_{m+1} \setminus E_m} C(g) \leq \sum_{m \geq 1} |E_{m+1} \setminus E_m| \sum_{g \in G \setminus E_m} C(g)  < \infty.
$$

Now, for any $x \in X$,
\begin{align*}
|\phi(x)| &=  
c(1_G,x(1_G)) + \sum_{\substack{g\in G\\g \neq 1_G}} c(g,x(1_G)) \leq c(1_G,x(1_G)) + \sum_{\substack{g\in G\\g \neq 1_G}} C(g) < \infty,
\end{align*}
so $\phi(x)$ is well-defined and therefore $\beta\phi$ is well-defined, too.

Next, notice that, for every $m\in \N$ and $x,y \in X$ such that $x_{E_m} = y_{E_m}$, we have that
\begin{align*}
|\phi(x)-\phi(y)| &= \left|\sum_{g \in G} c(g,x(1_G))\left(\mathbbm{1}_{\left\{y(1_G) = y(g)\right\}}- \mathbbm{1}_{\left\{x(1_G) = x(g)\right\}}\right)\right|\\
&\leq \sum_{g \in G\setminus E_m} c(g,x(1_G))\left|\mathbbm{1}_{\left\{y(1_G) = y(g)\right\}}- \mathbbm{1}_{\left\{x(1_G) = x(g)\right\}}\right|\\
&\leq \sum_{g \in G \setminus E_m} C(g).
\end{align*}
Therefore, for any $m\in \N$, $\delta_{E_m}(\phi) \leq \sum_{g \in G \setminus E_m} C(g)$, so that
\begin{align*}
    V(\phi) = \sum_{m=1}^{\infty} |E_{m+1}^{-1} \setminus E_{m}^{-1}|\cdot\delta_{E_m} (\phi) \leq \sum_{m=1}^{\infty} |E_{m+1}^{-1} \setminus E_{m}^{-1}|\sum_{g \in G \setminus E_m} C(g).
\end{align*}
Thus, due to condition (\ref{defn-of-c-item1}), we have that $\phi$ has summable variation according to $\{E_m\}_m$. In addition, observe that $V(\beta\phi) = \beta V(\phi)$, so $\beta\phi$ has also summable variation.

Pick $0 < \alpha = \frac{\beta}{2} < \beta$. This determines $n_1$ such that
\begin{align*}
\sum_{n \geq 1} \beta c(1_G,n) \exp(-\beta c(1_G,n))    &   = \sum_{n < n_1} \beta c(1_G,n) \exp(-\beta c(1_G,n)) \\
            &   \quad   + \sum_{n \geq n_1} \beta c(1_G,n) \exp(-\beta c(1_G,n))  \\
            &   = C_0 + \beta\sum_{n \geq n_1}  \exp(\alpha c(1_G,n)) \exp(-\beta c(1_G,n)) \\
            &   = C_0 + \beta\sum_{n \geq n_1}  \exp\left(-\frac{\beta}{2}c(1_G,n)\right),
\end{align*}
where
$$
C_0 := \sum_{n < n_1} \beta c(1_G,n) \exp(-\beta c(1_G,n)) < \infty.
$$

It remains to bound $\sum_{n \geq n_1}  \exp\left(-\frac{\beta}{2}c(1_G,n)\right)$. Now, for any $\epsilon > 0$ and $M = \frac{2}{\beta}(1 + \epsilon)>0$, condition (\ref{defn-of-c-item2}) implies that there exists $n_0 \in \N$ (maybe $n_0 > n_1$) such that $\forall n \geq n_0$, $M\log(n) \leq c(1_G,n)$, so $-\frac{2}{\beta}(1 + \epsilon) \log(n) \geq -c(1_G,n)$. Therefore,
\begin{align*}
\sum_{n \geq n_1}  \exp\left(-\frac{\beta}{2}c(1_G,n)\right)   &   =   \sum_{n_1 \leq n < n_0}  \exp\left(-\frac{\beta}{2}c(1_G,n)\right) + \sum_{n \geq n_0}  \exp\left(-\frac{\beta}{2}c(1_G,n)\right)   \\
            &   \leq  C_1 + \sum_{n \geq n_0}  \exp\left((1 + \epsilon) \log(n)\right)\\
        &   =   C_1 + \sum_{n \geq n_0}  \frac{1}{n^{1+\epsilon}} < \infty,
\end{align*}
where $C_1 := \sum_{n_1 \leq n < n_0}  \exp(-\frac{\beta}{2}c(1_G,n)) < \infty$. So, from Proposition \ref{prop:equivalentes-of-finiteness-of-integral} we have that $\int \phi d\mu_{\beta}>-\infty$ for every $\beta>0$.

 Later, choosing a $n_3\in\mathbb{N}$, great enough
\begin{align*}
    Z_{1_G}(\beta\phi)&= \sum_{n \geq 1}\exp(-\beta c(1_G,n)) \\
                      &= \sum_{n < n_3}\exp(-\beta c(1_G,n))+\sum_{n \geq n_3}c(1_G,n)\exp(-\beta c(1_G,n))<\infty.
\end{align*}
Therefore, the potential $\beta\phi$ is exp-summable, for all $\beta>0$.
\end{proof}

\begin{remark}
The set of functions $c\colon G \times \N \to [0,\infty)$ satisfying conditions (\ref{defn-of-c-item1}) and (\ref{defn-of-c-item2}) is non-vacuous. For example, given an exhausting sequence $\{E_m\}_m$, consider $c\colon G \times \N \to [0,\infty)$ and some constant $L \geq 0$ such that
\begin{itemize}
    \item[(a)] for every $m \geq 1$, $0 \leq c(g,n) \leq \frac{L2^{-m-1}}{|E_{m+1}|^2}$ for every $g \in E_{m+1} \setminus E_m$; and
    \item[(b)] any $c(1_G,n)$ of polynomial order will satisfy condition (\ref{defn-of-c-item2}).
\end{itemize}

We now prove that any such $c$ satisfies the previous conditions (\ref{defn-of-c-item1}) and (\ref{defn-of-c-item2}). Due to (a) and the fact that $\{E_m\}_m$ is nested, for every $m \geq 1$,
\begin{equation*}
    \sum_{g \in G \setminus E_m} C(g) \leq \sum_{\ell = m}^\infty \sum_{g \in E_{\ell + 1} \setminus E_\ell} \frac{L2^{-\ell-1}}{|E_{\ell + 1}|^2} \leq \sum_{\ell = m}^\infty \frac{L2^{-\ell-1}}{|E_{\ell+1}|} \leq \frac{L2^{-m}}{|E_{m+1}|} < \infty.  
\end{equation*}

Then, 
\begin{equation*}
    \sum_{\substack{g \in G\\g \neq 1_G}} C(g) = \sum_{g \in G \setminus E_1} C(g) \leq \frac{L}{2|E_2|} <\infty
\end{equation*} and
$$
\sum_{m =1}^{\infty} |E_{m+1} \setminus E_m| \sum_{g \in G \setminus E_m} C(g) \leq \sum_{m =1}^{\infty} |E_{m+1} \setminus E_m| \frac{L2^{-m}}{|E_{m+1}|} \leq \sum_{m =1}^{\infty} L2^{-m} = L < \infty,
$$
so condition (\ref{defn-of-c-item1}) is satisfied. Now, due to (b), we have that condition (\ref{defn-of-c-item2}) is satisfied.

\end{remark}

Our next goal is to study under which conditions we have uniqueness of Gibbs measures for $\beta \phi$, where $\beta$ can be interpreted as the inverse of the temperature of the system. For that, we use the Dobrushin's Uniqueness Theorem (Theorem \ref{thm:Dobrushin-uniqueness}). In order to obtain explicit conditions on $\beta$, we divide the rational into claims.

\begin{claim}\label{claim:dif-f} If $x,y \in X$ are such that $x_{G \setminus \{g\}} = y_{G \setminus \{g\}}$, for some $g \in G$, then
 \begin{equation*}
     \sum_{h \in G} \left(\phi(h \cdot x) - \phi(h \cdot y) \right)\end{equation*}
converges absolutely. Moreover,
 \begin{align*}
     \sum_{h \in G}\left(\phi(h \cdot x) - \phi(h \cdot y) \right) =& -c(1_G,x(g)) + c(1_G,y(g))\\
     &\quad + \sum_{\substack{h\in G\\h \neq 1_G}}\left(c(h,x(hg)) + c(h^{-1},x(hg))\right)\left(-\mathbbm{1}_{\{x_{hg} = x_{g}\}} + \mathbbm{1}_{\{y_{hg} = y_g\}}\right).
    \end{align*}
\end{claim}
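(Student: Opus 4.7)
The plan is to compute the sum directly by expanding each shifted potential using the identity $(h\cdot x)(a)=x(ah)$, which gives
$$\phi(h\cdot x) \;=\; -\sum_{a\in G} c(a,x(h))\,\mathbbm{1}_{\{x(h)=x(ah)\}},$$
and then tracking which pairs $(h,a)$ in the resulting double sum actually contribute to $\phi(h\cdot x)-\phi(h\cdot y)$. Since $x$ and $y$ coincide off $\{g\}$, the $(h,a)$-summand is unchanged unless $h=g$ or $ah=g$.

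I will partition the contributing pairs into three groups: (i) $h=g$ with $a=1_G$; (ii) $h=g$ with $a\neq 1_G$; and (iii) $h\neq g$ with $a=gh^{-1}$. Group (i) produces exactly $-c(1_G,x(g))+c(1_G,y(g))$. For group (iii), the substitution $h=h'g$ with $h'\neq 1_G$ converts $gh^{-1}$ into $h'^{-1}$ and $x(h)$ into $x(h'g)$, and after using $y(h)=x(h)$ together with $y(h'g)=x(h'g)$ the contribution becomes $c(h'^{-1},x(h'g))\bigl[-\mathbbm{1}_{\{x_{h'g}=x_g\}}+\mathbbm{1}_{\{y_{h'g}=y_g\}}\bigr]$. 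For group (ii) (renaming $a$ as $h'$), the summand reads $-c(h',x(g))\mathbbm{1}_{\{x(g)=x(h'g)\}}+c(h',y(g))\mathbbm{1}_{\{y(g)=x(h'g)\}}$; the key observation is that on the support of each indicator the second argument of $c$ already equals $x(h'g)$, so both prefactors collapse to $c(h',x(h'g))$ and the summand becomes $c(h',x(h'g))\bigl[-\mathbbm{1}_{\{x_{h'g}=x_g\}}+\mathbbm{1}_{\{y_{h'g}=y_g\}}\bigr]$. Adding groups (ii) and (iii) over $h'\neq 1_G$ produces the claimed formula.

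For absolute convergence, I will use the uniform bound $|\phi(h\cdot x)-\phi(h\cdot y)|\le 2\,C(gh^{-1})$ for $h\neq g$. Reindexing $k=gh^{-1}$ and invoking condition~(\ref{defn-of-c-item1})---which in particular yields $\sum_{k\neq 1_G}C(k)<\infty$---gives summability of the tail. The single term at $h=g$ is bounded by $c(1_G,x(g))+c(1_G,y(g))+2\sum_{k\neq 1_G}C(k)$, hence finite.

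The main obstacle is the bookkeeping that identifies groups (ii) and (iii) as contributions to the \emph{same} summand in the final sum: group (iii) naturally produces the prefactor $c(h^{-1},x(hg))$, while group (ii) only produces $c(h,x(hg))$ after the on-support rewriting of the middle argument of $c$. Making this replacement precise, together with recasting the indicators into the symmetric form $-\mathbbm{1}_{\{x_{hg}=x_g\}}+\mathbbm{1}_{\{y_{hg}=y_g\}}$ by using $y(hg)=x(hg)$ whenever $h\neq 1_G$, is the crux of the argument; the rest reduces to a routine case split and reindexing.
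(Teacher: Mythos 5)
Your proposal is correct and follows essentially the same route as the paper's proof: expand each $\phi(h\cdot x)$ via the shift identity, observe that only pairs with $h=g$ or $ah=g$ survive, use the on-support rewriting of the second argument of $c$ to merge the two surviving families into the symmetric prefactor $c(h,x(hg))+c(h^{-1},x(hg))$, and control everything by $\sum_{k\neq 1_G}C(k)<\infty$ from condition (1). The only difference is cosmetic: the paper first reduces to $g=1_G$ by translation invariance of the sum, whereas you keep general $g$ and reindex $h=h'g$, which amounts to the same bookkeeping.
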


\begin{proof}[Proof of Claim \ref{claim:dif-f}.] Since we are summing over all $h$-translations of $x$ and $y$, for $h \in G$, we can assume, without loss of generality, that $g = 1_G$, that is, $x_{G \setminus \{1_G\}} = y_{G \setminus \{1_G\}}$. Then,
\begin{align*}
\sum_{h \in G} \left|\phi(h \cdot x) - \phi(h \cdot y) \right| &= \sum_{h \in G}\left|-\sum_{g \in G} c(g,x(h)) \mathbbm{1}_{\{x_h = x_{gh}\}} + \sum_{g \in G} c(g,y(h)) \mathbbm{1}_{\{y_h = y_{gh}\}}\right| \\
    &= \left|\sum_{g\in G}-c(g,x(1_G)) \mathbbm{1}_{\{x_{1_G} = x_{g}\}} + c(g,y(1_G)) \mathbbm{1}_{\{y_{1_G} = y_{g}\}}\right|\\
    &\quad + \sum_{\substack{h \in G\\h\neq 1_G}}\left| \sum_{g \in G} \left(-c(g,x(h)) \mathbbm{1}_{\{x(h) = x(gh)\}} + c(g,y(h)) \mathbbm{1}_{\{y_h = y_{gh}\}}\right)\right|\\
    &= \left|\sum_{g\in G}-c(g,x(1_G)) \mathbbm{1}_{\{x_{1_G} = x_{g}\}} + c(g,y(1_G)) \mathbbm{1}_{\{y_{1_G} = y_{g}\}}\right|\\
    &\quad + \sum_{\substack{h \in G\\h\neq 1_G}} \left|\sum_{g \in G} -c(g,x(h)) \left(\mathbbm{1}_{\{x_{h} = x_{gh}\}} - \mathbbm{1}_{\{y_{h} = y_{gh}\}}\right)\right|\\
    &\leq \left|\sum_{g\in G}-c(g,x(1_G)) \mathbbm{1}_{\{x_{1_G} = x_{g}\}} + c(g,y(1_G)) \mathbbm{1}_{\{y_{1_G} = y_{g}\}}\right|\\
    &\quad + \sum_{\substack{h \in G\\h\neq 1_G}}\left| \sum_{\substack{g \in G\\g\neq h^{-1}}} -c(g,x(h)) \left(\mathbbm{1}_{\{x_{h} = x_{gh}\}} - \mathbbm{1}_{\{y_{h} = y_{gh}\}}\right)\right|\\
    &\quad + \sum_{\substack{h \in G\\h\neq 1_G}} \left|-c(h^{-1},x(h)) \left(\mathbbm{1}_{\{x_{h} = x_{1_G}\}} - \mathbbm{1}_{\{y_{h} = y_{1_G}\}}\right)\right|.
\end{align*}

Note that if $h \neq 1_G$ and $g \neq h^{-1}$, we have that $x(h) = y(h)$ and $x(gh) = y(gh)$, so that $x(gh) = y(gh) \iff y(h) = y(gh)$. Then, $\mathbbm{1}_{\{x(h) = x(gh)\}} - \mathbbm{1}_{\{y(h) = y(gh)\}} = 0$ and
\begin{align*}
     \sum_{h \in G} \left|\phi(h \cdot x) - \phi(h \cdot y) \right| &\leq \left|\sum_{g\in G}\left(-c(g,x(1_G)) \mathbbm{1}_{\{x(1_G) = x(g)\}} + c(g,y(1_G)) \mathbbm{1}_{\{y(1_G) = y(g)\}}\right)\right|\\
    &\quad + \sum_{\substack{h \in G\\h\neq 1_G}} \left|-c(h^{-1},x(h)) \left(\mathbbm{1}_{\{x(h) = x(1_G)\}} - \mathbbm{1}_{\{y(h) = y_{1_G}\}}\right)\right|\\
    &\leq c(1_G,x(1_G)) + c(1_G,y(1_G))\\
    &\quad + \sum_{\substack{g\in G\\g \neq 1_G}} \left|-c(g,x(1_G)) \mathbbm{1}_{\{x(1_G) = x(g)\}} + c(g,y(1_G)) \mathbbm{1}_{\{y(1_G) = y(g)\}}\right|\\
    &\quad + \sum_{\substack{h \in G\\h\neq 1_G}}\left| -c(h^{-1},x(h)) \left(\mathbbm{1}_{\{x(h) = x(1_G)\}} - \mathbbm{1}_{\{y(h) = y(1_G)\}}\right)\right|\\
    &\leq c(1_G,x(1_G)) + c(1_G,y(1_G)) \\
    &   \quad + \sum_{\substack{g\in G\\g \neq 1_G}} \left(c(g,x(1_G)) + c(g,y(1_G)) + c(g^{-1},x(g))\right).
\end{align*}

Therefore,
\begin{equation}\label{eq:bound-for-f-G-in-dobrushin-example}
   \sum_{h \in G} \left|\phi(h \cdot x) - \phi(h \cdot y) \right| \leq 3\sum_{\substack{h\in G\\h \neq 1_G}} C(h) + c(1_G,x(1_G)) + c(1_G,y(1_G)), 
\end{equation}
  from where it follows that, due to condition (\ref{defn-of-c-item1}),
 \begin{equation*}
     \sum_{h \in G} \left|\phi(h \cdot x) - \phi(h \cdot y) \right|  < \infty.
\end{equation*}
Moreover, notice that, applying the same rational with no absolute values, we get that 
\begin{align*}
   &\sum_{h \in G}\left(\phi(h \cdot x) - \phi(h \cdot y) \right)\label{ecu-claim2}\\
 &=  -c(1_G,x(1_G)) + c(1_G,y(1_G))\\
    &\quad + \sum_{\substack{h\in G\\h \neq 1_G}} \left[\mathbbm{1}_{\{x_{h} = x_{1_G} \}}\left(-c(h,x(1_G)) -c(h^{-1},x(h))\right) + \mathbbm{1}_{\{y_{h} = y_{1_G}\}} \left(c(h,y(1_G))  + c(h^{-1},x(h))\right)\right]\\
    &= -c(1_G,x(1_G)) + c(1_G,y(1_G)) \\
    &   \quad + \sum_{\substack{h\in G\\h \neq 1_G}}\left(c(h,x(h)) + c(h^{-1},x(h))\right)\left(-\mathbbm{1}_{\{x_h = x_{1_G}\}} + \mathbbm{1}_{\{y_h = y_{1_G}\}}\right).
\end{align*}
\end{proof}

Now, for a fixed $b \in \N$, define, for each $g \in G$ and $z \in X$ the potential $\varphi^g_z\colon \N \to \R$ given by
\begin{equation}\label{eq:defn-of-varphi-involving-f-star}
    \varphi^g_z(a) = \phi_{*}^{\tau_{a,b}}(bz_{G \setminus \{g\}}).
\end{equation}

Notice that, from Claim \ref{claim:dif-f},
\begin{align}\label{eq:calculation-of-varphi-g-z}
\varphi^g_z(a) &= \phi_{*}^{\tau_{a,b}}(bz_{G \setminus \{g\}})\nonumber\\
&= \sum_{h \in G} \left[\phi(h \cdot (a z_{G \setminus \{g\}})) - \phi(h \cdot(b z_{G \setminus \{g\}})) \right]\nonumber\\
&= c(1_G,b) -c(1_G,a) + \sum_{\substack{h \in G\\ h \neq 1_G}} \left[(c(h,z_{hg}) + c(h^{-1},z_{hg}))(\mathbbm{1}_{\{z_{hg} = b\}} -\mathbbm{1}_{\{z_{hg} = a\}})\right].
\end{align}
Now, pick $h_0 \neq g$ and $z,z' \in X$ such that $z_{G \setminus \{h_0\}} = z'_{G \setminus \{h_0\}}$ and define the function $ \varphi^g_{z,z'}\colon \N \times [0,1] \to \R$ given by
\begin{equation*}
    \varphi^g_{z,z'}(a,t) := t\varphi^g_{z'}(a) + (1-t)\varphi^g_{z}(a) = \varphi^g_{z}(a) +   t\Delta^g_{z,z'}(a),
\end{equation*}
with $\Delta^g_{z,z'}(a) = \varphi^g_{z'}(a) - \varphi^g_{z}(a)$. Notice that $\varphi^g_{z,z'}(a,0) = \varphi^g_{z}(a)$ and $\varphi^g_{z,z'}(a,1) = \varphi^g_{z'}(a)$.

\begin{claim}\label{claim-Delta} Let $g \in G$. Then, for every $h_0 \neq g$ and $z,z' \in X$ such that $z_{G \setminus \{h_0\}} = z'_{G \setminus \{h_0\}}$, it holds that
$$
\|\Delta^g_{z,z'}\|_\infty \leq 2(C(h_0g^{-1}) + C(gh_0^{-1})).
$$
\end{claim}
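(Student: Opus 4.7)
The plan is to exploit the very restricted form of the dependence of $\varphi^g_z(a)$ on $z$, as given by equation \eqref{eq:calculation-of-varphi-g-z}. More precisely, the only way $z$ enters the formula for $\varphi^g_z(a)$ is through the values $z_{hg}$ for $h \in G$, $h \neq 1_G$. Since $z$ and $z'$ differ at most at the single site $h_0$, the only term in the sum over $h$ that can contribute to $\Delta^g_{z,z'}(a) = \varphi^g_{z'}(a) - \varphi^g_{z}(a)$ is the one for which $hg = h_0$, that is, $h = h_0 g^{-1}$. Observe that the assumption $h_0 \neq g$ ensures $h_0 g^{-1} \neq 1_G$, so this index does belong to the range of summation.

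I would first subtract the expressions for $\varphi^g_{z'}(a)$ and $\varphi^g_z(a)$ coming from \eqref{eq:calculation-of-varphi-g-z}. The constant terms $c(1_G, b) - c(1_G, a)$ cancel, and every summand with $h \neq h_0 g^{-1}$ cancels because $z_{hg} = z'_{hg}$ for such $h$. Writing $h' = h_0 g^{-1}$ (so that $(h')^{-1} = g h_0^{-1}$ and $h'g = h_0$), I would obtain the two-term expression
\begin{align*}
\Delta^g_{z,z'}(a) &= \bigl(c(h', z'_{h_0}) + c((h')^{-1}, z'_{h_0})\bigr)\bigl(\mathbbm{1}_{\{z'_{h_0} = b\}} - \mathbbm{1}_{\{z'_{h_0} = a\}}\bigr)\\
&\quad - \bigl(c(h', z_{h_0}) + c((h')^{-1}, z_{h_0})\bigr)\bigl(\mathbbm{1}_{\{z_{h_0} = b\}} - \mathbbm{1}_{\{z_{h_0} = a\}}\bigr).
\end{align*}

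Finally, I would apply the triangle inequality together with the uniform bounds $c(h', \cdot) \leq C(h') = C(h_0 g^{-1})$ and $c((h')^{-1}, \cdot) \leq C((h')^{-1}) = C(g h_0^{-1})$, and the trivial bound $|\mathbbm{1}_{\{\cdot = b\}} - \mathbbm{1}_{\{\cdot = a\}}| \leq 1$. Each of the two summands is then bounded by $C(h_0 g^{-1}) + C(g h_0^{-1})$, giving
$$|\Delta^g_{z,z'}(a)| \leq 2\bigl(C(h_0 g^{-1}) + C(g h_0^{-1})\bigr),$$
uniformly in $a$, which is the desired estimate. No step here is a serious obstacle: the whole argument is essentially a one-coordinate-change bookkeeping computation, and the only thing to check with care is that the index $h = h_0 g^{-1}$ is legitimately included in the sum (which is exactly why the hypothesis $h_0 \neq g$ is needed).
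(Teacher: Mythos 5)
Your proposal is correct and follows essentially the same route as the paper's proof: cancel the constant terms and all summands with $hg \neq h_0$ in \eqref{eq:calculation-of-varphi-g-z}, isolate the single surviving index $h = h_0g^{-1}$, and bound the resulting two-term expression by the triangle inequality together with $c(h_0g^{-1},\cdot) \leq C(h_0g^{-1})$ and $c(gh_0^{-1},\cdot) \leq C(gh_0^{-1})$. Your explicit remark that $h_0 \neq g$ guarantees $h_0g^{-1} \neq 1_G$, so the surviving index really lies in the summation range, is a small point the paper leaves implicit, but otherwise the arguments coincide.
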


\begin{proof}[Proof of Claim \ref{claim-Delta}] Let $g, h_0\in G$, with $h_0 \neq g$, and $z, z' \in X$ be such that $z_{G \setminus \{h_0\}} = z'_{G \setminus \{h_0\}}$. Then, due to the equation \eqref{eq:calculation-of-varphi-g-z}, we have that, for all $a \in \N$,

\begin{align*}
\Delta^g_{z,z'}(a) 
&= \sum_{\substack{h \in G\\ h \neq 1_G}} \left[(c(h,z'(hg)) + c(h^{-1},z'(hg)))(-\mathbbm{1}_{\{z'_{hg} = a\}} + \mathbbm{1}_{\{z'_{hg} = b\}})\right]\\
&\quad - \sum_{\substack{h \in G\\ h \neq 1_G}} \left[(c(h,z(hg)) + c(h^{-1},z(hg)))(-\mathbbm{1}_{\{z_{hg} = a\}} + \mathbbm{1}_{\{z_{hg} = b\}})\right].
\end{align*}

Notice that, if $hg \neq h_0$, then $z_{hg} = z'_{hg}$, so that
\begin{align*}
   & (c(h,z'(hg)) + c(h^{-1},z'(hg)))(-\mathbbm{1}_{\{z'_{hg} = a\}} + \mathbbm{1}_{\{z'_{hg} = b\}}) \\ & \quad =  (c(h,z(hg)) + c(h^{-1},z(hg)))(-\mathbbm{1}_{\{z_{hg} = a\}} + \mathbbm{1}_{\{z_{hg} = b\}}).
\end{align*}
This means that the only terms that remain in the sums above are the ones such that $hg = h_0$, that is, $h = h_0g^{-1}$. Thus,
\begin{align*}
    \Delta^g_{z,z'}(a)&= \left[(c(h_0g^{-1}, z'_{h_0}) + c((h_0g^{-1})^{-1},z'_{h_0}))(-\mathbbm{1}_{\{z'_{h_0} = a\}} + \mathbbm{1}_{\{z'_{h_0} = b\}})\right]\\
    &\quad - \left[(c(h_0g^{-1},z_{h_0}) + c((h_0g^{-1})^{-1},z(h_0)))(-\mathbbm{1}_{\{z(h_0) = a\}} + \mathbbm{1}_{\{z(h_0) = b\}})\right].
\end{align*}

Therefore,
\begin{align*}
    \left|\Delta^g_{z,z'}(a)\right| &\leq c(h_0g^{-1}, z'(h_0)) + c(gh_0^{-1},z'(h_0)) + c(h_0g^{-1},z(h_0)) + c(gh_0^{-1},z(h_0))\\
    &\leq 2(C(h_0g^{-1}) + C(gh_0^{-1})),
\end{align*}
where the last inequality follows from the definition of $C(g)$. 
\end{proof}

\begin{claim}\label{claim:specification-in-terms-of-varphi} Let $g,h_0 \in G$ and $z,z' \in X$ be such that $z_{G \setminus \{h_0\}} = z'_{G \setminus \{h_0\}}$. Then, for every $A \in \mathcal{E}$,
\begin{equation}\label{eq:specification-for-A-times-N-z}
    \gamma^0_g(A,z) = \gamma_g(A\times \N^{G\setminus \{g\}},z) = \frac{\sum_{a \in A} \exp\left(\varphi^g_{z,z'}(a,0)\right)}{\sum_{n \in \N} \exp\left(\varphi^g_{z,z'}(n,0)\right)}
\end{equation}
and
\begin{equation}\label{eq:specification-for-A-times-N-z-prime}
     \gamma^0_g(A,z') =  \gamma_g(A\times \N^{G\setminus \{g\}},z') = \frac{\sum_{a \in A} \exp\left(\varphi^g_{z,z'}(a,1)\right)}{\sum_{n \in \N} \exp\left(\varphi^g_{z,z'}(n,1)\right)},
\end{equation}
where $\gamma$ is the Gibbsian specification given by equation \eqref{specification}.
\end{claim}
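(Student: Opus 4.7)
The plan is to combine the explicit formula for the specification from Proposition \ref{lemma:characterization-DLR-state} with the decomposition of $\gamma_K(B,\cdot)$ over cylinders given by equation \eqref{sumspecification}, and then to recognize the resulting exponents as the function $\varphi^g_z$ (respectively $\varphi^g_{z'}$) defined in equation \eqref{eq:defn-of-varphi-involving-f-star}. The first equality in each displayed formula is immediate from the definition of $\gamma^0_g$, so the content lies in evaluating $\gamma_g(A\times\N^{G\setminus\{g\}},z)$ and $\gamma_g(A\times\N^{G\setminus\{g\}},z')$ explicitly.

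First I would apply Proposition \ref{lemma:characterization-DLR-state} with $K=\{g\}$, identifying elements $w\in X_{\{g\}}$ with values $a\in\N$, and choosing the auxiliary element $v\in X_{\{g\}}$ to be the constant value $b$ (the same $b$ used in the definition of $\varphi^g_z$). Writing $[a^g]$ for the cylinder $\{x\in X\colon x(g)=a\}$ and denoting by $\tau_{a,b}$ the permutation at site $g$ swapping $a$ and $b$ (i.e.\ $\tau_{w,v}$ in the notation of Definition \ref{defn:coordinate-wise-permutation-two}), this gives
\begin{equation*}
\gamma_{\{g\}}([a^g],z)=\frac{\exp\bigl(\phi_*^{\tau_{a,b}}(bz_{G\setminus\{g\}})\bigr)}{\sum_{n\in\N}\exp\bigl(\phi_*^{\tau_{n,b}}(bz_{G\setminus\{g\}})\bigr)}=\frac{\exp\bigl(\varphi^g_z(a)\bigr)}{\sum_{n\in\N}\exp\bigl(\varphi^g_z(n)\bigr)},
\end{equation*}
where the second equality is just the definition of $\varphi^g_z$ in equation \eqref{eq:defn-of-varphi-involving-f-star}.

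Next I would invoke equation \eqref{sumspecification} of Proposition \ref{existenceoflimit} with $K=\{g\}$ and $B=A\times\N^{G\setminus\{g\}}$; since $\mathbbm{1}_{\{a^g z_{\{g\}^c}\in A\times\N^{G\setminus\{g\}}\}}=\mathbbm{1}_{\{a\in A\}}$, this yields
\begin{equation*}
\gamma_g\bigl(A\times\N^{G\setminus\{g\}},z\bigr)=\sum_{a\in\N}\gamma_{\{g\}}([a^g],z)\,\mathbbm{1}_{\{a\in A\}}=\frac{\sum_{a\in A}\exp\bigl(\varphi^g_z(a)\bigr)}{\sum_{n\in\N}\exp\bigl(\varphi^g_z(n)\bigr)}.
\end{equation*}
Since $\varphi^g_{z,z'}(a,0)=\varphi^g_z(a)$ by definition, this establishes equation \eqref{eq:specification-for-A-times-N-z}. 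The argument for equation \eqref{eq:specification-for-A-times-N-z-prime} is identical: replace $z$ by $z'$ throughout and use $\varphi^g_{z,z'}(a,1)=\varphi^g_{z'}(a)$.

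There is no real obstacle here beyond bookkeeping; the only point to verify carefully is that the map $\tau_{w,v}$ of Definition \ref{defn:coordinate-wise-permutation-two} with $K=\{g\}$, $w(g)=a$, $v(g)=b$ agrees with the single-site swap $\tau_{a,b}$ appearing in the definition of $\varphi^g_z$, so that $\phi_*^{\tau_{w,v}}(vz_{\{g\}^c})=\phi_*^{\tau_{a,b}}(bz_{G\setminus\{g\}})=\varphi^g_z(a)$. Once this identification is made, everything reduces to a direct substitution using Proposition \ref{lemma:characterization-DLR-state}, Proposition \ref{existenceoflimit}, and the definition \eqref{eq:defn-of-varphi-involving-f-star}.
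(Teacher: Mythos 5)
Your proposal is correct and follows essentially the same route as the paper: decompose $\gamma_g(A\times\N^{G\setminus\{g\}},\cdot)$ via equation \eqref{sumspecification}, evaluate each $\gamma_g([a^g],\cdot)$ through Proposition \ref{lemma:characterization-DLR-state} with reference configuration $b$, and identify the exponents with $\varphi^g_{z,z'}(\cdot,0)$ and $\varphi^g_{z,z'}(\cdot,1)$. The only difference is the order of the two substitutions, which is immaterial.
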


\begin{proof}[Proof of Claim \ref{claim:specification-in-terms-of-varphi}]
Begin by noticing that, from equation \eqref{sumspecification}, for any $z \in X$, we can write
\begin{align*}
     \gamma_g(A\times \N^{G\setminus \{g\}},z)  
     = \sum_{a \in \N} \gamma_{g}\left([a^g], z\right)\mathbbm{1}_{\{a  \in A\}} = \sum_{a \in A} \gamma_{g}\left([a^g], z\right).
\end{align*}
Moreover, from Proposition \ref{lemma:characterization-DLR-state}, we have that
\begin{align*}
    \gamma_g([a^g],z) = \frac{\exp\left(\phi_*^{\tau_{a, b}}(bz_{\{g\}^c})\right)}{\sum_{n \in \N}\exp\left(\phi_*^{\tau_{n, b}}(bz_{\{g\}^c})\right)} = \frac{\exp\left(\varphi^g_z(a)\right)}{\sum_{n \in \N}\exp\left(\varphi^g_z(n)\right)} = \frac{\exp\left(\varphi^g_{z,z'}(a,0)\right)}{\sum_{n \in \N}\exp\left(\varphi^g_{z,z'}(n,0)\right)}.
\end{align*}

This concludes the proof of equation \eqref{eq:specification-for-A-times-N-z}. The proof of equation \eqref{eq:specification-for-A-times-N-z-prime} follows from an analogous argument, by just replacing $z$ by $z'$.
\end{proof}

Now, let $m$ be the counting measure on $\N$ and, for each $t \in [0,1]$, $g \in G$, and $a \in \N$, consider the measure
\begin{equation*}
    \nu_t = \chi_{g}(\cdot,t)dm, \text{ with } \chi_{g}(a,t)= \frac{\exp\left(\varphi^g_{z,z'}(a,t)\right)}{\sum_{n \in \N} \exp\left(\varphi^g_{z,z'}(n,t)\right)}.
\end{equation*}

For each $A \subseteq \N$, $g,h_0 \in G$, and $z,z' \in X$ such that $z_{G \setminus \{h_0\}} = z'_{G \setminus \{h_0\}}$, from Claim \ref{claim:specification-in-terms-of-varphi}, we obtain that
\begin{align*}
\nu_0(A)   = \frac{\sum_{a \in A}\exp\left(\varphi^g_{z,z'}(a,0)\right)}{\sum_{n \in \N} \exp\left(\varphi^g_{z,z'}(n,0)\right)}
                 = \gamma^0_g(A,z)
\end{align*}
and
\begin{align*}
 \nu_1(A) = \frac{\sum_{a \in A}\exp\left(\varphi^g_{z,z'}(a,1)\right)}{\sum_{n \in \N} \exp\left(\varphi^g_{z,z'}(n,1)\right)}  = \gamma^0_g(A,z').
 \end{align*}

In order to study conditions under which Theorem  \ref{thm:Dobrushin-uniqueness} holds, we need some estimates, which we calculate now. First, notice that $    \left\Vert \nu_{1} - \nu_{0} \right\Vert_{TV}= \frac{1}{2} \int \left|\chi_g(a,1) - \chi_g(a,0)\right|dm.$

\begin{claim}\label{claim-diferential-t} For each $a \in \N$ and $g \in G$, the map $t \mapsto \chi_{g}(a,t)$ is differentiable and
$$\frac{\partial}{\partial t}\chi_{g}(a,t) =\chi_{g}(a,t)\left(\Delta^g_{z,z'}(a) - \int \Delta^g_{z,z'}(b) \, d\nu_t(b)\right).$$
\end{claim}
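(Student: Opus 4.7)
The plan is to exploit the fact that the exponent $\varphi^g_{z,z'}(n,t) = \varphi^g_z(n) + t\,\Delta^g_{z,z'}(n)$ is affine in $t$, so that $\chi_g(a,t)$ is a ratio whose numerator $e^{\varphi^g_{z,z'}(a,t)}$ is manifestly smooth in $t$ with derivative $\Delta^g_{z,z'}(a)\,e^{\varphi^g_{z,z'}(a,t)}$. Setting $Z(t) := \sum_{n \in \N} e^{\varphi^g_{z,z'}(n,t)}$, the only real task is to differentiate $Z(t)$ termwise and then apply the quotient rule.

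First I would invoke Claim \ref{claim-Delta} to fix the uniform bound $M := 2(C(h_0 g^{-1}) + C(g h_0^{-1})) \geq \|\Delta^g_{z,z'}\|_\infty$, which is finite by condition (\ref{defn-of-c-item1}) on $c$. For every $t \in [0,1]$ and $n \in \N$ this produces the dominating bound
$$ \left| \Delta^g_{z,z'}(n)\, e^{\varphi^g_{z,z'}(n,t)} \right| \;\leq\; M\, e^{M}\, e^{\varphi^g_z(n)}. $$
Absolute summability of $\sum_n e^{\varphi^g_z(n)}$ is not a new hypothesis: via equation \eqref{eq:defn-of-varphi-involving-f-star} it is precisely the finiteness statement of Lemma \ref{lem:dominatedConvergence}, together with exp-summability of $\phi$ (Lemma \ref{claim:potential}). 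Consequently the series defining $Z(t)$ can be differentiated termwise on $[0,1]$, yielding
$$ Z'(t) \;=\; \sum_{n \in \N} \Delta^g_{z,z'}(n)\, e^{\varphi^g_{z,z'}(n,t)}, $$
while the same comparison shows $Z(t)$ is bounded away from $0$ uniformly in $t$. The quotient rule then delivers
\begin{align*}
\frac{\partial}{\partial t}\chi_g(a,t)
 &= \frac{\Delta^g_{z,z'}(a)\, e^{\varphi^g_{z,z'}(a,t)}\, Z(t) \;-\; e^{\varphi^g_{z,z'}(a,t)}\, Z'(t)}{Z(t)^2}\\
 &= \chi_g(a,t)\left(\Delta^g_{z,z'}(a) \;-\; \frac{Z'(t)}{Z(t)}\right),
\end{align*}
and by construction
$$ \frac{Z'(t)}{Z(t)} \;=\; \sum_{n \in \N} \Delta^g_{z,z'}(n)\, \chi_g(n,t) \;=\; \int \Delta^g_{z,z'}(b)\, d\nu_t(b), $$
which matches the claimed formula.

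The main obstacle is just the justification of termwise differentiation of $Z(t)$; everything else is elementary calculus. Once one combines the uniform bound from Claim \ref{claim-Delta} with the absolute convergence that underlies Lemma \ref{lem:dominatedConvergence}, the proof reduces to a routine dominated-convergence argument on the counting measure on $\N$, and no further structural input is needed.
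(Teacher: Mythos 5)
Your proposal is correct and follows essentially the same route as the paper: differentiate the numerator termwise, justify interchanging $\partial_t$ with the sum over $\N$ by a domination argument based on the uniform bound from Claim \ref{claim-Delta} together with the summability of $\sum_n \exp(\varphi^g_z(n))$ (Lemma \ref{lem:dominatedConvergence} plus exp-summability), and then apply the quotient rule. Your write-up is in fact slightly more explicit than the paper's about the dominating function and the positivity of the denominator, but the underlying argument is the same.
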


\begin{proof} Notice that, for each $t\in [0,1]$, the function $\exp\left( \varphi^g_{z,z'}(\cdot,t)\right)$ is integrable with respect to $m$. Furthermore, for each $a \in \N$, $\frac{\partial}{\partial t}\exp\left( \varphi^g_{z,z'}(a,t)\right)$ exists and, more precisely, 
$$\frac{\partial}{\partial t}\exp\left( \varphi^g_{z,z'}(a,t)\right)=\Delta^g_{z,z'}(a)\exp\left( \varphi^g_{z,z'}(a,t)\right).$$

Moreover, notice that, from Claim \ref{claim-Delta}, the expression $\Delta^g_{z,z'}(a)\exp\left( \varphi^g_{z,z'}(a,t)\right)$ is integrable with respect to $m$. Therefore, we conclude that 
\begin{equation}\label{deriv-t}
    \frac{\partial}{\partial t}\int \exp\left( \varphi^g_{z,z'}(a,t)\right) \, dm(a) = \int \Delta^g_{z,z'}(a)\exp\left( \varphi^g_{z,z'}(a,t)\right) \, dm(a).
\end{equation}

Now, fix $g$ and $h_0 \in G$ and let $z,z' \in X$ be such that $z_{G \setminus \{h_0\}} = z'_{G \setminus \{h_0\}}$. Then, the map $t \mapsto \sum_{n\in\N}\exp\left(\varphi^g_{z,z'}(n,t)\right)$ is differentiable with respect to $t$. This implies that the map $t \mapsto \chi_{g}(x,t)$ is also differentiable.

From equation \eqref{deriv-t} and the fact that $t \mapsto \chi_{g}(x,t)$ is differentiable, we have that
\begin{align*}
     \frac{\partial}{\partial t}\chi_{g}(a,t) &=  \frac{\partial}{\partial t}\left( \frac{\exp( \varphi^g_{z,z'}(a,t))}{\int \exp\left( \varphi^g_{z,z'}(b,t)\right)\, dm(b)}\right)\\
    &= \Delta^g_{z,z'}(a)\chi_{g}(a,t)  - \frac{\left(\int \Delta^g_{z,z'}(b)\chi_g(b,t) \, dm(b)\right)\exp\left(\varphi^g_{z,z'}(a,t)\right)}{\int \exp\left(\varphi^g_{z,z'}(b,t)\right) \, dm(b)}\\
     &= \chi_{g}(a,t)\left(\Delta^g_{z,z'}(a) - \int \Delta^g_{z,z'}(b) \, d\nu_t(b)\right).
\end{align*}
\end{proof}

Considering  Claim \ref{claim-diferential-t}, we have that
\begin{align*}
    \int \left|\chi_g(a,1) - \chi_g(a,0)\right|dm(a) &= \int \left|\int_0^1 \left(\frac{\partial}{\partial t}\chi_{g}(a,t)\right)\,dt\right| \, dm(a)\\
   &\leq \int_{0}^1 \int\left|\frac{\partial}{\partial t}\chi_{g}(a,t)\right|dm(a)\, dt\\
   &= \int_{0}^1 \int\left| \chi_{g}(a,t)\left(\Delta^g_{z,z'}(a) - \int \Delta^g_{z,z'}(b) \, d\nu_{t}(b)\right)\right|dm(a)\,dt\\
&= \int_{0}^1 \int\left|\Delta^g_{z,z'}(a) - \int \Delta^g_{z,z'}(b) \, d\nu_{t}(b)\right|d\nu_{t}(a)\,dt\\
&\leq \int_{0}^1 \int\left(\|\Delta^g_{z,z'}\|_\infty + \int \|\Delta^g_{z,z'}\|_\infty \, d\nu_{t}(b)\right)d\nu_{t}(a)\,dt\\
&= 2\|\Delta^g_{z,z'}\|_\infty.
\end{align*}
Thus, by Claim \ref{claim-Delta} we have
\begin{align*}
\rho_{gh_0} \leq\frac{1}{2} \sup_{\substack{z, z' \in X\\z_{G\setminus\{h_0\}} = z'_{G\setminus\{h_0\}}}} 2\|\Delta^g_{z,z'}\|_\infty 
\leq 2(C(h_0g^{-1}) + C(gh_0^{-1})).
\end{align*}

Therefore, considering that $\rho_{gg} = 0$,
\begin{align*}
\sum_{h \in G} \rho_{gh} &\leq 2\sum_{\substack{h\in G\\h \neq g}} \left[C(hg^{-1}) + C(gh^{-1})\right]
    =2\sum_{\substack{h\in G\\h \neq 1_G}} \left[C(h)+C(h^{-1})\right]
    =4\sum_{\substack{h\in G\\h \neq 1_G}} C(h),
\end{align*}
so
$$
c(\gamma)=\sup_{g \in G} \sum_{h \in G} \rho_{gh}(\gamma) \leq 4\sum_{\substack{h\in G\\h \neq 1_G}} C(h).
$$ 

Finally, if we consider the potential $\beta \phi$ for $\beta > 0$, then by linearity, we have 
\begin{equation*}
c(\gamma^{\beta \phi})=\sup_{g \in G} \sum_{h \in G} \rho_{gh}(\gamma^{\beta \phi}) \leq 4\beta\sum_{\substack{h\in G\\h \neq 1_G}} C(h),    
\end{equation*}
where $\gamma^{\beta \phi}$ is the specification given by equation \eqref{specification} for the potential $\beta \phi$. Thus, if 
$$
\beta < \left(4\sum_{\substack{h\in G\\h \neq 1_G}} C(h)\right)^{-1},
$$
Dobrushin's condition is satisfied and, by Theorem \ref{thm:Dobrushin-uniqueness}, we have at most one DLR measure for the potential $\beta \phi$. Furthermore, if $\beta > 0$, then the set of $G$-invariant DLR measures for $\beta \phi$ is non-empty, so that we can guarantee that if $\beta \in \left(0, \frac{1}{4\sum_{h \neq 1_G} C(h)}\right)$, there exists exactly one DLR measure for $\beta \phi$.

\section*{Acknowledgements}

Elmer R. Beltrán would like to thank to the fellow program Fondo Postdoctorado Universidad Católica del Norte No 0001, 2020. Rodrigo Bissacot is supported by CNPq grants 312294/2018-2 and 408851/2018-0, by FAPESP grant 16/25053-8, and by the University Center of Excellence ``Dynamics, Mathematical Analysis and Artificial Intelligence'', at the Nicolaus Copernicus University. Luísa Borsato is supported by grants
2018/21067-0 and 2019/08349-9, São Paulo Research Foundation (FAPESP). Raimundo Briceño would like to acknowledge the support of ANID/FONDECYT de Iniciación en Investigación 11200892.

\bibliographystyle{abbrv}
\bibliography{references}

\end{document}